\documentclass[a4paper,11pt,english,reqno]{amsart}

\usepackage[english]{babel}
\usepackage{amssymb}
\usepackage{color}
\usepackage{appendix}
\usepackage{stmaryrd}
\usepackage{bm, upgreek}
\usepackage{soul}
\usepackage{mathrsfs}
\usepackage{hyperref}
\usepackage{mathtools}
\usepackage{dsfont}
\usepackage{tikz}
\usetikzlibrary{decorations.pathreplacing}
\numberwithin{equation}{section}

\usepackage[letterpaper,top=2cm,bottom=2cm,left=2cm,right=2cm,marginparwidth=1.75cm]{geometry}
\renewcommand{\leq}{\leqslant}
\renewcommand{\geq}{\geqslant}
\newcommand{\cbar}{\overline{C}}
\newcommand{\cstar}{\overline{C}_{*}}

\newcommand{\N}{\mathbb{N}}
\newcommand{\wn}{\widetilde{N}}
\newcommand{\R}{\mathbb{R}}

\newcommand{\W}{\mathcal{W}}
\newcommand{\E}{\mathbb{E}}

\newcommand{\UTH}{{\sf UT}_{{H}}(\delta)}
\newcommand{\nUTH}{\widetilde{{\sf UT}}_H(\delta)}

\renewcommand{\P}{\mathbb{P}}

\newcommand{\G}{\mathbb{G}}
\newcommand{\Gd}{\mathbb{G}^{d}_{n}}
\newcommand{\Xd}{\mathcal{X}^{d}_{n}}

\newcommand{\tu}[1]{\textup{#1}}

\renewcommand{\d}{\widehat{d}}

\newcommand{\core}{\tu{core}}

\newcommand{\set}[1]{[\![#1]\!]}

\usepackage{amsthm}
\newtheorem{theorem}{Theorem}[section]
\newtheorem{proposition}[theorem]{Proposition}
\newtheorem{definition}[theorem]{Definition}
\newtheorem{corollary}[theorem]{Corollary}

\newtheorem{lemma}[theorem]{Lemma}

\newtheorem{assumption}[theorem]{Assumption}

\newcommand{\low}{\textup{low}}
\newcommand{\high}{\textup{high}}
\newcommand{\bad}{\textup{bad}}
\renewcommand{\sc}{\textup{sc}}
\newcommand{\tup}{i_{\star},\bm{e}, \bm{e}_{\#}, \bm{v}, \bm{v}_{\#}}
\newcommand{\x}{\widehat{x}}

\theoremstyle{definition}
\newtheorem{remark}{Remark}[section]

\usepackage{amsmath}
\usepackage{graphicx}

\definecolor{darkgreen}{RGB}{0,100,0}

\definecolor{purple}{rgb}{0.9,0,0.8}

\definecolor{gray}{rgb}{0.7,0.7,0.7}

\definecolor{darkblue}{RGB}{0,87,184}
\definecolor{darkred}{RGB}{178,34,34}
\definecolor{darkpurple}{RGB}{128,0,128}
\definecolor{deepteal}{RGB}{0,112,112}
\newcommand{\dteal}[1]{#1}

\newcommand{\dred}[1]{#1}

\begin{document} 
\title[Large deviations of regular subgraph counts in random regular graphs]{Upper tail large deviations of regular subgraph counts in sparse random regular graphs}

\author{Anirban Basak}
\address{Anirban Basak, International Centre for Theoretical Sciences, Tata Institute of Fundamental Research, Bangalore, India}
\email{anirban.basak@icts.res.in}
\author{Shaibal Karmakar}
\address{Shaibal Karmakar, International Centre for Theoretical Sciences, Tata Institute of Fundamental Research, Bangalore, India}
\email{shaibal.karmakar@icts.res.in}

\begin{abstract}
\dteal{For a $\Delta$-regular connected graph $H$ we obtain the upper tail large deviations speed and rate function for the number of copies of $H$ in $\mathbb{G}_n^d$, a random $d$-regular graph on $n$ vertices, in the regime $np^{\Delta/2} \gg (\log n)^{1/(v_H-2)}$ and $p \ll 1$, where $p:=d/n$ and $v_H$ is the number of vertices, and also identify the typical structure of $\mathbb{G}_n^d$ conditioned on this upper tail event. For $\Delta \geq 3$, the speed and the rate function are same with those in the Erd\H{o}s-R\'enyi setting. In contrast, for $\Delta=2$ (i.e.~in case of cycles), due to the degree constraint in $\mathbb{G}_n^d$, although speed remains the same the rate function turns out to be different. Moreover, conditioned on the upper tail event of cycle counts in $\mathbb{G}_n^d$, we show that the random graph contains a disjoint collection of clique and biclique like structures with probability approaching to one, a stark contrast compared to the Erd\H{o}s-R\'enyi setting.}

\dteal{In the complementary regime, i.e.~for $1 \ll n^{v_H}p^{e_H} \ll (\log n)^{\alpha^*_H/(\alpha^*_H-1)}$, where $e_H$ is the number of edges in $H$, $\alpha^*_H$ is the fractional independence number of $H$, and $H$ is a strictly balanced graph (and thus including the regular case), we show that the upper tail large deviations speed and rate function for the number of copies of $H$ match with those in the Erd\H{o}s-R\'enyi setting. 
In this regime, we further show that almost all of the unlabeled copies of $H$ conditioned on the upper tail event are edge disjoint.
}		
\end{abstract}

\maketitle
\section{Introduction and main results}

\subsection{Setup and main results} In the past fifteen years there have been a significant interest in studying large deviations in random graphs. One of the early influential works in this area dates back to \cite{varadhan2011} where, using the graph limit theory and Szemer\'edi's regularity lemma, Chatterjee and Varadhan derived large deviations of any statistic that is continuous under the cut metric for {\em dense} (i.e.~the edge probability does not change with the size of the graph) Erd\H{o}s-R\'enyi graphs. Since then there have been an enormous amount of works in studying large deviations in  random graphs (mostly Erd\H{o}s-R\'enyi), both dense and sparse. Beyond the setting of Erd\H{o}s-R\'enyi graphs, where the edges are jointly independent, results have been quite limited. 

One natural model of random graph that does not have the joint independence of its edges and is widely studied in the literature is the random $d$-regular graph model on $n$ vertices, to be denoted hereafter by $\Gd$ (without loss of generality we assume that $nd \in 2 \N$ which ensures that such graphs exist). The work of Dhara and Sen \cite{Dhara_Sen_2022} obtained large deviations of observables in $\Gd$, with $d \asymp n$ (see  Section \ref{sec:notation} for the notational conventions adopted in this article), that are continuous in the cut metric. On the other hand, for the sparse case, i.e.~$d \ll n$, for certain sparsity ranges, Bhattacharya and Dembo \cite{BD} and Gunby \cite{Gunby2020UpperTO} obtained upper tail large deviations of the number of homomorphisms of subgraphs in $\Gd$ (see Section \ref{sec:background} for the sparsity range and the choices of the subgraphs in these works). 
The goal of this present article is to study the upper tail large deviations of regular subgraph counts in sparse $\Gd$ for the entire sparsity ranging from $d \ll n$ to all the way up to the appearance threshold, establishing analogs of results of \cite{ABRB, HMS} that were obtained in the Erd\H{o}s-R\'enyi setting, and identify the typical structures of $\Gd$ conditioned on these upper tail events.

\dteal{Throughout this article, for ease in writing and to compare our results with those in the Erd\H{o}s-R\'enyi setting, we set $p:=d/n$.} It will be convenient to have the following definition: 
Fix $\delta >0$ and set 
\[
\UTH:= \left\{N(H, \Gd)\geq (1+\delta)n^{v_{H}}p^{e_{H}} \right\},
\]
where $v_H$ and $e_H$ denote the number of vertices and edges of $H$, respectively, and for graphs $H$ and $G$ we write $N(H,G)$ to denote the number of labeled copies of $H$ in $G$. 
The following is the first main result of this article. 

\begin{theorem}[Localized regime]\label{thm:loc-regime}
Let $H$ be a $\Delta$-regular {connected} graph and $p \in (0,1)$ be such that $np^{\Delta/2}\gg (\log n)^{1/(v_H -2)}$  and $p\ll 1$. For any $\delta >0$,
\begin{equation*}
    \lim_{n\rightarrow \infty}-\frac{\log\P\left(\dteal{\UTH}\right)} {n^{2}p^{\Delta} \log(1/p)}=\dteal{\Psi_H(\delta)},
\end{equation*}
\dteal{where 	
\begin{equation}
\label{eq:rate function defn}
\Psi_{H}(\delta)\coloneqq  
\frac{1}{2}\begin{cases}
    \lfloor \delta\rfloor+\{\delta\}^{2/v_{H}}, \quad &\tu{if } \Delta =2, \\
    \delta^{2/v_{H}}, &\tu{if } \Delta\geq 3.
\end{cases}
\end{equation}}
\end{theorem}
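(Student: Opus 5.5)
We prove matching lower and upper bounds on $-\log \P(\UTH)$, both at scale $n^2p^\Delta\log(1/p)$, and we compare the configuration/uniform model on $d$-regular graphs with the Erd\H{o}s--R\'enyi graph $\G(n,p)$ through planted-subgraph probabilities. The basic tool is a switching estimate. For any fixed graph $F$ on $[n]$ with $e_F \ll nd$ edges and maximum degree $\ll d$, we have
\[
\P(F\subset \Gd) = p^{e_F}\exp\bigl(O(e_F\cdot\text{small})\bigr)\prod_v \frac{(d)_{\deg_F v}}{d^{\deg_F v}}.
\]
Near-cliques, however, have vertex degrees comparable to $d$, and then the falling factorials matter. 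For this reason we use a sharper switching/edge-exposure bound: for $F$ of maximum degree at most $d$,
\[
\P(F\subset\Gd)\le \Bigl(\tfrac{C}{n}\Bigr)^{e_F}\prod_v (d)_{\deg_F v}.
\]
This bound gives $p^{e_F}$ up to $e^{O(e_F)}$, which is negligible against $e_F\log(1/p)$.

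\emph{Lower bound.} We plant a structure carrying $\delta n^{v_H}p^{e_H}$ extra copies of $H$ at cost $\exp(-(\Psi_H(\delta)+o(1))n^2p^\Delta\log(1/p))$.

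For $\Delta\ge3$ the Erd\H{o}s--R\'enyi optimizer is a clique on $s=\delta^{1/v_H}np^{\Delta/2}$ vertices. It has $\sim s^2/2$ edges and creates $\sim s^{v_H}=\delta n^{v_H}p^{e_H}$ copies, using $e_H=\Delta v_H/2$. Since $s\ll d$, when $\Delta\ge 3$, i.e.\ $np^{\Delta/2}\ll np$, the switching estimate shows the planting costs $p^{s^2/2(1+o(1))}$. We then need the copies of $H$ in the rest of the graph to stay at least $(1-o(1))n^{v_H}p^{e_H}$ after conditioning, which follows from concentration of $N(H,\Gd)$ under planting via a second-moment or Janson-type argument in the regular model. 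The hub (anti-clique) construction is irrelevant for $\Delta$-regular $H$, since $\Delta\ge 3$ makes clique-only optimal in Erd\H{o}s--R\'enyi too.

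For $\Delta=2$ we have $s=\Theta(np)=\Theta(d)$, so degrees cap at $d$ and a clique can hold at most $d+1$ vertices. A clique $K_{d+1}$ contributes about $d^{v_H}=n^{v_H}p^{e_H}$ cycles at cost $\binom{d+1}{2}\log(1/p)\approx\frac12 n^2p^2\log(1/p)$. We therefore plant $\lfloor\delta\rfloor$ disjoint $(d+1)$-cliques (or $K_{d,d}$-type bicliques, which work for parity reasons) together with one clique of size $\{\delta\}^{1/v_H}d$. This gives $\frac12(\lfloor\delta\rfloor+\{\delta\}^{2/v_H})$. Because clique vertices are saturated, we must check that the probability of containing a $K_{d+1}$ component is $p^{(1+o(1))d^2/2}$; this is a direct count of regular graphs with that component removed, using asymptotic enumeration of regular graphs.

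\emph{Upper bound.} We follow the entropic stability / core approach of Harel--Mousset--Samotij. On $\UTH$, with high probability there is a small "core" subgraph $F\subset\Gd$ with $e_F=O(n^2p^\Delta\log(1/p))$ edges that explains the excess: $N(H,F\cup\cdot)$ contributes $(\delta-o(1))n^{v_H}p^{e_H}$ copies. The assumption $np^{\Delta/2}\gg(\log n)^{1/(v_H-2)}$ is exactly the condition in their framework, and the core count's $\log n$ entropy is dominated. A union bound over cores, using $\P(F\subset\Gd)\le p^{e_F(1-o(1))}$ (the degree-cap bound above), reduces the problem to a variational problem: minimize $e_F$ subject to $N(H,F)\gtrsim\delta n^{v_H}p^{e_H}$ and $\Delta(F)\le d$.

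For $\Delta\ge3$ the degree cap is inactive, and the Erd\H{o}s--R\'enyi solution gives $\delta^{2/v_H}/2$ by the generalized Kruskal--Katona / $N(H,F)\le (2e_F)^{v_H/2}$ bound for regular $H$ (Alon's theorem: $N(H,F)\le (2e)^{\alpha^*}$ with $\alpha^*_H=v_H/2$ for regular $H$). For cycles we need a new extremal inequality. If $F$ has maximum degree $\le d$ and $m$ edges, then
\[
N(C_k,F)\le \max\Bigl\{\sum_i (2m_i)^{k/2}\Bigr\},\qquad \sum_i m_i=m,\ m_i\le d^2/2.
\]
The right-hand side is maximized, by convexity of $x\mapsto x^{k/2}$, by saturated pieces plus one remainder. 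This yields $\lfloor\delta\rfloor+\{\delta\}^{2/k}$. I would prove it via the spectral bound $N(C_k,F)\le\operatorname{tr}A^k\le\sum_j|\lambda_j|^k$. Using $\sum\lambda_j^2=2m$ and $|\lambda_j|\le d$, we have $\sum|\lambda_j|^k\le\max\sum x_j^{k/2}$ over $x_j=\lambda_j^2\le d^2$ with $\sum x_j=2m$. This gives $\lfloor 2m/d^2\rfloor d^k+\ldots$, i.e.\ exactly the rate, since each $d^k$ costs $d^2/2$ edges. Pleasingly, eigenvalues $\pm d$ correspond to cliques or bicliques, matching the structure claim.

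\emph{Main obstacle.} The hardest step will be the upper-bound union bound in the non-independent model: establishing $\P(F\subset\Gd)\le p^{(1-o(1))e_F}$ uniformly over all candidate cores, including those with vertices of degree near $d$. Doing so needs a careful switching argument in the configuration model; alternatively, one can use the known sandwiching of $\Gd$ between Erd\H{o}s--R\'enyi graphs, but that is only valid for $d\gg\log n$ and with lossy constants, so I would attempt the direct switching bound first.
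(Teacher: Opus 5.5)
Your lower bound is essentially the paper's: one planted clique for $\Delta\ge 3$; for $\Delta=2$, $\lfloor\delta\rfloor$ copies of $K_{d+1}$ plus a clique on about $\{\delta\}^{1/v_H}d$ vertices; near-independence from the switching estimate; and a second-moment bound for the copies avoiding the planted edges. (For $K_{d+1}$ the residual-degree product costs only $e^{O(d^2)}$, so no asymptotic enumeration is needed.) Your spectral treatment of the degree-capped extremal problem for cycles is also exactly how the paper proves $e(G)\ge\Psi_H(\gamma)n^2p^\Delta$. Your second display should read $(C/(nd))^{e_F}\prod_v(d)_{\deg_F v}$; with $(C/n)^{e_F}$ it is off by a factor $d^{e_F}$.

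The genuine gap is in the upper bound. You claim a union bound over cores works throughout $np^{\Delta/2}\gg(\log n)^{1/(v_H-2)}$ because ``the core count's $\log n$ entropy is dominated''. This is false, and the argument of \cite{HMS} gives the sharp constant over this whole range only for cliques. Condition (C3) only forces $\deg(u)\deg(v)\gtrsim n^2p^\Delta(\log(1/p))^{-v_H}$ on core edges. So once $np^{\Delta-1}\lesssim(\log n)^{v_H}$, a core may contain $\bm{v}\asymp n^2p^\Delta$ vertices of bounded degree. For bipartite $H$, a $K_{v_H/2,m}$ with $m\asymp n^2p^\Delta$ is an example; it respects the degree cap when $np^{\Delta-1}\lesssim 1$. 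Choosing these vertices contributes $\log\binom{n}{\bm{v}}\approx\bm{v}\log\frac{1}{np^\Delta}$ to the entropy. Near the lower end of the regime this is about $\frac{\Delta}{2}\bm{v}\log(1/p)$, i.e.\ up to half of the cost (at least $\Delta\bm{v}\log(1/p)$) of their edges. Hence $\log\#\{\text{cores with }\bm{e}\text{ edges}\}=o(\bm{e}\log(1/p))$ fails, and a plain union bound yields only a lossy constant.

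What is missing is the machinery that pays for this entropy:
(i) showing that cores with at least $\overline{C}_*n^2p^\Delta$ edges are negligible (Lemma~\ref{lem:SC reduction step}); when $n^2p^\Delta$ is polylogarithmic this uses $e(G)\lesssim e_{1,2}(G)+e_{2,2}(G)$ from Lemma~\ref{lem:control on E_11}, or the chaining of \cite{ABRB};
(ii) pruning to strong cores, whose edges satisfy $\deg(u)\deg(v)\ge c_0n^2p^\Delta$, so that their number is at most $\binom{n}{\bm{v}}p^{-\varepsilon\bm{e}}$ and the relevant exponent becomes the entropy-corrected cost $\bm{e}-\frac{\Delta}{2}\bm{v}$;
(iii) a sharp lower bound on that cost. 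For non-bipartite $H$ the part incident to low-degree vertices is bipartite, and one shows it carries few edges. For bipartite $H$ with $\Delta\ge 3$ you need $e(\overline{G})-\frac{\Delta}{2}|U_1|\ge(1-O(\varepsilon_2))(\Delta/2)^{2/v_H}\frac12N(H,\overline{G})^{2/v_H}$ for that bipartite part with low-degree side $U_1$ (Lemma~\ref{lem:bipartite edge,vertex bound}). Combined with a concavity argument, this gives $\bm{e}-\frac{\Delta}{2}\bm{v}\ge(1-o_\varepsilon(1))\Psi_H(\delta)n^2p^\Delta$.

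There are two further omissions. First, the moment argument only yields a pre-seed $F$ with $\mathbb{E}[N(H,\mathbb{G}_n^d)\mid F\subseteq\mathbb{G}_n^d]$ large. To reach your variational problem in terms of $N(H,F)$, you must show that the partial-copy terms $N(J,F)n^{v_H-v_J}p^{e_H-e_J}$, $\emptyset\ne J\subsetneq H$, are $o(n^{v_H}p^{e_H})$ (Lemmas~\ref{lem:expectation difference upper bound} and \ref{lem:contribution zero}). This is exactly where the degree cap excludes hubs. The delicate $J$ are bipartite with $\Delta(v_J-\alpha^*_J)=e_J$, and one counts their copies through a maximum matching (K\H{o}nig's theorem) using that $F$ has maximum degree at most $d$. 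Your reason for ignoring hubs, that ``$\Delta\ge 3$ makes clique-only optimal in Erd\H{o}s--R\'enyi too'', is incorrect. In $\mathbb{G}(n,p)$ with $p\gg n^{-1/\Delta}$ the hub construction competes for every connected regular $H$, and this very step fails there.

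Second, what you single out as the main obstacle is routine. The switching estimate gives $\mathbb{P}((u,v)\in\mathbb{G}_n^d\mid G\subseteq\mathbb{G}_n^d)\le(1+o(1))p$ for every $G$ with $e(G)=o(nd)$, whatever its degrees, since the residual-degree factors are at most $1$. So $\mathbb{P}(F\subseteq\mathbb{G}_n^d)\le p^{(1-o(1))e_F}$ follows by exposing edges one at a time. The real difficulty is the entropy control described above.
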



Our next main result deals with the range of $p$  not covered by Theorem \ref{thm:loc-regime}. 
\dteal{It requires a few notation. 
For any graph $H$, we define $\tu{Aut}(H)$ to be the number of automorphisms of $H$. That is, it is the number of bijective maps $\phi:V(H)\mapsto V(H)$ such that $(u,v)\in E(H)$ if and only if $(\phi(u), \phi(v))\in E(H)$. 
For any graph $J$, a fractional independent set is a map $\alpha: V(J) \mapsto [0,1]$  such that $\alpha(u) + \alpha(v) \leq 1$ whenever $(u,v) \in E(J)$. The fractional independence number of $J$, to be denoted by $\alpha^*_J$, is the
largest value of $\sum_{v\in V(J)} \alpha(v)$ among all fractional independent sets $\alpha$ in $J$. Further, a graph $H$ is said to be strictly balanced the ratio $e_J/v_J$, for $\emptyset \neq J \subseteq H$, is uniquely maximized for $J=H$. 
}
\begin{theorem}[Poisson regime]
\label{thm:poisson}
Let $H$ be a strictly balanced graph. Fix $\delta>0$. For $p\in(0,1)$ satisfying $1\ll n^{v_{H}}p^{e_{H}} \ll (\log n)^{\alpha_{H}^{*}/(\alpha_{H}^{*}-1)}$, we have
\begin{equation}\label{eq:poisson-rate-function}
    \lim_{n\rightarrow \infty} \frac{-\log \P (\dteal{\UTH})}{n^{v_{H}}p^{e_{H}}/\tu{Aut}(H)}\ =\ (1+\delta)\log(1+\delta)-\delta =: I(\delta).
\end{equation}
\end{theorem}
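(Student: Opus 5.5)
We prove the Poisson regime statement (Theorem~\ref{thm:poisson}) by establishing matching lower and upper bounds on $\P(\UTH)$. Write $\mu := n^{v_H}p^{e_H}/\tu{Aut}(H)$; this is, up to $1+o(1)$, the expected number of unlabeled copies of $H$ in $\Gd$. Let $X$ denote the number of unlabeled copies, so that $\UTH=\{X\geq (1+\delta)\mu(1+o(1))\}$. Strict balancedness gives, for every proper subgraph $J$ of $H$, $n^{v_J}p^{e_J}\gg \mu^{v_J/v_H}\cdot n^{\epsilon}$. Consequently overlapping pairs of copies are rare in expectation. The proof rests on comparing $\Gd$ with the Erd\H{o}s--R\'enyi graph via the switching method or the configuration model. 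The key input is that for any fixed family of edge-disjoint copies $H_1,\dots,H_k$ with $k\leq C\mu$ (so $k=n^{o(1)}$ edges in total, since $\mu\ll(\log n)^{O(1)}$), one has
\[
\P\Bigl(\bigcup_i E(H_i)\subseteq \Gd\Bigr)=p^{ke_H}\exp\bigl(O(k^2e_H^2/(np))+o(1)\bigr)=(1+o(1))^{k}p^{ke_H}.
\]
This follows from the standard estimate $\P(S\subseteq \Gd)=\prod(1+O(|S|/d))\,p^{|S|}$ for small edge sets $S$ with bounded degrees, which is proved by switchings (McKay).

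\emph{Lower bound.} Let $Y$ be the number of unlabeled copies that are edge-disjoint from all other copies. We compute factorial moments $\E[(Y)_k]$ for $k\leq K\mu$ using the estimate above. The contribution from overlapping configurations is negligible by strict balancedness, which gives $\E[(X)_k]=(1+o(1))^k\mu^k$ uniformly for $k\leq K\mu$. A Poisson comparison then follows from Bonferroni/inclusion--exclusion, or more simply from $\P(X\geq m)\geq \E[(X)_m]$-type bounds combined with a second-moment argument on $\{X=m\}$. This yields $\P(X\geq(1+\delta)\mu)\geq \exp(-(1+o(1))\mu I(\delta))$. Alternatively, one plants $m=\lceil(1+\delta)\mu\rceil$ disjoint copies on prescribed vertex sets: by the estimate this has probability $\approx p^{me_H}$, and summing over the $\approx (n^{v_H}/\tu{Aut}(H))^m/m!$ choices gives $\mu^m/m!\cdot e^{-\mu(1+o(1))}$, the last factor coming from the requirement of no further copies. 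Stirling's formula then produces $I(\delta)$.

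\emph{Upper bound.} We follow the Erd\H{o}s--R\'enyi argument of Harel--Mousset--Samotij. On $\UTH$, either there is a large collection of $m\geq(1+\delta-\epsilon)\mu$ edge-disjoint copies, or the number of copies is inflated by overlapping clusters. The first case is bounded by a union bound: $\binom{N}{m}$ choices times $(1+o(1))^mp^{me_H}\leq (e\mu/m)^m(1+o(1))^m$, which gives $\exp(-\mu I(\delta)(1+o(1)))$, refined as usual through the Poisson tail for sums over disjoint families. The second case requires a subgraph $F\subseteq\Gd$ that is a union of overlapping copies of $H$ containing many copies relative to its size. Here the condition $\mu\ll(\log n)^{\alpha^*_H/(\alpha^*_H-1)}$ enters: the cheapest way to create $\epsilon\mu$ extra copies via overlaps (a ``hub'' structure with cost governed by $\alpha^*_H$) costs $\exp(-c\,\mu^{1/\alpha^*_H}\log n)$, which is $\ll e^{-C\mu}$ exactly in this regime. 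We transfer these counting bounds to $\Gd$ by applying the subgraph-inclusion estimate to unions of copies with $n^{o(1)}$ edges.

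The main obstacle is justifying the inclusion estimate $\P(S\subseteq\Gd)\leq (C p)^{|S|}$, or more sharply $(1+o(1))^{|S|}p^{|S|}$ when $S$ has bounded degree, uniformly over all edge sets $S$ of size up to $\mu^{1/\alpha^*_H}\cdot\log n$ arising in the overlap analysis. Such $S$ may contain vertices of high degree in the hub configurations, and this is delicate when $d$ is small, e.g.\ near the appearance threshold, where $np$ may be bounded by a power of $\log n$. I would first try the switching bound $\P(S\subseteq \Gd)\leq\prod_v (d)_{\deg_S v}/(nd)^{|S|}\cdot e^{O(|S|^2/nd)}$ via the configuration model. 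This bound holds for $|S|\ll\sqrt{nd}$ and accommodates high degrees with the factor $(d)_k\leq d^k$, so it gives $\leq (Cp)^{|S|}$. That crude form suffices for the overlap case, while the sharp form is needed only for the bounded-degree disjoint families.
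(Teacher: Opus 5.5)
Your architecture is sound, but it is organized differently from the paper's.

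\textbf{How the two routes compare.} The paper proves two-sided bounds $\E[\wn^{\underline t}]=(1\pm n^{-c})(\mu D_H)^t$ for the full count (Lemmas~\ref{lem:estimate t factorial moment lower bound} and~\ref{lem:estimate t factorial moment}). It does this by decomposing every $t$-tuple into isolated copies and edge-disjoint clusters, paying $t^{\sum_i s_i}$ for positions, and then invokes \cite[Proposition~8.3]{HMS} for both directions. The tail bound $\P(Z_H\ge\beta)\le\exp(-c_H\beta^{1/\alpha^*_H}\log n)$ for non-isolated copies appears only later, in the proof of Theorem~\ref{thm:conditional-poisson}.

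You instead split $X=Y+Z$, with $Y$ the isolated copies and $Z$ the rest. Carried out properly, this is valid and somewhat more transparent. Isolated copies are pairwise edge-disjoint, so $\E[(Y)_k]\le(\mu D_H)^k(1+o(1))$ follows at once from iterating Corollary~\ref{cor:almost independence lemma(poisson)} with $D_{I,G}\le D_H$. The matching lower bound only needs $k$ planted vertex-disjoint copies to stay isolated, which fails with conditional probability $O(k^{O(1)}\mu n^{-c})$ by strict balancedness. So your lower bound never uses $\mu\ll(\log n)^{\alpha^*_H/(\alpha^*_H-1)}$ (only that $\mu$ is polylogarithmic); that hypothesis enters only through $\P(Z\ge\epsilon\mu)=e^{-\omega(\mu)}$. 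The paper's route yields more, namely precise factorial moments of $X$ itself. Both routes rest on the same cluster estimate, Lemma~\ref{lem:cluster expectation}.

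\textbf{What is wrong or missing as written.}

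(i) The accumulated error in your inclusion estimate is $\exp(O(kp+k^2/(nd)))$ times the degree factor $D_H^k=e^{-\Theta(k/d)}$, not $\exp(O(k^2/(np)))$. For $H=C_\ell$ one has $d\asymp\mu^{1/\ell}$, so with $k\asymp\mu$ your error is $e^{\Theta(\mu^{2-1/\ell})}$, which swamps the scale $\mu$. Also, since $d\to\infty$ here, the configuration model is simple only with probability $e^{-\Theta(d^2)}$. The estimate must therefore come from switchings on simple graphs (Theorem~\ref{thm:almost independence}).

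(ii) In the lower bound you pass from $Y$ to $X$ and claim $\E[(X)_k]=(1+o(1))^k\mu^k$ ``by strict balancedness''. That does not follow from strict balancedness. Its upper half fails once $\mu\gg(\log n)^{\alpha^*_H/(\alpha^*_H-1)}$: for $H=K_v$, cliques on $\asymp\mu^{1/v}$ vertices dominate $\E[(X)_{\delta\mu}]$. Inside the regime it is exactly Lemma~\ref{lem:estimate t factorial moment}, so stay with $Y$. Further, $\P(X\ge m)\ge\E[(X)_m]$ is false. The lower bound comes from tilting by $\binom{Y}{t}$ with $t\approx\delta\mu$, which uses the upper as well as the lower moment bounds. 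The ``planting'' alternative needs $\P(\text{no further copies}\mid\text{planted})\ge e^{-(1+o(1))\mu}$. That is a lower-tail estimate for which you have no tool in $\Gd$ (no Harris or Janson inequality), so drop it.

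(iii) In the upper bound, the union bound $(e\mu/m)^m$ with $m=(1+\delta)\mu$ equals $e^{-(I(\delta)-1)\mu}$. It loses a factor $e^{\mu}$ on the relevant scale. Use Markov's inequality for $(Y)_t$ with $t\approx m-\mu$ instead.

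(iv) The bound $\P(Z\ge\epsilon\mu)\le\exp(-c(\epsilon\mu)^{1/\alpha^*_H}\log n)$ carries all the difficulty, and you only assert it. You need:
\begin{itemize}
\item a witness of bounded size, namely edge-disjoint clusters of total size in $[\beta,2\beta]$ as in \eqref{eq:claim-conditional};
\item the bound of \cite[Theorem~5.7]{HMS} on the number of copies of $H$ in a graph with a given number of edges;
\item the density increment \eqref{eq: SB inequality} for clusters, proved by induction by removing a copy so that a cluster remains;
\item the summation over cluster-size profiles with $\ell$ clusters via $\sum_i s_i^{1/\alpha^*_H}\ge(\sum_i s_i)^{1/\alpha^*_H}+\varkappa(\ell-1)$, as in \eqref{eq:varkappa}.
\end{itemize}
Your crude bound $(Cp)^{|S|}$ suffices there precisely because $s\ll(\log n)^{\alpha^*_H/(\alpha^*_H-1)}$ absorbs the factor $C^{O(s)}$.
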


\dteal{A few comments are in order. It is easy to see that any strictly balanced graph must be connected and any regular connected graph is strictly balanced. Moreover, it is well known for any regular graph $\alpha^*_H=v_H/2$. Therefore, for any regular connected graph $H$ Theorems \ref{thm:loc-regime} and \ref{thm:poisson} cover the entire sparsity regime $1 \ll np^{\Delta/2} \ll n$ except for the regime $np^{\Delta/2} \asymp (\log n)^{1/(v_H-2)}$, \dred{where a mixture of localized and Poisson behavior is expected}. 

It is known in the literature that for a strictly balanced graph $H$, one has the following dichotomous behavior: 
\[
\lim_{n \to \infty} \P(H \subseteq \Gd) = 
\begin{cases}
0 & \text{if } n^{v_H}p^{e_H}  \ll 1,\\
1 & \text{if } n^{v_H}p^{e_H}  \gg 1.
\end{cases}
\]  
Moreover, if $n^{v_H}p^{e_H} \asymp 1$ then the number of unlabeled copies of $H$ in $\Gd$ converges weakly to some non degenerate Poisson distribution. See \cite[Theorems 1.3 and 1.4]{KSV}. These show that the lower bound assumption in Theorem \ref{thm:poisson} extends all the way up to the {\em appearance threshold} and  is optimal. 

\dred{Let us also mention in passing that the conclusion of Theorem \ref{thm:poisson} continues to hold under the relaxed upper bound $n^{v_H}p^{e_H} \leq C_H (\log n)^{\alpha^*_H/(\alpha^*_H-1)}$ for some $C_H < \infty$ (see Theorem \ref{thm:poisson-new}). Since, the definition of $C_H$ is notationally heavy, we have chosen not to include this extension in this introduction section.}
}

\begin{remark}
\dteal{It is instructive to compare Theorems \ref{thm:loc-regime} and \ref{thm:poisson} with the corresponding results in the Erd\H{o}s-R\'enyi setting (cf.~\cite{ABRB, BK25, HMS}). At a very high level, the upper tail large deviation events in the localized and Poisson regimes are determined essentially by $O(n^2p^\Delta \log(1/p))$ and $\Theta(n^{v_H}p^{e_H})$ many edges, respectively. See Section \ref{sec:proof-outline} for an explanation on this. In their respective regimes  both these bounds are $o(nd)$. Although the edges in $\Gd$ are not independent, using switching arguments and a double counting argument one can show that any collection of $o(nd)$ edges are nearly independent (see for e.g.~\eqref{eq:almost independence lemma-old} and Theorem \ref{thm:almost independence}). 
Heuristically, this essentially results the same speed and rate function in the Poisson regime and in the localized regime for $\Delta \geq 3$. The difference in the rate function in the localized regime, compared to the Erd\H{o}s-R\'enyi setting for $\Delta =2$, is due to the degree rigidity in $\Gd$. These are only heuristics. It needs a considerable amount of work to justify these heuristics, which along with the conditional structure below, are the content of this article.}
\end{remark}


In the context of large deviations in random graphs probably it is more interesting (and often more challenging) to identify the driving mechanisms that cause these rare events. In other words, the problem is to identify the new geometric features that emerge in the random graph under consideration when conditioned on these rare events.  
\dteal{Building on the arguments employed in the proofs of Theorems \ref{thm:loc-regime} and \ref{thm:poisson}, we identify the typical structure $\Gd$ conditioned on these upper tail events.}

\begin{theorem}[Conditional structure in the localized regime]
Consider the same setup as in Theorem \ref{thm:loc-regime}. Fix any $\chi \in (0,1)$. There exists some constant $c_\chi >0$ such that the following statements hold:
\begin{enumerate}
\item[(i)] Assume $\Delta \geq 3$. Then, for all large $n$,  
\[
\P\left( \tu{Cliq}_\chi(\delta)\mid \UTH\right) \geq 1- \exp(-c_\chi r_{n,p, \Delta}),
\]
where we use the shorthand $r_{n,p, \Delta}:= n^2 p^\Delta \log(1/p)$, and 
\[
\tu{Cliq}_{\chi}(\delta) \coloneqq \big\{\exists U\subseteq \set{n} \text{ of size at least }(1-\chi)\delta^{1/v_H} np^{\Delta/2} \text{ and } \min_{u\in U} \tu{deg}_{\G_n^d[U]} ({u})\geq (1-\chi) |U|\big\}.					
\]
\item[(ii)] Let $H=C_{2\ell+1}$ for some $\ell \in \N$. Then, for all large $n$,
\[
\P\left(\tu{Disj-Cliq}_\chi(\lfloor \delta \rfloor, \{\delta\}^{1/v_H}) \mid \UTH \right) \geq 1- \exp(-c_\chi r_{n,p, \Delta}), 
\]
where for any $m \in \N_0:=\N \cup\{0\}$, $\theta \in [0,1)$, and $\chi \in (0,1)$,
\begin{multline}\label{eq:djt-clique}
\tu{Disj-Cliq}_\chi(m, \theta):= \Big\{ \exists \text{ pairwise disjoint subsets } \{U_i\}_{i \in \set{m+1}}  \text{ of } \set{n} \text { such that } \\
\min_{i \in \set{m}} |U_i| \geq (1-\chi)d, \quad |U_{m+1}| \geq (1-\chi) \theta d, \text{ and } \min_{i \in \set{m+1}} \min_{u \in U_i} \deg_{\G_n^d[U_i]}(u)/|U_i| \geq (1-\chi) \Big\}.
\end{multline}
\item[(iii)] Let $H= C_{2\ell}$ for some $\ell \in \N$. Then, for all large $n$, 
\[
\P\left(\bigcup_{m, m' \in \N_0: m+2m' =\lfloor \delta \rfloor} \tu{Disj-Cliq-Bicliq}_\chi(m, m', \{\delta\}^{1/v_H}) \mid \UTH \right) \geq 1- \exp(-c_\chi r_{n,p, \Delta}), 
\]
where for $m, m' \in \N_0$, $\theta \in [0,1)$, and $\chi \in (0,1)$, 
\begin{multline}\label{eq:djt-biclique}
\tu{Disj-Cliq-Bicliq}_\chi(m, m', \theta)\\
:= \Big\{ \exists \text{ pairwise disjoint subsets } \{U_i\}_{i \in \set{m+1}}, \{V_i\}_{i \in \set{m'}}, \text{ and } \{W_i\}_{i \in \set{m'}} 
\text{ of } \set{n} \text { such that } \\
\min\left\{\min_{i \in \set{m}} |U_i|, \min_{i \in \set{m'}} \min\left\{ |V_i|, |W_i|\right\} \right\}\geq (1-\chi)d, \quad |U_{m+1}| \geq (1-\chi) \theta d, \\
\min_{i \in \set{m+1}} \min_{u \in U_i} \deg_{\G_n^d[U_i]}(u)/|U_i| \geq (1-\chi), \text{ and }\\
\min\left\{ \min_{i \in \set{m'}} \min_{u \in V_i} \deg_{\G_n^d[V_i,W_i]}(u)/|W_i|, \min_{i \in \set{m'}} \min_{u \in W_i} \deg_{\G_n^d[V_i,W_i]}(u)/|V_i| \right\} \geq (1-\chi)  \Big\}.
\end{multline}
\end{enumerate}
\label{thm:cond-loc}
\end{theorem}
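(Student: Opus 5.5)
Our plan is to reduce the conditional statement to a \emph{stability} version of the upper bound in Theorem \ref{thm:loc-regime}. Write $r:=r_{n,p,\Delta}$. Theorem \ref{thm:loc-regime} gives $\P(\UTH)\geq \exp(-(\Psi_H(\delta)+\varepsilon)r)$ for every $\varepsilon>0$ and large $n$. Hence it suffices to show the following. For each $\chi$ there is $\varepsilon=\varepsilon(\chi)>0$ such that
\[
\P\bigl(\UTH\cap \mathcal{B}_\chi\bigr)\leq \exp\bigl(-(\Psi_H(\delta)+2\varepsilon)r\bigr),
\]
where $\mathcal{B}_\chi$ is the complement of the relevant structural event in (i), (ii) or (iii). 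Dividing the two bounds then gives the claim with $c_\chi=\varepsilon$.

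\textbf{Reduction to a core subgraph.} To obtain the displayed bound we reuse the upper-bound machinery from the proof of Theorem \ref{thm:loc-regime}. Using the almost-independence of $o(nd)$ edges (Theorem \ref{thm:almost independence}), a union bound shows that on $\UTH$, outside an event of probability $\exp(-Kr)$ with $K$ large, the excess count $\delta n^{v_H}p^{e_H}$ is essentially produced by a \emph{core}. This is a subgraph $G_*\subseteq\Gd$ with $e(G_*)=O(r/\log(1/p))=O(n^2p^\Delta)$ edges and $N(H,G_*)\geq (\delta-\varepsilon')n^{v_H}p^{e_H}$, whose appearance costs roughly $p^{e(G_*)}$. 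Since there are at most $\exp(o(r))$ possible cores, the problem becomes deterministic. We must show that every core compatible with the degree constraint $\max\deg\leq d$ and with
\[
e(G_*)\leq (\Psi_H(\delta)+2\varepsilon)\,n^2p^\Delta
\]
contains the asserted clique or biclique structure up to error $\chi$.

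\textbf{Deterministic stability.} This is a variational problem. We maximise $N(H,G)$ subject to $e(G)\leq m$ and, when $\Delta=2$, subject to $\max\deg(G)\leq d$.

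For $\Delta\geq3$ we will invoke a stability form of the generalized Kruskal--Katona / Alon inequality, namely $N(H,G)\leq (2e(G))^{v_H/2}(1+o(1))$. Near-equality forces almost all edges to lie in a near-clique of size about $\delta^{1/v_H}np^{\Delta/2}$. Here $np^{\Delta/2}\ll d$, since $\Delta\geq 3$ gives $p^{\Delta/2-1}\ll1$, so the degree constraint is inactive. We then clean the near-clique by iteratively deleting low-degree vertices, which yields $U$ with minimum degree at least $(1-\chi)|U|$.

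For cycles, $np^{\Delta/2}=d$, so the degree cap binds. Because of the cap, every component of a near-extremal $G$ contributes at most roughly $d^{v_H}$ times a factor determined by its shape. We will show the following concavity-type statement: a component with $x$ edges, measured in units of $d^2/2$, contributes at most $\min(x,x^{v_H/2})\,d^{v_H}$ copies. When $x\geq1$, the bound $x\,d^{v_H}$ is approached only by disjoint unions of $K_{d+1}$'s, or by $K_{d,d}$'s for even cycles. A $K_{d,d}$ costs $2$ units and yields $2\cdot d^{v_H}$ copies, which is why the condition reads $m+2m'=\lfloor\delta\rfloor$; for odd cycles bicliques contain no copies at all. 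Optimising over the splits then gives $\lfloor\delta\rfloor$ full blocks plus one fractional clique of size $\{\delta\}^{1/v_H}d$, matching $\Psi_H$. Stability of this optimisation, established by a compactness/perturbation argument on the vector of component sizes, yields the required sets $U_i,V_i,W_i$.

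\textbf{Main obstacle.} We expect the main difficulty to be the quantitative stability for cycles under the degree cap. Specifically, we need to show that a bounded-degree graph whose $C_\ell$-count nearly attains $d^{\ell}$ per $d^2/2$ edges must be close to a disjoint union of cliques and bicliques of order $d$. Our first approach is spectral: $N(C_\ell,G)\approx\sum_i\lambda_i^\ell$ with $|\lambda_i|\leq d$ and $\sum\lambda_i^2=2e(G)$. Near-equality then forces a few eigenvalues with $|\lambda_i|\approx d$, and for even $\ell$ also $\lambda_i\approx -d$. A near-$d$-regular component carrying an eigenvalue close to $\pm d$ is close to a clique, or to a biclique in the case $-d$, by a standard stability argument. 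After that we apply the same degree-cleaning step as in the $\Delta\geq3$ case.
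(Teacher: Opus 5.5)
Your overall architecture matches the paper's: compare with the lower bound, confine the excess to a graph with nearly the minimal number of edges, then prove a deterministic stability statement (spectrally for cycles). Your $\Delta\ge3$ deterministic step (stability of $N(H,G)\le(2e(G))^{v_H/2}$ plus degree cleaning) is also the paper's. However, both the probabilistic reduction and the cycle stability have genuine gaps.

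\textbf{Reduction.} The claim that there are at most $\exp(o(r))$ cores is false in the sparse part of the localized regime, $np^{\Delta-1}\lesssim(\log n)^{v_H}$.
\begin{itemize}
\item Cores there may contain $\Theta(e(G))$ bounded-degree vertices. The number of cores with $\bm e$ edges is only bounded by $\binom{n}{\bm v}p^{-\varepsilon\bm e}$, and $\binom{n}{\bm v}$ can be as large as $p^{-(1-o(1))\Delta\bm v/2}$.
\item Example: take $H=K_{3,3}$ and $np^{3/2}$ polylogarithmic. A $3$-vertex hub joined to $b\asymp n^2p^3$ other vertices satisfies (C1)--(C3). There are $\exp(\Theta(r))$ such cores, each with substantially more than $(1+C\varepsilon)\Psi_H(\delta)n^2p^\Delta$ edges and no large near-clique.
\item Passing to strong cores repairs this for cycles and for non-bipartite $H$. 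It does not for bipartite $H$ with $\Delta\ge3$. There, the argument that suffices for the rate function only gives $\bm e-\tfrac{\Delta}{2}\bm v_\#\ge(1-o_\varepsilon(1))\Psi_H(\delta)n^2p^\Delta$. That bound cannot rule out hub-dominated cores with many edges and total probability comparable to $\P(\UTH)$.
\end{itemize}
A matching upper bound on $\P(\UTH)$ does not by itself confine the contributing cores to $e(G)\le(1+C\varepsilon)\Psi_H(\delta)n^2p^\Delta$. That confinement is Proposition~\ref{prop:loc-cond}. Its proof needs the sharpened inequality of Lemma~\ref{lem:bipartite edge,vertex bound}, with the extra factor $(\Delta/2)^{2/v_H}$, to show that such configurations have net cost at least $(1+c\,\varepsilon^{4/v_H})\Psi_H(\delta)r$, where $\varepsilon^{4/v_H}\gg\varepsilon$.

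\textbf{Cycle stability.} Your per-component bound $N(C_\ell,\cdot)\le\min(x,x^{\ell/2})d^\ell$ is true but too weak.
\begin{itemize}
\item Since $\min(x,x^{\ell/2})\le x$, optimizing it over components gives total cost $\delta$, achieved by a single piece with $x=\delta$. For non-integral $\delta>1$ this is strictly below $\lfloor\delta\rfloor+\{\delta\}^{2/\ell}$. So it cannot produce $\lfloor\delta\rfloor$ full blocks plus a $\{\delta\}^{1/\ell}d$-clique.
\item Near-extremal graphs need not be disconnected (for instance, cliques joined by $o(d^2)$ edges), so components are the wrong objects.
\item The optimization must be run over the whole spectrum with $|\lambda_i|\le d$. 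This is Lemma~\ref{lem:loc-structure-spectral}, and it needs the non-concave comparison of Lemma~\ref{lem:L}.
\end{itemize}
Your spectral fallback then assumes the hard part, namely ``near-$d$-regular components''.
\begin{itemize}
\item From $k$ eigenvalues near $d$ (resp.\ $-d$) one obtains $k$ loosely connected pieces only through higher-order (dual) Cheeger inequalities. These give no control on the sizes of the pieces unless the graph is regular.
\item The paper restores regularity with a ghost vertex joined to each $v$ with weight $d-\deg_G(v)$ (Propositions~\ref{prop:cheeger-applied} and~\ref{prop:dual-cheeger-applied}).
\item For $\delta\notin\mathbb{N}$ you must also exclude that the eigenvalue near $\{\delta\}^{1/\ell}d$ is a second eigenvalue of a near-$d$-regular block (Lemma~\ref{lem:neg-theta}). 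You must then extract a near-clique from a single macroscopic eigenvalue (Lemma~\ref{lem:only-theta}).
\item For even $\ell$ you must show that at most $\lfloor\delta\rfloor/2$ eigenvalues lie near $-d$ (from $\mathrm{tr}\,A^3\ge0$). You must also show that each bipartite piece carries exactly one eigenvalue near $d$, one near $-d$, and no other macroscopic eigenvalue (by triangle-freeness). This is what yields $m+2m'=\lfloor\delta\rfloor$.
\end{itemize}
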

For $\theta =0$, the lower bound on the cardinality of $U_{m+1}$ in \eqref{eq:djt-clique} does not exclude the case $U_{m+1}=\emptyset$. In that case, we adopt the convention that the inequality $\min_{u \in U_{m+1}} \deg_{\G_n^d[U_{m+1}]}(u)/|U_{m+1}| \geq 1-\chi$ holds vacuously.  Similar convention is adopted in \eqref{eq:djt-biclique}. 

Theorem \ref{thm:cond-loc} shows that for $\Delta \geq 3$, conditionally on the upper tail event, with a large probability the random graph $\Gd$ contains a clique like structure of an appropriate size that is responsible for almost all the excess copies. While for $\Delta =2$, due to the degree bound in $\Gd$ the picture, although similar in flavor, is more intricate. See Figure \ref{fig:localized} below for a schematic representation of this picture for $\Delta=2$. 


\newcommand{\gframeC}{%
\draw[gray!55,line width=0.4pt,rounded corners=2mm] (-9,-13) rectangle (137,12.5);
\node[font=\footnotesize,gray!60,anchor=west] at (-7.8,10.5) {$\mathbb{G}_n^d$};}

\newcommand{\backgroundC}{%
\begin{scope}[gray!45,line width=0.25pt]
\foreach \p/\q in {4/10.6, 13/7.4, 19/9.2, 27/10.5, 33/7.2, 42/10.1,
                   49/7.6, 55/10.3, 63/8.4, 72/10.6, 79/7.3, 87/9.9, 93/7.1,
                   101/10.7, 108/8.1, 116/10.2, 124/7.4, 129/9.9}
  \fill (\p,\q) circle (0.5);
\foreach \a/\b/\c/\d in {13/7.4/19/9.2, 19/9.2/27/10.5,
                         33/7.2/42/10.1, 49/7.6/55/10.3, 63/8.4/72/10.6,
                         72/10.6/79/7.3, 87/9.9/93/7.1, 101/10.7/108/8.1,
                         108/8.1/116/10.2, 116/10.2/124/7.4}
  \draw (\a,\b)--(\c,\d);
\end{scope}}

\newcommand{\bipC}[2]{%
\draw[blob] (#1,0) ellipse (4 and 6);
\draw[blob] (#2,0) ellipse (4 and 6);
\foreach \pa/\pb in {2.57/4.60, 4/0, 2.57/-4.60}
\foreach \qa/\qb in {-2.57/4.60, -4/0, -2.57/-4.60}
  \draw[black!60,line width=0.3pt] ({#1+\pa},\pb) -- ({#2+\qa},\qb);}


\begin{figure}[htbp]
\centering
\begin{tikzpicture}[x=1mm,y=1mm,
blob/.style={draw,line width=0.6pt,fill=gray!12},
lbl/.style={font=\small}]

\begin{scope}

\gframeC \backgroundC
\foreach \x in {4,26,84} \draw[blob] (\x,0) circle (6); \node at (55,0) {$\cdots$};
\foreach \x in {4,26,84} \node[lbl,below=0.6mm] at (\x,-6) {$\widetilde K_d$}; \draw[blob] (125,0) circle (3.8);
\node[lbl,below=0.6mm] at (125,-3.8) {$\widetilde K_{\{\delta\}^{1/\ell}d}$};
\draw[decorate,decoration={brace,amplitude=1.5mm}] (-3,14) -- (91,14) node[midway,above=1.9mm,lbl] {$\lfloor\delta\rfloor$ almost-cliques $\widetilde K_d$};
\end{scope}

\begin{scope}[yshift=-37mm]

\gframeC \backgroundC
\foreach \x in {4,34} \draw[blob] (\x,0) circle (6);
\node at (19,0) {$\cdots$};
\foreach \x in {4,34} \node[lbl,below=0.6mm] at (\x,-6) {$\widetilde K_d$};
\draw[decorate,decoration={brace,amplitude=1.5mm}] (-3,14) -- (41,14) node[midway,above=1.9mm,lbl] {$\lfloor\delta\rfloor-2k$ almost-cliques $\widetilde K_d$};

\bipC{52}{68}\bipC{92}{108}
\node at (80,0) {$\cdots$};
\node[lbl,below=0.6mm] at (60,-6) {$\widetilde K_{d,d}$};
\node[lbl,below=0.6mm] at (100,-6) {$\widetilde K_{d,d}$};
\draw[decorate,decoration={brace,amplitude=1.5mm}] (47.5,14) -- (112.5,14) node[midway,above=1.9mm,lbl] {$k$ almost-bicliques $\widetilde K_{d,d}$};

\draw[blob] (125,0) circle (3.8);
\node[lbl,below=0.6mm] at (125,-3.8) {$\widetilde K_{\{\delta\}^{1/\ell}d}$};
\end{scope}
\end{tikzpicture}

\caption{{\em Typical structure of $\mathbb{G}_n^d$ conditioned on the rare event ${\sf UT}_{C_\ell}(\delta)$ in the localized regime}: Here $\widetilde K_m \subseteq K_m$ denotes a graph with $e(K_m\setminus \widetilde K_m) = o(m^2)$, and  $\widetilde K_{m,m} \subseteq  K_{m,m}$ a bipartite graph with $e(K_{m,m}\setminus \widetilde K_{m,m})=o(m^2)$.
\newline
\texttt{Top panel ($\ell \in 2 \N+1$)}: $\delta d^\ell$ many excess copies of $C_\ell$ are carried by $\lfloor\delta\rfloor$ disjoint almost cliques of size $(1-o(1))d$ together with one of size $(1-o(1)) \{\delta\}^{1/\ell}d$. Rest of the graph contains about $d^\ell$ many copies of $C_\ell$, the typical number of copies of $C_\ell$ in $\mathbb{G}_n^d$. 
\newline
\texttt{Bottom panel ($\ell \in 2 \N$)}: The configuration, responsible for the excess copies, may instead consist of
$\lfloor\delta\rfloor-2k$ almost cliques $\widetilde K_d$ together with $k$ almost bicliques $\widetilde K_{d,d}$, for some $ 0 \leq k \leq \lfloor \delta\rfloor/2$, and as above an almost clique of size $(1-o(1)) \{\delta\}^{1/\ell}d$.}
\label{fig:localized}
\end{figure}

\dteal{Next we proceed to state the result on the conditional structure of $\Gd$ in the Poisson regime. It requires some more notation: a labeled copy $I$ of $H$ in $\Gd$ will be termed `edge non-disjoint' if there exists at least another labeled copy $I'$ of $H$, distinct from $I$, i.e.~$E(I)\neq E(I')$, such that $E(I) \cap E(I') \neq \emptyset$. We set $N_{\tu{non-disj}}(H, \Gd)$ to be the number of such edge non-disjoint copies $H$ in $\Gd$. Write $N_{\tu{disj}}(H,\Gd):= N(H, \Gd) - N_{\tu{non-disj}}(H, \Gd)$ to denote the number of edge disjoint labeled copies of $H$ in $\Gd$. 

                
\begin{theorem}[Conditional structure in the Poisson regime]
\label{thm:conditional-poisson}
Consider the same set up as in Theorem \ref{thm:poisson}. 
There exists some constant $c_{H}>0$ depending only on $H$ such that for any $\chi>0$ sufficiently small and all large $n$, 
\begin{equation*}
    \P\left(N_{\tu{disj}}(H, \Gd)\ \geq \ (1+\delta-\chi)n^{v_{H}}p^{e_{H}} \mid \UTH\right)\geq 1-\exp(-c_{H} (\chi n^{v_{H}}p^{e_{H}})^{1/\alpha_{H}^{*}}\log n).
\end{equation*}
\end{theorem}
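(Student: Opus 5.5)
By Theorem \ref{thm:poisson}, $\P(\UTH) = \exp(-(1+o(1)) I(\delta)\, n^{v_H}p^{e_H}/\tu{Aut}(H))$. So it suffices to prove the unconditional bound
\[
\P\big(\UTH \cap \{N_{\tu{non-disj}}(H,\Gd) \geq \chi n^{v_H}p^{e_H}\}\big) \leq \exp\big(-2c_H(\chi n^{v_H}p^{e_H})^{1/\alpha^*_H}\log n\big).
\]
Indeed, in the Poisson regime $n^{v_H}p^{e_H} \ll (\log n)^{\alpha^*_H/(\alpha^*_H-1)}$, which is equivalent to $n^{v_H}p^{e_H} \ll (n^{v_H}p^{e_H})^{1/\alpha^*_H}\log n$. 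Hence for fixed $\chi$ the right-hand side above is $o(\P(\UTH))$ by a huge margin, and dividing by $\P(\UTH)$ gives the claim.

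\textbf{Reduction to a clustered subgraph.} Write $\mu:=n^{v_H}p^{e_H}$ and suppose $N_{\tu{non-disj}}(H,\Gd)\geq \chi\mu$. Consider the graph $F$ given by the union of all edge non-disjoint copies of $H$. Group these copies into connected clusters under the relation of sharing an edge. Each cluster containing at least two distinct copies is a graph $J=I\cup I'\cup\cdots$ that is a union of copies of $H$ and strictly contains a copy of $H$. Strict balancedness then gives $e_J/v_J > e_H/v_H$, and hence the expected number of copies of a fixed such $J$ is $n^{v_J}p^{e_J} \ll \mu$. This is the standard first-moment input.

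The large deviation needed is that the total number of copies of $H$ lying inside such clusters is at least $\chi\mu$. I would follow the Erd\H{o}s--R\'enyi Poisson-regime argument (as in \cite{HMS, BK25}), which splits into two cases.

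\textbf{Case 1: many bounded-size clusters.} If there are many clusters of bounded size, each uses boundedly many edges and carries $O(1)$ copies, so we need $\Omega(\chi\mu)$ vertex-disjoint clusters. A union bound over choices of these clusters gives probability at most roughly $(C n^{v_J}p^{e_J}/(\chi\mu))^{\Omega(\chi\mu)}$. Since $n^{v_J}p^{e_J}\leq \mu\, n^{-\varepsilon}$ for some $\varepsilon=\varepsilon(H)>0$ (by strict balancedness and $\mu\ll \mathrm{polylog}\, n$), this bound is $\exp(-c\chi\mu\log n)$. That is at least as strong as the target, because $\mu \geq \mu^{1/\alpha^*_H}$.

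\textbf{Case 2: a large cluster.} Otherwise some cluster, or a union of clusters, spans a subgraph $G'$ with $e(G')=m$ edges that contains at least $\chi\mu$ copies of $H$. The generalized Alon-type bound for labeled copies gives $N(H,G')\leq C m^{\alpha^*_H}$, or more precisely a bound in terms of the fractional independence number. So $G'$ must have $m\gtrsim (\chi\mu)^{1/\alpha^*_H}$ edges. The probability that $\Gd$ contains a fixed such structure on $v(G')$ vertices is at most $p^{m}(1+o(1))^m$. Counting structures, this yields at most $\exp(-c m\log n)$, using $v(G')\le$ (const)$\cdot m$ and $n^{v}p^{m}\le n^{-cm}$ for subgraphs far denser than the threshold. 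This is the $\exp(-c(\chi\mu)^{1/\alpha^*_H}\log n)$ bound. To cover the whole range, I would take $G'$ to be a minimal edge set spanning $\chi\mu$ copies and decompose it into a core plus pendant pieces, as in \cite{HMS}.

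\textbf{Main obstacle.} The key point specific to $\Gd$ is replacing edge independence: I need $\P(E\subseteq \Gd)\leq (p(1+o(1)))^{|E|}$ for edge sets with $|E|=o(nd)$. This is exactly the almost-independence result Theorem \ref{thm:almost independence} (cf.~\eqref{eq:almost independence lemma-old}), applicable here since all relevant $m\lesssim \mathrm{polylog}(n)\ll nd$.

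The delicate step is Case 2 in the intermediate range, where clusters are moderately large but numerous. There I would combine the two cases by summing the edge counts across clusters. The point is that $\sum_J e(J)\cdot \log n$ always dominates $(\chi\mu)^{1/\alpha^*_H}\log n$, by concavity of $x\mapsto x^{1/\alpha^*_H}$. The constant $c_H$ comes out uniform, provided $\chi$ is small enough that the $(1+o(1))$ losses and the rate-function prefactor are absorbed.
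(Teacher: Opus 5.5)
Your reduction to an unconditional bound (via $\mu\ll\mu^{1/\alpha^*_H}\log n$) is correct, and so is the overall architecture: clusters of edge-overlapping copies, near independence, $N(H,G')\le(2m)^{\alpha^*_H}$ forcing $m\gtrsim(\chi\mu)^{1/\alpha^*_H}$, and concavity to merge clusters. All of this matches the paper. However, the two steps that actually carry the proof are asserted rather than proved.

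\textbf{The uniform density gap.} Knowing $e_J/v_J>e_H/v_H$ is not enough. Write $n^{k}p^{m}=\mu^{k/v_H}p^{m-\lambda k}$ with $\lambda=e_H/v_H$. A union bound over clusters of unbounded size closes only if $m-\lambda k\ge c_H m$ uniformly. A merely bounded excess leaves a factor $\mu^{k/v_H}$ that nothing kills. This uniform gap is the paper's inequality \eqref{eq: SB inequality}, $m-e_H\ge\lambda r_H(k-v_H)$, proved by induction. Every cluster of size $s\ge2$ contains a copy whose removal leaves a cluster of size $s-1$. The overlap of that copy with the rest is a proper subgraph $J$ satisfying $e_H-e_J\ge\lambda r_H(v_H-v_J)$. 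This yields $n^kp^m\le n^{-(m-e_H)\phi_H}(\log n)^{O(m)}$, which is exactly your unproved claim that $G'$ is ``far denser than the threshold''.

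\textbf{The truncation.} Near independence and the count $n^{v}\binom{v^2}{m}$ both require a witness with polylogarithmically many edges. The density gain additionally requires every copy in the witness to overlap another copy in the witness. An inclusion-minimal subgraph carrying $\chi\mu$ copies need not have this property. Suppose $\Gd$ has $\beta=\lceil\chi\mu/\tu{Aut}(H)\rceil$ edge-disjoint pairs of overlapping copies. Then one copy from each pair can be such a witness, and the first moment of $\beta$ edge-disjoint copies is about $(e/\chi)^{\beta}$, which is not small.

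The paper instead proves the containment \eqref{eq:claim-conditional} by a greedy construction. It grows connected sub-collections, using that adjoining a copy which overlaps a current member preserves being a cluster. It caps each sub-collection at size $\beta$ and stops once the total lies in $[\beta,2\beta]$. A minimum-size witness could also be made to work, via an exchange argument showing that at most one of its copies is isolated. That argument is missing, and ``core plus pendant pieces as in \cite{HMS}'' does not supply it.

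With these two ingredients in place, your union bound over graphs $G'$ (rather than over collections of copies) does close. It is in fact slightly leaner than the paper's route through Lemma \ref{lem:cluster expectation}. It avoids the entropy factor $\binom{(2m)^{\alpha^*_H}}{s}$ and the $\varkappa(\ell-1)$ bookkeeping, and it makes your Case~1/Case~2 split unnecessary. One small correction: the clusters are edge-disjoint, not vertex-disjoint, but vertex overlaps only decrease $v$ and so only help.
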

}

%


\newcommand{\gframe}{%
\draw[gray!55,line width=0.4pt,rounded corners=2mm] (-9,-23) rectangle (137,20);
\node[font=\footnotesize,gray!60,anchor=west] at (-7.8,17.4) {$\mathbb{G}_n^d$};}

\newcommand{\background}{%
\begin{scope}[gray!45,line width=0.25pt]
\foreach \p/\q in {-4/13.0, 4/17.9, 13/12.2, 19/15.1, 27/17.6, 33/11.9, 42/16.8, 49/12.4, 55/17.2, 63/13.9, 72/17.6, 79/12.1, 87/16.5, 93/11.7, 101/17.8, 108/13.4, 116/16.9, 124/12.3, 129/16.6, 6/-20.6, 15/-19.2, 25/-21.1, 35/-19.5, 44/-20.9, 58/-19.4, 66/-21.3, 76/-19.6, 90/-20.9, 100/-19.4, 108/-21.2, 121/-19.8}
  \fill (\p,\q) circle (0.55);
\foreach \a/\b/\c/\d in {-4/13.0/4/17.9, 13/12.2/19/15.1, 19/15.1/27/17.6,
                         33/11.9/42/16.8, 49/12.4/55/17.2, 63/13.9/72/17.6,
                         72/17.6/79/12.1, 87/16.5/93/11.7, 101/17.8/108/13.4,
                         108/13.4/116/16.9, 116/16.9/124/12.3,
                         6/-20.6/15/-19.2, 25/-21.1/35/-19.5, 35/-19.5/44/-20.9,
                         58/-19.4/66/-21.3, 76/-19.6/90/-20.9, 100/-19.4/108/-21.2}
  \draw (\a,\b)--(\c,\d);
\end{scope}}

\newcommand{\hx}[5]{%
\begin{scope}[shift={(#1,#2)},rotate=#3]
\foreach \a in {0,60,...,300} \coordinate (q\a) at (\a:#5);
\draw[line width=0.55pt,fill=#4] (q0)--(q60)--(q120)--(q180)--(q240)--(q300)--cycle;
\foreach \a in {0,60,...,300} \fill (q\a) circle ({#5*0.155});
\end{scope}}
\newcommand{\hxo}[3]{\hx{#1}{#2}{#3}{none}{4.5}}       
\newcommand{\hxf}[3]{\hx{#1}{#2}{#3}{black!14}{4.5}}   

\begin{figure}[htbp]
\centering
\begin{tikzpicture}[x=1mm,y=1mm,lbl/.style={font=\small}]

\gframe
\background

\hxo{4}{5}{0}     \hxo{46}{7}{-12}  \hxo{84}{7}{15}   \hxo{124}{6}{-8} \hxo{10}{-7}{18}  \hxo{22}{-13}{-10}                  \hxo{80}{-5}{-18} \hxo{90}{-13}{8}  \hxo{100}{-6}{-5} \hxo{112}{-12}{14}\hxo{124}{-6}{22}

\hxo{60}{6}{0} \hxo{69}{6}{0}   \draw[line width=0.4pt] (64.5,6) circle (1.7);
\hxo{62}{-13}{0} \hxo{71}{-13}{0}   \draw[line width=0.4pt] (66.5,-13) circle (1.7);

\hxf{24}{6}{30}\hxf{31.8}{6}{30}   \draw[line width=1pt] (27.9,3.75) -- (27.9,8.25);

\hxf{100}{6}{30}\hxf{107.8}{6}{30}   \draw[line width=1pt] (103.9,3.75) -- (103.9,8.25);

\hxf{36}{-8}{30}\hxf{43.8}{-8}{30}\hxf{51.6}{-8}{30}   \draw[line width=1pt] (39.9,-10.25) -- (39.9,-5.75);
\draw[line width=1pt] (47.7,-10.25) -- (47.7,-5.75);

\draw[decorate,decoration={brace,amplitude=1.6mm}] (-3,21.5) -- (131,21.5)
 node[midway,above=2mm,lbl] {$(1+\delta)(1-o(1)) d^{\ell}$ copies of $C_\ell$};

\end{tikzpicture}

\caption{{\em Typical structure of $\mathbb{G}_n^d$ conditioned on the rare event $\UTH$ in the Poisson regime}: Almost all copies of $H=C_\ell$ (with $\ell=6$) are edge disjoint (not necessarily vertex disjoint), unshaded in the figure. Note the stark contrast with Figure \ref{fig:localized}.}
\label{fig:poisson}
\end{figure}

\begin{remark}[Conditional structure in the Erd\H{o}s-R\'enyi setting]
Some of the arguments used in the proofs of Theorems \ref{thm:cond-loc} and \ref{thm:conditional-poisson} can be used to identify the typical structure of the Erd\H{o}s-R\'enyi graph conditioned on the upper tail of the subgraph count. Indeed, upon appropriately adapting the arguments of Theorem \ref{thm:cond-loc}(i)  and Theorem \ref{thm:conditional-poisson} one can check that those extend to the Erd\H{o}s-R\'enyi setting mutatis mutandis. 
\end{remark}

\subsection{Background and related results}\label{sec:background}
The upper tail problem for subgraph counts in an Erd\H{o}s-R\'enyi graph $\G(n,p)$ (we use $p$ also to denote the edge connectivity probability in an Erd\H{o}s-R\'enyi graph), termed as the \textit{infamous upper-tail} problem by Janson and Ruci\'{n}ski \cite{JR2002}, has a very long and rich history. 
Early bounds on the probability of the upper tail event 
were obtained by Kim and Vu \cite{kimandvu}, and Janson, Oleszkiewicz, and Ruci\'{n}ski \cite{JOR}, using polynomial concentration inequalities and moment methods. However, these were not sharp. Chatterjee \cite{chatterjee2012} and DeMarco and Kahn \cite{demarcokahn1} independently established the correct large deviations speed for triangles up to the Poisson regime boundary and later it was extended for cliques in \cite{demarcokahn2}. 


In the dense regime, i.e.~$p\asymp 1$, Chatterjee and Varadhan \cite{varadhan2011} established a large deviation principle on the space of all graphons with respect to the topology induced by the cut metric and therefore by the contraction principle derived large deviations for the number of homomorphisms of $H$ in $\G(n,p)$ as well. 
This further expresses the relevant rate function as the solution of a certain mean-field variational problem on the space of all graphons. There has been a few work identifying the solution of this variational problem. For example, in the context  of upper tail Lubetzky and Zhao \cite{Lubetzky_Zhao_replica_symmetry_2015} determined the exact \textit{replica symmetry} phase boundary for regular $H$, characterizing when the constant graphon is the unique optimizer, while similar results were obtained by Zhao \cite{Zhao17} for the lower tail. 



It is well known in the community that the approach of \cite{varadhan2011} does not quite extend well to the sparse setting, e.g.~for $p \asymp n^{-a}$ for some $a >0$. For the sparse setting, in a seminal work Chatterjee and Dembo \cite{chatterjeedembo}  developed a general large deviation principle for nonlinear functions of independent random variables whose gradients are of low complexity, reducing the large deviation problem at hand to a mean-field variational problem. This, in particular, allowed them to derive the large deviations of subgraph counts for certain ranges of sparsity. The range of sparsity was successively improved in \cite{eldan, cookdembo, Augeri}. In identifying the rate function, similar to the dense case, one needs to determine the solution to the mean-field variational problem at hand. These were obtained in \cite{lubetzkyzhao, BGLZ}.

The recent most breakthrough was made in \cite{HMS} where results analogous to Theorem \ref{thm:poisson} and  Theorem \ref{thm:loc-regime} for $H$ a clique (for other regular graphs the results of \cite{HMS} were not the optimal). Building on the general strategy of \cite{HMS} the work \cite{ABRB} proved Theorem \ref{thm:loc-regime} for all regular connected graph $H$, thereby settling the upper tail large deviations problem for regular subgraph counts in sparse Erd\H{o}s-R\'enyi graphs except for a couple of boundary cases (in terms of sparsity).  Very recently this program was extended in \cite{BK25, antonir2026} for the upper tail of irregular subgraph counts, revealing additional mechanisms and, in some regimes, a possible failure of the so called na\"{i}ve mean-field approximation. 
Let us mention in passing that there have been a few works in studying large deviations in Erd\H{o}s-R\'enyi hypergraphs (cf.~\cite{CDP, CN25}). 

In the context of random regular graphs 
the classical work of Wormald \cite{WORMALD1981} and McKay, Wormald, and Wysocka \cite{McKay_Wormald_Wysocka_2004}  established Poisson limits for short cycles, while subsequent works of Kim, Sudakov and Vu \cite{KSV}, Gao and Wormald \cite{Gao_Wormald_2008},  Gao \cite{PuGao}, and Sah and Sawhney \cite{Sah_Sawhney_2023} studied subgraph thresholds, probabilities and asymptotic distributions, without addressing sharp upper tail large deviations. 

Focusing on the large deviations, as already alluded to, in the dense regime Dhara and Sen \cite{Dhara_Sen_2022} established a large deviation principle in the space of all graphons for the sequence of probability measures induced by the uniform random graph with prescribed degree sequences. This, in particular, yields large deviations of the number of homomorphisms of any subgraph in $\Gd$, for $d$ such that $\lim_{n \to \infty} d/n \in (0,1)$.
%
%
%
%
By extending the framework of \cite{cookdembo} Bhattacharya and Dembo \cite{BD} obtained the first general sharp upper tail results for sparse random regular graphs. They reduced the upper tail problem to a constrained variational problem and solved it for graphs $H$ with regular $2$-cores. Gunby \cite{Gunby2020UpperTO} similarly extended \cite{cookdembo} to general subgraph homomorphism counts. Using fractional vertex covers, generalized H\"{o}lder inequalities, and together with the lower bounds for subgraph probabilities of prescribed subgraphs in $\Gd$, he established sharp asymptotics for a broad class of graphs, while retaining a logarithmic gap in the exponent for general $H$. However, both approaches inherit sparsity restrictions from the quantitative approximation methods of Cook and Dembo \cite{cookdembo}, leaving a substantial part of the localized regime $np^{\Delta/2}\gg(\log n)^{1/(v_H-2)}$ unresolved for connected $\Delta$-regular graphs. 

The best known structural results for triangle rich random regular graphs are \cite{collet_eckmann2002} and  \cite{Hoorn_Lippner_Mossel_2022}. The latter showed that conditioning on at least $cT_{\max}$ triangles, where $T_{\max}=\frac{n}{3} {d\choose 2}$, typically produces disjoint almost-cliques of size approximately $d$. This conditioning is much stronger than a fixed multiplicative deviation above the mean. In contrast, conditional structure results for sparse $\G(n,p)$ were established by Cook and Dembo \cite{CD24}, identifying clique and hub configurations for $p\gg n^{-1/(\Delta+1)}$. In our earlier work \cite{BK25}, we extended these to connected non bipartite regular graphs throughout the full localized regime $np^{\Delta/2}\gg(\log n)^{1/(v_H-2)}$, and obtained analogous results for irregular graphs in the regime $n^{-1/\Delta}\ll p\ll1$.

\begin{remark}[Na\"ive mean-field approximation]
As already noted above, a recurring feature in the large deviations of homomorphism (or subgraph) counts in an Erd\H{o}s-R\'enyi graph is that the large deviation probability (in the localized regime) can be expressed as the solution of the relevant mean-field variational problem. We refer the reader to \cite[Section 1.4]{BK25} for a description of and a more detailed discussion on  this variational problem. 

It is natural to seek whether such a phenomenon continues to hold for $\Gd$ as well. 
Bhattacharya and Dembo \cite{BD} showed that it indeed holds for the upper tail of homomorphism counts in the the sparsity regime they considered. As we will explain below, the same phenomenon continues to hold for the upper tail of subgraph counts through the entire localized regime. 

To this end, let us formulate the relevant variational problem. Let $\xi\coloneqq \left(\xi_{i,j}\right)_{1\leq i<j\leq n}\in [0,1]^{n\choose 2}$, and define the product measure $\mu_{\xi} \coloneqq \bigotimes_{1\leq i<j\leq n} \tu{Ber}(\xi_{i,j})$. We write $\E_{\mu_{\xi}}[N(H, \G_n)]$ for the expected number of labeled copies of $H$ in the random graph $\G_n$ with edge distribution $\mu_{\xi}$. By an abuse of terminology, we call $\xi$ to be $d$-regular if \[\sum_{j=1}^{n}\xi_{i,j} =d, \quad \tu{for every }i\in \set{n},\]
with the convention $\xi_{i,j}=\xi_{j,i}$ and $\xi_{i,i} = 0$. Thus, under $\mu_{\xi}$, every vertex has expected degree $d$.

Define
\[\Phi^{\tu{inj}}_{d,H}(\delta)\coloneqq \inf\{I_{p}(\xi):\xi\in [0,1]^{n\choose 2}\tu{ is }d\tu{-regular},\  \E_{\mu_{\xi}}[N(H,\G_n)]\geq (1+\delta)\E[N(H, \Gd)]\},\]
\[I_{p}(\xi)\coloneqq \sum_{1\leq i<j\leq n}I_{p}(\xi_{i,j}), \quad \tu{and}\quad I_{p}(x)\coloneqq x\log\frac{x}{p}+(1-x)\log\frac{1-x}{1-p}\quad \tu{for }x\in [0,1], \]
with the convention $0\log 0 =0$. We will argue that 
\begin{equation}\label{eq:mean-field}
\lim_{n\rightarrow \infty} \frac{\Phi_{d,H}^{\tu{inj}}(\delta)}{n^{2}p^{\Delta}\log(1/p)} =\Psi_{H}(\delta).
\end{equation}
Combining this 
with Theorem \ref{thm:loc-regime}, we indeed obtain that the na\"{i}ve mean-field approximation holds in the entire localized regime $np^{\Delta/2}\gg (\log n)^{1/(v_H-2)}$ and $p\ll1$.

Turning to derive \eqref{eq:mean-field} we further introduce some more notation. 
Let $\G_{n,\xi}$ to be the weighted graph on vertex set $\set{n}$ with edge weights $(\xi_{i,j})_{1\leq i<j\leq n}=\xi$. Set 
\[
N(H, \G_{n,\xi}) \coloneqq \sum_{\substack{\phi:V(H)\rightarrow \set{n}\\ \tu{injective}}}\sum_{(u,v)\in E(H)}\xi_{\phi(u), \phi(v)} \quad \text{ and } \quad {\rm Hom}(H, \G_{n,\xi}) \coloneqq \sum_{\phi:V(H)\rightarrow \set{n}}\sum_{(u,v)\in E(H)}\xi_{\phi(u), \phi(v)}.
\]
and 
\[\Phi_{d,H}(\delta)\coloneqq \inf\{I_{p}(\xi):\xi\in [0,1]^{n\choose 2}\tu{ is }d\tu{-regular},\  {\rm Hom}(H, \G_{n,\xi})\geq (1+\delta)\E[N(H, \Gd)]\}.\]
As 
$\E_{\mu_{\xi}}[N(H, \G_n)] = N(H, \G_{n,\xi})\leq \tu{Hom}(H, G_{n,\xi})$ it is immediate that $\Phi_{d,H}(\delta) \leq \Phi_{d,H}^{\tu{inj}}(\delta)$. Further, by \cite{KSV}, $\E[N(H,\Gd)]=(1+o(1))n^{v_H}p^{e_H}$. Therefore, applying \cite[Theorem 1.1]{BD}, for a $\Delta$-regular graph $H$ we get
\begin{equation}\label{eq:mean-field-1}
\liminf_{n\rightarrow \infty} \frac{\Phi_{d,H}^{\tu{inj}}(\delta)}{n^{2}p^{\Delta}\log(1/p)} \geq \lim_{n\rightarrow \infty} \frac{\Phi_{d,H}(\delta)}{n^{2}p^{\Delta}\log(1/p)} =\Psi_H(\delta).
\end{equation}
Although \cite[Theorem~1.1]{BD} is stated for the regime $ n^{-1/\Delta}\ll p \ll 1$ the reader may inspect that same proof continues to hold for the entire sparsity regime $n^{-2/\Delta} \ll p \ll 1$. Finally it remains to note that for the candidate optimizer $\widehat \xi$ used in the proof of \cite[Theorem 1.1]{BD} one also has that $N(H, \G_{n,\widehat \xi})\geq (1+\delta-o(1))n^{v_H}p^{e_H}$ and $I_p(\widehat \xi) = (1-o(1)) \Psi_H(\delta) n^2p^\Delta \log(1/p)$. Therefore, one deduces \eqref{eq:mean-field} from \eqref{eq:mean-field-1}.  




\end{remark}

\subsection{Notational conventions}\label{sec:notation} 
\dteal{For any two sequences of positive real numbers $\{a_{n}\}$ and $\{b_{n}\}$, we write $b_{n}=o(a_{n})$, $a_n\ll b_{n}$, and $b_{n}\gg a_{n}$ to denote $\lim_{n\rightarrow\infty } a_{n}/b_{n} = 0 $. We use $a_{n} \lesssim b_{n}$ and $b_n \gtrsim a_n$ to denote $\limsup_{n\rightarrow \infty} a_{n}/b_{n}<\infty$.  
We write $a_{n}\asymp b_{n}$ or $a_{n}=\Theta(b_{n})$ if $a_{n}\lesssim b_{n}$ and $a_n \gtrsim b_{n}$. For $x \in \R$ we use the standard notation $\lfloor x \rfloor$, $\lceil x \rceil$, and $\{x\}$ to denote the largest integer smaller than or equal to $x$, the smallest integer greater than or equal to $x$, and the fractional part of $x$, respectively.

For a graph ${G}$ we write $V({G})$ and $E({G})$ to denote its vertex set and edge set respectively. We will use $v({G}) = |V({G})|$ and $e({G}) = |E({G})|$, where $|\cdot|$ is the cardinality of a set. For typographic reasons we also use $v_{{G}}$ and $e_{{G}}$ instead of $v(\tu{G})$ and $e({G})$. We also write $K_n$ to denote the complete graph on $\set{n}:=\{1,2,\ldots,n\}$. The notation $K_{n,n}$ is used to denote the complete bipartite graph on $\set{2n}$ with all its edges from $\set{n}$ to $\set{2n}\setminus \set{n}$. We write $C_\ell$ to denote the cycle graph of length $\ell$. 
For any finite set $S \subseteq V$ we write ${\bf 1}_S$ to be vector that is one on $S$ and zero on $V\setminus S$. 
For $S, T \subseteq V(G)$ disjoint, we write $G[S]$ to denote the subgraph of $G$ spanned by the edges with both ends in $S$, and $G[S,T]$ to denote the bipartite subgraph spanned by the edges with one end in $S$ and the other end in $T$.} 

\subsection{Outlines of the proofs}\label{sec:proof-outline} \dteal{In this short section we highlight the key ideas behind the proofs of the main results.}

\dteal{A key difference between an Erd\H{o}s-R\'enyi graph and a random regular graph is that the edges in the latter random graph are not independent. However, a moment's thought reveals that any pair of edges in $\Gd$ are weakly dependent. Using a forward and backward switching and a double counting argument, as employed in \cite{KSV}, one can quantify this observation to obtain a bound of the following form:  
\begin{equation} \label{eq:almost independence lemma-old}
         \widetilde{p}\ \cdot  \left(1-O\left(p+\frac{e(G)}{nd}\right)\right) \cdot(1-C_{uv,G}) \leq \P\left(\{(u,v)\}\subseteq \Gd \big| G \subseteq \Gd\right)\ \leq \widetilde{p}\ \cdot  \left(1+ O\left(p+\frac{e(G)}{nd}\right)\right),
            \end{equation} 			
for any $G\subseteq K_n$ and $(u,v) \notin E(G)$ so that $e(G)=o(nd)$ and the maximum degree of the subgraph spanned by the edge disjoint union of $G$ and the edge $\{(u,v)\}$ is at most $d$, where  
\begin{equation}\label{eq:p_tilde}
            {\widetilde{p} = \widetilde{p}_{uv,G_\star}}\coloneqq p\cdot \left(1-\frac{\tu{deg}_{G_\star}(u)}{d}\right)\cdot\left(1-\frac{\tu{deg}_{G_\star}(u)}{d}\right) \quad \text{ and } \quad C_{uv,G} \coloneqq \left(d-\min\{\tu{deg}_{G}(u),\tu{deg}_{G}(v)\}\right)^{-1}.
        \end{equation}
%
Iterating \eqref{eq:almost independence lemma-old} (we actually use a variant of this bound; see Section \ref{sec:almost independence}) one can indeed show that a collection of `small' number of edges in $\Gd$ are nearly independent. This observation is used repeatedly in the proofs of all four theorems mentioned in the introduction section in various capacities.}

\dteal{
\subsubsection{Poisson regime} The proof relies on the following simple observation: Let $X$ be a nonnegative integer random variable with mean $\mu \gg 1$ such that 
\begin{equation}\label{eq:poisson-high-mom}
\left|\E(X^{\underline{t}}) - \mu^{\underline{t}}\right| =o(\mu^t), \qquad \text{ for all } t \in \set{\delta(1-o(1))\mu, \delta (1+o(1))\mu},
\end{equation}
where $x^{\underline{t}}:= x(x-1) \cdots (x-t+1)$ and $\delta >0$ is fixed but arbitrary. Then, it is not difficult to deduce that
\[
- \mu^{-1} \cdot \log \P(X \geq (1+\delta) \mu) = I(\delta)(1+o(1))
\]
(recall the definition of $I(\cdot)$ from \eqref{eq:poisson-rate-function}). The proof is not difficult: for one direction one can use Markov's inequality, while the other direction follows from a tilting argument. For further details one may look at the proof of \cite[Proposition 8.3]{HMS}. 

In the context of upper tail large deviations of subgraph counts in random graphs, this idea was implemented in \cite{HMS} to derive the large deviations for regular graphs and later in \cite{BK25} to obtain the same for strictly balanced graphs in Erd\H{o}s-R\'enyi random graphs in the sparsity regimes, where a Poisson behavior was expected. This required bounding expectations of `cluster's of given sizes.  

At a high level, in this current work we also use the same general idea to prove Theorem \ref{thm:poisson}. However, here as we work with random regular graphs, the independence among the edges that were extensively used in \cite{BK25, HMS}, is now lost. In partciular, neither the edges in a given cluster nor the edges across disjoint clusters are indpendent. To navigate this challenge we  repeatedly need to use the near independence of a small number of edges (in this case for $\Theta(\mu)$ many edges, with $\mu= (1+o(1)) n^{v_H}p^{e_H}$, as needed in \eqref{eq:poisson-high-mom})
to obtain (upper and lower) bounds on the conditional expectation of  clusters of varying sizes to 
leading to the desired upper and lower bounds as required by \eqref{eq:poisson-high-mom}. The proof of Theorem \ref{thm:conditional-poisson} also follows from this cluster analysis argument as well.} 

\subsubsection{Localized regime:~large deviations speed and rate function}\dteal{The proosf of the lower and upper bounds follow two different routes. The proof of the lower bound is much simpler. Here, as in the Erd\H{o}s-R\'enyi setting, the strategy is to plant a subgraph $G \subseteq \Gd$ with a small number of edges, in this case $\Theta(n^2p^\Delta)$ many edges, show that the number of labelled copies of $H$ in the rest of $\Gd$ is $(1-o(1))n^{v_H}p^{e_H}$ with probability $1-o(1)$, $N(H,G) \geq \delta(1-o(1))n^{v_H}p^{e_H}$, and the negative $\log$-probability of the event $\{G \subseteq \Gd\}$ is upper bounded by $(1+o(1))n^2p^\Delta \log(1/p) \Psi_H(\delta)$. 
To implement these steps, the near independence of a small number of edges is needed to be used again. 
}

\dteal{The proof of the large deviation upper bound proceeds via a different route. Our approach uses 
the general strategy that was introduced in \cite{HMS} and later extended in \cite{ABRB} to tackle the sparser regime and arbitrary regular graphs. To explain the idea let us first describe some of their key steps.}  

\dteal{At a high level, \cite{ABRB, HMS} aim to find a collection of subgraphs $\mathcal{G}:= \mathcal{G}_\varepsilon$ such that
\begin{equation}\label{eq:idea-core}
\P(\nUTH) \leq (1+\varepsilon) \P(\exists G \in \mathcal{G} \text{ such that } G \subseteq \G),
\end{equation}
for any $\varepsilon >0$, where $N(H,G) \geq \delta (1-o(1)) \E_\G[N(H, \cdot)]$ for all $G \in \mathcal G$,  $\E_\G[N(H, \cdot)]$ denotes the expected number of copies of $H$ in $\G$,
\[
\nUTH:= \{N(H,\G) \geq (1+\delta) \E_{\G}[N(H,\cdot)]\},
\]
and $\G$ is the random graph under consideration. Applying \eqref{eq:idea-core} one then uses a union bound to obtain
\begin{equation}\label{eq:idea-core-1}
\P(\nUTH) \leq (1+\varepsilon)  \sum_{e_0} \mathcal{N}_{e_0} \mathfrak{p}_{e_0},
\end{equation}
where the sum is over all allowable choices of the number of edges $e_0$ for subgraphs belonging to $\mathcal{G}$, 
\[
\mathcal{N}_{e_0}:= |\mathcal{G}_{e_0}|, \quad \mathcal{G}_{e_0}:= \{G \in \mathcal{G}: e(G)=e_0\}, \quad \text{ and } \quad \mathfrak{p}_{e_0}:= \P(G \subseteq \G) \text{ for } G \in \mathcal{G}_{e_0}.
\]
In \cite{HMS} it was then shown that, when $\G$ is the Erd\H{o}s-R\'enyi graph, for certain classes of graphs and certain sparsity regimes, one has 
\begin{equation}\label{eq: log-mathcal-N}
\log \mathcal{N}_{e_0} =o(\log(1/\mathfrak{p}_{e_0})), \quad \text{ for all allowable choices of } e_0. 
\end{equation}
The proof completes by observing that $(1+o(1))\log (1/\mathfrak{p}_{e^\star_0})$ equals the desired large deviation speed times the rate function, where $e_0^\star$ is the minimum number of edges a subgraph $G \in \mathcal{G}$ must have.} 

\dteal{However, \eqref{eq: log-mathcal-N} no longer holds for sparser regime, for e.g.~when $H=C_4$ 
and $p \ll n^{-1/2-\xi}$, where $\xi>0$ some small positive constant.  To resolve this issue, \cite{ABRB} noted that \eqref{eq:idea-core} can be strengthened to obtain the following: 
%
\begin{equation}\label{eq:idea-strong-core}
(1+\varepsilon)^{-1}\P(\nUTH)  \leq \P(\exists G \in \widehat{\mathcal{G}}_C \text{ such that } G \subseteq \G) 
+ \P(\exists G \in \mathcal{G} \text{ with } e(G) \geq C e_0^\star \text{ such that } G \subseteq \G),
\end{equation}
where $\widehat{\mathcal G}_C \subseteq \mathcal{G}$ is some subclass of graphs and $C < \infty$ some large constant determined during the course of the proof. For $H$ non-bipartite they showed that an analog of \eqref{eq: log-mathcal-N} holds for the subclass $\widehat{\mathcal{G}}_C$. However, for $H$ bipartite it fails again. At a high level the obstacle stems from the existence some bipartite subgraph $\emptyset \neq G'\subseteq G \in \mathcal{G}$ containing a non negligible fraction of $N(H,G)$. They navigated this challenge by noting that for such scenarios one must have $e(G) \geq (1+c) e_0^\star$, for some $c >0$. Hence, it is not necessary to have an analog of \eqref{eq: log-mathcal-N} to carry out the union bound. Instead, they derived an appropriate bound on the number of these cases so that it perfectly balances the probability of the existence of such graphs yielding the required upper bound on the upper tail probability. Then, it remains to tackle the second term in the RHS of \eqref{eq:idea-strong-core} and show that it is negligible at the large deviations scale. For $n^2 p^\Delta \geq (\log n)^{O(1)}$ the argument to tackle that term turns out to be fairly simple, while in the other regime \cite{ABRB} used a rather involved chaining type argument. 



}

\dteal{To implement the above strategy in the context of a random regular graph, as the reader have alreday observed, 
the first obstacle 
is the dependence among its edges. To this end, upon investigating the proof of \eqref{eq:idea-core}, that build on the moment argument of Janson, Oleszkiewicz, and Ruci\'nski \cite{JOR}, one notes that it does {\em not} use any assumption on the joint law of the edges, and hence extends to the current setting. Further, one notes that the maximum number of edges a graph $G \in \mathcal{G}$ can have is $o(nd)$. Therefore, the near independence of the edges in $\Gd$ mentioned above, can be invoked, and thus the strategy explained above should work in random regular graph setting, modulo the changes due to the difference between the rate functions in the Erd\H{o}s-R\'enyi and the random regular graph settings. 
}


\dteal{ 
It should be mentioned that the arguments of \cite{ABRB} that obtains a bound on the first term in the RHS of \eqref{eq:idea-strong-core} is not sufficiently strong to yield a meningful bound when tasked with finding the typical structure of $\Gd$ conditioned on $\UTH$ (for e.g.~it will not yield \eqref{eq:cond-str-idea}, see Section \ref{subsubsec:loc-cond} below for its necessity). Hence, apprpriate refinements are necessary. Details are in Section \ref{sec:pf-lm-513}. 

It is also worth adding that while tackling the second term in the RHS of \eqref{eq:idea-strong-core} we find a new argument that byapsses the chaining argument of \cite{ABRB}, and is substantially simpler. The key observation is that one can decompose the vertices of any graph $G \in \mathcal G$ into high and low degree vertices (unlike \cite{ABRB, HMS} the degrees of these low degree vertices are bounded by some constant depending only on $H$) such that the number of edges incident to the set of high degree vertices is at least  $\bar c_H e(G)$, for some $\bar c_H>0$ depending only on $H$. This then yields the bound $\log {\mathcal N}_{e_0} \leq (1-\bar c_H/2) {\bm e} \log(1/p)$ (compare with \eqref{eq: log-mathcal-N}) near the Poisson regime boundary.  Therefore, by choosing $C$ large in \eqref{eq:idea-strong-core}, one can now indeed apply a union bound to control the second term in the RHS of \eqref{eq:idea-strong-core}.
}

\subsubsection{Localized regime:~conditional structure}\label{subsubsec:loc-cond}
\dteal{While proving the upper bound on the large deviation probability, with some additional effort, we also obtain that  
\begin{equation}\label{eq:cond-str-idea}
\P(\exists G \in \mathcal{G} \text{ with } e(G) \geq (1+\varepsilon ^{1/2}) e_0^\star \text{ such that } G \subseteq \Gd) \ll \P(\UTH),
\end{equation}
where we recall that $e_0^\star$ is the minimum number of edges a graph $G \in \mathcal{G}$ must possess. Therefore, to derive the condtional structure of $\Gd$ conditioned on the upper tail event one simply to needs to understand the structure of graphs $G \in \widetilde{\mathcal{G}}_\varepsilon:= \{G \in \mathcal{G}: e(G) \leq (1+\varepsilon ^{1/2}) e_0^\star\}$. For $\Delta \geq 3$ the required structural proerty follows from our previous work \cite{BK25}. 

The case $\Delta=2$, i.e.~when $H=C_\ell$ for some $\ell \geq 3$, requires a new argument. Recalling the definition of the rate function \eqref{eq:rate function defn} and the near independence among the edges in $\Gd$, one notes that planting any graph that is a disjoint union of $\lfloor \delta \rfloor$ copies of $K_d$ 
(or $\lfloor \delta \rfloor -2k$ many copies of $K_d$ and $k$ many copies of $K_{d,d}$ 
for some integer $k$ such that $0 \leq k \leq \lfloor \delta\rfloor/2$) and a copy of 
$K_{\{\delta\}^{1/\ell}d}$, provides the correct lower bound on the large deviation probability. 
Hence, in light of \eqref{eq:cond-str-idea}, Theorem \ref{thm:cond-loc} demands that any graph $G \in \widetilde{\mathcal{G}}_\varepsilon$ must consist of loosely connected copies of some approximate versions of these cliques and blicliques. 

Since, ${\rm Hom}(C_\ell, G)$ can be expressed as the trace of powers of the adjacency matrix, a natural direction is to employ some spectral argument. Indeed, as a first step, relying on the strict concavity of the map $(x_1,x_2,\ldots, x_s) \mapsto \sum_{i \in \set{s}} x_i^{2/\ell}$ for any $s \in \N$ and $\ell \geq 3$ and some assoaicted functions, and the degree upper bound, we show that $A(G)$, the adjacency matrix of a graph $G$, for any graph $G \in \widetilde{\mathcal{G}}_\varepsilon$, must have $\lfloor \delta \rfloor$ many eignevalues close to $d$ and an eigenvalue close to $\{\delta\}^{1/\ell} d$ in abolsute values (if $\ell \in 2 \N+1$ then the absolute values can be further removed), and the rest of the eigenvalues have a negligible $\ell^2$-mass.  The task then reduces to establishing that such a graph must be of the form mentioned in Therem \ref{thm:cond-loc}. 
}

\dteal{For ease in explaining the core idea, let us consider the case $\delta=2$. In this case, the step above yields that for $\ell \in 2\N+1$ there are two eigenvalues of $A(G)$ that are close to $d$, and for $\ell \in 2 \N$ one eigenvalue is close to $d$ and the other is close to $\pm d$. 
These together with the fact that the maximum degree is bounded by $d$ yields that for $\ell$ odd, the second eigenvalue of the normalized laplacian $\mathcal{L}(G)$ is close to zero, and therefore $G$ must possess two loosely connected components in it. This is the standard Cheeger inequality (cf.~\cite[Chapter 2]{Chung97}). On the other hand, if $\ell$ is even and has an eigenvalue close to $-d$, one finds that the largest eigenvalue of $\mathcal{L}(G)$ 
is close to two and therefore it must contain a bipartite compoenent that is loosely connected to the rest of the graph (cf.~\cite{Trev12}). However, neither of these assert any bounds on the sizes of  these components. In fact, unless additional assumptions are imposed, such as regularity, any such bound on the sizes of the relevant components is simply false.} 

\dteal{Nevertheless, in our setting we make the following simple yet a key observation: if we augment the graph $G$ by adding a ghost vertex $v_\star$ and edges between this ghost vertex and any vertex $v \in V(G)$ with edge weight $(d-\deg_G(v))$ to obtain a new (wieghted) graph $\widetilde G$, then the aforementioned bounds on the eigenvalues of $A(G)$ 
still translate to bounds on the eigenvalues of $\mathcal{L}(\widetilde G)$, similar to those for $\mathcal{L}(G)$ as mentioned above. 
Further, the added advantage is that all vertices in $V(G) \subseteq V(\widetilde G)$ have weighted degree $d$ in $\widetilde G$. Therefore, one can now use this added regularity and employ \cite{Chung97} and \cite{Trev12} to obtain the necessary structure as required in Theorem \ref{thm:cond-loc}. 

For $\delta \in \N$ one can iteratively apply the above scheme. However, this makes the error bounds, although not so important for our end goal, lossy. So, we resort to the more recent works \cite{LOGT2014} and \cite{Liu2015}, where they develop higher order Cheeger and dual Cheeger inequalities for weighted graphs. For $\delta \notin \N$ an additional step is required. Namely, one needs to show that in a near regular graph (this would follow from the last step), one cannot have only two large eigenvalues unless both of them are close to each other in absoute values. The proof relies on two observations: under the assumption that those two large eigenvalues are separated from each other the leading eigenvector of $A(G)$ 
is close to a constant vector, 
and the eigenvector corresponding to the sceond largest eigenvalue can essentially only take two values.} 





\subsection*{Organization of the rest of the paper} \dteal{Section \ref{sec:almost independence} provides results on the near independence of a small number of edges that are necessary for the main results of this article. In Section \ref{sec:poisson-regime} we prove large deviations speed and rate function in the Poisson regime, as well as the typical structure of $\Gd$ conditioned on the upper tail event in the same regime. Sections \ref{sec:lower bound} and \ref{sec:loc-regime-ubd} yield large deviation lower and upper bounds in the localized regime, respectively. Finally, in Section \ref{sec:conditional-loc-regime} we prove the typical behavior of $\Gd$ conditioned on the upper tail event in the localized regime.} 


\subsection*{Acknowledgements} AB thanks Riddhipratim Basu for some helpful comments. Research was partially supported by DAE Project no.~RTI4001 via ICTS, by an ANRF ARG MATRICS Grant (grant no.~ARGM/2025/001927/MTR), and by the Infosys Foundation via the Infosys-Chandrashekharan Virtual Centre for RandomGeometry.

\subsection*{Declaration on AI use} All the arguments and results in this paper were completed by the authors by mid-July 2026. The authors spent the subsequent three months in writing up the manuscript. {AI tools were used only to assist in preparing the TikZ code for the figures. No AI tools were used in generating proof ideas or in assistance with writing.}

\section{Near independence of a small number of edges}\label{sec:almost independence}
A key ingredient in our analysis of the upper tail is a quantitative near independence estimate for edge probabilities in a random $d$-regular graph. Since the edges of $\Gd$ are globally dependent, conditioning on the presence of a graph $G$ \dteal{alters} the probability that an additional edge $(u,v)$ is present. \dteal{We need to estimate these change (from both sides) when $G$ contains a small number of edges. One such control are the bounds in \eqref{eq:almost independence lemma-old}, which are obtained by an adaptation of the switching arguments of \cite{KSV}. After completion of the proof of \eqref{eq:almost independence lemma-old} and those of subsequent bounds using \eqref{eq:almost independence lemma-old}, we learnt of the work \cite{PuGao} which also derive similar bounds. Hence, in the exposition below we do not present the proof of \eqref{eq:almost independence lemma-old}, and rather directly work with the bounds from \cite{PuGao}. 
}	
%
%


\dteal{For ease in writing, set} 
\begin{equation}\label{eq:def-Xd}
    \widehat{\Xd} := \left\{G \subseteq K_n:\ \tu{deg}_{G}(i)\leq d,\ \forall i\in \set{n} \right\}, \text{ and } \Xd := \left\{G \subseteq K_n:\ \tu{deg}_{G}(i)= d,\ \forall i\in \set{n} \right\}.
\end{equation}
\dteal{In words}, $\Xd$ is the collection of $d$-regular graphs on $\set{n}$, while $\widehat{\Xd}$ consists of graphs that may occur as subgraphs of a $d$-regular graph.	

\begin{theorem}[{\cite[Theorem~2]{PuGao}}]
    \label{thm:almost independence}
        Assume $2\leq d=np\ll n$. Let $G\in \widehat{\Xd}$ with $e(G)=o(nd)$. Fix two vertices $u,v\in \set{n}$ such that $ (u,v)\notin E(G)$ and $\max\{\tu{deg}_{G}(u), \tu{deg}_{G}(v)\}\leq d-1$. \dteal{Let $\widetilde p_{uv,G}$ be as in \eqref{eq:p_tilde}.} 
        Then for sufficiently large $n$, we have
        \begin{equation}
            \label{eq:almost independence lemma}
            \P\left(\{(u,v)\}\subseteq \Gd \big| G\subseteq \Gd\right)\ = \widetilde{p}\cdot  \left(1+O\left(p+\frac{e(G)}{nd}\right)\right)
        \end{equation}
\end{theorem}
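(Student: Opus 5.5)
The plan is a switching argument in the spirit of \cite{KSV, PuGao}, worked on the family of $d$-regular graphs containing $G$. Write $\mathcal{A}:=\{F\in\Xd: G\subseteq F,\ (u,v)\in F\}$ and $\mathcal{B}:=\{F\in\Xd: G\subseteq F,\ (u,v)\notin F\}$. Since $\Gd$ is uniform on $\Xd$,
\[
\P\left(\{(u,v)\}\subseteq\Gd\mid G\subseteq\Gd\right)=\frac{|\mathcal{A}|}{|\mathcal{A}|+|\mathcal{B}|}.
\]
So everything reduces to estimating the ratio $|\mathcal{A}|/|\mathcal{B}|$ and showing it equals $\widetilde p\,(1+O(p+e(G)/(nd)))$. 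Because $\widetilde p\le p\ll1$, the same relative error then carries over to $|\mathcal{A}|/(|\mathcal{A}|+|\mathcal{B}|)$.

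For the switching, start from $F\in\mathcal{A}$. Choose an ordered edge $(x,y)\in F\setminus G$ with $x,y\notin\{u,v\}$ such that $(u,x),(v,y)\notin F$. Delete the edges $(u,v)$ and $(x,y)$ and insert $(u,x)$ and $(v,y)$. The result lies in $\mathcal{B}$ and every degree is preserved. Counting forward switchings from $F$ gives $nd(1+O(p+e(G)/(nd)))$ choices: the number of edges of $F\setminus G$ is $nd/2-e(G)$, the two orientations contribute a factor $2$, and the excluded configurations (those adjacent to $u$ or $v$, or creating a double edge) cost only $O(d^2)$, a relative error $O(d/n)=O(p)$.

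For the backward count, start from $F'\in\mathcal{B}$. We need $x\in N_{F'}(u)$ and $y\in N_{F'}(v)$ such that $(u,x),(v,y)\notin G$, the pair $(x,y)$ is a non-edge of $F'$, and the degeneracies are excluded. The number of $F'$-neighbours of $u$ whose edge to $u$ is not in $G$ is exactly $d-\deg_G(u)$, and similarly for $v$. The requirement $(x,y)\notin F'$ removes at most $O(d)$ pairs for each $x$, a relative error $O(d/n)$... but this must be measured against $d-\deg_G(v)$, which might be small. That is where the factor $C_{uv,G}$ of \eqref{eq:almost independence lemma-old} enters, and why the present statement relies on the sharper analysis of \cite{PuGao}. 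Combining the forward and backward counts by double counting gives
\[
\frac{|\mathcal{A}|}{|\mathcal{B}|}\approx\frac{(d-\deg_G(u))(d-\deg_G(v))}{nd}=p\left(1-\frac{\deg_G(u)}{d}\right)\left(1-\frac{\deg_G(v)}{d}\right).
\]

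The main obstacle is that the backward-switching count is only available on average, not uniformly over $F'$. The fix is to allow $(x,y)$ to already be an edge of $F'$ and handle that case with a second-level switching, i.e.\ to show that the expected number of $F'$-edges between $N(u)$ and $N(v)$ is $O(d^2 p)$ on average over $\mathcal{B}$. Achieving the multiplicative error $1+O(p)$, instead of something like $1+O(1/(d-\deg_G))$, requires exactly the refined switching counts of \cite[Theorem~2]{PuGao}. I would therefore invoke that argument rather than reproduce it.
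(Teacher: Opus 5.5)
Your proposal follows the paper's route. The paper does not prove this statement either: it imports it from \cite[Theorem~2]{PuGao} and notes only that it rests on a switching between $d$-regular graphs containing $G$ with and without $(u,v)$, which is the switching you sketch before deferring the sharp count to the same reference. One caveat: your uniform forward count, together with the trivial bound $(d-\deg_G(u))(d-\deg_G(v))$ on backward switchings, already gives the upper bound, but the lower bound needs the averaged defect to be small relative to $(d-\deg_G(u))(d-\deg_G(v))$ rather than $d^2$. Your $O(d^2p)$ estimate therefore suffices only when both residual degrees are of order $d$, and the general case is exactly what the citation to Gao supplies.
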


\dteal{We have stated \cite[Theorem~2]{PuGao} in a slightly simplified form, tailored to our setting. 
The author in \cite{PuGao} proves a more precise expansion for the conditional edge probability. The proof is based on a switching argument that compares $d$-regular graphs containing $(u,v)$ with those not containing $(u,v)$, while preserving the prescribed graph $G$. The resulting estimate shows that, when $G$ contains $o(nd)$ edges, the leading effect of the conditioning is local. It replaces the marginal edge density $p=d/n$ by the product of the fractions of unused degree at $u$ and $v$.}

In \dteal{many} of our applications, we use Theorem \ref{thm:almost independence} only a bounded number of times, typically to compute the probability that $\Gd$ contains a fixed copy of $H$, conditional on the presence of a sparse collection of edges. The following simpler consequence, which bounds both residual degree factors by their worst case, will be more convenient \dteal{for those uses}.
\begin{corollary}
\label{cor:almost independence lemma(simple)}
Assume the same setup as Theorem \ref{thm:almost independence}. Set  $\eta \coloneqq d^{-1}\cdot\max\{\tu{deg}_{G}(u), \tu{deg}_{G}(v)\}$, then for all sufficiently large $n$,
\begin{equation}
    (1-\eta)^{2}\cdot p^{1+o(1)}\leq(1-o(1))\cdot (1-\eta)^{2} \cdot p\leq \P\left(\{(u,v)\}\subseteq \Gd\big| G\subseteq \Gd\right)\leq (1+o(1))\cdot p\leq p^{1-o(1)}.
\end{equation}
\end{corollary}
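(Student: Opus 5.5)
The corollary follows from Theorem \ref{thm:almost independence} by bounding the two residual-degree factors in $\widetilde p$ by their worst case, and then absorbing the error term $O(p+e(G)/(nd))$ into $o(1)$ using the hypotheses $p \ll 1$ and $e(G)=o(nd)$.

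\textbf{Upper bound.} Each factor $1-\deg_G(u)/d$ and $1-\deg_G(v)/d$ lies in $[1-\eta,1]$, since $\deg_G(u),\deg_G(v)\le \eta d$. Hence $\widetilde p \le p$. Combined with \eqref{eq:almost independence lemma}, this gives conditional probability at most $(1+o(1))p$. For the final step, $(1+o(1))p \le p^{1-o(1)}$ holds because $p\le 1-c$ for large $n$. Then $\log(1+o(1))/\log(1/p) = o(1)$, so $1+o(1) = p^{-o(1)}$.

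\textbf{Lower bound.} The same factors give $\widetilde p \ge (1-\eta)^2 p$. So the conditional probability is at least $(1-o(1))(1-\eta)^2 p$.

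\textbf{Remaining inequality.} It remains to show $(1-\eta)^2 p^{1+o(1)} \le (1-o(1))(1-\eta)^2 p$, which reduces to $p^{o(1)} \le 1-o(1)$. This holds because $p\to 0$: taking $p^{o(1)}$ to mean $p^{\varepsilon_n}$ with $\varepsilon_n\to0$ chosen as $\varepsilon_n = \log(1/(1-o(1)))/\log(1/p)$, the two sides coincide.

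The only slight subtlety is this bookkeeping of $o(1)$ exponents. The substantive content, including why the hypotheses $\max\{\deg_G(u),\deg_G(v)\}\le d-1$ and $G\in\widehat{\Xd}$ make the theorem applicable, is inherited directly from Theorem \ref{thm:almost independence}.
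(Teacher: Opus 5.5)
Your proposal is correct and is the argument the paper has in mind. The paper gives no separate proof, only the remark that the corollary ``bounds both residual degree factors by their worst case.'' You do exactly that: you sandwich the product of residual-degree factors in $\widetilde p$ between $(1-\eta)^2$ and $1$, absorb $O(p+e(G)/(nd))$ into $o(1)$ using $d\ll n$ and $e(G)=o(nd)$, and convert the $1\pm o(1)$ factors into $p^{\pm o(1)}$ via $\log(1/p)\to\infty$.
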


For Sections \ref{sec:poisson-regime} and \ref{sec:lower bound}, the coarser form above \dteal{will no longer be}  sufficient. In these arguments we may expose a growing collection of edges, and hence need to keep track both of the loss in available degree and of the accumulated error at each step. We therefore record the following consequence of Theorem \ref{thm:almost independence}, obtained by exposing the edges of a prescribed graph $I$ sequentially.		
\begin{corollary}
    \label{cor:almost independence lemma(poisson)}
    Assume $2\leq d =np\ll n$.
    Let $G, I\in \widehat{\Xd}$ satisfy $G\cup I\in \widehat{\Xd}$, \dteal{$E(I)\cap E(G)=\emptyset$} and $e(G\cup I)=o(nd)$. 
Define
    \begin{equation}
        \label{eq:D_I,G definition}
        D_{I, G} \coloneqq \prod_{u\in V(I)} \prod_{i=\dteal{0}}^{\tu{deg}_{I}(u)-1}\left(1-\frac{\tu{deg}_{G}(u)+i}{d}\right).
    \end{equation}
Then there exists an absolute constant $L>0$ such that, with
\begin{equation}
\label{eq:error bound on independence}
f_{k} \coloneqq  \exp\left( L\left(p+\frac{k}{nd}\right)\right),
\end{equation}
we have, for sufficiently large $n$,
    \begin{equation}\label{eq:bounds-on-copy}
        p^{e(I)}\cdot D_{I,G}\cdot f_{e(G\cup I)}^{-e(I)}\leq \P(I\subseteq \Gd| G\subseteq \Gd )\leq  p^{e(I)}\cdot D_{I,G}\cdot f_{e(G\cup I)}^{e(I)}.
    \end{equation}
\end{corollary}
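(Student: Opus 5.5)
We prove the corollary by exposing the edges of $I$ one at a time and applying Theorem \ref{thm:almost independence} at each step. Enumerate $E(I)=\{e_1,\ldots,e_m\}$ with $m=e(I)$, and set $G_0:=G$ and $G_j:=G\cup\{e_1,\ldots,e_j\}$. By the chain rule,
\[
\P(I\subseteq \Gd\mid G\subseteq \Gd)=\prod_{j=1}^{m}\P\big(\{e_j\}\subseteq\Gd \,\big|\, G_{j-1}\subseteq \Gd\big).
\]
Each $G_{j-1}$ is a subgraph of $G\cup I\in\widehat{\Xd}$. Hence $G_{j-1}\in\widehat{\Xd}$ and $e(G_{j-1})\le e(G\cup I)=o(nd)$. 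Since $E(I)\cap E(G)=\emptyset$, the edge $e_j=(u,v)\notin E(G_{j-1})$. Moreover $\deg_{G_{j-1}}(u)+1\le \deg_{G\cup I}(u)\le d$, so $\deg_{G_{j-1}}(u)\le d-1$, and likewise for $v$. Thus the hypotheses of Theorem \ref{thm:almost independence} hold at every step, and each factor equals
\[
p\Big(1-\tfrac{\deg_{G_{j-1}}(u)}{d}\Big)\Big(1-\tfrac{\deg_{G_{j-1}}(v)}{d}\Big)\big(1+O(p+e(G_{j-1})/(nd))\big).
\]

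Next we identify the product of the degree factors with $D_{I,G}$. Fix $u\in V(I)$. The edges of $I$ incident to $u$ appear in the ordering as, say, the $1$st, $2$nd, $\ldots$, $\deg_I(u)$-th such edge. At the exposure of the $(i+1)$-th of them, $\deg_{G_{j-1}}(u)=\deg_G(u)+i$, because the only edges added so far are edges of $I$. Each edge contributes exactly one factor to each of its two endpoints. Collecting the factors by vertex therefore gives exactly $\prod_{u\in V(I)}\prod_{i=0}^{\deg_I(u)-1}(1-(\deg_G(u)+i)/d)=D_{I,G}$, independently of the ordering.

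For the errors, note that $e(G_{j-1})\le e(G\cup I)$, so the $j$-th error factor is $1+O(p+e(G\cup I)/(nd))$. The implied constant is absolute, as is the threshold on $n$, provided we read Theorem \ref{thm:almost independence} as uniform over all admissible $G$ and edges with $e(G)/(nd)$ below a fixed small constant. This uniform reading is the one point needing care, and it is how \cite{PuGao} states the result. Write $x:=p+e(G\cup I)/(nd)=o(1)$. Using $1+Cx\le e^{Cx}$ and $1-Cx\ge e^{-2Cx}$ for $x$ small, each factor lies in $[f^{-1},f]$ with $f=f_{e(G\cup I)}$ and $L:=2C$. Taking the product over the $m=e(I)$ factors yields \eqref{eq:bounds-on-copy}.

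The main obstacle is merely bookkeeping: the error bound must be uniform along the whole sequence $G_0,\ldots,G_{m-1}$, which holds because every $G_j$ has at most $e(G\cup I)=o(nd)$ edges.
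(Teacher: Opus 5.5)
Your proof is correct and is the paper's argument. You expose the edges of $I$ one at a time, apply the chain rule with Theorem~\ref{thm:almost independence} at each step, regroup the residual-degree factors by vertex to obtain $D_{I,G}$, and absorb the $1+O(p+e(G\cup I)/(nd))$ errors into $f_{e(G\cup I)}^{\pm 1}$ via $e^{-2x}\le 1-x\le 1+x\le e^{x}$. Two points you make explicit that the paper leaves implicit are the verification of the hypotheses at every step (notably $\deg_{G_{j-1}}(u)\le d-1$, which follows from $G\cup I\in\widehat{\Xd}$) and the need for the implied constant in Theorem~\ref{thm:almost independence} to be uniform.
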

Here $D_{I,G}$ accounts for the cumulative loss of available degree at the vertices of $I$ as its edges are sequentially exposed, while the factors $f_{e(G\cup I)}^{\pm e(I)}$ collect the error terms arising from successive applications of Theorem \ref{thm:almost independence}.
\begin{proof}
    Assume $e(I) = m$ and let $e_{1}, \ldots, e_{m}$ be an enumeration of edges in $I$. Consider the nested sequence of subsets:
    \begin{equation*}
        \dteal{G \eqqcolon G_{1}\subseteq G_{2}\subseteq  \cdots \subseteq G_{m}},
    \end{equation*}
    where for $i\in \{2,\ldots, m\}$ we define
    \begin{equation*}
        G_{i} \coloneqq G\cup \bigcup_{j=1}^{i-1} \{e_{i}\}.
    \end{equation*}
    Using Theorem \ref{thm:almost independence} we can expand as follows:
    \begin{equation*}
        \P(I\subseteq \Gd|G\subseteq \Gd) = \prod_{i=1}^{m}\P(\{e_{i}\}\subseteq \Gd|G_{i}\subseteq \Gd) = \prod_{i=1}^{m}\widetilde{p}_{e_{i},G_{i}}\cdot \left(1+O\left(\frac{d}{n}+\frac{e(G_i)}{nd}\right)\right),
    \end{equation*}
    where $\widetilde{p}_{e_{i},G_{i}}$ is as defined in \eqref{eq:p_tilde}. See that  for any $i\in \set{m}$, we have $\tu{deg}_{G_{i}}(u) = \tu{deg}_{G}(i)+ \sum_{j=1}^{i-1}\mathds{1}(u\in e_{j})$. Since each $\dteal{u} \in V(I)$ appears exactly $\tu{deg}_{I}(\dteal{u})$ many times in the sequence $(e_{j})_{j\in \set{m}}$, therefore using the previous observation we can write
    \begin{equation}
        \label{eq:p,D}
        \prod_{i=1}^{m}\widetilde{p}_{e_i, G_{i}} = \dteal{p^m} \dteal{\prod_{u \in V(I)}} \prod_{i=1}^{m} \cdot \prod_{u\in e_{i}}\left(1-\frac{\tu{deg}_{G_{i}}(u)}{d}\right) = p^{m}\cdot D_{I,G}.
    \end{equation}
    Further note that we have $d=o(n)$ and $e(G_i)\leq e(G\cup \dteal{I})=o(nd)$ for each $i$. This yields,
    \begin{equation}
        \label{eq:bound on error}
        1+O\left(\frac{d}{n}+\frac{e(G_i)}{nd}\right) = 1+O\left(\frac{d}{n}+\frac{e(G\cup I)}{nd}\right) = \exp\left(O\left(\frac{d}{n}+\frac{e(G\cup I)}{nd}\right)\right),  
    \end{equation}
where the last step follows by using the inequality $e^{-2x}\leq 1-x\leq 1+x\leq e^{x}$ whenever $0<x\leq \frac{1}{2}$. Combining \eqref{eq:p,D} and \eqref{eq:bound on error} gives our desired result.
\end{proof}

\section{Poisson regime}\label{sec:poisson-regime}
In this section \dteal{we prove} Theorems \ref{thm:poisson} and \ref{thm:conditional-poisson}. In fact, \dteal{we prove} a stronger version of Theorem \ref{thm:poisson} which \dteal{allows us} to consider the sparsity \dteal{regime $n^{v_H}p^{e_H} \leq C_H (\log n)^{\alpha^*_H/(\alpha_H^*-1)}$, for some $C_H < \infty$ to be defined below}. To state that result we need to introduce the following set of notation: For any given $C>0$, set 
    \begin{equation}
        \label{eq: defn of M_C}
        M_{C} \coloneqq \sup_{x\geq 1} \frac{f(x)+\log C}{x^{1/\alpha_{H}^{*}}}, \quad \text{ where } f(x):= x\log x - (x-1)\log(x-1).
    \end{equation}
Since $f(x)/ x^{1/\alpha_H^*} \to 0$ as $x \to \infty$, it follows that for any $C \in (0, \infty)$ the constant $M_C < \infty$. 		
    Further define 
    \begin{equation}
        \label{eq:def of constants}
        \lambda:=\frac{e_{H}}{v_{H}},\,  \lambda^{*} := \max_{\emptyset \neq J\subsetneq H} \frac{e_{J}}{v_{J}},\, r_{H}\coloneqq 1+\frac{1}{v_{H}}\left(\frac{1}{\lambda^{*}}-\frac{1}{\lambda}\right), \, \phi_{H}\coloneqq \frac{1}{\lambda}\cdot \frac{r_{H}-1}{r_H}, \, \text{ and } \, \rho^{\star}_{C}\coloneqq \left(\frac{\phi_{H}}{2M_{C}}\right)^{\frac{\alpha_{H}^{*}}{\alpha_{H}^{*}-1}}.
    \end{equation}
    \begin{theorem}
    \label{thm:poisson-new}
        Fix $\delta>0$, $\varepsilon\in (0,1)$ and choose a constant $C_{0}>\delta+\varepsilon$. Let $H$ be a strictly balanced graph and $\rho\in (0,\rho^{\star}_{C_0})$. For 
        $1 \ll n^{v_{H}}p^{e_{H}} \leq \frac{\rho}{C_0} \cdot \tu{Aut}(H) \cdot(\log n)^{\alpha_{H}^{*}/(\alpha_{H}^{*}-1)}$, we have
            \begin{equation*}
                (1-\varepsilon-o(1))\cdot I(\delta)\ \leq -  \frac{\log \P(\UTH)}{n^{v_{H}}p^{e_{H}}/\tu{Aut}(H)}\ \leq\ (1+\varepsilon+o(1))\cdot I(\delta),
            \end{equation*}
            {where $I$ is as in \eqref{eq:poisson-rate-function}}. 
    \end{theorem}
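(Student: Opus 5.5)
We work with the number $X$ of unlabeled copies of $H$ in $\Gd$, so that $N(H,\Gd)=\tu{Aut}(H)\,X$, and set $\mu:=n^{v_H}p^{e_H}/\tu{Aut}(H)$. By \cite{KSV}, $\E X=(1+o(1))\mu$, and $\UTH=\{X\ge (1+\delta)\mu\}$. Following the strategy in Section~\ref{sec:proof-outline}, we would prove both directions through factorial moments of order $t\asymp\mu$. The aim is to show that for every integer $t\le C_0\mu$,
\begin{equation*}
(1-o(1))^t\,\mu^t e^{-\varepsilon' t}\ \le\ \E\big(X^{\underline t}\big)\ \le\ \mu^t e^{\varepsilon' t}(1+o(1))^t ,
\end{equation*}
with $\varepsilon'$ small in terms of $\varepsilon$. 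The upper bound on $\P(\UTH)$ then follows from Markov's inequality applied to $X^{\underline t}$ with $t=\lceil(1+\delta)\mu\rceil\cdot(\delta/(1+\delta))$, or more precisely by optimizing the Chernoff-type bound $\P(X\ge s)\le \E\binom{X}{t}/\binom{s}{t}$ with $s=(1+\delta)\mu$, $t=\delta\mu$. This is where the constraint $C_0>\delta+\varepsilon$ enters. For the lower bound we would use a tilting/second-moment argument in the style of \cite[Proposition 8.3]{HMS}: compare the moments $\E X^{\underline t}$ and $\E X^{\underline{t+1}}$ near $t\approx\delta\mu$ to show that $\P(X\ge(1+\delta)\mu)\ge e^{-(1+\varepsilon+o(1))I(\delta)\mu}$.

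To get the factorial moment bounds, we write $\E X^{\underline t}=\sum \P(I_1\cup\dots\cup I_t\subseteq\Gd)$ over ordered $t$-tuples of distinct copies. We split the tuples according to the cluster structure of the union $F=\bigcup I_j$, where clusters are components of the graph on copies in which two copies are adjacent when they share an edge.

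\textbf{Lower bound.} Restrict to tuples whose copies are pairwise vertex disjoint and whose union has maximum degree at most $\Delta_H$. The union has $te_H=O(\mu)=o(nd)$ edges. By Corollary~\ref{cor:almost independence lemma(poisson)} with $G=\emptyset$, each tuple has probability at least $p^{te_H}f^{-te_H}$ times $D$. Here $D\ge (1-\Delta_H/d)^{O(t)}$ and $f^{te_H}=e^{O(t(p+\mu/(nd)))}$, so both are $(1-o(1))^t$. The number of such tuples is $(1-o(1))^t\, (n^{v_H}/\tu{Aut}(H))^t$ because $tv_H\ll n$.

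\textbf{Upper bound.} Decompose $F$ into the edge-disjoint part and the clusters of size at least $2$. Using Corollary~\ref{cor:almost independence lemma(simple)} iteratively gives $\P(F\subseteq\Gd)\le (1+o(1))^{e(F)}p^{e(F)}$, since $e(F)=O(\mu)=o(nd)$. Tuples consisting of edge-disjoint copies then contribute at most $\mu^t(1+o(1))^t$; vertex overlaps without shared edges cost only a lower-order factor, as in Erd\H{o}s--R\'enyi. For the remaining configurations, we would bound the total contribution of a cluster with $k\ge2$ copies by the Erd\H{o}s--R\'enyi cluster estimate used in \cite{BK25,HMS}: the expected number of clusters of size $k$ is at most $\mu\cdot(\text{small})^{k-1}$, with the smallness coming from strict balancedness via $\lambda^*<\lambda$ and the exponent $r_H$. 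Dependence enters only through the factor $(1+o(1))^{e(F)}$. Summing over cluster profiles with generating functions then gives $\E X^{\underline t}\le \mu^t\exp(t\cdot \eta)$ with $\eta$ small.

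\textbf{Main obstacle.} The hardest step is controlling large clusters when $t\asymp\mu\asymp(\log n)^{\alpha^*/(\alpha^*-1)}$. A cluster with $k$ copies spans at least roughly $(k\cdot\text{const})^{1/\alpha^*_H}$ vertices' worth of savings. The count of ways to place it, which involves entropy terms like $f(x)$ in \eqref{eq: defn of M_C}, competes with a probability gain of $n^{-\phi_H\,x^{1/\alpha^*_H}}$, where $x$ is the cluster size relative to its copies. The constants $M_{C_0}$ and $\rho^\star_{C_0}$ are exactly designed so that $M_{C_0}x^{1/\alpha^*}\log\mu\le \tfrac{\phi_H}{2}x^{1/\alpha^*}\log n$ whenever $\mu\le\rho(\log n)^{\alpha^*/(\alpha^*-1)}\tu{Aut}(H)/C_0$. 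I would first try to follow the \cite{BK25} cluster-counting argument verbatim, and replace its independence step by Corollary~\ref{cor:almost independence lemma(simple)} applied edge by edge, using that all unions involved have $o(nd)$ edges.
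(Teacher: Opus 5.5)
Your outline is the paper's: factorial moments of order $t\asymp\mu$, vertex-disjoint tuples with Corollary~\ref{cor:almost independence lemma(poisson)} for the lower bound, and a cluster expansion with the \cite{BK25} cluster count for the upper bound. Bounding every union uniformly by $\P(F\subseteq\Gd)\le((1+o(1))p)^{e(F)}$ is a legitimate shortcut: it turns your upper bound into the Erd\H{o}s--R\'enyi factorial moment at $p'=(1+o(1))p$.

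\textbf{The gap: passing from moments to the lower bound on $\P(\UTH)$.}

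\emph{Why your moments are too coarse.} Your upper bound drops the degree-depletion factor and is about $\mu^t$, while your lower bound (and the truth) is about $(\mu D_H)^t$. For $H=C_\ell$ one has $d\asymp\mu^{1/\ell}$ and $D_H^t=e^{-\Theta(\mu^{1-1/\ell})}$. So your estimates pin down $\E X^{\underline t}$ only up to a diverging factor $e^{o(\mu)}$, and as you state them only up to $e^{\pm\varepsilon' t}$.

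\emph{Why the step you describe fails.} At this accuracy the ratio $\E X^{\underline{t+1}}/\E X^{\underline t}$ is itself unknown up to a diverging factor. Hence the adjacent-moment tilting or second-moment step gives nothing. Nor can \cite[Proposition~8.3]{HMS} be invoked as the paper does, since that needs $|\E X^{\underline t}-m^t|=o(m^t)$ for one fixed $m$ across the whole window. The paper keeps $D_{A_i,G_\star}\le D_H$ for the isolated copies in the upper bound, so both sides become $(1\pm n^{-c})(\E\wn)^t$.

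\emph{A fix that keeps your cruder bounds.} Compare moments of macroscopically different orders. Set $s=(1+\delta)\mu$, $s'=(1+\delta+2\varepsilon_1)\mu$, $t_0=(\delta-\varepsilon_1)\mu$, $t_1=(\delta+\varepsilon_1)\mu$ and $t_2=(\delta+2\varepsilon_1)\mu$. Then $\E[X^{\underline{t_1}};X<s]\le(s-t_0)^{\underline{t_1-t_0}}\E X^{\underline{t_0}}\le\mu^{t_1}e^{-\varepsilon_1^3\mu/3+o(\mu)}$. Also $\E[X^{\underline{t_1}};X>s']\le\E X^{\underline{t_2}}/(s'-t_1)^{\underline{t_2-t_1}}\le\mu^{t_1}e^{-I(\varepsilon_1)\mu+o(\mu)}$. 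Hence $\P(X\ge s)\ge\frac12\E X^{\underline{t_1}}/(s')^{\underline{t_1}}\ge e^{-(I(\delta)+O_\delta(\varepsilon_1)+o(1))\mu}$. This requires your error to satisfy $\varepsilon'\ll\varepsilon_1^3$. It is this step, not the Markov step, that uses $C_0>\delta+\varepsilon$.

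\textbf{The cluster estimate must be imported in the right form.} Getting $\varepsilon'$ that small depends on this. What you need is Lemma~\ref{lem:cluster expectation}: $\sum_{A\in\mathfrak F_s}\P(G(A)\subseteq\Gd\mid G_\star\subseteq\Gd)\le C_0^{-s}n^{-c s^{1/\alpha^*_H}}$, with no factor $\mu$.
\begin{itemize}
\item In your generating function a cluster of size $s$ carries positional weight $(t/\mu)^s\le C_0^s$. The $C_0^{-s}$, i.e.\ the $\log C$ inside $M_C$, is exactly what cancels it, leaving a total cluster factor $1+O(t^2n^{-c})$.
\item A bound of the form $\mu\cdot(\mathrm{small})^{k-1}$ is too weak. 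Uniformly up to $k\asymp\rho(\log n)^{\alpha^*_H/(\alpha^*_H-1)}$ the counting only makes ``small'' a $\rho$-dependent constant. The sum then leaves an error of order $C_0^2\cdot\mathrm{small}\cdot\mu$ in the exponent. That is on the large-deviation scale and not small for $\rho$ near $\rho^\star_{C_0}$.
\item The balance defining $\rho^\star_C$ is not $\log\mu$ against $\log n$. It is $s f((2m)^{\alpha^*_H}/s)+s\log C\le 2mM_C\rho^{1-1/\alpha^*_H}\log n$ against the gain $n^kp^m\le n^{-\phi_H(m-e_H)}(\log n)^{O(m)}$.
\item That gain rests on the cluster inequality $m-e_H\ge\lambda r_H(k-v_H)$. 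The paper proves it by induction, removing one copy while keeping a cluster; your sketch needs it too.
\end{itemize}
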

    
\begin{remark}
For $H=K_3$, the regime $np\asymp\log n$ is conjectured to exhibit a transition from Poisson type behavior to a mixture of Poisson and localized type upper-tail behavior. More precisely, the rate function is expected to arise from an optimization problem balancing the cost of producing an excess of essentially edge disjoint triangles against that of planting a clique (or cliques depending on $\delta$). In case this conjectured picture holds, the constants in Theorem \ref{thm:poisson-new} do not seem to be optimal, and the conclusion of Theorem \ref{thm:poisson-new} should remain valid up to a larger threshold. 
\end{remark}		

It is obvious that Theorem \ref{thm:poisson-new} implies Theorem \ref{thm:poisson}. The definitions of the constants appearing in \eqref{eq: defn of M_C} and \eqref{eq:def of constants} may seem mysterious at the moment. The reader will later notice that these constants emerge naturally from the proof of the upper bound on the factorial moments of the number of unlabelled copies of $H$ in $\Gd$.

Throughout the rest of the section we fix a strictly balanced graph $H$ with minimum degree $2$ and maximum degree $\Delta$. For ease in writing the proofs, {\em only} in this section, we use the nomenclature `copy of $H$' to mean an unlabelled subgraph of $K_n$ isomorphic to $H$. We will use the shorthand $\widetilde N$ to denote the number of (unlabelled) copies of $H$ in $\Gd$. As already mentioned in Section \ref{sec:proof-outline}, the key to the proof of upper tail large deviations speed and rate function \dteal{in the Poisson regime} is tight upper and lower bounds on $\E[\widetilde N^{\underline{t}}]$ for $t \asymp n^{v_H}p^{e_H}$, where we recall $x^{\underline{y}}:= x(x-1)\cdots (x-y+1)$, for $x, y \in \N$. The next two lemmas provide the necessary upper and lower bounds.

\begin{lemma}
            \label{lem:estimate t factorial moment lower bound}
        {Fix some positive constants $\{\beta_i\}_{i \in \set{4}}$. Assume $d\geq 2$ and  $n^{v_{H}}p^{e_{H}}\leq \beta_1 (\log n)^{\beta_2}$. Then, uniformly over all integers $t$  satisfying $1\leq t\leq \beta_3(\log n)^{\beta_4}$}, there exists a constant $c_{H}>0$, depending only on $H$, such that for sufficiently large $n$, 
        \[\E[\wn^{\underline{t}}]\geq (1-n^{-c_{H}})\cdot \left(\frac{n^{v_{H}}p^{e_{H}}\cdot D_{H}}{\tu{Aut}(H)}\right)^{t},\]
        where 
        \begin{equation}
            \label{eq:def of D_H}
            D_{H}\coloneqq \prod_{u\in V(H)}\prod_{i=1}^{\tu{deg}_{H}(u)-1}\left(1-\frac{i}{d}\right).
        \end{equation}
    \end{lemma}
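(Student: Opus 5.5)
Since $\wn^{\underline{t}}$ counts ordered $t$-tuples of distinct copies of $H$ present in $\Gd$, we write
\[
\E[\wn^{\underline{t}}] = \sum_{(H_1,\ldots,H_t)} \P\Big(\bigcup_{i\in\set{t}} H_i \subseteq \Gd\Big),
\]
where the sum runs over ordered $t$-tuples of distinct copies of $H$ in $K_n$. All summands are nonnegative, so we may discard the tuples that are not pairwise vertex disjoint. This leaves a sum that is simple to count and whose probabilities Corollary \ref{cor:almost independence lemma(poisson)} computes exactly.

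First, the number of ordered tuples of pairwise vertex-disjoint copies is $\prod_{i=0}^{t-1}\binom{n-iv_H}{v_H}\frac{v_H!}{\tu{Aut}(H)} \ge \big(\frac{n^{v_H}}{\tu{Aut}(H)}\big)^t\big(1-\frac{tv_H}{n}\big)^{tv_H}$. Because $t\le\beta_3(\log n)^{\beta_4}$, this correction factor is $1-O(t^2/n)\ge 1-n^{-1/2}$.

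Second, for a fixed vertex-disjoint tuple, let $I$ be the union $\bigcup_i H_i$ and take $G=\emptyset$. Then $I\in\widehat{\Xd}$ as soon as $d\ge\Delta$. In the regime of interest $n^{v_H}p^{e_H}\to\infty$ with $H$ having minimum degree $2$, so $d\to\infty$; the small-$d$ cases are handled trivially or excluded. We also have $e(I)=te_H=o(nd)$. Corollary \ref{cor:almost independence lemma(poisson)} then gives
\[
\P(I\subseteq\Gd)\ge p^{te_H} D_{I,\emptyset}\, f_{te_H}^{-te_H}.
\]
Since the copies are vertex disjoint, $\deg_I(u)=\deg_{H_i}(u)$ for $u\in V(H_i)$, and hence $D_{I,\emptyset}=\prod_i\prod_{u}\prod_{i'=0}^{\deg_H(u)-1}(1-i'/d)$. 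The $i'=0$ factor equals $1$, so this is exactly $D_H^t$ with $D_H$ as in \eqref{eq:def of D_H}.

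Third, we control the error factor. We have $f_{te_H}^{-te_H}=\exp(-Lte_H(p+te_H/(nd)))$. Here $tp\le \beta_3(\log n)^{\beta_4}p$, and the assumption $n^{v_H}p^{e_H}\le\beta_1(\log n)^{\beta_2}$ forces $p\le n^{-v_H/e_H+o(1)}$, a fixed negative power of $n$. Also $t^2/(nd)\le t^2/n$. So the exponent is $O(n^{-c})$ for some $c=c_H>0$, and the factor is at least $1-n^{-c}$.

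Multiplying the three estimates gives the claim with $c_H$ equal to the smaller of $1/2$ and $c$, minus a margin. The only delicate point is checking uniformity in $t$ together with the range of $d$ where the corollary applies. The main obstacle is making sure $p$ is polynomially small. This follows because $v_H/e_H>0$ and the upper bound on $n^{v_H}p^{e_H}$ is polylogarithmic.
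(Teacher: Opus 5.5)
Your proposal is correct and follows the paper's route: you keep only pairwise vertex-disjoint tuples, lower bound their number, and use Corollary \ref{cor:almost independence lemma(poisson)} to get $p^{te_H}D_H^t f_{te_H}^{-te_H}$, with the error factor controlled because $p$ is polynomially small. Your differences from the paper are harmless: you apply the corollary once to the union rather than copy by copy, and you count disjoint tuples directly as a product rather than subtracting overlapping tuples from $|\mathcal{H}(t)|$. Your hedge about small $d$ can be made precise: if $d<\Delta$ then $D_H=0$ and the bound is trivial, and for bounded $d\ge\Delta$ the corollary still applies, so you do not need $n^{v_H}p^{e_H}\to\infty$.
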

    
    {\begin{lemma}
\label{lem:estimate t factorial moment}
    Let $\beta, C_0>0$ and $\rho\in (0,\rho_{C_0}^{\star})$ (recall \eqref{eq:def of constants}). Assume $n^{v_{H}}p^{e_{H}}\leq \beta(\log n)^{\alpha^*_H/(\alpha^*_H-1)}$. Then, uniformly over all integers $t$ satisfying $1\leq t\leq \min\{C_0\cdot  n^{v_{H}}p^{e_{H}}D_{H}\cdot \tu{Aut}(H)^{-1},\ \rho (\log n)^{\alpha^*_H/(\alpha^*_H-1)}\}$, there exists a constant $\widehat c_{\rho,H}>0$ (depending only on $\rho, H$), such that for sufficiently large enough $n$,
    \[\E[\wn^{\underline{t}}]\leq (1+n^{-\widehat c_{\rho, H}})\cdot \left(\frac{n^{v_{H}}p^{e_{H}}\cdot D_{H}}{\tu{Aut}(H)}\right)^{t}. \]
\end{lemma}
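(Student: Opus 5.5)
We expand the factorial moment as a sum over ordered $t$-tuples of distinct copies,
\[
\E[\wn^{\underline{t}}]=\sum_{(H_1,\ldots,H_t)}\P\Big(\textstyle\bigcup_{i} H_i\subseteq \Gd\Big),
\]
and then split the tuples into two classes according to the overlap graph on $\{H_1,\ldots,H_t\}$, where two copies are joined when they share an edge. Tuples whose overlap graph has no edges, i.e.\ pairwise edge disjoint tuples, should give the main term. All other tuples should give a relative error of order $n^{-\widehat c_{\rho,H}}$.

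\emph{Edge-disjoint tuples.} Apply Corollary \ref{cor:almost independence lemma(poisson)} with $G=\emptyset$ and $I=\bigcup_i H_i$. If $I\notin\widehat{\Xd}$ the probability is zero, so we may assume $I\in\widehat{\Xd}$. Here $e(I)=te_H\le \rho e_H(\log n)^{\alpha^*_H/(\alpha^*_H-1)}=o(nd)$. The corollary gives
\[
\P(I\subseteq\Gd)\le p^{te_H}D_{I,\emptyset}\,f_{te_H}^{te_H}.
\]
We now compare $D_{I,\emptyset}$ with $D_H^t$. A vertex $u$ lying in several copies has $\deg_I(u)=\sum_i\deg_{H_i}(u)$. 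Since
\[
\prod_{i<a+b}(1-i/d)\le\prod_{i<a}(1-i/d)\prod_{i<b}(1-i/d),
\]
we get $D_{I,\emptyset}\le D_H^t$. Next, the assumption $n^{v_H}p^{e_H}\le\beta(\log n)^{O(1)}$ forces $p\le n^{-v_H/e_H+o(1)}$. Hence
\[
f_{te_H}^{te_H}=\exp\big(O(t(p+t/(nd)))\big)=1+n^{-c}.
\]
Finally, the number of such ordered tuples is at most $(n^{v_H}/\tu{Aut}(H))^t$. Together these give exactly the main term $(1+n^{-c})\big(n^{v_H}p^{e_H}D_H/\tu{Aut}(H)\big)^t$.

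\emph{Overlapping tuples.} Decompose the overlap graph into connected components, which we call clusters. For a cluster of $k\ge2$ copies with union $F$, we have $F$ edge disjoint from the other clusters. Apply Corollary \ref{cor:almost independence lemma(poisson)} sequentially over clusters, dropping the degree factors since they are $\le1$. The probability then factorizes up to $(1+o(1))^{e}$ errors into $\prod_{\text{clusters}} p^{e_F}$.

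For a single cluster, strict balancedness, through the constants $\lambda^*$ and $r_H$, shows that the expected number of cluster unions with $k$ copies is at most $(n^{v_H}p^{e_H})^{?}\cdot n^{-\phi_H e_F/ \cdot}$. The intended meaning is that each overlap costs a polynomial factor $n^{-\phi_H\cdot(\text{excess})}$ relative to $k$ disjoint copies. The bound $e_F\gtrsim k^{1/\alpha^*_H}$ then follows from the fractional-independence bound $N(H,F)\lesssim e_F^{\alpha^*_H}$.

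\emph{Combinatorial sum and main obstacle.} We then sum over set partitions of $[t]$ into clusters and over cluster shapes. The combinatorial weight of choosing which $x$ of the $t$ indices form a cluster, and how they overlap, behaves like $\exp(f(x)+\log C_0)$ per unit. This weight must be beaten by the gain $n^{-\phi_H x^{1/\alpha^*_H}/2}$, and the gain is at least $\exp(-\phi_H x^{1/\alpha^*_H}\log n/2)$. The definition of $M_{C_0}$, together with the constraint $t\le\rho(\log n)^{\alpha^*_H/(\alpha^*_H-1)}$ with $\rho<\rho^\star_{C_0}$, is exactly what should make this comparison go through uniformly in the cluster size $x\le t$. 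The bound $t\le C_0\mu$ is used to absorb ratios of the form $t/\mu$ arising from removing copies from the main term.

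I expect this bookkeeping to be the main difficulty. Specifically, it requires summing over all cluster profiles so that the total non-disjoint contribution is $n^{-\widehat c}\mu^t$ rather than merely $e^{O(t)}\mu^t$. My first attempt would be to bound the ratio of the weight of tuples with cluster profile $(x_1,\ldots)$ to the disjoint weight by $\prod_j\exp\big((f(x_j)+\log C_0) - \phi_H x_j^{1/\alpha^*_H}\log n\big)$, and then use the definition of $M_{C_0}$ to make each factor at most $n^{-\phi_H x_j^{1/\alpha^*_H}/2}$.
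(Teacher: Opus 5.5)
Your decomposition matches the paper's: isolated copies plus edge-disjoint clusters, near independence for the main term, strict balance for overlaps, $N(H,F)\le (2e_F)^{\alpha^*_H}$, and $M_{C_0}$. The gap is the estimate that carries the whole proof. That is Lemma~\ref{lem:cluster expectation}, namely $\sum_{A\in\mathfrak{F}_s:\,E(A)\cap E(G_\star)=\emptyset}\P(G(A)\subseteq\Gd\mid G_\star\subseteq\Gd)\le C_0^{-s}n^{-c s^{1/\alpha^*_H}}$, uniformly over $G_\star\in\widehat{\Xd}$ with polylogarithmically many edges. You do not prove it, and the form you propose for it is wrong.

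Your factor $\exp\big(f(x_j)+\log C_0-\phi_H x_j^{1/\alpha^*_H}\log n\big)$ gives a cluster of $x_j$ copies entropy only $O(\log x_j)$. With it, $f(x)+\log C_0\le M_{C_0}x^{1/\alpha^*_H}$ already yields $n^{-\phi_H x_j^{1/\alpha^*_H}/2}$ for every $x_j$. The restriction $\rho<\rho^\star_{C_0}$ is then never used, which signals that the count is off.

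The true entropy of a cluster of $s$ copies whose union has $k$ vertices and $m$ edges has two parts:
\begin{itemize}
\item $\log\big[n^k\binom{k^2}{m}\binom{(2m)^{\alpha^*_H}}{s}\big]$, from choosing the union and then $s$ of its at most $(2m)^{\alpha^*_H}$ copies;
\item $s\log t$ for positions, which is traded for $s\log C_0$ via $(t/C_0)^s\le (n^{v_H}p^{e_H}D_H/\tu{Aut}(H))^s$.
\end{itemize}
By the definition of $M_{C_0}$, $s\,f\big((2m)^{\alpha^*_H}/s\big)+s\log C_0\le 2M_{C_0}s^{1-1/\alpha^*_H}m$. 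This is linear in $m$, so the gain must also be kept linear in $m$.

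The peeling induction $m-e_H\ge\lambda r_H(k-v_H)$ supplies that gain. It rests on the fact that one can delete a suitable copy from a cluster and still have a cluster. It gives $n^kp^m\le (n^{v_H}p^{e_H})^{m/e_H}n^{-\phi_H(m-e_H)}$. Beating $2M_{C_0}s^{1-1/\alpha^*_H}m$ by $\phi_H m\log n$ is exactly the condition $s<\rho^\star_{C_0}(\log n)^{\alpha^*_H/(\alpha^*_H-1)}$.

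Only after this may you insert $2m\ge s^{1/\alpha^*_H}$. Bounded $s$ must be treated separately via $m\ge e_H+1$, because of the offset $n^{\phi_H e_H}$. Replacing the gain by $n^{-\phi_H x^{1/\alpha^*_H}/2}$ first, as you propose, leaves the $m$-dependent entropy uncontrolled.

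Separately, in tuples that mix isolated copies and clusters you cannot drop the degree factors of the isolated copies; keep $D_{A_i,G_\star}\le D_H$ for each of them. For $H=C_\ell$ one has $d\lesssim(\log n)^{1/(\ell-2)}$, while the number $k$ of isolated copies can be of order $t\gg d\log n$. A loss of $D_H^{-k}=e^{\Theta(k/d)}$ would then swamp the $n^{-c}$ gain of even a single cluster of size two.

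With these repairs, the sum over cluster profiles is routine. The paper uses the defect $\varkappa$ from \eqref{eq:varkappa}, together with the fact that there are at most $t^{j-1}=n^{o(j)}$ profiles with $j$ clusters.
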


Postponing the proofs of Lemmas \ref{lem:estimate t factorial moment lower bound} and \ref{lem:estimate t factorial moment} for the moment, we now prove Theorem \ref{thm:poisson-new}.

\begin{proof}[Proof of Theorem \ref{thm:poisson-new}]
Fix $\varepsilon >0$ arbitrarily small. We will prove that 
\begin{equation}\label{eq:fac-mom-bd}
\left|\E[\wn^{\underline{t}}] - \E[\wn]^{t}\right| \leq n^{-\frac{1}{2}\bar c_{\rho, H}} \cdot \E[\wn]^{t}, \quad \text{ for all } t \in \set{(\delta-\varepsilon) \E[\widetilde N], (\delta+\varepsilon) \E[\widetilde N]},
\end{equation}
for some constant $\bar c_{\rho, H}>0$ depending only on $\rho$ and $H$, and all large $n$. Since $\varepsilon >0$ is arbitrary and $\delta \mapsto I(\delta)$ is continuous on $(0,\infty)$, the desired conclusion then follows from \cite[Proposition~8.3]{HMS}.

Turning to prove \eqref{eq:fac-mom-bd}, as a preliminarily step, applying Lemmas \ref{lem:estimate t factorial moment lower bound} and \ref{lem:estimate t factorial moment} with $t=1$, we deduce that 
\begin{multline}\label{eq:t-1-bd}
\E[\wn] = (1+o(n^{-\bar c_{\rho, H}})) \left(\frac{n^{v_{H}}p^{e_{H}}\cdot D_{H}}{\tu{Aut}(H)}\right) \\
= (1+o(n^{-\bar c_{\rho, H}})) \cdot (1-O(d^{-1})) \left(\frac{n^{v_{H}}p^{e_{H}}}{\tu{Aut}(H)}\right) = (1+o(1))\left(\frac{n^{v_{H}}p^{e_{H}}}{\tu{Aut}(H)}\right),
\end{multline}
for some constant $\bar c_{\rho, H}$. To obtain the second equality in \eqref{eq:t-1-bd} we have used that any graph $H$ with minimum degree two has $e_H \geq v_H$, which together with the lower bound $n^{v_H} p^{e_H} \gg 1$ implies that $d \gg 1$, and the latter in turn implies $D_{H}= 1-O(d^{-1})$. \dteal{Upon using} the fact that $d \gg 1$ once more yields the rightmost equality.

Next, as a consequence of \eqref{eq:t-1-bd}, any positive integer $t$ satisfying $t\leq (\delta+\varepsilon)\E[\wn]$ also satisfies the hypotheses of Lemmas \ref{lem:estimate t factorial moment lower bound} and \ref{lem:estimate t factorial moment}, with $C_0$ and $\rho$  as specified in our assumptions. Hence, uniformly over all positive integers $t\leq (\delta+\varepsilon) \E[\widetilde N]$, we have
\[ 
\left|\E[\wn^{\underline{t}}]-\left(\frac{n^{v_{H}}p^{e_{H}}\cdot D_{H}}{\tu{Aut}(H)}\right)^{t}\right| \leq n^{-\bar c_{\rho, H}}\cdot \left(\frac{n^{v_{H}}p^{e_{H}}\cdot D_{H}}{\tu{Aut}(H)}\right)^{t} \leq n^{-{\bar c_{\rho, H}}}\cdot \left(1+o(n^{-\bar c_{\rho, H}})\right)^t \E[\widetilde N]^t,
\]
where we also used the first equality in \eqref{eq:t-1-bd}. Since, by our assumption $t \lesssim (\log n)^{O(1)}$ the required bound \eqref{eq:fac-mom-bd} follows from above. This completes the proof. 
\end{proof}
    
Next we turn to the proof of Lemma \ref{lem:estimate t factorial moment lower bound}. We need some more notation. For 
$t\in \N$, define 	
    \begin{equation*}
        \begin{aligned}
            \mathcal{H}(t) \coloneqq \big\{\underline{I}=(I_1, I_2, \ldots, I_t):\ & I_{1},I_{2},\ldots, I_{t}\tu{ are distinct copies of }H\tu{ in }K_n\big\}. 
        \end{aligned}
    \end{equation*}
Notice that $\wn= \sum_{I\in \mathcal{H}(1)}\mathds{1}_{(I\subseteq \Gd)}$. For $\underline{I}=(I_1, \ldots, I_{t})\in \mathcal{H}(t)$, write {\dteal{$G({\underline{I}})$} to denote the subgraph \dteal{spanned by the edges} $\cup_{i=1}^{t}I_{i}$}. We will write $V(\underline I)$ and $E(\underline I)$ to denote the vertex set and the edge set of $G(\underline I)$, respectively. Armed with this notation, obrseve that, for any $t\in \N$,   {$\wn^{\underline{t}} = \sum_{\underline{I}\in \mathcal{H}(t)}\mathds{1}_{(G(\underline{I})\subseteq \Gd)}$.} 
Therefore, to obtain a good lower bound on $\E[\wn^{\underline t}]$ we need to start with the same on $|\mathcal{H}(t)|$ as well. 

Turning to do that, upon using $\log(1-a) \geq -2a$ for $a \in [0,1/2]$ we find that  
\begin{equation}
x^{y}\cdot \exp(-y^{2}/x) \leq x^y \cdot \prod_{i=0}^{y-1} \left(1-\frac{i}{x}\right) \leq x^{\underline{y}}\leq x^y, \qquad \text{ for all } x \geq 2y,
\end{equation}		
Using the above bound, we \dteal{find} that  		
\[
|\mathcal{H}(1)| = n^{\underline{v_{H}}}\cdot (\tu{Aut}(H))^{-1} \geq n^{v_{H}}e^{-v_{H}^{2}/n}\cdot (\tu{Aut}(H))^{-1}.
\]
The lower bound on $|\mathcal{H}(1)|$, for $t\leq |\mathcal{H}(1)|/2$, further yields that
    \begin{equation}
        \label{eq:t-tuple count}
        |\mathcal{H}(t)| = |\mathcal{H}(1)|^{\underline{t}} \geq \left(\frac{n^{v_{H}}}{\tu{Aut}(H)}\right)^{t}\exp\left(-\frac{tv_{H}^{2}}{n}-\frac{t^{2}}{|\mathcal{H}(1)|}\right) = \left(\frac{n^{v_{H}}}{\tu{Aut}(H)}\right)^{t}\cdot e^{-O(\frac{t}{n}+\frac{t^{2}}{n^{v_{H}}})}.
    \end{equation} 		

Equipped with \eqref{eq:t-tuple count} we move to the proof of Lemma \ref{lem:estimate t factorial moment lower bound}.
    \begin{proof}[Proof of Lemma \ref{lem:estimate t factorial moment lower bound}]
        
        
{To obtain the necessary lower bound on the factorial moment, we rely on \eqref{eq:t-tuple count} to \dteal{lower bound} 
the number of $t$-tuples of copies of $H$ that are pairwise vertex disjoint. This counting estimate, together with a uniform lower bound on the probability of the event $\{G(\underline I) \subseteq \Gd\}$ for any such $t$-tuples $\underline I$, completes the argument.} Below we carry out the details.
        
Consider tuples  $\underline{I} = (I_{1},\ldots, I_{t})\in \mathcal{H}(t)$ consisting of copies of $H$ that are not all pairwise vertex disjoint, so that  $V(\underline{I})\leq tv_{H}-1$. 
        In this case, there exists a pair $(I_i, I_j)$ with $V(I_i)\cap V(I_j)\neq \emptyset$. We count such $t$-tuples by first choosing a pair of copies that intersect. There are at most ${t\choose 2}$ choices for $\{i,j\}$, at most $n^{v_{H}}\cdot\tu{Aut}(H)^{-1}$ choices for $I_i$, and at most $v_{H}^{2} n^{v_{H}-1}\cdot (\tu{Aut}(H))^{-1}$ choices for $I_j$ intersecting $I_i$ (the factor $v_H^2$ is due to the choice of the common vertex of $I_i$ and $I_j$ \dteal{from $V(I_i)$ and the choice of the vertex from $V(H)$ to be mapped to that common vertex}). The remaining $t-2$ copies can be chosen in at most $n^{(t-2)v_{H}}\cdot (\tu{Aut}(H))^{-(t-2)}$ ways. 
Subtracting this upper bound for the number of tuples $\underline I$ consisting of copies of $H$ that are not all pairwise vertex disjoint from \eqref{eq:t-tuple count}, and setting 
\[
\widehat{\mathcal{H}}(t):= \{\underline{I}\in \mathcal{H}(t): |V(\underline{I})|=tv_{H}\}, 
\]		
we obtain
        \begin{equation}
            \label{eq:disjoint t copies}
            |\widehat{\mathcal{H}}(t)| \geq \left(\frac{n^{v_{H}}}{\tu{Aut}(H)}\right)^{t}\left(e^{-O(\frac{t}{n}+\frac{t^{2}}{n^{v_{H}}})}-\frac{t^{2}v_{H}^{2}/2}{n}\right)\geq \left(\frac{n^{v_{H}}}{\tu{Aut}(H)}\right)^{t}(1-n^{-c_H}),
        \end{equation}
    for some $c_{H}\in (0,1)$, where in the last step we have used $t\lesssim (\log n)^{O(1)}$.

    Now, fix a $\underline{I} = (I_{1},\ldots, I_{t})\in \widehat{\mathcal{H}}(t)$, 
Define $\widetilde{J}_{1}:= \emptyset$ and $\widetilde{J}_{i} := \cup_{j=1}^{i-1} I_j$, for $i\in \{2,\ldots,t+1\}$. As $I_{i}$'s are pairwise edge disjoint, we have $|E(I_i)|=e_{H}$ and $|E(\widetilde{J}_i)|=(i-1)e_{H}$. We may therefore apply Corollary \ref{cor:almost independence lemma(poisson)} successively to each $I_i$, conditioning on $\{G(\widetilde{J_i})\subseteq \Gd\}$. {Therefore, applying Corollary  \ref{cor:almost independence lemma(poisson)} we get 
        \begin{equation}
            \label{eq:bound on each copy}
            \P(I_{i}\subseteq \Gd| \widetilde{J}_{i}\subseteq \Gd) \geq  p^{e_{H}} \cdot D_{I_{i},\widetilde{J}_i}\cdot f^{-e_H}_{ie_H}.
        \end{equation}
        Moreover, since the copies are vertex disjoint, we observe that 
        $D_{I_i, \widetilde{J}_i}  = D_H$ for all $i \in \set{t}$ \dteal{(recall \eqref{eq:def of D_H})}.  
Further note that $f_{ie_{H}}\leq f_{te_H}$ for all $i\in \set{t}$ (recall \eqref{eq:error bound on independence}). 
Plugging these bounds in \eqref{eq:bound on each copy}  we deduce

        \begin{equation}
            \label{eq:lower bound on vertex disjoint copies}
            \P(G(\underline{I})\subseteq \Gd) = \prod_{i=1}^{t}\P(I_i \subseteq \Gd|\widetilde{J}_i\subseteq \Gd) \geq p^{te_{H}}\cdot D_{H}^{t} \cdot f_{te_{H}}^{-te_{H}}. 
        \end{equation}
        Next, we analyze the asymptotic behavior of the correction term in \eqref{eq:lower bound on vertex disjoint copies}. From \eqref{eq:error bound on independence} we have $ f_{te_{H}}^{te_{H}} = \exp(O(tp+t^{2}/nd))$. Using the upper bounds $n^{v_{H}}p^{e_{H}}, t\lesssim (\log n)^{O(1)}$ yields  
        \begin{equation*}
            tp+\frac{t^{2}}{nd}\ \lesssim\  (n^{-v_{H}/e_{H}}+n^{-1})(\log n)^{O(1)}\ \lesssim\ n^{-\widehat{c}},
        \end{equation*}
    where $\widehat{c}$ is some positive constant depending only on $H$. As for $0\leq a< \dteal{1/2}$, $e^{a}\leq1+2a\leq (1-\dteal{2a})^{-1}$ holds,  we conclude from the above bound that 
        \begin{equation}
            \label{eq:f bound error}
            f_{te_{H}}^{te_{H}} \leq 1+O(n^{-\widehat{c}}) = \left(1-O(n^{-\widehat{c}})\right)^{-1}.
        \end{equation}

        The proof of the lower bound is now completed by noting that $\E[\wn^{\underline{t}}] \geq  \sum_{\underline{I}\in \widehat{\mathcal{H}}(t)} \P(G(\underline{I})\subseteq \Gd)$, and using the estimates \eqref{eq:disjoint t copies}, \eqref{eq:lower bound on vertex disjoint copies},and \eqref{eq:f bound error}.}
    \end{proof}


For the lower bound on $\E[\wn^{\underline{t}}]$ we had focused only on vertex non-overlapping \dteal{(and thus edge non-overlapping)} copies of $H$. 	
To obtain the same asymptotic upper bound on $\E[\wn^{\underline{t}}]$, we must also account for contributions from edge overlapping $t$ many copies of $H$. Since such overlapping copies are no longer \dteal{edge disjoint}, care is needed to deal with them.  A natural route is to partition these overlapping copies into clusters so that within a given cluster copies overlap, while copies across clusters are edge disjoint.  This procedure \dteal{is} formalized as below:
For an integer $s\geq 2$, define
\begin{equation}\label{eq:def-Fs}
\begin{aligned}
\mathfrak{F}_{s} := \{\{I_1,\ldots, I_{s}\}: I_{1}, \ldots, I_{s} & \tu{ are distinct copies of $H$ such that } \forall \emptyset \neq S \subsetneq \set{s}\\&
\exists i\in \set{S}, \tu{ and } j\in \set{s}\setminus S 
\tu{ with } E(I_{i})\cap E(I_{j})\neq \emptyset\}.
\end{aligned}
\end{equation}
For ease in writing, we extend the above definition to $\mathfrak{F}_{1}$, which is naturally identified with $\mathcal{H}(1)$.  It is worth highlighting that the definition \eqref{eq:def-Fs} ensures that any $\underline I \in \mathfrak{F}_s$, for some $s \geq 2$, cannot be decomposed further into $\underline I_1$ and $\underline I_2$ such that $\underline I_1\in \mathfrak{F}_{s_1}$ and $\underline I_1\in \mathfrak{F}_{s_2}$, with $s_1+s_2=s$, such that \dteal{$E(G({\underline I_1})) \cap E(G({\underline I_2}))=\emptyset$}.

Next, for a cluster $A = \{I_{1}, \ldots, I_{s}\}\in \mathfrak{F}_{s}$, we write $G(A)$, for convenience, to  
denote the subgraph spanned by the the union of the edges of $\{I_j\}_{j \in \set{s}}$, and denote $E(A)$ and $V(A)$ to be the edge set and vertex set of $G(A)$, respectively. 
For $\ell \geq 0$ and a sequence $1\leq s_{1}\leq s_{2}\leq \cdots \leq s_{\ell}$, we define the family of edge disjoint clusters by
\begin{equation*}
    \mathfrak{F}_{s_1, s_{2}, \ldots, s_\ell} \coloneqq \{(A_1, \ldots, A_{\ell}): A_{i}\in \mathfrak{F}_{s_{i}}\ \forall i\in \set{\ell}\tu{ and } \forall i,j\in \set{\ell},\ i\neq j,E(A_{i})\cap E(A_{j})=\emptyset\}.
\end{equation*}
Similar to the above, for an $\underline A = (A_1, \ldots, A_{\ell}) \in \mathfrak{F}_{s_1, s_{2}, \ldots, s_\ell}$ we set $G(\underline A):= G(\cup_{i=1}^\ell A_i)$ to be \dteal{the subgraph spanned by the} edge disjoint union of $\{G(A_i)\}_{i \in \set{\ell}}$. 


The key step in establishing the upper bound on $\E[\wn^{\underline t}]$ is to control the contribution of non-trivial clusters. The next lemma provides precisely the required conditional estimate in the presence of other clusters, as long as their union contains at most poly-logarithmically many edges.

\begin{lemma}
    \label{lem:cluster expectation}
    Fix some $C, \beta_1, \beta_{2}>0$. Assume that $p\in (0,1)$, satisfies  $n^{v_{H}}p^{e_{H}}\leq \beta_1(\log n)^{\beta_2}$. Assume $0< \rho <\rho^{\star}_{C}$ (recall \eqref{eq:def of constants}). Uniformly over all $G_{\star}\in \widehat{\Xd}$ (recall \eqref{eq:def-Xd}) satisfying $e(G_{\star}) \leq \beta_1(\log n)^{\beta_2}$, and all integers $2\leq s\leq \rho (\log n)^{\alpha^*_H/(\alpha^*_H-1)}$, we have for \dteal{all} sufficiently large $n$, 
    \begin{equation}
        \label{eq:1st estimate cluster expectation}
        \sum_{A\in \mathfrak{F}_{s}:\ E(A)\cap E(G_{\star})= \emptyset}  \P\left({G(A)} \subseteq \Gd\ \big| G_{\star}\subseteq \Gd \right)\leq C^{-s} \exp\left(-\frac{\theta_{\rho}}{4e_{H}} s^{1/\alpha}\log n \right),
    \end{equation}
    where $\theta_{\rho}\coloneqq\frac{\phi_{H}}{2}-M_{C}\cdot \rho^{(\alpha^*_H-1)/\alpha^*_H}$. 
\end{lemma}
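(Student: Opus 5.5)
We bound each cluster's probability by near independence, reduce to a count of clusters indexed by $(v_A,e_A)$, and close with a union bound. Fix $A\in\mathfrak F_s$ with $E(A)\cap E(G_\star)=\emptyset$. Every cluster with $s\le \rho(\log n)^{\alpha^*_H/(\alpha^*_H-1)}$ has at most $se_H$ edges, so $e(G_\star\cup G(A))$ is polylogarithmic and hence $o(nd)$. If $G_\star\cup G(A)\notin\widehat{\Xd}$ the probability vanishes. Otherwise Corollary~\ref{cor:almost independence lemma(poisson)} gives
\[
\P(G(A)\subseteq\Gd\mid G_\star\subseteq\Gd)\le p^{e(A)}D_{G(A),G_\star}f^{e(A)}\le (1+o(1))p^{e(A)},
\]
since $D\le 1$ and $f^{e(A)}=\exp(O(e(A)(p+e/(nd))))=1+o(1)$, exactly as in \eqref{eq:f bound error}. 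The left side of \eqref{eq:1st estimate cluster expectation} is therefore at most $2\sum_{A\in\mathfrak F_s}p^{e(A)}$. This is a purely deterministic count, identical to the Erd\H{o}s--R\'enyi cluster sum in \cite{HMS, BK25}. The remaining work is to show
\[
\sum_{A\in\mathfrak F_s} p^{e(A)}\le \tfrac12 C^{-s}\exp\!\big(-\tfrac{\theta_\rho}{4e_H}s^{1/\alpha^*_H}\log n\big).
\]

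To bound this sum, group clusters by $v=v(A)$ and $e=e(A)$; let $N_{v,e,s}$ be the number of clusters with these parameters. Two ingredients are needed.

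\textbf{Step 1: excess of edges over vertices.} Strict balancedness, via an ordering argument, should give $e-\lambda v\gtrsim$ a positive amount. Add the copies of $A$ one at a time in an order where each new copy shares an edge with the union so far. The new copy contributes its intersection $J$ with the previous union, with $\emptyset\neq J\subsetneq H$ and $e_J/v_J\le\lambda^*<\lambda$, so it adds $e_H-e_J$ edges and $v_H-v_J$ vertices. Hence $p^{e-e_J\cdot}$-type bookkeeping yields
\[
n^{v}p^{e}\le (n^{v_H}p^{e_H})^{\,v/v_H}\, n^{-c\,(\#\text{non-first steps})}.
\]
Combined with $n^{v_H}p^{e_H}\le(\log n)^{O(1)}$, the overlap forces a gain of $n^{-\phi_H}$-type per cluster. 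This is where $r_H$ and $\phi_H$ come from: writing $p\le n^{-1/\lambda}(\log n)^{O(1)/e_H}$, each shared $J$ saves $n^{v_J(1/\lambda^*-1/\lambda)\cdot\ldots}$.

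\textbf{Step 2: vertices grow with $s$.} A cluster of $s$ distinct copies of $H$ on $v$ vertices satisfies $s\le N(H,K_v)\lesssim v^{\alpha^*_H}$ only in the wrong direction. Here one instead needs $v\gtrsim s^{1/\alpha^*_H}$, which follows from the fractional-independence bound on the number of copies of $H$ in a graph with $e$ edges (Alon/Friedgut--Kahn), namely $s\le e^{\alpha^*_H}\cdot O(1)$, so $e\gtrsim s^{1/\alpha^*_H}$. The number of ways to choose $A$ is then at most $n^{v}\binom{\binom v2}{e}$-ish, times a factor $e^{O(s)}$. More precisely, one counts $s$-subsets of the $\lesssim e^{\alpha^*_H}$ copies inside a given graph, giving
\[
\binom{O(e^{\alpha^*_H})}{s}\le \exp\big(s\,f(\cdot)\big),
\]
which is where $M_C$, built from $f(x)+\log C$ over $x^{1/\alpha^*_H}$, enters. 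Balancing these, the combinatorial entropy $\exp(M_C\,\rho^{(\alpha^*_H-1)/\alpha^*_H}\, s^{1/\alpha^*_H}\log n\cdot\ldots)$, plus the $C^s$ factor, is absorbed by the gain $\exp(-\tfrac{\phi_H}{2}s^{1/\alpha^*_H}\log n/e_H)$. This uses $s\le\rho(\log n)^{\alpha^*_H/(\alpha^*_H-1)}$, i.e. $s\le \rho^{(\alpha^*_H-1)/\alpha^*_H}\,s^{1/\alpha^*_H}\log n$ after rearranging. The result is $\theta_\rho>0$, with the remaining sum over $(v,e)$ contributing only a polylog factor absorbed by the factor $1/4$.

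The main obstacle is Step 1 combined with Step 2. One must show that the saving in $n^{v}p^{e}$ scales like $s^{1/\alpha^*_H}\log n$ rather than merely a constant. I would try to prove
\[
e\ge \lambda v+ c\,(e/e_H)
\]
via a greedy spanning sequence of copies whose length is at least $e/e_H\gtrsim s^{1/\alpha^*_H}/e_H$, each step gaining $n^{-\phi_H}$ up to polylog losses. This is delicate when $n^{v_H}p^{e_H}$ is as large as $(\log n)^{\alpha^*_H/(\alpha^*_H-1)}$, since the polylog losses per step must be dominated by $\log n$ with the explicit constant $\phi_H/2$.
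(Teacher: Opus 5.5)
Your architecture is the paper's. Write $m=e(G(A))$ and $k=|V(G(A))|$ for your $e$ and $v$. The shared pieces are:
\begin{itemize}
\item the near-independence bound $(1+o(1))p^{m}$;
\item the count $n^{k}\binom{k^2}{m}\binom{(2m)^{\alpha^*_H}}{s}$ from $N(H,k,m)\le(2m)^{\alpha^*_H}$, which forces $2m\ge s^{1/\alpha^*_H}$;
\item Stirling with $f$;
\item strict balancedness along an ordering in which each new copy shares an edge with the current union. This ordering is a fine substitute for the paper's induction, which removes a copy and leaves a cluster.
\end{itemize}
However, the step that actually produces the stated $\theta_\rho$ is left open, and the balancing you sketch is done at the wrong scale. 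You pit an entropy of order $M_C\rho^{(\alpha^*_H-1)/\alpha^*_H}s^{1/\alpha^*_H}\log n$ against a gain of order $\frac{\phi_H}{2e_H}s^{1/\alpha^*_H}\log n$. The entropy of your count is not of that order in general, since $m$ ranges up to $e_Hs$ and the factor $\binom{k^2}{m}$ alone can contribute order $m\log\log n$. Taken at face value, absorption would also need $M_C\rho^{(\alpha^*_H-1)/\alpha^*_H}<\phi_H/(2e_H)$, which $\rho<\rho^\star_C$ does not give. Likewise, a saving of $n^{-c}$ per step with unspecified $c$ cannot yield the explicit $\theta_\rho$.

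The missing point is that gain and entropy must both be measured \emph{per edge} of $G(A)$.

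\textbf{Gain.} Your ordering must deliver the sharp increment bound $e_H-e_J\ge\lambda r_H(v_H-v_J)$ for every $\emptyset\ne J\subseteq H$. This includes $J=H$: a new copy may lie entirely inside the current union, contrary to your $J\subsetneq H$, though this case is harmless. Summing over steps gives $m-e_H\ge\lambda r_H(k-v_H)$, and therefore
\[
n^kp^m\le n^{v_H}p^{e_H}\,(np^{\lambda r_H})^{(m-e_H)/(\lambda r_H)}\le n^{-\phi_H(m-e_H)}\big(\beta_1(\log n)^{\beta_2}\big)^{m/e_H}.
\]
This is a saving of exactly $\phi_H\log n$ per edge beyond the first copy.

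\textbf{Entropy.} Set $x=(2m)^{\alpha^*_H}/s$, which is at least $1$ whenever the count is nonzero. The definition of $M_C$, which already contains the $C^s$ factor, gives
\[
s(f(x)+\log C)\le M_Csx^{1/\alpha^*_H}=2mM_Cs^{1-1/\alpha^*_H}\le 2mM_C\rho^{(\alpha^*_H-1)/\alpha^*_H}\log n .
\]
This is again linear in $m$. Hence $\log\big(C^sn^kp^m\binom{k^2}{m}\binom{(2m)^{\alpha^*_H}}{s}\big)\le\big(\phi_He_H-2m(\theta_\rho-o(1))\big)\log n$. Only at this point should you invoke $2m\ge s^{1/\alpha^*_H}$.

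\textbf{Case split.} The additive $\phi_He_H\log n$ term, reflecting that the first copy saves nothing, forces a split with $s_0:=\lceil(2\phi_He_H/\theta_\rho)^{\alpha^*_H}\rceil$.
\begin{itemize}
\item For $s\ge s_0$, the term is absorbed by half the gain.
\item For $s<s_0$, one has $m,s=O(1)$ and $m\ge e_H+1$. The bound becomes $-(1-o(1))\phi_H\log n$, which is at most $-(1-o(1))\frac{\theta_\rho}{2e_H}s^{1/\alpha^*_H}\log n$. This is the source of the $1/e_H$ in the final exponent.
\end{itemize}
The polylogarithmic losses you flag as delicate are only $e^{O(m\log\log n)}$ and are harmless. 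The real issue is the per-edge constant.
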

The precise value of the constant in the exponent in \eqref{eq:1st estimate cluster expectation} will not be important for the proof of Lemma \ref{lem:estimate t factorial moment}; any positive constant would suffice. We will prove Lemma \ref{lem:cluster expectation} later, but for now we use this estimate to complete the proof of Lemma \ref{lem:estimate t factorial moment}.   To improve notational clarity, during the rest of the proofs in this section we will use $\alpha^*$ instead of $\alpha_{H}^{*}$. 

    \begin{proof}[Proof of Lemma \ref{lem:estimate t factorial moment}]
Consider an arbitrary sequence $\underline{I}=(I_1,\ldots, I_{t})\in \mathcal{H}(t)$. The copies of $H$ in $\underline{I}$ can be partitioned into groups based on their connectivity: some copies are isolated (i.e.~they do not share any edges with other copies of $H$ present in $\underline I$), while the rest form clusters of specific sizes. Hence, for any $\underline I \in \mathcal{H}(t)$, there exist integers $\ell,k,s_1, s_2,\ldots, s_\ell\geq 0$ satisfying $\ell\leq \frac{1}{2}(t-k)$, $k+s_1+\cdots+ s_\ell=t$ and $2\leq s_1\leq \cdots\leq s_\ell$ such that we can partition $\underline{I}$ as 
        \begin{equation*}
            \bigg(\underbrace{\{I_{i_1}\}, \{I_{i_2}\}, \ldots, \{I_{i_k}\}}_{k\tu{ isolated copies}}, \underbrace{\{I_{i_{k+1}}, \ldots, I_{i_{k+s_1}}\}}_{\tu{cluster of size }s_1}, \ldots, \underbrace{\{I_{i_{k+s_1+\cdots+s_{\ell-1}+1}}, \ldots, I_{i_{k+s_1+\cdots+s_{\ell}}}\}}_{\tu{cluster of size }s_\ell}\bigg) \in \mathfrak{F}_{1,1,\ldots, 1, s_{1}, \ldots, s_{\ell}},
        \end{equation*}
        where $i_{1}\leq i_{2}\leq \cdots \leq i_{k}$ and  there are exactly $k$ many $1$'s in the subscript of $\mathfrak{F}$. 
        
        Conversely, with $\ell,k, s_{1}, \ldots, s_{\ell}$  satisfying the above conditions, let 
        \[
\underline{A}=(A_{1}, A_{2}, \ldots,A_k, A_{k+1},\ldots, A_{k+\ell})\in \mathfrak{F}_{1, \ldots, 1,s_{1}, \ldots, s_\ell}. 
\]
We claim that the number of sequences $\underline{I}\in \mathcal{H}(t)$ that can be partitioned to yield this specific $\underline{A}$ is at most $t^{s_{1}+\cdots+s_{\ell}}$. This is because each element in $A_{i}$, for  $i> k$, has at most $t$ choices for its position, and there are $s_{1}+\cdots+s_\ell$ many such elements in $A_{k+1}\cup \cdots \cup A_{\ell}$. The positions of the rest of $k$ isolated elements of $A_{1}\cup \cdots \cup A_{k}$ are already fixed by our choice of the partitioning process. This yields the following bound:
        \begin{equation}
            \label{eq:rewrite t factorial moment}
\E[\wn^{\underline t}] =	\sum_{\underline{I}\in \mathcal{H}(t)} \P(G(\underline{I}) \subseteq\Gd)\ \leq\ \sum_{k=0}^{t} \sum_{\ell=0}^{\lfloor\frac{t-k}{2}\rfloor} \sum_{\stackrel{s_{1}+\ldots+s_{\ell}=t-k}{2\leq s_{1}\leq s_{2}\leq \cdots \leq s_{\ell}}} \sum_{\stackrel{(A_{1},\ldots,A_k, \ldots, A_{k+\ell})}{\in \mathfrak{F}_{1,\ldots, 1,s_{1}, \ldots, s_{\ell}}}} \P(G(\cup_{i=1}^{k+\ell}A_i) \subseteq \Gd) \cdot t^{\sum_{i=1}^{\ell}s_i}.
        \end{equation}  
        
        An upper bound on the RHS of  \eqref{eq:rewrite t factorial moment} will be obtained in stages. 
First, consider the contribution from the isolated copies. For any $(A_{1}, \ldots, A_{k})\in \mathfrak{F}_{1,\ldots,1}$, each $A_{i}$ is a copy of $H$ and $\{A_i\}_{i\in \set{k}}$ are pairwise edge disjoint. {Applying the upper bound in Corollary \ref{cor:almost independence lemma(poisson)} iteratively $k$ times, we get}
        \begin{equation*}
            {\P(G(\cup_{i=1}^k A_i)  \subseteq \Gd) \leq p^{ke_{H}}\cdot D_{H}^{k}\cdot f_{ke_{H}}^{ke_{H}},}
        \end{equation*}
        where we have used that for each $i\in \set{k}$ and any $G_\star \in \widehat{\Xd}$, we have $D_{A_i,G_\star}\leq  D_H$. The number of elements in $\mathfrak{F}_{1,\ldots, 1}$ (with $k$ many $1$'s in the subscript) is at most $|\mathcal{H}(1)|^{k}\leq n^{kv_{H}}\tu{Aut}(H)^{-k}$. Hence,
\begin{equation} \label{eq:bound on isolated copies}	
\sum_{(A_1, A_2, \ldots, A_k) \in \mathfrak{F}_{1,\ldots,1}} \P(G(\cup_{i=1}^k A_i)  \subseteq \Gd) \leq \left(\frac{n^{v_H}p^{e_H}D_H}{\tu{Aut}(H)}\right)^k \cdot {f_{ke_H}^{ke_H}}.	
\end{equation}	
        
        Now fix a tuple $(A_{1}, \ldots, A_k) \in \mathfrak{F}_{1, \ldots, 1}$. We recall that for any $(A_{k+1},\ldots, A_{k+\ell})\in \mathfrak{F}_{s_1, \ldots, s_\ell}$, $\{G(A_{k+j})\}_{j\in \set{\ell}}$ are pairwise edge disjoint. Notice that  
\begin{equation*}
\begin{aligned}
\P\left(G(\cup_{j=1}^{\ell}A_{k+j})\subseteq \Gd\ \mid  G(\cup_{i=1}^{k}A_{i})\subseteq \Gd\right) & = \prod_{j=1}^\ell  \P\left(G(A_{k+j})\subseteq \Gd\ \mid  G(\cup_{i=1}^{k+j-1}A_{i})\subseteq \Gd\right)\\
& \leq \prod_{j=1}^\ell \left(\sup_{\substack{ G_{\star}\in \widehat{\Xd},e(G_{\star})\leq te_{H}, \\ E(G_\star) \cap E(A_{k+j})=\emptyset}} \P(G(A_{k+j}) \subseteq \Gd|G_{\star}\subseteq \Gd)\right).
\end{aligned}
\end{equation*}
Therefore, for any $(A_1,A_2, \ldots, A_k) \in \mathfrak{F}_{1,1,\ldots, 1}$ and $\{s_i\}_{i \in \set{\ell}}$, with $2\leq s_1\leq\cdots\leq s_\ell\leq t \leq \rho (\log n)^{\alpha^*/(\alpha^*-1)}$, upon applying Lemma \ref{lem:cluster expectation} with $C=C_0$ we obtain 
        \begin{equation}
            \label{eq:bound on multiple clusters}
            \begin{aligned}
            & \sum_{\stackrel{A_{k+1}, \ldots, A_{k+\ell}\in \mathfrak{F}_{s_1, \ldots, s_\ell}}{E(A_{k+j})\cap E(\cup_{i=1}^{k}A_i)=\emptyset,\ \forall j\in \set{\ell}}}  \P\left(G(\cup_{j=1}^{\ell}A_{k+j})\subseteq \Gd\ \mid  G(\cup_{i=1}^{k}A_{i})\subseteq \Gd\right)\\ 
\leq & \hskip10pt \prod_{j=1}^{\ell}\ \bigg(\sup_{\substack{ G_{\star}\in \widehat{\Xd},\\ e(G_{\star})\leq te_{H}}}\sum_{\substack{A\in \mathfrak{F}_{s_{j}}\\ E(A)\cap E(G_{\star})=\emptyset}}\P(A\subseteq \Gd|G_{\star}\subseteq \Gd)\bigg) \leq C_{0}^{-\sum_{j=1}^{\ell}s_j}\exp\left(-c_{\rho, H}\cdot\log n \sum_{j=1}^{\ell}s_{j}^{1/\alpha^*}\right),
            \end{aligned}
        \end{equation}
for some constant $c_{\rho, H} >0$, depedning only on $\rho$ and $H$.			


On the other hand, since $\alpha^*>1$ (as $v_H >2$ and $\alpha^*_H \geq v_H/2$), 
and $s_{i}\geq 2$ for all $i\in \set{\ell}$, using that the map $(x,y) \mapsto (x+y)^{1/\alpha^*} - x^{1/\alpha^*} - y^{1/\alpha^*}$ is strictly decreasing on $(0,\infty)^2$ we have 
\begin{equation}\label{eq:varkappa}
\sum_{i=1}^{\ell}s_{i}^{1/\alpha^*}- \left(\sum_{i=1}^{\ell} s_{i}\right)^{1/\alpha^*}\geq \varkappa(\ell-1), \quad \text{ where } \varkappa :=2\cdot 2^{1/\alpha}-4^{1/\alpha}>0.
\end{equation}
Further for a fixed $k$ and an $\ell \geq 1$, the number of ordered tuples $(s_1, \ldots, s_\ell)$ summing to $t-k$, such that $t \lesssim (\log n)^{O(1)}$, is at most ${t-k-\ell-1\choose \ell-1}\leq t^{\ell-1}=\exp((\ell-1)\cdot o( \log n))$. Hence, for any $k$ such that $k \leq t-2$, 
        \begin{equation}
            \label{eq:bound for non-zero cluster}
            \sum_{\substack{s_{1}+\cdots+s_{\ell}=t-k\\ 2\leq s_{1}\leq \cdots\leq s_{\ell}}}\exp\left(-c_{\rho, H}\log n\sum_{j=1}^{\ell}s_{j}^{1/\alpha^*}\right)
        \leq n^{-c_{\rho, H}\cdot(t-k)^{1/\alpha^*}}\cdot n^{-(\varkappa c_{\rho, H}-o(1))(\ell-1)}\leq	n^{-c_{\rho, H}\cdot2^{1/\alpha^*}}.
        \end{equation}
                
Plugging the bounds \eqref{eq:bound on isolated copies},\eqref{eq:bound on multiple clusters}, and \eqref{eq:bound for non-zero cluster} in \eqref{eq:rewrite t factorial moment}, recalling that $\sum_{j=1}^{\ell}s_j=t-k$,  and noting $k$ never equals $t-1$, we deduce
        \begin{equation*}
            \begin{aligned} 
\E[\wn^{\underline t}] &\leq\sum_{k=0}^{t}\sum_{\ell= 0}^{\lfloor\frac{t-k}{2}\rfloor} \sum_{\stackrel{s_{1}+\ldots+s_{\ell}=t-k}{2\leq s_{1}\leq \cdots \leq s_{\ell}}}\left(\frac{n^{v_{H}}p^{e_{H}} D_{H}}{\tu{Aut}(H)}\right)^{k} \cdot {f_{ke_{H}}^{ke_{H}}}\cdot  \left(\frac{t}{C_0}\right)^{t-k}\exp\left(-c_{\rho, H}\log n\sum_{j=1}^{\ell}s_j^{1/\alpha^*}\right)\\[-5pt]
                &\leq \left(\frac{n^{v_{H}}p^{e_{H}} D_{H}}{\tu{Aut}(H)}\right)^{t}{f_{te_{H}}^{te_{H}}}\left(1+\sum_{k=0}^{t-2}\sum_{\ell=1}^{\lfloor\frac{t-k}{2}\rfloor}\sum_{\substack{s_{1}+\cdots + s_{\ell}=t-k\\ 2\leq s_{1}\leq \cdots\leq s_{l}}} \exp\big(-c_{\rho, H}\log n \sum_{j=1}^{\ell}s_{j}^{1/\alpha^*}\big)\right)\\
                & \leq \left(\frac{n^{v_{H}}p^{e_{H}} D_{H}}{\tu{Aut}(H)}\right)^{t} \cdot{(1+O(n^{-\widehat{c}}))}  \cdot \left(1+t^2 n^{-c_{\rho, H}\cdot2^{1/\alpha^*}}\right), 
            \end{aligned}
        \end{equation*}
        where in the second step we use the bounds $t\leq C_{0}\cdot n^{v_{H}}p^{e_{H}}D_{H}\cdot \tu{Aut}(H)^{-1}$ and  {$f_{ke_{H}}^{ke_{H}}\leq f_{te_{H}}^{te_{H}}$, and in the last step we used that $ f_{te_{H}}^{te_{H}} \leq (1+O(n^{-\widehat{c}}))$ obtained in \eqref{eq:f bound error} (since we have $n^{v_{H}}p^{e_{H}},t\lesssim (\log n)^{O(1)}$)}. {Further  using $t \lesssim (\log n)^{\alpha^*/(\alpha^*-1)}$, the desired upper bound on $\E[\wn^{\underline t}]$ now follows from above. This completes the proof.}
\end{proof}			
%
%
%
%
%
        
We now return to the proof of Lemma \ref{lem:cluster expectation}, which was deferred earlier.
\begin{proof}[Proof of Lemma \ref{lem:cluster expectation}]
 We begin by fixing an $A\in \mathfrak{F}_{s}$, and setting $m\coloneqq e(G(A))$,  $k\coloneqq|V(G(A))|$. Since $A$ consists of $s\geq 2$ copies of $H$, we have $e_{H}+1\leq m\leq e_{H}s$ and $v_{H}\leq k\leq v_{H}s$. In particular, the assumed upper bound on $s$ implies $m,k\lesssim (\log n)^{O(1)}$. Applying Corollary \ref{cor:almost independence lemma(poisson)} with $I=A$, we obtain
\begin{equation}
    \label{eq:cluster prob bound}
    \P(G(A)\subseteq \Gd\ \big|\ G_{\star}\subseteq \Gd)\leq p^{m}\cdot D_{A, G_{\star}}\cdot f_{m+e(G_{\star})}^{m}\leq (1+o(1))\cdot p^{m},
\end{equation} 
where we used $D_{A, G_{\star}}\leq 1$, and since, $m+e(G_{\star}) \lesssim  (\log n)^{O(1)}$, {using the upper bound on $p$, we have $f_{m+e(G_{\star})}^{m}= \exp(O(p+n^{-1}) (\log n)^{O(1)})=1+o(1)$. }

Next, let $N(H, k,m)$ denote the maximum number of copies of $H$ in a graph with $k$ vertices and $m$ edges. For fixed $k$ and $m$, we may bound the number of possible $A\in \mathfrak{F}_{s}$ by first choosing the $k$ vertices, then the $m$ edges, and finally the $s$ copies of $H$. Thus,
\begin{equation}
    \label{eq:number of clusters}
    \left|\{A\in \mathfrak{F}_s:\ e(G(A)) = m,\ |V(G(A))| = k\}\right|\leq n^{k}\cdot {k^{2}\choose m} {N(H, k, m)\choose s}\leq n^{k}\cdot {k^{2}\choose m} {(2m)^{\alpha^*}\choose s},
\end{equation}
where the last inequality follows from applying \cite[Theorem~5.7]{HMS} to upper bound $N(H,k,m)$. Notice that it also follows from above that unless $m\geq \frac{1}{2}s^{1/\alpha^*}$ the LHS \eqref{eq:number of clusters} zero. We claim that, uniformly for $2\leq s\leq t$, $\max\{\frac{1}{2}s^{1/\alpha^*}, e_{H}+1\}\leq m\leq e_{H}s$, and $v_{H}\leq k\leq v_{H}s$, 
\begin{equation}\label{eq:log-psi-bd}
\log \Psi_{s,k,m} \leq -(1-o(1))(\theta_{\rho}/2e_{H})s^{1/\alpha^*}\log n, 
\end{equation}
where
\begin{equation}\label{eq:def-psi}
    \Psi_{s,k,m}\coloneqq C^{s}n^{k}p^{m}{k^{2}\choose m} {(2m)^{\alpha^*}\choose s}.
\end{equation}
Equipped with the claim \eqref{eq:log-psi-bd}, upon using a union bound  and combining \eqref{eq:cluster prob bound} and \eqref{eq:number of clusters} it therefore follows that
\begin{multline*}
\sum_{A\in \mathfrak{F}_s: E(A)\cap E(G_{\star})=\emptyset}  C^s \P({G(A)}\subseteq \Gd|G_{\star}\subseteq \Gd)   \leq \sum_{\substack{v_{H}\leq k\leq v_{H}s\\ e_{H}+1\leq m\leq e_{H}s}}2\cdot\Psi_{s,k,m}\\
  \leq 2v_{H}e_{H}s^{2}\cdot\exp\left(-(1-o(1))\frac{\theta_{\rho}}{2e_{H}}s^{1/\alpha^*}\log n\right) \leq \exp\left(-\frac{\theta_{\rho}}{4e_{H}}s^{1/\alpha^*}\log n\right),
\end{multline*}
for all large $n$, where in the last step we used that $s \lesssim (\log n)^{O(1)}$. This proves the desired conclusion.	

Therefore, it remains to establish \eqref{eq:log-psi-bd}. \dteal{It} requires the following bound (recall \eqref{eq: defn of M_C} and \eqref{eq:def of constants}): For any $A=\{I_1,I_2, \ldots, I_s\} \in \mathfrak{F}_s$ with $s \geq 1$ and $m = e(G(A))$ and $k=|V(G(A))|$ we have 	
\begin{equation}
    \label{eq: SB inequality}
    m-e_{H}\geq \lambda r_{H} (k-v_{H}).
\end{equation} 
We prove \eqref{eq: SB inequality} by an induction on $s$. The bound \eqref{eq: SB inequality} trivially holds when $s=1$. Fix $s \geq 2$ and an $A \in \mathfrak{F}_s$. We claim that there exists some $i \in \set{s}$ such that $A':=A\setminus \{I_i\} \in \mathfrak{F}_{s-1}$. To see this, define $\widehat A:= \{I_1, I_2, \ldots, I_{s-1}\}$. If $\widehat A \in \mathfrak{F}_{s-1}$ then there is nothing to be proved. So, assume the contrary. In this case, we can decompose $\widehat A$ into edge disjoint clusters and write $\widehat A=(\widehat A_1, \widehat A_2, \ldots, \widehat A_t) \in \mathfrak{F}_{\widehat s_1, \widehat s_2, \ldots, \widehat s_t}$ for some integer $t \geq 2$ and some collection of positive integers $\{\widehat s_j\}_{j \in \set{t}}$. As $A \in \mathfrak{F}_s$ it follows that there exists some collection of indices $\{a_j\}_{j \in \set{t}}\subseteq \set{s-1}$ such that
\begin{equation}\label{eq:a-j}
I_{a_j} \in \widehat A_j \qquad \text{ and } \qquad E(I_{a_j})\cap E(I_s) \neq \emptyset, \quad \text{ for all } j \in \set{t}. 
\end{equation}
Set $i=a_1$. With this choice, using \eqref{eq:a-j}, as $\{\widehat A_b\}_{b \in \set{t}}$ is a collection of clusters, we observe that $A'$ is indeed is a cluster of size $s-1$.

Now we return to the proof of \eqref{eq: SB inequality}. We note from above that $E(A') \cap E(I_i) \neq \emptyset$. Hence, there exists some $\emptyset \neq {J}\subseteq {H}$ such that the subgraph spanned by $E(I_i) \cap E(A')$ is isomorphic to $J$. As $e(G(A))=m$ and $|V(G(A))|=k$, the last observation implies that 
\[
m -e_{H}= e(A')-e_{{J}} \qquad \text{ and } \qquad  k-v_{H}= |V(G(A'))|-v_{{J}}.
\]
Applying the induction hypothesis to ${A}'$ we deduce from above 
\[
m-e_{H}=e(A')-e_{H}+e_{H}-e_{{J}}\geq  \lambda r_{H}(|V(G(A'))|-v_{H})+\lambda r_{H}(v_{H}-v_{\tu{J}}) = \lambda r_{H}(k-v_{H}),
\]
where in the penultimate step we used
\[
e_{H}-e_{{J}}\geq \lambda r_{H}(v_{H}-v_{{J}}),
\]
which is a consequence of $H$ being strictly balanced (see \cite[Section~5]{BK25} for a proof). This proves \eqref{eq: SB inequality}.  	




We return to the proof of \eqref{eq:log-psi-bd}. Applying 
\eqref{eq: SB inequality}, upon using the assumption $n^{v_{H}}p^{e_{H}}\leq \beta_{1}(\log n)^{\beta_2}$, and noting that $np^{\lambda r_H}= (n p^{e_H/v_H})^{r_H} \cdot n^{1-r_H}$ (recall \eqref{eq:def of constants}) we obtain 
\begin{equation}\label{eq:n-k-p-m}
n^{k}p^{m} = n^{v_{H}}p^{e_{H}} \cdot n^{k-v_{H}}p^{m-e_{H}}\leq n^{v_{H}}p^{e_{H}} \cdot (np^{\lambda r_{H}})^{(m-e_{H})/\lambda r_{H}}\leq n^{-(m-e_{H})\phi_{H}}(\beta_{1}(\log n)^{\beta_2})^{m/e_{H}}.
\end{equation}
Moreover, as $k\leq v_{H}s$ and $s$ is at most poly-logarithmic, we obtain 
\begin{equation}\label{eq:log-k-choose-m}
\log {k^{2}\choose m}\leq 2m\log k = O(m\log \log n). 
\end{equation}
Using Stirling's approximation, as $m \lesssim s$, we also have 
\begin{equation}\label{eq:log-stirling}
\log {(2m)^{\alpha^*}\choose s } = s\cdot  f\left(\frac{(2m)^{\alpha^*}}{s}\right)+O(\log s) = s\cdot  f\left(\frac{(2m)^{\alpha^*}}{s}\right)+O(\log \log n),
\end{equation}
where $f(\cdot)$ is as in \eqref{eq: defn of M_C}. Combining \eqref{eq:n-k-p-m}, \eqref{eq:log-k-choose-m}, and \eqref{eq:log-stirling}, and recalling \eqref{eq:def-psi} we therefore have	
\begin{equation}
    \label{eq: Poisson exponent asymp} 
    \begin{aligned}
    \log \Psi_{s,k,m} & = \phi_{H}e_{H}\log n - \phi_{H}m\log n+ s\cdot f\left(\frac{(2m)^{\alpha^*}}{s}\right)+s\log C+O(m\log\log n )\\
    & \leq \phi_{H}e_{H}\log n +2m\log n \left(M_{C}\frac{s^{1-1/\alpha^*}}{\log n} - \frac{\phi_{H}}{2}\right)+O(m\log \log n)\\
    & \leq (\phi_{H}e_{H}-(\theta_{\rho}-o(1))2m)\log n \leq (\phi_{H}e_{H}-(\theta_{\rho}-o(1))s^{1/\alpha^*})\log n.
    \end{aligned}
\end{equation}
where in the second step we use \eqref{eq: defn of M_C}), in the penultimate step we used $s\leq \rho(\log n)^{\alpha^*/(\alpha^*-1)}$ and the definition of $\theta_{\rho}$, and in the last step we used $\theta_\rho >0$ and $2m\geq s^{1/\alpha^*}$. 

To complete the proof, we set $s_{0}\coloneqq \lceil (2\phi_{H}e_{H}/\theta_{\rho})^{\alpha^*}\rceil$ and treat the regimes $s\geq s_{0}$ and $s\leq s_{0}-1$ separately. If $s\geq s_{0}$, then $\phi_{H}e_{H}\leq \frac{1}{2}\theta_{\rho}s^{1/\alpha^*}$ and therefore the desired upper bound is immediate from \eqref{eq: Poisson exponent asymp}.


On the other hand, for $2\leq s\leq s_{0}$, we have $m,s= O(1)$. Since $m\geq e_{H}+1$ and $\phi_{H}>0$, by \eqref{eq: Poisson exponent asymp} (notice the first equality there) 
\[
\log \Psi_{s,k,m}= -\phi_{H}(m-e_{H})\log n + O(\log 
\log n)\leq -(1-o(1))\phi_{H}\log n.
\]
As $s \leq s_0$, we have $\frac{\theta_{\rho}}{2e_{H}}s^{1/\alpha^*}\leq \phi_{H}$. Therefore, substitution this bound in the above inequality gives the claim.  
\end{proof}

        

We end the section with the proof of Theorem \ref{thm:conditional-poisson}.

    \begin{proof}[Proof of Theorem \ref{thm:conditional-poisson}]
        Set $\beta \coloneqq \lceil \chi n^{v_{H}}p^{e_{H}}\tu{Aut}(H)^{-1} \rceil$.
        Let $Z_{H}$ denote the number of edge non-disjoint (unlabelled) copies in $\Gd$. We shall prove that there exists a constant $c_{H}>0$ such that
        \begin{equation}
            \label{eq: non-isolated copies prob}
            \P(Z_{H}\geq \beta)\leq \exp(-c_{H}(\chi n^{v_{H}}p^{e_{H}})^{1/\alpha^*}\cdot \log n).
        \end{equation}
    By Theorem \ref{thm:poisson}, we have $-\log \P(\UTH) \asymp n^{v_{H}}p^{e_{H}}$. Under the assumption $n^{v_{H}}p^{e_{H}}\ll (\log n)^{\alpha^*/(\alpha^*-1)}$, \eqref{eq: non-isolated copies prob} implies 
    \[
    \P(Z_{H}\geq \beta\ |\ \UTH )\leq \exp(-c_{H}^{\prime}(\chi n^{v_{H}}p^{e_{H}})^{1/\alpha^*}\log n),
    \]
    for some constant $c_{H}^{\prime}>0$. Since, 
    $N_{\tu{disj}}(H, \Gd) = (\wn - Z_{H})\cdot \tu{Aut}(H)$, the desired result follows from above.

    So, it remains to prove \eqref{eq: non-isolated copies prob}. A key to \eqref{eq: non-isolated copies prob} is the following claim:  
    \begin{equation}\label{eq:claim-conditional}
    \{Z_H \geq \beta\} \subseteq \bigcup_{\ell \in \set{\beta}}\bigcup_{\substack{s_1, s_2, \ldots, s_\ell\\ 2 \leq \min_i s_i \leq \max_i s_i \leq \beta\\ \beta \leq \sum_{i=1}^{\ell}s_{i}\leq 2\beta}}\left\{\exists \underline A=(A_1,A_2,\ldots, A_\ell) \in \mathfrak{F}_{s_1, \ldots, s_\ell} \tu{ such that } G(\cup_{i=1}^{\ell}A_{i})\subseteq \Gd\right\}. 
    \end{equation}


Before proving the claim \eqref{eq:claim-conditional}, we make the following observation which will be essential: Let $\underline I = \{I_1, I_2, \ldots, I_t\}$ be a cluster of size $t$, for some $t \in \N$. Fix $j \in \set{t}$ and let $\{I_{t+i}\}_{i \in \set{t'}}$ be a collection of distinct copies of $H$ so that $E(I_j) \cap E(I_{t+i}) \neq \emptyset$ for all $i \in \set{t'}$, for some $t' \in \N$. Then  $\underline I' = \{I_1, I_2, \ldots, I_{t+t'}\}$ is also a cluster.

To prove that $\underline I' $ is a cluster of size $(t+t')$, we need to show that give any $\emptyset \neq T \subsetneq \set{t+t'}$, there must exist $i \in T$ and $i' \in \set{t+t'} \setminus T$ so that $E(I_i) \cap E(I_{i'})\neq \emptyset$. To prove this, we need to consider a couple of subcases. First, let $T$ be such that $\min\{|T \cap \set{t}|, |\set{t}\setminus T|\} \geq 0$. Then, as $\underline I$ is a cluster, setting $T':= T \cap \set{t} \subsetneq \set{t}$ we see that there indeed exist some $i \in T' \subseteq T$ and $i' \in \set{t} \setminus T' \subseteq \set{t+t'} \setminus T$ so that $E(I_i) \cap E(I_{i'})\neq \emptyset$. In the other case, where either $T \supseteq \set{t}$ or $T \subseteq \set{t+t'}\setminus \set{t}$, the desired conclusion follows from the assumption that $I_j$ shares at least an edge with $I_{t+i}$ for every $i \in \set{t'}$. 

     
Now we return to the proof of \eqref{eq:claim-conditional}. 
We will prove \eqref{eq:claim-conditional} by providing an algorithm to construct an $\underline A$ with the required properties. Start with any edge non-disjoint unlabelled copy $I_1$ of $H$ in $\Gd$. Let $\{I_a\}_{a=2}^{s}$ be the collection of all distinct copies of $H$ in $\Gd$ that share at least one edge with $I_1$. Since $I_1$ is an edge non-disjoint copy, we have $s \geq 2$. As $E(I_a) \cap E(I_1)\neq \emptyset$ for all $a \in \{2, 3, \ldots, s\}$, it follows that, for any $\widetilde s \leq s$, the tuple $(I_1, I_2, \ldots, I_{\widetilde s})$ is a cluster. If $s \geq \beta$ then we set $\ell=1$ and $\underline A= A_1=\{I_1, I_2, \ldots, I_\beta\}$. Therefore, the claim \eqref{eq:claim-conditional} holds in this case with $\ell=1$ and $s_1=\beta$. 
     
So, let us assume $s \leq \beta -1$. On the event $\{Z_H \geq \beta\}$ let us look for edge non-disjoint copies of $I_2$ that are distinct from $\{I_a\}_{a \in \set{s}}$. Let $\{I_{s+a}\}_{a \in \set{s'}}$ be the collection of such copies. If $s+s' \geq \beta$ then we again set $\underline A= A_1 = \{I_1,I_2, \ldots, I_\beta\}$. By the observation made above, $A_1$ is a cluster of size $\beta$. On the other hand, if $s+s' \leq \beta -1$ then we note that $\{I_1,I_2, \ldots, I_{s+s'}\}$ is a cluster of size $(s+s')$ (again by the observation made above). In this case, we move to $I_3$ and repeat the same process. We continue this process until the cluster thus formed is of size $\beta$, or is of size $s_1 \leq \beta-1$ and all other copies of $H$ are edge disjoint from the current cluster. 

In the latter case, as $Z_H \geq \beta$ there must exist at least one edge non-disjoint copy of $H$, say $I'$. We repeat the entire process starting with $I'$.   This process of forming an edge disjoint clusters $\underline A \in \mathfrak{F}_{s_1, s_2, \ldots, s_\ell}$ is continued until $s_1+\cdots+s_\ell \geq \beta$ for some $\ell \in \N$ (the lower bound on the sum if guranteed on the event $\{Z_H \geq \beta\}$). 

Moreover, by the discussion above, we also have $2 \leq \min_i s_i \leq \max_i s_i \leq \beta$. By the stopping rule just mentioned, we also have $s_1+\cdots+s_{\ell-1} \leq \beta -1$. As $s_\ell \leq \beta$, this also gives the upper bound $s_1+\cdots+s_\ell \leq 2\beta$. This upper bound together with the lower bound on $\min_i s_i$ further yields the upper bound  on $\ell$. This finally proves the claim \eqref{eq:claim-conditional}. 
     
%
%
%
    
    
\dteal{Now we are nearly done. Equipped with \eqref{eq:claim-conditional} and applying a union bound,} we derive 
    \begin{equation}
        \label{eq:cluster expansion}
        \begin{aligned}
        \P(Z_{H}\geq \beta)& \leq \sum_{k=\beta}^{2\beta}\sum_{\ell=1}^{\beta}\sum_{\substack{s_1, \ldots, s_\ell \geq2, \\ s_{1}+\cdots+s_\ell =k}} \ \sum_{\substack{A_{1},\ldots, A_\ell\\ \ \in \mathfrak{F}_{s_1, \ldots, s_\ell}}}\P(G(\cup_{i=1}^{\ell}A_{i})\subseteq \Gd)\\
        & \leq \sum_{k=\beta}^{2\beta}\sum_{\ell= 1}^{\beta}\sum_{\substack{s_1,\ldots, s_\ell\geq 2 \\ s_{1}+\cdots+s_\ell=k}}\exp\left(-2c_{H}\log n\sum_{i=1}^{\ell}s_{i}^{1/\alpha^\star}\right)\\
& \leq \sum_{k=\beta}^{2\beta} \sum_{\ell=1}^{\beta} \exp\left(-2c_{H}k^{1/\alpha}\log n-(\ell-1)\left(\varkappa -\frac{\log k}{\log n}\right)\log n \right) \\
& \leq \beta^{2}\cdot \exp(-2c_{H}\beta^{1/\alpha} \log n) \leq \exp(-c_{H}  (\chi n^{v_{H}}p^{e_{H}})^{1/\alpha} \log n),
        \end{aligned}
    \end{equation}
for all large $n$ and some positive constant $c_H >0$, depending only on $H$, where in the second step using the fact that $e(\cup_{i=1}^\ell A_i) \lesssim n^{v_H}p^{e_H} \ll (\log n)^{\alpha^*/(\alpha^*-1)}$ we argued similarly as in \eqref{eq:bound on multiple clusters}, in the second last step we used \eqref{eq:bound for non-zero cluster} and argued similarly as in the paragraph following \eqref{eq:bound for non-zero cluster}, 
in the penultimate step we used $\log k  \lesssim \log\beta \ll\log n$, $k \geq \beta$, and $\ell\geq 1$, 
and in the last step we used the definition of $\beta$. This completes the proof. 
\end{proof}
%

\section{Localized regime: the lower bound}
\label{sec:lower bound}

\dteal{In this section we prove the lower bound on the upper tail large deviation event, suitable for the localized regime. The following is the main result of this section. Recall \eqref{eq:rate function defn}.} 
\begin{proposition} 
\label{prop:Lower bound}
Fix $\delta>0$.
    Let $\Delta\geq 2$ be an integer and $H$ a fixed $\Delta$-regular  graph. Assume $p=p_{n}\in [0,1]$ satisfy $p\ll1$ and $np^{\Delta/2}\gg1$. Then we have
    \begin{equation}
        \label{eq:lower bound}
        \liminf_{n\rightarrow \infty}\frac{\log\P\left(N(H, \Gd)\geq (1+\delta)n^{v_{H}}p^{e_{H}}\right)}{n^{2}p^{\Delta}\log(1/p)} \geq \dteal{-\Psi_H(\delta)}.
\end{equation}
\end{proposition}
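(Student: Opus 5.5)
The plan is the planting strategy sketched in Section \ref{sec:proof-outline}. Fix small $\eta>0$ and choose a deterministic graph $G=G_\eta\in\widehat{\Xd}$ on $\set{n}$ with $e(G)=O(n^2p^\Delta)$, such that
\[
N(H,G)\geq(\delta+\eta)n^{v_H}p^{e_H}
\qquad\text{and}\qquad
-\log\P(G\subseteq\Gd)\leq(1+O(\eta)+o(1))\,\Psi_H(\delta)\,n^2p^\Delta\log(1/p).
\]
We then show that, conditionally on $\{G\subseteq\Gd\}$, the number of copies of $H$ not using edges of $G$ is at least $(1-\eta)n^{v_H}p^{e_H}$ with probability $1-o(1)$. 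Together these give $\P(\UTH)\geq\frac12\P(G\subseteq\Gd)$, and letting $\eta\downarrow0$ yields \eqref{eq:lower bound}.

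\textbf{Choice of $G$.} For $\Delta\geq3$, take $G$ to be a clique $K_s$ with $s=\lceil(\delta+\eta)^{1/v_H}np^{\Delta/2}\rceil$. Since $\Delta\geq3$ we have $np^{\Delta/2}\ll np=d$, so every degree is at most $s\ll d$ and $G\in\widehat{\Xd}$. Also $N(H,K_s)=(1-o(1))s^{v_H}$ and $e(G)\approx\frac12(\delta+\eta)^{2/v_H}n^2p^\Delta$.

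For $\Delta=2$ (so $H=C_\ell$ and $np^{\Delta/2}=d$), take $\lfloor\delta\rfloor$ vertex-disjoint cliques of size $\lfloor(1-\eta')d\rfloor$ together with one clique of size $\approx(\{\delta\}+\eta)^{1/\ell}d$. Each clique of size $(1-\eta')d$ carries $(1-\eta')^\ell d^\ell$ copies, so $\eta'$ must be tuned against $\eta$ to keep $N(H,G)\geq(\delta+\eta)d^\ell$. If $\{\delta\}+\eta\geq1$, simply add one more near-$d$ clique. Degrees are then at most $(1-\eta')d$, so $G\in\widehat{\Xd}$, and $e(G)\approx\frac12(\lfloor\delta\rfloor+\{\delta\}^{2/\ell})d^2$.

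\textbf{Probability of planting.} Apply Corollary \ref{cor:almost independence lemma(poisson)} with the empty graph in place of $G$ and the planted graph in place of $I$. Since $e(G)=o(nd)$ (because $p\ll1$), the error factor is $f^{e(G)}=\exp(o(e(G)))$, and the probability is $p^{e(G)}D_{G,\emptyset}e^{-o(e(G))}$. The key point is that $D_{G,\emptyset}$ is not a problem. Each vertex contributes $\prod_{i<\deg}(1-i/d)\geq(1-\deg/d)^{\deg}$. For $\Delta\geq3$ this is $e^{-O(s^2/d)}$, and $s^2/d\ll s\log(1/p)$. For $\Delta=2$ it is $\eta'^{\,d}$ per vertex, which is $e^{O(d^2\log(1/\eta'))}=e^{o(d^2\log(1/p))}$ since $p\ll1$. (Here we use $\deg<d$ strictly, which is exactly why cliques of size $(1-\eta')d$ are taken rather than $d$.) Hence
\[
-\log\P(G\subseteq\Gd)=(1+o(1))\,e(G)\log(1/p),
\]
which matches the rate function up to $O(\eta)$.

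\textbf{Copies in the remainder.} This is the main obstacle, because conditioning on $G$ drastically depletes the degrees of the planted vertices. Let $V_0=V(G)$, with $|V_0|=O(d)$ (or $O(np^{\Delta/2})$). Let $Y$ be the number of copies of $H$ in $\Gd$ whose vertices all lie in $\set{n}\setminus V_0$. These copies are edge-disjoint from $G$, and $N(H,\Gd)\geq N(H,G)+Y$. We estimate $\E[Y\mid G\subseteq\Gd]$ and $\E[Y^2\mid G\subseteq\Gd]$ via Corollary \ref{cor:almost independence lemma(poisson)}, now conditioning on $G$ and planting a copy $I$ (or a union $I\cup I'$) on vertices disjoint from $V_0$. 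Then $D_{I,G}=1-O(1/d)$, and since $e(G\cup I\cup I')=o(nd)$ the error factors are $1+o(1)$. This gives
\[
\E[Y\mid G\subseteq\Gd]=(1+o(1))\,n^{v_H}p^{e_H},
\]
using $|V_0|\ll n$. The second moment is controlled as in the Erd\H{o}s--R\'enyi second-moment computation: overlapping pairs sharing a subgraph $J$ contribute $O\bigl(\E[Y]^2/(n^{v_J}p^{e_J})\bigr)$. This is $o(\E[Y]^2)$ because $H$ is regular and connected, hence strictly balanced, and $n^{v_J}p^{e_J}\geq\min(n,\,(np^{\Delta/2})^{2})\to\infty$ for $J\subsetneq H$ under $np^{\Delta/2}\gg1$. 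I expect verifying this for all $J$ to be the delicate point; it reduces to $e_J\leq\Delta v_J/2$. Chebyshev's inequality then gives $Y\geq(1-\eta)n^{v_H}p^{e_H}$ with conditional probability $1-o(1)$, which completes the argument.
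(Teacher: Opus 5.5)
Your proposal is correct and is essentially the paper's proof. Both plant a single clique of size $\approx(\delta+\eta)^{1/v_H}np^{\Delta/2}$ for $\Delta\geq 3$, or $\lfloor\delta\rfloor$ disjoint near-$d$ cliques plus one of size $\approx\{\delta\}^{1/\ell}d$ for $\Delta=2$. Both bound the planting cost by Corollary \ref{cor:almost independence lemma(poisson)}, using $|\log D|=O(d^2)=o(d^2\log(1/p))$, and both run a conditional second-moment/Chebyshev argument for the copies of $H$ outside the planted graph.

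The differences are minor:
\begin{itemize}
\item The paper plants $K_{d+1}$ directly. Degree exactly $d$ is harmless, since $\prod_{i=0}^{d-1}(1-i/d)=d!/d^d=e^{-O(d)}$, so your shrinking to $(1-\eta')d$ is unnecessary. It also costs you a little extra slack in the last clique: with $\lfloor\delta\rfloor$ cliques of size $(1-\eta')d$ and a last clique of size exactly $(\{\delta\}+\eta)^{1/\ell}d$, you get $N(H,G)<(\delta+\eta)d^\ell$ for every $\eta'>0$, so tuning $\eta'$ alone does not suffice.
\item The paper counts the remaining copies through the general Lemma \ref{lem:minimum subgraph count}, using copies on vertices of small $G$-degree that avoid $E(G)$, which gives conditional probability $1-5\varepsilon$. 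Your restriction to $\set{n}\setminus V(G)$ works because $|V(G)|=o(n)$, and it even gives conditional probability $1-o(1)$.
\end{itemize}
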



\dteal{To prove Proposition \ref{prop:Lower bound} we plant a subgraph $G_\star$ inside $\Gd$ with a small number of edges that possesses 
the excess copies of $H$, and show that  
conditional on planting $G_\star$, the remainder of $\mathbb G_n^d$ still contains approximately the typical number of copies of $H$ that avoid the planted edges with probability close to one, and that the probability of the event $G_\star \subseteq \Gd$ is essentially the desired lower bound of Proposition \ref{prop:Lower bound}.}

To implement this strategy, \dteal{as just mentioned, we} need to control the \dteal{probability of the existence of typically many} 
\dteal{copies of $H$ in the} part of $\mathbb G_n^d$ outside the planted graph. 
The following lemma gives a more \dteal{general 
statement}, \dteal{applicable for irregular $H$}: it applies whenever $p\gg n^{-1/m_H}$, where $m_H:=\max\{e_{J}/v_{J}: J\subseteq H, e_J>0\}$, denotes the maximum density of $H$. 

\dteal{For ease in writing, we introduce the following notation: For any $G\in \widehat{\Xd}$ (recall \eqref{eq:def-Xd}), we let $N(H, \Gd, \overline{G})$ to be the number of labelled copies of $H$ in $\Gd$ that does not use any edge from $G$.}

\begin{lemma}
\label{lem:minimum subgraph count}
Assume $H$ is a fixed graph with maximum degree $\Delta$ and $p\gg n^{-1/m_{H}}$.
Fix $\varepsilon\in (0,1)$, and let $G\in \widehat{\Xd}$ with $e(G)=o(nd)$. 
Then, \dteal{for all sufficiently large $n$}, 
\begin{equation*}
    \P\left(N(H, \Gd, \overline{G})\geq (1-\varepsilon)n^{v_{H}}p^{e_{H}}\ \big|\ G\subseteq \Gd\right) \geq 1-5\varepsilon.
\end{equation*}
\end{lemma}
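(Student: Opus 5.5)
Conditionally on $\{G\subseteq \Gd\}$, write $X:=N(H,\Gd,\overline G)$. We use a second moment (Chebyshev/Paley--Zygmund type) argument, computing the first two conditional moments of $X$ through Corollary \ref{cor:almost independence lemma(poisson)}. Throughout we may assume $e(G)=o(nd)$, and every graph we condition on is $G$ plus at most $2e_H$ further edges, so the total edge count stays $o(nd)$ and the error factors $f^{\pm O(1)}=1+o(1)$ (using $p\ll 1$, which holds since $p\gg n^{-1/m_H}$ is only a lower bound; if $p\asymp 1$ is allowed we would restrict to $p\ll1$ as in the section).

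\textbf{First moment.} The main issue is that vertices of $G$ may have large degree, so $D_{I,G}$ can be small for copies $I$ touching them. Let $B:=\{u:\deg_G(u)\geq \eta d\}$ for a small $\eta=\eta(\varepsilon)>0$. Since $\sum_u \deg_G(u)=2e(G)=o(nd)$, we have $|B|=o(n)$. Any labelled copy $I$ of $H$ in $K_n$ with $V(I)\cap B=\emptyset$ and $E(I)\cap E(G)=\emptyset$ has $D_{I,G}\geq (1-\eta-\Delta/d)^{2e_H}$. The number of such copies is $(1-o(1))n^{v_H}$: copies meeting $B$ number $O(|B|n^{v_H-1})$, and copies using an edge of $G$ number $O(e(G)n^{v_H-2})=o(n^{v_H-1}d)=o(n^{v_H})$. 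The condition $\deg_{G\cup I}\leq d$ needed to apply the corollary holds for $d$ large, and $d\to\infty$ because $np^{m_H}\gg1$ forces $d=np\gg1$. Hence
\[
\E[X\mid G\subseteq\Gd]\geq (1-2e_H\eta-o(1))\,n^{v_H}p^{e_H}.
\]
It is more convenient to work with $X'$, the count restricted to copies avoiding $B$ and $E(G)$. By the same corollary (upper bound, $D\le1$), $\E[X'\mid G\subseteq\Gd]\leq(1+o(1))n^{v_H}p^{e_H}$.

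\textbf{Second moment.} Write
\[
\E[X'^2\mid G\subseteq\Gd]=\sum_{I,I'}\P(I\cup I'\subseteq \Gd\mid G\subseteq\Gd).
\]
For edge-disjoint pairs, apply the corollary to $I\cup I'$ directly: the probability is at most $(1+o(1))p^{2e_H}$. There are at most $(n^{v_H})^2$ such pairs, so their contribution is $\leq (1+o(1))(n^{v_H}p^{e_H})^2$. For pairs sharing a nonempty edge set forming a subgraph $J\subseteq H$ with $e_J\geq1$, the probability is at most $(1+o(1))p^{2e_H-e_J}$ and the number of such pairs is $O(n^{2v_H-v_J})$. This gives a contribution $O\big((n^{v_H}p^{e_H})^2/(n^{v_J}p^{e_J})\big)=o\big((n^{v_H}p^{e_H})^2\big)$, since $p\gg n^{-1/m_H}$ gives $n^{v_J}p^{e_J}\gg1$ for every $J$ with edges. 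Thus
\[
\mathrm{Var}(X'\mid G\subseteq\Gd)\leq (4e_H\eta+o(1))\,(n^{v_H}p^{e_H})^2.
\]

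\textbf{Conclusion.} Chebyshev's inequality now gives $\P\big(X'<(1-\varepsilon)n^{v_H}p^{e_H}\mid G\subseteq\Gd\big)\leq \varepsilon$, after choosing $\eta\ll\varepsilon^3$ so that the mean loss is at most $\varepsilon/2$ and the variance ratio is at most $\varepsilon^3/4$. Since $X\geq X'$, the bound $1-5\varepsilon$ follows with room to spare.

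The main obstacle is the first-moment lower bound, specifically controlling the degree-depletion factor $D_{I,G}$ uniformly; this is handled by discarding the $o(n)$ high-degree vertices of $G$. The variance must be computed for the same restricted count $X'$, so that the upper and lower bounds match up to the $\eta$-loss.
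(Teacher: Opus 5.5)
Your proof is correct and follows essentially the same route as the paper: discard the $o(n)$ vertices of large $G$-degree and the copies using edges of $G$, lower-bound the first moment via near-independence, bound the second moment by splitting pairs according to their shared edges, and conclude with Chebyshev. One small correction: $p\gg n^{-1/m_H}$ forces $d\to\infty$ only when $m_H\geq 1$, i.e.\ when $H$ contains a cycle. This is why the paper explicitly restricts to non-forests; for forests with bounded $d$ the statement can actually fail, e.g.\ $H=K_{1,2}$ and $d=2$, where $N(H,\Gd)=2n<(1-\varepsilon)\cdot 4n$.
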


\dteal{Observe that for any $\Delta$-regular graph, we have $m_H=\Delta/2$. Therefore, the sparsity regime in  Lemma \ref{lem:minimum subgraph count} for regular graphs coincides with the sparsity regime in Proposition \ref{prop:Lower bound}.}

We defer the proof of Lemma \ref{lem:minimum subgraph count} and first use it to prove Proposition \ref{prop:Lower bound}.

\begin{proof}[Proof of Proposition \ref{prop:Lower bound}]
\dteal{We notice that} it suffices to construct a deterministic graph $G_{\star}\in \widehat{\Xd}$ such that 
\begin{equation}\label{eq:copies-in-G-lbd}
N(H,G_{\star})\geq (\delta+\varepsilon) n^{v_{H}}p^{e_{H}}\quad \tu{and }\quad e(G_{\star})=o(nd),
\end{equation}
\dteal{and show that
\begin{equation}\label{eq:G-star-cost}
\P(G_\star \subseteq \Gd) \geq \exp\left(-(1+o(1))
\dteal{\Psi_H(\delta+2\varepsilon)} n^{2}p^{\Delta}\log(1/p)\right).
\end{equation}
Indeed,} by Lemma \ref{lem:minimum subgraph count}, \dteal{
\begin{equation}\label{eq:good event}
    \P(\mathcal{A}_0)\geq (1-5\varepsilon)\cdot \P(G_{\star}\subseteq \Gd).
\end{equation}
where
\[
\mathcal{A}_0:= \left\{N(H, \Gd, \overline{G_{\star}})\geq (1-\varepsilon)n^{v_{H}}p^{e_{H}}\right\}\ \bigcap \left\{ G_{\star}\subseteq\Gd\right\}.
\]
On $\mathcal{A}_0$, by \eqref{eq:copies-in-G-lbd},  we have } 
\[N(H,\Gd)\geq N(H, G_{\star})+ N(H, \Gd, \overline{G_{\star}})\geq (1+\delta) n^{v_{H}}p^{e_{H}}.\]
\dteal{Therefore, the required lower bound is immediate from \eqref{eq:G-star-cost} and the continuity of the map $\delta \mapsto \Psi_H(\delta)$.} 

\dteal{Hence, it remains to prove \eqref{eq:copies-in-G-lbd} and \eqref{eq:G-star-cost}.}
We construct $G_{\star}$ \dteal{and lower bound the probability cost of its planting} separately for $\Delta=2$ and $\Delta\geq 3$.

Let us first consider $\Delta\geq 3$. Fix $\varepsilon\in (0,1)$ sufficiently small, and  set $s:=\lceil (\delta+2\varepsilon)^{1/v_{H}}np^{\Delta/2}\rceil$. Let $G_{\star}\in \widehat{\Xd}$ be the clique on the first $s$ vertices, that is,
\begin{equation*}
    E(G_\star) = \left\{(i,j):\ 1\leq i<j\leq s\right\}.
\end{equation*}
Since every injection of $V(H)$ into $\set{s}$ gives a labelled copy of $H$ in $G_{\star}$, we have 
\[
N(H, G_{\star})\geq {s\choose v_{H}}(v_{H}!)= (1-o(1))s^{v_{H}}\geq (\delta+\varepsilon)n^{v_{H}}p^{e_{H}}, 
\]
for all sufficiently large $n$, \dteal{where in the penultimate step we have used $np^{\Delta/2} \gg 1$}. 

Since $\Delta\geq 3$ and $p=o(1)$, we get $\tu{deg}_{G_{\star}}(i)\leq s = o(d)$ for any $i\in \set{s}$. Moreover, \dteal{as $\Delta \geq 3$,}
\[
e(G_{\star})={s\choose 2} = (1+o(1))(\delta+2\varepsilon)^{2/v_{H}}n^{2}p^{\Delta}/2 = o(nd). 
\]
\dteal{Therefore, Corollary \ref{cor:almost independence lemma(simple)} applies.}   
\dteal{Applying} Corollary \ref{cor:almost independence lemma(simple)} by sequentially exposing edges in $G_{\star}$, with $\eta = o(1)$, \dteal{and upon using that $p \ll 1$}, yields the probability bound
\begin{equation}
\label{eq:prob of clique}
\P(G_\star\subseteq \Gd) \geq (1-o(1))\cdot p^{(1+o(1))e(G_{\star})}\geq \exp(-(1+o(1))\frac{1}{2}(\delta+2\varepsilon)^{2/v_{H}}n^{2}p^{\Delta}\log(1/p)).
\end{equation} 
\dteal{Thus we have \eqref{eq:copies-in-G-lbd} and \eqref{eq:G-star-cost} for $\Delta \geq 3$.}

Now, let $\Delta=2$. \dteal{In this case $s  > d$ for $\delta  \geq 1$. Hence, due to the degree upper bound on graphs belonging to $\widehat{\Xd}$ the choice of $G_\star$ from above does not work. So, it needs some change. 

We therefore now set} $G_{\star}$ to be the union of several vertex disjoint cliques. Choose $\varepsilon\in(0,1)$ sufficiently small so that $\{\delta\}+2\varepsilon<1$ and set $s_0 \coloneqq \lfloor (\{\delta\}+2\varepsilon)^{1/v_{H}}d\rfloor$. Let $G_{\star}= G_{\star}^{1}\cup \cdots \cup G_{\star}^{\lfloor \delta \rfloor}\cup G_{\star}^{\lfloor \delta \rfloor+1}$, where $G_{\star}^{1}, \ldots, G_{\star}^{\lfloor \delta\rfloor}$ are copies of $K_{d+1}$ and $G_{\star}^{\lfloor \delta\rfloor +1}$ is a copy of $K_{s_0+1}$. More precisely, 
\begin{equation*}
E(G_\star^{k}) = \left\{(i,j):(k-1)(d+1)+1\leq i<j\leq k(d+1)\right\}, \tu{ for } k\in\set{\lfloor\delta\rfloor },
\end{equation*}
and
\begin{equation*}
E(G_\star^{\lfloor \delta\rfloor+1}) = \left\{(i,j): (d+1)\lfloor\delta\rfloor+1\leq i<j\leq (d+1)\lfloor \delta\rfloor+s_0+1\right\}.
\end{equation*}

\dteal{Next notice} that every injection of $V(H)$ into \dteal{the} vertex set of one of the cliques gives a copy of $H$. Therefore, \dteal{as $p \ll 1$},  
\begin{equation*}
    N(H, G_{\star})= \sum_{i=1}^{\lfloor \delta\rfloor +1}N(H, G_{\star}^{i})\geq \lfloor\delta\rfloor{d+1 \choose v_{H}}v_{H}!+{s_0+1\choose v_{H} }v_{H}!\ \geq\ (\delta+\varepsilon)\cdot n^{v_{H}}p^{e_{H}}.  
\end{equation*}
Further, \dteal{as $d= np \gg 1$}, we have 
\[
e(G_{\star}) = \lfloor\delta\rfloor {d+1 \choose 2}+ {s_0+1\choose 2}= (1+o(1))(\lfloor \delta \rfloor+ \{\delta+2\varepsilon\}^{2/v_{H}})d^{2}/2 = o(nd).
\]
\dteal{It remains to prove the probability lower bound \eqref{eq:G-star-cost} for this choice of $G_\star$. Turning to do the same},  
set $\mathcal{G}_{0}:=\emptyset$ and $\mathcal{G}_{i} := \cup_{j=1}^{i-1} G_{\star}^{j}$ for each $i\in \set{\lfloor \delta+1\rfloor}$. \dteal{We} apply Corollary \ref{cor:almost independence lemma(poisson)} iteratively to get
\begin{equation}
\label{eq:Delta=2, clique planting}\P(G_{\star}\subseteq\Gd) = \prod_{i=1}^{\lfloor\delta \rfloor +1} \P(G_{\star}^{i} \subseteq \Gd | \mathcal{G}_{i}\subseteq \Gd)\geq  \prod_{i=1}^{\lfloor\delta \rfloor+1}p^{e(G_{\star}^{i})}\cdot D_{G_{\star}^{i},  \mathcal{G}_i}\cdot (f_{e(G_{\star}^{i}\cup\mathcal{G}_{i})})^{-e(G_{\star}^{i})}. 
\end{equation}
The vertex-disjointness of $G_{\star}^{i}$ and $\mathcal{G}_i$ implies $D_{G_{i}^{\star}, \mathcal{G}_{i}} = D_{G_{\star}^{i},\emptyset}$. As $G_{\star}^{i}$ are cliques, we deduce
\[D_{G_{\star}^{i},\emptyset} = \left(\frac{d!}{d^{d}}\right)^{d+1},\  \forall i\in \set{\lfloor \delta \rfloor},\quad  \tu{and} \quad D_{G_{\star}^{\lfloor \delta\rfloor +1}, \emptyset} = \left(\frac{d!}{d^{s_0}((d-s_{0})!)}\right)^{s_0+1}.\]
Since $s_0\leq d$, Stirling's approximation yields $|\log D_{G_{\star}^{i},\emptyset}| = O(d^{2})$ uniformly over all $i\in\set{\lfloor\delta\rfloor+1}$. 

\dteal{Next, as} $f_{k}$  is increasing in $k$ (see definition in \eqref{eq:error bound on independence}), we obtain
\[\prod_{i=1}^{\lfloor \delta \rfloor +1}(f_{e(G_{\star}^{i}\cup \mathcal{G}_i)})^{e(G_{\star}^{i})}\leq (f_{e(G_{\star})})^{e(G_{\star})} = \exp\left(O\left(p\cdot e(G_{\star})+\frac{e(G_{\star})^{2}}{nd}\right)\right) = \exp(O(pd^2)),\]
because $e(G_{\star})=O(d^{2})$ and $d=np$. 

Substituting these bounds into \eqref{eq:Delta=2, clique planting} and using $p=o(1)$, we derive
\begin{equation*}
\P(G_{\star}\subseteq \Gd)\geq \exp(-e(G_{\star})\log (1/p) -O(d^{2}+pd^{2})) = \exp(-(1+o(1))
\dteal{\Psi_H(\delta+2\varepsilon)}d^{2}\log(1/p)).
\end{equation*}
\dteal{This yields \eqref{eq:G-star-cost} for $\Delta=2$. The proof of the proposition is now complete.} 
\end{proof}
Now, we finish the proof of Lemma \ref{lem:minimum subgraph count}. \dteal{We will assume that $H$ is not a forest. Otherwise, it is straightforward to obtain a deterministic bounds on $N(H, G\setminus G')$ in the leading order, for any $d$-regular graph $G$ on $n$ vertices and any graph $G'$ with $e(G') \ll nd$.}  
\begin{proof}[Proof of Lemma \ref{lem:minimum subgraph count}]
\dteal{In a nutshell the proof follows upon applying the Chebychev's inequality together with tight upper and lower bounds on the second and first moments of $N(H, \Gd, \overline{G})$, respectively. To obtain these bounds we will repeatedly use Corollary \ref{cor:almost independence lemma(simple)}. 

However, notice that to apply Corollary \ref{cor:almost independence lemma(simple)} we need to ensure that the endpoints of the edge to be added does not have a large degree in $G$. One approach would be to discard vertices that have `large' degrees in $G$, focus only on labelled copies that do not use any vertex of large degree, and prove the probability lower bound for this subset of labelled copies. 

To carry out this strategy, we need the following definitions:} 
Set \[\eta \coloneqq  \min\left\{\frac{\varepsilon}{8e_{H}}, \frac{\varepsilon^{3}}{4e_{H}}\right\}\quad \tu{and} \quad 
S_{\eta} \coloneqq \{v\in \set{n}: \tu{deg}_{G}(v)\leq \eta d\}.\]
Define
\begin{equation*}
        \mathcal{F}_{H} \coloneqq \mathcal{F}(H,S_{\eta},\overline{G})\coloneqq \left\{I:\ I \tu{ is a labelled copy of $H$ in $K_n$ such that } V(I)\subseteq S_{\eta} \tu{ and }E(I)\cap E(G)=\dteal{\emptyset}\right\}
\end{equation*}
\dteal{and
\[
X_{H}\coloneqq X_{H}(\Gd, S_\eta, \overline{G})\coloneqq \sum_{I\in \mathcal{F}_{H}}\mathds{1}(I\subseteq \Gd).
\]
We claim that 
\begin{equation}\label{eq:F-H-lbd}
    |\mathcal{F}_{H}|\geq (1-o(1))n^{v_H}.
    \end{equation}
The proof of the lower bound in \eqref{eq:F-H-lbd} is quite straightforward. From the definition of $S_\eta$ we find}
$|S_{\eta}^{c}|\leq \frac{2e(G)}{\eta d} =o(n)$, \dteal{as $e(G) \ll n d$}. Hence the number of labelled copies of $H$ that contains at least one vertex \dteal{belonging to} $S_{\eta}^{c}$ is $o(n^{v_{H}})$. 
The number of copies of $H$ \dteal{that contains at least one edge of $G$} is $O(e(G)\cdot (n-2)^{v_{H}-2}) =o(n^{v_{H}})$, where we used $e(G)=o(nd)$ \dteal{again. Combining the last two observations we deduce \eqref{eq:F-H-lbd}.} 


We next obtain a uniform lower bound for the conditional probability of \dteal{$I \subseteq \Gd$ for each} $I\in \mathcal{F}_{H}$. We fix such an $I\in \mathcal{F}_{H}$, enumerate its edges in $I$ as $e_{1}, e_{2},\ldots, e_{e_{H}}$. We expose these edges one at a time. At the $i$-th step, we condition on $\dteal{\widetilde G_i \coloneqq} G\cup \dteal{\{e_1,\ldots, e_{i-1}\}\subseteq \Gd}$. \dteal{Since every vertex  $v \in V(I)$ belongs to $S_{\eta}$, $I$ is a copy of $H$, and the maximum degree of $H$ is $\Delta$, 
we find that $\deg_{\widetilde G_i}(v) \leq \eta d+\Delta$. Since $H$ is not a forest it must contain a cycle, and therefore $m_H \geq 1$. This in turn yields that $d \gg 1$.} Thus, by iteratively applying Corollary \ref{cor:almost independence lemma(simple)}
\begin{equation*}
\P(I\subseteq \Gd\big| G\subseteq \Gd) = \prod_{i=1}^{e_{H}} \P\left(\{{e_{i}}\}\subseteq \Gd\ \big| \ \dteal{\widetilde G_i} 
\right)\geq \left(1-\eta-o(1)\right)^{2e_H} \cdot p^{e_{H}}.
\end{equation*} 
\dteal{Hence}
\begin{equation}
\label{eq:expectation lower bound}
\begin{aligned}
    \E[X_{H}\ \big|\ G\subseteq \Gd]\ \geq\ |\mathcal{F}_{H}|\cdot \left(1-\eta-o(1)\right)^{2e_{H}}\cdot p^{e_{H}} &\geq  (1-2e_{H}\eta-o(1))\cdot n^{v_{H}}p^{e_{H}}\\[-4pt]
    &>(1-\varepsilon)n^{v_{H}}p^{e_{H}},	  
\end{aligned}
\end{equation}
where the last inequality follows from our choice of $\eta<\varepsilon/(8e_{H})$. 

It remains to show that $X_{H}$ is sufficiently concentrated under the conditional law. 
Applying Chebyshev's inequality with respect to the conditional measure gives
\begin{equation}
\label{eq:chebyshev}
\P\left(X_{H} \leq (1-\varepsilon)n^{v_{H}}p^{e_{H}}\big| G\subseteq \Gd\right) \leq \frac{\E\left[X_{H}^{2}\big|G\subseteq \Gd\right]-\left(\E\left[X_{H}\big|G\subseteq \Gd\right]\right)^{2}}{\left(\E\left[X_{H}\big|G\subseteq \Gd\right]-(1-\varepsilon)n^{v_{H}}p^{e_{H}}\right)^{2}}.
\end{equation}
\dteal{To proceed from here} we need an upper bound on $\E[X_{H}^{2}]$. \dteal{To this end,} expanding the second moment, \dteal{we obtain}
\begin{equation}
\begin{aligned}
\label{eq:l2_norm}
\E\left[X_{H}^{2}\big| G\subseteq \Gd\right]  & = \sum_{I,I^{'}\in \mathcal{F}_{H}} \P(I\cup I^{'}\subseteq \Gd|G\subseteq \Gd) \\
 & \dteal{\leq (1+o(1)) \left[
 \sum_{k \in \set{v_H}\cup\{0\}} \sum_{I, I^{'}\in \mathcal{F}_{k}} p^{e(I\cup I^{'})} + \sum_{\emptyset \neq J \subseteq H} \sum_{I, I^{'}\in \widehat{\mathcal{F}}_J} p^{e(I\cup I^{'})}\right],}
\end{aligned}
\end{equation}
\dteal{where for any $k \in \set{v_H} \cup\{0\}$ the set $\mathcal{F}_k \subseteq \mathcal{F}_H\times \mathcal{F}_H$ is the collection of all pairs $(I, I')$ such that $V(I) \cap V(I')=k$ and $E(I) \cap E(I')=\emptyset$, for $\emptyset \neq J \subseteq H$ the set $\widehat{\mathcal{F}}_J \subseteq \mathcal{F}_H\times \mathcal{F}_H$ is the collection of all pairs $(I, I')$ such that $I \cap I'=J$, and in the last step above} 
%
%
we used Corollary \ref{cor:almost independence lemma(simple)} and the fact that $I\cup I^{'}$ has at most $O(1)$ many edges. 

\dteal{It remains to simplify the sum in the RHS of \eqref{eq:l2_norm}.} 
%
%
%
%
%
\dteal{Turning to do the same, we note that for any pair $(I, I^{'}) \in \mathcal{F}_k$ for some 
$k\in \set{v_H}\cup\{0\}$, we have $e(I\cup I^{\prime})= 2e_{H}$. Further, it is clear that 
$|\mathcal{F}_k| \leq {n\choose k}{n-k\choose {v_{H}-k}} {n-v_H\choose {v_{H}-k}}(v_{H}!)^{2} \leq n^{2v_{H}-k/2}$, for all large $n$. Therefore, 
\begin{equation}\label{eq:l-2-F-k}
\sum_{k \in \set{v_H}\cup\{0\}} \sum_{I, I^{'}\in \mathcal{F}_{k}} p^{e(I\cup I^{'})} = (1+o(1))n^{v_H}p^{e_H}.
\end{equation}
On the other hand, for any pair $(I,I')\in \widehat{\mathcal{F}}_J$ for some $\emptyset \neq J \subseteq H$ we have $e(I\cup I^{'}) = 2e_{H}-e_{J}$. Moreover,  
for any such $J$, $|\widehat{\mathcal{F}}_J| \leq {n\choose v_{J}} {n-v_{J}\choose 2v_{H}-2v_{J}}(v_{H}!)^{2}$. Therefore
%
\begin{equation}\label{eq:l-2-F-J}
\begin{aligned}
\sum_{\emptyset \neq J \subseteq H} \sum_{I, I^{'}\in \widehat{\mathcal{F}}_J} p^{e(I\cup I^{'})} \leq \sum_{\emptyset \neq J \subseteq H} 	{n\choose v_{J}} {n-v_{J}\choose 2v_{H}-2v_{J}}(v_{H}!)^{2} \cdot p^{2e_{H}-e_{J}}  & \lesssim\max_{\emptyset \neq J \subseteq H} n^{2v_H - v_{J}} p^{2e_{H}-e_{J}} \\ &= o(n^{2v_{H}}p^{2e_{H}}),
\end{aligned}
\end{equation} 
where last inequality follows from $p\gg n^{-1/m_{H}}$. Plugging the bounds \eqref{eq:l-2-F-k} and \eqref{eq:l-2-F-J} in \eqref{eq:l2_norm} we deduce} 
\begin{equation}
\label{eq:l2_norm upper bound}
\E[X_{H}^{2}\big| G\subseteq \Gd] \leq (1+o(1))\cdot n^{2v_{H}}p^{2e_{H}}.
\end{equation}
\dteal{Now, substituting} both the expectation lower bound \eqref{eq:expectation lower bound} and the second moment estimate \eqref{eq:l2_norm upper bound} into \eqref{eq:chebyshev}, we obtain
\begin{equation}
\label{eq:complement bound}
\P\left(X_{H}\leq (1-\varepsilon)n^{v_{H}}p^{e_{H}}\big| G\subseteq \Gd\right) \leq \frac{4e_{H}\eta +o(1)}{(\varepsilon - 2e_{H}\eta -o(1))^{2}} \leq 4\varepsilon+o(1)< 5\varepsilon,
\end{equation}
where the penultimate inequality follows from our assumption $\eta\leq \varepsilon^{3}/4e_{H}$. Using \eqref{eq:complement bound} and the fact that $N(H,\Gd, \overline{G})\geq X_{H}(\Gd)$ yields the desired conclusion.
\end{proof}

\section{Localized regime: the upper Bound}\label{sec:loc-regime-ubd}
\dteal{The main goal of this section is to prove the necessary large deviation upper bound in the localized regime. In particular we will show that, for any $\Delta$-regular connected graph $H$ and any $\varepsilon >0$,
\begin{equation}\label{eq:ubd-with-mathfrak-f}
\liminf_{n\rightarrow \infty}-\frac{\log \P(\UTH)}{n^{2}p^{\Delta}\log (1/p)}\geq \Psi_{H}(\delta)\cdot(1-\widehat{\mathfrak{f}}_{H}(\varepsilon)),
\end{equation}
for some nonnegative function $\widehat{\mathfrak{f}}_H(\cdot)$ such that $\lim_{\varepsilon \downarrow 0} \widehat{\mathfrak{f}}_H(\varepsilon)=0$.} 

\dteal{As already mentioned in Section \ref{sec:proof-outline}, the proof starts by identifying a class of subgraphs with a small number of edges so that each member of that class contains almost all the excess copies of $H$ and has the property that each edge in any such graph participates in  non negligible many copies. Following \cite{ABRB}, this class of graphs will be termed `core' graphs (definition differs from that defined in \cite{HMS}; explained below). To achieve this we need to introduce another class of subgraphs with a small number of edges, to be termed as `pre-seed' graphs (following \cite{ABRB} again), whose presence substantially increases the conditional expectation of the  number of copies of $H$. Recall \eqref{eq:def-Xd}.} 


\begin{definition}[Pre-seed]Let $\varepsilon>0$ be sufficiently small \dteal{and} $\cbar\coloneqq \cbar(\varepsilon, \delta)$ be a sufficiently large constant.
A graph $G\in \widehat{\Xd}$ is called a pre-seed if the following holds:  
\begin{itemize}
    \item[(PS1)]$\E\left[N(H, \Gd)\big|G\subseteq \Gd\right]\geq (1+\delta(1-\varepsilon)) \cdot n^{v_{H}}p^{e_{H}}$,
    \item[(PS2)]$e(G)\leq \cbar n^{2}p^{\Delta}\log(1/p)$.
\end{itemize}
\end{definition}

\dteal{For graphs $H$ and $G$, and any edge $e \in E(G)$ we write $N(H,G,e)$ to denote the number of labelled copies of $H$ in $G$ that uses the edge $e$.}

\begin{definition}[Core]
A graph $G\in \widehat{\Xd}$ is called a core if the \dteal{followings hold:}  
\begin{itemize}
    \item[(C1)]$N(H, G)\geq \delta(1-3\varepsilon) \cdot n^{v_{H}}p^{e_{H}}$,
    \item[(C2)]$e(G)\leq \cbar n^{2}p^{\Delta}\log(1/p)$,
\end{itemize}
and,
\begin{itemize}
    \item[(C3)]$\min_{e\in E(G)}N(H, G, e)\geq \delta \varepsilon n^{v_{H}}p^{e_{H}}/\left(\cbar n^{2}p^{\Delta}\log(1/p)\right)$.
\end{itemize}
\end{definition}
\dteal{The definitions of pre-seed and core graphs depend implicitly on $\varepsilon$. For ease in writing, we suppress this dependency. The first reduction towards the proof of \eqref{eq:ubd-with-mathfrak-f} is the following lemma.}
\begin{lemma}
\label{lem:existence of a core}
Assume $np^{\Delta/2}\gg 1$ and $p\ll 1$. For sufficiently large $n$, we have
    \begin{equation}\label{eq:contains-core}
        \P\left(\UTH\right)\leq (1+\varepsilon)\cdot \P(\Gd \tu{ contains a core}).
\end{equation}
\end{lemma}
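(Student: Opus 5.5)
We follow the route of \cite{HMS, ABRB}, which the paper indicates carries over because it needs no information on the joint law of the edges. The argument has two stages. First we show that, up to a factor $1+\varepsilon$, the upper tail event forces $\Gd$ to contain a pre-seed. Then we extract a core from any pre-seed by deterministic pruning, and compare probabilities.

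\emph{Step 1 (seeds via the moment argument of \cite{JOR}).} Set $\mu:=n^{v_H}p^{e_H}$. For a labelled copy $I$ of $H$ in $\Gd$, call $I$ heavy if the graph $I$ is contained in some pre-seed $G\subseteq \Gd$; otherwise call it light. Let $N_{\rm light}$ be the number of light copies. We will show
\[
\P\big(N_{\rm light}\ge (1+\delta(1-\varepsilon/2))\mu\big)\le \varepsilon\,\P(\UTH)
\]
as follows. Take $q$ of order $n^2p^\Delta\log(1/p)$ and bound $\E[N_{\rm light}^q]$ by exposing the copies one at a time. At step $j$ we condition on the union $G_j$ of the previously chosen light copies, which has at most $qe_H\le \cbar n^2p^\Delta\log(1/p)$ edges, so (PS2) holds for it. Since these copies are light, $G_j$ is not a pre-seed, and therefore (PS1) fails:
\[
\E[N(H,\Gd)\mid G_j\subseteq\Gd]<(1+\delta(1-\varepsilon))\mu .
\]
The one point that needs care is that the JOR bound is stated as a sum over copies $I_j$ of $\P(I_j\subseteq\Gd\mid G_j\subseteq\Gd)$. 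This is exactly the conditional expectation above, so no independence is used. Markov's inequality then gives $((1+\delta(1-\varepsilon))/(1+\delta(1-\varepsilon/2)))^q=e^{-c\varepsilon q}$. Choosing $\cbar$ large makes this $\le \varepsilon\exp(-2\Psi_H(\delta)n^2p^\Delta\log(1/p))$, which is $\le\varepsilon\P(\UTH)$ by Proposition~\ref{prop:Lower bound}.

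On the complementary event, if also $N(H,\Gd)\ge(1+\delta)\mu$, then at least $\delta\varepsilon\mu/2$ copies are heavy. Hence some pre-seed $G\subseteq\Gd$ exists.

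\emph{Step 2 (from pre-seed to core).} Fix a pre-seed $G\subseteq\Gd$. Enlarge it to $G'$ by adding the heavy copies, keeping the total edge count $\le \cbar n^2p^\Delta\log(1/p)$. This can be arranged by applying the JOR bound directly to the set $\mathcal{C}$ of all copies of $H$ in $\Gd$ that are contained in a subgraph with at most $\cbar n^2p^\Delta\log(1/p)$ edges on which the conditional expectation exceeds the threshold.

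We then need $N(H,G')\ge\delta(1-2\varepsilon)\mu$. One route is the typical-count argument: by Lemma~\ref{lem:minimum subgraph count} and a second-moment bound conditional on the planted graph, copies avoiding $E(G')$ number about $\mu$. The cleaner route is to define the event directly as ``there is $G'\subseteq\Gd$ with $e(G')\le\cbar r$ and $N(H,G')\ge\delta(1-2\varepsilon)\mu$'', and to run the JOR bound on copies not lying in any such $G'$.

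Next prune iteratively. While some edge $e$ satisfies $N(H,G',e)<\delta\varepsilon\mu/(\cbar n^2p^\Delta\log(1/p))$, delete it. Each deletion destroys fewer copies than this bound, and there are at most $\cbar n^2p^\Delta\log(1/p)$ deletions. So in total at most $\delta\varepsilon\mu$ copies are lost, leaving $\ge\delta(1-3\varepsilon)\mu$. The resulting subgraph satisfies (C1)--(C3) and lies in $\widehat{\Xd}$ because it is a subgraph of $\Gd$. Therefore
\[
\P(\UTH)\le \P(\text{core})+\varepsilon\P(\UTH),
\]
and rearranging gives \eqref{eq:contains-core} after replacing $\varepsilon$ by $\varepsilon/2$.

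\emph{Main obstacle.} The delicate point is Step 1: making the JOR moment bound rigorous with conditional expectations in $\Gd$. Two things must be checked carefully. The unions $G_j$ must lie in $\widehat{\Xd}$, which holds automatically since they are subgraphs of $\Gd$. And the counting must be arranged so that the lightness condition is exactly the failure of (PS1), with the edge budget in (PS2) matching $q$. I would copy the structure of \cite[Section~3]{HMS} verbatim, replacing independence by conditioning on $G_j\subseteq\Gd$.
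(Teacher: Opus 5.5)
Your Step~1 is sound and is essentially the paper's first stage. It rederives the moment bound that the paper imports as \cite[Lemma~3.7]{HMS}, and your observation that lightness of the earlier copies forces $G_j$ to violate (PS1) is exactly what lets the iteration run without independence.

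The gap is in Step~2, in getting from a pre-seed to a graph satisfying (C1). A pre-seed only tells you that $\E[N(H,\Gd)\mid G\subseteq\Gd]\ge(1+\delta(1-\varepsilon))n^{v_H}p^{e_H}$. Nothing you wrote converts this into $N(H,G)\ge\delta(1-2\varepsilon)n^{v_H}p^{e_H}$.
\begin{itemize}
\item Your first route invokes Lemma~\ref{lem:minimum subgraph count}. That is a \emph{lower} bound on copies avoiding $G$. What is needed is an \emph{upper} bound on the conditional expected number of copies not contained in $G$, in particular those that share some but not all edges with $G$.
\item Your second route (redefining seeds by the count condition and rerunning the moment argument) presupposes the same fact. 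To iterate, you must know that every $G\in\widehat{\Xd}$ with $e(G)\le\cbar n^2p^\Delta\log(1/p)$ and $N(H,G)<\delta(1-2\varepsilon)n^{v_H}p^{e_H}$ has conditional expectation below the threshold.
\item Enlarging $G$ by the heavy copies does not help either. You only know there are about $\delta\varepsilon n^{v_H}p^{e_H}/2$ of them, far fewer than (C1) requires. They may also sit in different pre-seeds whose union exceeds the edge budget.
\end{itemize}

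The missing ingredient is Lemma~\ref{lem:expectation difference upper bound}: for $G\in\widehat{\Xd}$ with $e(G)\le\cbar n^2p^\Delta\log(1/p)$,
\[
\E[N(H,\Gd)\mid G\subseteq\Gd]-\E[N(H,\Gd)]\le(1+o(1))N(H,G)+o(n^{v_H}p^{e_H}).
\]
With it, (PS1) applied to the pre-seed itself gives $N(H,G)\ge\delta(1-2\varepsilon)n^{v_H}p^{e_H}$, and your pruning then finishes the proof with no enlargement.

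Its proof splits copies $I$ according to $J\cong I\cap G$. Copies edge-disjoint from $G$ contribute $o(n^{v_H}p^{e_H})$ by near-independence. Partially overlapping copies contribute at most $\sum_{\emptyset\neq J\subsetneq H}N(J,G)\,n^{v_H-v_J}p^{e_H-e_J}$, which is $o(n^{v_H}p^{e_H})$ by Lemma~\ref{lem:contribution zero}.

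This is where the regular-graph structure genuinely enters. In the extremal case $\Delta(v_J-\alpha^*_J)=e_J$ (e.g.\ $J=K_{1,\Delta}$), $J$ is bipartite with one side of full degree $\Delta$. The bound $N(J,G)=o(n^{v_J}p^{e_J})$ then uses the maximum degree $d$ of graphs in $\widehat{\Xd}$, via a K\H{o}nig/matching count. Without the degree cap the bound is false: a few hub vertices of degree about $n$ make $N(K_{1,\Delta},G)$ too large. This is why, as the paper notes, the analogous reduction fails in the Erd\H{o}s--R\'enyi setting for $p\gg n^{-1/\Delta}$, and why this step cannot come out of the moment argument alone.
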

\dteal{The proof of Lemma \ref{lem:existence of a core} is done in two stages. In the first stage, it is shown that \eqref{eq:contains-core} continues to hold if `core' in its RHS is replaced by `pre-seed'. The second stage involves in showing that any pre-seed graph must contain a core subgraph, which will be an immediate byproduct of our next lemma.} 
%

\begin{lemma}
\label{lem:expectation difference upper bound}
Fix $\Delta\geq 2$ and let $H$ be a connected $\Delta$-regular graph. Assume $1\ll d=np\ll n$ and let $G\in \widehat{\Xd}$ satisfy $e(G) \leq \cbar n^{2}p^{\Delta}\log(1/p)$. For sufficiently large $n$, we have
\begin{equation}
    \label{eq:expectation difference upper bound}
    \E[N(H, \Gd)\big| G\subseteq \Gd] - \E[N(H,\Gd)] \leq (1+o(1))\cdot N(H, G) + o(n^{v_{H}}p^{e_{H}}).
\end{equation}
\end{lemma}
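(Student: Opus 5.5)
The plan is to expand $\E[N(H,\Gd)\mid G\subseteq\Gd]=\sum_{I}\P(I\subseteq\Gd\mid G\subseteq\Gd)$ over labelled copies $I$ of $H$ in $K_n$, and to classify each $I$ by $J:=I\cap G$, the subgraph of $I$ spanned by its edges in $E(G)$. If $J=I$, then $I\subseteq G$, the probability is one, and these copies contribute exactly $N(H,G)$. If $J=\emptyset$, Corollary \ref{cor:almost independence lemma(poisson)} applied with $I$ and $G$ (so $e(G\cup I)=o(nd)$, because $n^2p^\Delta\log(1/p)\ll nd$ when $\Delta\ge2$ and $p\ll1$) gives $\P(I\subseteq\Gd\mid G\subseteq\Gd)\le p^{e_H}D_{I,G}(1+o(1))\le(1+o(1))p^{e_H}$, since $D_{I,G}\le1$. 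There are at most $n^{v_H}$ such copies, so their total contribution is at most $(1+o(1))n^{v_H}p^{e_H}$. By Lemma \ref{lem:estimate t factorial moment lower bound} with $t=1$ (or directly by Corollary \ref{cor:almost independence lemma(poisson)} with $G=\emptyset$), $\E[N(H,\Gd)]\ge(1-o(1))n^{v_H}p^{e_H}$, so this part is cancelled up to $o(n^{v_H}p^{e_H})$.

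The main work is with copies where $\emptyset\ne J\subsetneq H$. For each such $I$, Corollary \ref{cor:almost independence lemma(poisson)} bounds the probability by $(1+o(1))p^{e_H-e_J}$. The number of such $I$ with a given isomorphism type of $J$ is at most $N(J,G)\,n^{v_H-v_J}$. So it suffices to show that for every proper nonempty $J\subseteq H$,
\[
N(J,G)\,n^{v_H-v_J}p^{e_H-e_J}=o(n^{v_H}p^{e_H}),\quad\text{i.e.}\quad N(J,G)=o\big(n^{v_J}p^{e_J}\big).
\]
This is where the generalized Hölder / fractional-cover bounds enter. I will use the standard estimate $N(J,G)\lesssim e(G)^{\alpha}\,d^{\,v_J-2\alpha}$-type counts: vertices of $J$ covered by a maximal fractional matching contribute $e(G)^{\cdot}$, and the remaining vertices contribute at most $d$ each, using $\Delta(G)\le d$ since $G\in\widehat{\Xd}$. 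Concretely, decompose $J$ into its connected components. For a component $J'$ that is a tree (or more generally has $e_{J'}\le$ something), embed an edge via $e(G)$ choices and then each further vertex via at most $d$ neighbours, giving $N(J',G)\le 2e(G)d^{v_{J'}-2}$. For components containing cycles, use the bound of \cite[Theorem~5.7]{HMS}, $N(J',G)\lesssim e(G)^{\alpha^*_{J'}}$, combined with the degree bound.

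Plugging in $e(G)\le\cbar n^2p^\Delta\log(1/p)$ and $d=np$, and using that $J$ is a proper subgraph of a connected $\Delta$-regular $H$ (so $2e_J<\Delta v_J$ on at least one component), this should give the needed $o(n^{v_J}p^{e_J})$, since $(np^{\Delta/2})$ powers times only a $\log(1/p)$ loss. Alternatively, one can lean on the analogous Erd\H{o}s–R\'enyi statement in \cite{ABRB}, whose counting lemma is deterministic in $G$; the only model-specific input there is the conditional probability estimate, which Corollary \ref{cor:almost independence lemma(poisson)} supplies. The catch is that \cite{ABRB} does not assume $\Delta(G)\le d$, so the extra degree cap here only helps.

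I expect the main obstacle to be this deterministic estimate $N(J,G)=o(n^{v_J}p^{e_J})$ uniformly over proper $J$, especially in the sparse end $np^{\Delta/2}\approx$ polylog, where the $\log(1/p)$ factor in $e(G)$ must be absorbed. The strict inequality coming from properness of $J$ inside a connected regular $H$ gains a power of $np^{\Delta/2}$, which beats the polylogarithmic loss once $np^{\Delta/2}\gg1$ suitably. If this is too weak in some range, the fallback is to use the $1+o(1)$ factor in front of $N(H,G)$: borderline terms where $J$ is nearly all of $H$ can be charged against $N(H,G)$ only through the inequality itself, so I would first verify the crude bound and refine only if needed.
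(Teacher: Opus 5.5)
Your reduction is the same as the paper's. You expand over labelled copies $I$, split according to $J=I\cap G$, and reduce everything to the deterministic estimate $N(J,G)=o(n^{v_J}p^{e_J})$ for every nonempty proper $J\subsetneq H$ without isolated vertices, which is Lemma~\ref{lem:contribution zero}. (Minor point: Lemma~\ref{lem:estimate t factorial moment lower bound} is a Poisson-regime statement about unlabelled copies and does not apply here. Your alternative via Corollary~\ref{cor:almost independence lemma(poisson)} with $G=\emptyset$ is fine.)

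The gap is that this deterministic estimate, which carries the whole lemma, is left unproved, and the concrete plan you give for it fails. Your tree bound $2e(G)d^{v_{J'}-2}$ is fine; it saves $p^{\Delta-1}\log(1/p)$. But for components with cycles you use Alon's bound $N(J',G)\le(2e(G))^{\alpha^*_{J'}}$, which ignores the degree cap. Compared with $n^{v_{J'}}p^{e_{J'}}$ it leaves the factor $(\log(1/p))^{O(1)}(np^{\Delta/2})^{2\alpha^*_{J'}-v_{J'}}p^{\Delta v_{J'}/2-e_{J'}}$, which is not small when $\alpha^*_{J'}>v_{J'}/2$. For example, take $H=K_5$ and $J'$ a triangle with two pendant edges at one vertex, so $\alpha^*_{J'}=3$. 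The factor is of order $(\log(1/p))^3np^7$, which diverges at $p=1/\log n$; this value satisfies the hypotheses and lies in the localized regime.

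The fallback to \cite{ABRB} does not help. As the paper remarks right after the statement, the Erd\H{o}s--R\'enyi analogue holds only for $p\ll n^{-1/\Delta}(\log n)^{-O(1)}$. Without a degree cap, pieces such as $K_{1,\Delta}$ embed about $e(G)^{\Delta}$ times into a star, so the cap is the essential input, not a harmless extra. Your diagnosis of the saving is also off. It is a fixed positive power of $p$, independent of $np^{\Delta/2}$, and it beats $(\log(1/p))^{O(1)}$ simply because $p\ll1$; no lower bound on $np^{\Delta/2}$ is involved.

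What is missing is to spend the degree cap on exactly the vertices not covered by a fractional matching, which your first sentence gestures at. The paper does this in two cases using \cite[Lemma~5.2]{HMS}, which gives $\Delta(v_J-\alpha^*_J)\ge e_J$.
\begin{itemize}
\item If the inequality is strict, \cite[Theorem~5.4]{HMS}, namely $N(J,G)\le(2e(G))^{v_J-\alpha^*_J}n^{2\alpha^*_J-v_J}$, already gives the saving $p^{\Delta(v_J-\alpha^*_J)-e_J}$, even with $n$ choices for uncovered vertices.
\item If it is an equality, $J$ is bipartite with parts $A,B$, all of $A$ has degree $\Delta$, and $|A|<|B|$ because $H$ is connected. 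K\H{o}nig gives a matching saturating $A$, and each unmatched vertex of $B$ has a neighbour in $A$. Hence $N(J,G)\le(2e(G))^{|A|}d^{|B|-|A|}$, a saving of $p^{|B|-|A|}$.
\end{itemize}
Equivalently, in one step: set $\nu^*:=v_J-\alpha^*_J$ and take a maximum half-integral fractional matching, whose support is a union of disjoint edges and odd cycles.
\begin{itemize}
\item Embed that support in at most $(2e(G))^{\nu^*}$ ways.
\item Every other vertex has all its neighbours in the support, so it has at most $d$ choices.
\end{itemize}
This gives $N(J,G)\le(2e(G))^{\nu^*}d^{v_J-2\nu^*}$ and a saving $p^{(\Delta\nu^*-e_J)+(v_J-2\nu^*)}$. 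The exponent is positive because $e_J\le\Delta\nu^*$, and equality together with $v_J=2\nu^*$ would force $J$ to be $\Delta$-regular, i.e.\ $J=H$.
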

Its proof relies on expanding the left hand side of \eqref{eq:expectation difference upper bound} and identifying the terms that \dteal{have} non negligible contribution. These turn out to come only from copies of $H$ already contained in $G$, giving the contribution $N(H,G)$. \dteal{Let us mention in passing that \eqref{eq:expectation difference upper bound} does not hold in the Erd\H{o}s-R\'enyi setting for $p \gg n^{-1/\Delta}$ (see the proof of \cite[Lemma 7.3]{HMS}), and it only holds when $p \ll n^{-1/\Delta} (\log n)^{-O(1)}$ (see the proof of \cite[Lemma 3.2]{ABRB}). This results in the difference in the notion of core graphs of \cite{HMS} and the ones here and in \cite{ABRB}.} 


\dteal{The proof of Lemma \ref{lem:expectation difference upper bound} requires the following result.} 
\begin{lemma}
\label{lem:contribution zero} Fix some constant \dteal{$C\in (0, \infty)$}.
Assume $p=o(1)$ and $G\in \widehat{\Xd}$ with $e(G) \leq C n^{2}p^{\Delta}\log(1/p)$. Let $\Delta\geq 2$ and $H$ be a connected $\Delta$-regular graph. Then for any non-empty proper subgraph \dteal{$J\subsetneq  H$} without  isolated vertices, we have
\begin{equation*}
    N(J, G) = o(n^{v_{J}}p^{e_{J}}).
\end{equation*} 
\end{lemma}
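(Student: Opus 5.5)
The plan is to bound $N(J,G)$ through a Hölder / fractional-cover type counting inequality that uses \emph{both} constraints available on $G$: the edge bound $e(G)\le C n^2p^\Delta\log(1/p)$ and the degree bound $\max_v \deg_G(v)\le d=np$ coming from $G\in\widehat{\Xd}$. Once the counting bound is in hand, the claim should reduce to a purely combinatorial inequality about subgraphs of regular graphs.

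\textbf{Step 1 (reduction to connected $J$).} $N(J,G)\le\prod_i N(J_i,G)$ over the components $J_i$ of $J$, and each $J_i$ is a connected proper subgraph of $H$ with at least one edge. Also $n^{v_J}p^{e_J}=\prod_i n^{v_{J_i}}p^{e_{J_i}}$. So it suffices to show, for each connected $J\subsetneq H$ with $e_J\ge1$, a bound $N(J,G)\le n^{v_J}p^{e_J}\cdot p^{c}(\log(1/p))^{O(1)}$ with some $c=c_H>0$. Since $p\ll1$, this is $o(n^{v_J}p^{e_J})$.

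\textbf{Step 2 (counting bound).} For a graph $G$ with $m$ edges and maximum degree at most $d$, I would prove
\[
N(J,G)\lesssim m^{\,v_J-\alpha^*_J}\, d^{\,2\alpha^*_J-v_J}.
\]
This is the interpolation between the Alon/Friedgut--Kahn bound $m^{\rho^*}$ and the trivial tree bound $m\,d^{v-2}$; compare the ABRB/HMS counting lemmas and \cite[Theorem~5.7]{HMS}. I would prove it by taking a half-integral optimal fractional independent set / vertex cover. By LP duality this gives a decomposition of $V(J)$ into vertices of weight $0$, $1/2$ and $1$, which yields a spanning structure made of matching edges and odd cycles on the weight-$1/2$ part plus a star-like attachment of the weight-$0$ vertices. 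One then embeds the edges and cycles at cost $m$ per edge and $m^{\ell/2}$ per $\ell$-cycle, and every remaining vertex adjacent to an already embedded one at cost $d$. The delicate point here is to check that the exponent bookkeeping really matches $v_J-\alpha^*_J$ and $2\alpha^*_J-v_J$ for general $J$, and not only in examples such as an edge, $K_{1,2}$ or a triangle, where it is immediate. I expect this to be the main obstacle. If the clean statement fails, I would fall back on a weaker bound that still has the right exponent of $p$ for subgraphs of regular graphs.

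\textbf{Step 3 (exponent comparison).} Plugging in $m\le Cn^2p^\Delta\log(1/p)$ and $d=np$ gives
\[
N(J,G)\lesssim n^{v_J}\,p^{\Delta(v_J-\alpha^*_J)+2\alpha^*_J-v_J}\,(\log(1/p))^{v_J}.
\]
So it remains to show $\Delta(v_J-\alpha^*_J)+2\alpha^*_J-v_J> e_J$ strictly. By LP duality, $v_J-\alpha^*_J$ is the fractional vertex cover number $\tau^*_J$. Given a cover $\beta$, we have $e_J\le\sum_u\beta(u)\deg_J(u)\le\Delta\tau^*_J$, and also $2\alpha^*_J\ge v_J$. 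Strictness should come from $J$ being a proper connected subgraph of a connected $\Delta$-regular graph. If $\alpha^*_J>v_J/2$ we are done. Otherwise $\alpha^*_J=v_J/2$ and the uniform cover $\beta\equiv1/2$ is optimal, and then $e_J=\Delta v_J/2$ would force $J$ to be $\Delta$-regular. A $\Delta$-regular subgraph of a connected $\Delta$-regular $H$ is a union of components of $H$, hence equals $H$, a contradiction. More generally I would argue that any optimal cover must put positive weight on some vertex of $J$ with $\deg_J<\Delta$, which makes the first inequality strict. Since everything is half-integral, the gap is at least $1/2$. Thus $c=1/2$ works, and $p^{1/2}(\log(1/p))^{v_J}\to0$ finishes the proof.
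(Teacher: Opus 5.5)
Your argument is correct, but it is organized differently from the paper's.

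\textbf{How the two routes compare.} The paper starts from \cite[Lemma~5.2]{HMS}, namely $\Delta(v_J-\alpha^*_J)\ge e_J$, and splits on whether this inequality is strict.
In the strict case it applies \cite[Theorem~5.4]{HMS} in the form $N(J,G)\le (2e(G))^{v_J-\alpha^*_J}n^{2\alpha^*_J-v_J}$, so the degree bound on $G$ plays no role.
In the equality case it uses three things: the structural part of \cite[Lemma~5.2]{HMS} ($J$ is bipartite, every vertex of $A$ has degree $\Delta$, and $|A|<|B|$ by connectivity of $H$); K\H{o}nig's theorem, to get a matching saturating $A$; and the explicit count $(2e(G))^{|A|}d^{|B|-|A|}$.

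Your Step~2 inequality $N(J,G)\lesssim m^{v_J-\alpha^*_J}d^{2\alpha^*_J-v_J}$ generalizes both counts at once. For the paper's case-2 graphs the optimal fractional independent set is $\mathbf{1}_B$, and your count reduces to theirs.
With it, the slack in the exponent is $(\Delta\tau^*_J-e_J)+(2\alpha^*_J-v_J)$. You then only need two facts: $2\alpha^*_J-v_J$ is a nonnegative integer, and a $\Delta$-regular subgraph of a connected $\Delta$-regular $H$ equals $H$. No characterization of the equality case is required.
So your dichotomy ($\alpha^*_J>v_J/2$ versus $=v_J/2$) is not the paper's ($\Delta\tau^*_J>e_J$ versus equality). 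The paper's case~2 falls into your first case, where the gain comes from $d$ in place of $n$.
The paper's route rests on results quoted from \cite{HMS}. Yours is a single uniform argument with an explicit gain of $p^{1/2}$, at the price of proving Step~2 yourself.

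\textbf{Completing Step~2.} Step~2 is true and the exponents do match, but your sketch omits the one nontrivial ingredient. Take an optimal half-integral fractional independent set $\alpha$ with level sets $V_0,V_{1/2},V_1$.
\begin{itemize}
\item $V_1$ is independent and $N_J(V_1)\subseteq V_0$.
\item $J[V_{1/2}]$ has a spanning union of disjoint edges and odd cycles, because its own fractional independence number must equal $|V_{1/2}|/2$.
\item Crucially, there is a matching saturating $V_0$ into $V_1$. If some $S\subseteq V_0$ violated Hall's condition, resetting $\alpha$ to $1/2$ on $S\cup(N_J(S)\cap V_1)$ would stay feasible and increase $\sum\alpha$.
\end{itemize}
Embedding the matched pairs, the edges and the odd cycles (using $\mathrm{tr}(A(G)^\ell)\le(2m)^{\ell/2}$) costs $(2m)^{|V_0|+|V_{1/2}|/2}=(2m)^{v_J-\alpha^*_J}$. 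Each of the remaining $|V_1|-|V_0|=2\alpha^*_J-v_J$ vertices costs at most $d$.
Without the Hall step, a vertex of $V_0$ with no private partner would cost $n$ and the bound would fail.

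\textbf{Smaller points.} Drop your ``more generally'' remark, because it is false. For $J=K_{1,\Delta}\subseteq H$ the unique optimal fractional vertex cover is the indicator of the centre, which has $\deg_J=\Delta$, so $e_J=\Delta\tau^*_J$. Your main case analysis never uses that remark, since this $J$ has $\alpha^*_J>v_J/2$.
Step~1 is unnecessary, because the argument applies to disconnected $J$ directly.
Also, $\tau^*_J=v_J-\alpha^*_J$ follows from the substitution $\beta=1-\alpha$, not from LP duality.
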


\dteal{Before proceeding to prove Lemma \ref{lem:contribution zero} let us recall the following standard definitions: For a graph $F$, a subset of $S \subseteq V(F)$ is said to be its vertex cover if $S$ intersects every edge of $F$. We denote $\vartheta(F)$ to be the minimum size of a vertex cover of $F$. Next we recall that matching is a subset of edges with no two edges sharing a vertex. We let $\nu(F)$ to be the matching number of $F$, i.e.~the size of the largest matching of $F$. Further recall that by K\H{o}nig's theorem, for any bipartite graph $F$ we have $\vartheta(F)=\nu(F)$.} 

\begin{proof}[Proof of Lemma \ref{lem:contribution zero}]
\dteal{Let \dteal{$\emptyset \neq J\subsetneq H$} be a nonempty proper subgraph with no isolated vertices. By \cite[Lemma~5.2]{HMS}, $\Delta(v_{J}-\alpha^{*}_{J})-e_{J}\geq 0$, where we recall that $\alpha^*_J$ is the fractional independence number of $J$.} 	
We consider two cases based on the value of $\Delta(v_{J}-\alpha^{*}_{J})-e_{J}$.\\
\textbf{Case 1: $\Delta(v_{J}-\alpha^{*}_J)-e_{J}>0$.}
By applying the bound from \cite[Theorem~5.4]{HMS}, \dteal{as $p \ll 1$}, we obtain 
\begin{equation*}
    N(J, G)\leq (2e(G))^{v_{J}-\alpha_{J}^{*}}\cdot n^{2\alpha^{*}_{J}-v_{J}} \leq (2C\log (1/p))^{v_{J}-\alpha_{J}^{*}}\cdot  n^{v_{J}} p^{e_{J}}\cdot p^{\Delta(v_{J}-\alpha_{J}^{*})-e_{J}}=o(n^{v_{J}}p^{e_{J}}).
\end{equation*}
\textbf{Case 2: $\Delta(v_{J}-\alpha^{*}_{J})-e_{J}=0$.} In this case, \cite[Lemma~5.2]{HMS} implies that $J$ is a bipartite graph with vertex bipartition $V(J) = A\cup B$ such that $\tu{deg}_{J}(a) = \Delta$ for all $a\in A$. Furthermore, we must have $|A|<|B|$. Otherwise $J$ would be a $\Delta$-regular proper subgraph of $H$, thereby contradicting the connectedness of $H$.


Suppose $\widetilde{V}\subseteq V(J)$ is a vertex cover of $J$. Since every edge of $J$ is incident to \dteal{some vertex in} $\widetilde{V}$, it follows that:
\begin{equation*}
    \Delta \cdot |A| = e_{J}\leq  \sum_{v\in \widetilde{V}} \tu{deg}_{J}(v) \leq \Delta \cdot |\widetilde{V}|,
\end{equation*} 
\dteal{and consequently $|A| \leq |\widetilde V|$}, where the final inequality holds because $J$ has maximum degree $\Delta$. 
Since $A$ itself is a vertex cover (\dteal{as} every edge has an endpoint in $A$), it is \dteal{a} minimum vertex cover.
Therefore, \dteal{by K\H{o}nig's theorem, $\nu(J)=|A|$. Fix any largest matching of $J$, say $M$.} 
Because each edge in the matching $M$ has to have one endpoint in $A$ and other in $B$, this implies that $M$ covers all vertices in $A$. 

We now count the number of labelled copies of $J$ in $G$ \dteal{as follows}: First we choose the images of the \dteal{edges in $M$. As $|M|=|A|$}, there are at most $(2e(G))^{|A|}$ choices for these images. The remaining $|B|-|A|$ vertices in $B$ are not covered by the matching $M$. Each such vertex is adjacent to some vertex in $A$ (since $J$ has no isolated vertices). \dteal{Since} 
the images of the vertices in  $A$ \dteal{have already been} fixed, the image of each remaining vertex in $B$ is constrained to be a neighbor of the image of its counterpart in $A$. \dteal{As} $G$ has maximum degree $d$, there are at most $d^{|B|-|A|}$ choices for these remaining vertices. 

Combining these counts and using the assumption $e(G)\lesssim n^{2}p^{\Delta}\log(1/p)$, we get
\begin{equation*}
    N(J, G)\leq (2e(G))^{|A|}\cdot d^{|B|-|A|}\leq n^{|A|+|B|}\cdot p^{\Delta |A|}\cdot (2C\log (1/p))^{|A|}\cdot  p^{|B|-|A|} = o(n^{v_{J}}p^{e_{J}}), 
\end{equation*}
where the last step follows since $e_{J} = \Delta |A|$,  $v_{J} = |A|+|B|$, $|B|>|A|$, \dteal{and $p \ll 1$}. This concludes the proof.
\end{proof}

\dteal{We are now ready to prove Lemma \ref{lem:expectation difference upper bound}.}

\begin{proof}[Proof of Lemma \ref{lem:expectation difference upper bound}]
Using linearity of  expectation, we rewrite the left hand side of \eqref{eq:expectation difference upper bound} as  \dteal{the following sum:}
\begin{equation}
    \label{eq:expectation expansion}
    \E[N(H, \Gd)\big|G\subseteq \Gd] - \E[N(H, \Gd)] = \sum_{I}\dteal{\left[\P(I\subseteq \Gd|G\subseteq \Gd) -\P(I\subseteq \Gd)\right]},
\end{equation}
\dteal{where the sum over $I$ is} taken over all labelled copies of $H$ in $K_{n}$. We break the above sum into two cases. 

First consider labelled copies $I$  such that  $I\cap G = \emptyset$. Using Corollary \ref{cor:almost independence lemma(simple)} with $\eta =\Delta\cdot d^{-1}$, \dteal{as $d \gg 1$}, we have
\begin{equation*}
    \P(I\subseteq \Gd | G\subseteq \Gd) - \P(I\subseteq \Gd) \leq (1+o(1))^{e_{H}}\cdot p^{e_{H}} - (1-o(1))^{e_{H}}\left(1-(\Delta/d)\right)^{2e_{H}}\cdot p^{e_{H}} = o(p^{e_{H}}). 
\end{equation*}
As the number of labelled copies \dteal{of $H$ in $K_n$} is at most $n^{v_{H}}$, we obtain
\begin{equation}
    \label{eq:not intersecting copies}
    \sum_{I:\dteal{E(I)\cap E(G)} =\emptyset}\dteal{\left[\P(I\subseteq \Gd|G\subseteq \Gd) -\P(I\subseteq \Gd)\right]} = o(n^{v_{H}}p^{e_{H}}).
\end{equation}
On the other hand, consider $I$ where $I\cap G$ is isomorphic to a non-empty subgraph $J$ of $H$ without any isolated vertices. For such an $I$, we \dteal{employ Corollary \ref{cor:almost independence lemma(simple)}, applying it} $(|E(I)|-|E(J)|)$ \dteal{many} times to get
\begin{equation*}
    \P(I\subseteq \Gd|G\subseteq \Gd) = \P(I\setminus G\subseteq \Gd|G\subseteq \Gd)\ \leq\ (1+o(1))^{e_{H}-e_{J}} p^{e_{H}-e_{J}}.
\end{equation*}
Number of such $I$'s can be bounded by first choosing a copy of $J$ in $G$ and then each remaining $v_{H}-v_{J}$ \dteal{many} vertices gets at most $n$ choices. This leads to the estimate:
\begin{equation}\label{eq:intersecting copies}
\begin{aligned}
    \sum_{I:\dteal{E(I)\cap E(G)} \neq \emptyset} \dteal{\left[\P(I\subseteq \Gd|G\subseteq \Gd) -\P(I\subseteq \Gd)\right]}  & \leq (1+o(1))\cdot \sum_{\emptyset\neq J\subseteq H}N(J, G)\cdot n^{v_{H}-v_{J}}\cdot p^{e_{H}-e_{J}}\\
& \dteal{\leq (1+o(1))\cdot N(H,G) + o( n^{v_{H}} p^{e_{H}})},
\end{aligned}		
\end{equation}
where \dteal{in the first step} the sum 
is over non empty subgraphs $J$ of $H$ without any isolated vertices, \dteal{and in the final step we used} 
Lemma \ref{lem:contribution zero}. 
\dteal{Now, adding} up the bounds in \eqref{eq:intersecting copies} and \eqref{eq:not intersecting copies}, finishes the proof.
\end{proof} 

\dteal{We now return to the proof of Lemma \ref{lem:existence of a core}.}

\begin{proof}[Proof of Lemma \ref{lem:existence of a core}]
We first claim that 
\begin{equation}
    \label{eq:pre-seed existence}
    \P(\dteal{\UTH})\leq (1+\varepsilon)\cdot \P(\Gd\tu{ contains a pre-seed graph $G$}).
\end{equation}
Applying \cite[Lemma~3.7]{HMS} with $X=N(H, \Gd)$ and $\ell = \cbar n^{2}p^{\Delta}\log(1/p)$, give us  
\begin{equation}
    \label{eq:not pre-seed upper bound}
    \P(\UTH \cap \{\Gd \tu{ contains a pre-seed graph $G$}\}^{c})\leq \left(\frac{1+\delta(1-\varepsilon)}{1+\delta}\right)^{\frac{\cbar}{e_{H}}n^{2}p^{\Delta}\log(1/p)}.
\end{equation} 
Choosing $\cbar$ sufficiently large and comparing this estimate with the lower bound \eqref{eq:lower bound}, we obtain 
\begin{equation*}
    \P(\UTH \cap \{\Gd \tu{ contains a pre-seed}\}^{c})\leq (\varepsilon/(\varepsilon+1))\cdot \P(\UTH).
\end{equation*}
This gives the claim \eqref{eq:pre-seed existence}. It remains to show that every pre-seed contains a core. Let $G$ be a pre-seed. By (PS1), Lemma \ref{lem:expectation difference upper bound}, and the fact that $\E[N(H, \Gd)] = (1+o(1))n^{v_{H}}p^{e_{H}}$ \dteal{(one may use Corollary \ref{cor:almost independence lemma(simple)})}, we have
\begin{equation*}
    (1+\delta(1-\varepsilon))n^{v_{H}}p^{e_{H}}\leq \E[N(H, \Gd)|G\subseteq \Gd] \leq (1+o(1))\cdot n^{v_{H}}p^{e_{H}}+(1+o(1))\cdot N(H, G).
\end{equation*}
Hence, for large enough $n$, $N(H, G)\geq \delta(1-2\varepsilon)n^{v_{H}}p^{e_{H}}$. We now prune $G$ by repeatedly deleting any edge \dteal{$e \in E(G)$} that participate in strictly fewer than $\delta\varepsilon n^{v_H}p^{e_H}/\left(\cbar n^{2}p^{\Delta}\log(1/p)\right)$ labelled copies of $H$ in the current graph. Let $G_{\star}$ be the graph obtained at the end of this procedure. So, $G_{\star}$ satisfies (C3). Moreover, since $e(G_{\star})\leq \cbar n^{2}p^{\Delta}\log(1/p)$, the total number of copies of $H$ removed is less than $\delta \varepsilon n^{v_{H}}p^{e_{H}}$, thus implying condition (C1) for $G_{\star}$. Finally, the condition (C2) is automatically inherited as $G_\star \subseteq G$.
\end{proof}

\begin{remark}
\dteal{Note that by enlarging $\overline C$ in the proof of Lemma \ref{lem:existence of a core} one obtains
\[
\P(\UTH \cap \{\Gd \tu{ contains a pre-seed}\}^{c})\leq  \P(\UTH)^2.
\]
Hence, repeating the rest of the argument one arrives at the following stronger bound:
\begin{equation}\label{eq:cond-struc-starting}
\P(\{\Gd \text{ contains a core graph}\}\mid \UTH) =\frac{\P(\UTH \cap \{\Gd \text{ contains a core graph}\})}{\P(\UTH)}\geq 1 -\P(\UTH). 
\end{equation}
This stronger bound will be useful for the proof of Theorem \ref{thm:cond-loc}.} 
\end{remark}

Equipped with Lemma \ref{lem:existence of a core}, it remains to bound the probability that $\Gd$ contains a core. \dteal{This is the content of the next result.} 
    
\begin{proposition}
\label{prop:prob of existence of core}
Fix $\delta>0$ and a $\Delta$-regular connected graph $H$ with $\Delta\geq 2$. Then, for any $p\in (0,1)$ satisfying $(\log n)^{1/(v_{H}-2)}\ll np^{\Delta/2}\ll n$, and $\varepsilon\in (0,1)$ sufficiently small, we have
\[\liminf_{n\rightarrow \infty}-\frac{\log \P(\Gd \tu{ contains a core graph }G)}{n^{2}p^{\Delta}\log (1/p)}\geq \Psi_{H}(\delta)\cdot(1-\mathfrak{f}_{H}(\varepsilon)),\]
for some non-negative function $\mathfrak{f}_{H}(\cdot)$ such that $\lim_{\varepsilon\downarrow 0}\mathfrak{f}_{H}(\varepsilon) = 0$.
\end{proposition}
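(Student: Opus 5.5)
We follow the union-bound strategy of \cite{HMS, ABRB} described in Section \ref{sec:proof-outline}, namely \eqref{eq:idea-strong-core}. For a core $G$ with $e(G)=e_0$, Corollary \ref{cor:almost independence lemma(simple)}, applied by exposing the edges of $G$ one at a time, gives $\P(G\subseteq \Gd)\leq ((1+o(1))p)^{e_0}=p^{(1-o(1))e_0}$. Since $e_0\leq \cbar n^2p^\Delta\log(1/p)=o(nd)$, the corollary applies, and the degree-loss factors only help here. So, just as in the Erd\H{o}s--R\'enyi setting, we must bound $\P(\exists\text{ core } G\subseteq\Gd)\leq \sum_{e_0}\mathcal{N}_{e_0}\,p^{(1-o(1))e_0}$ and show two things: $e_0^\star$, the minimal number of edges of a core, is at least $(1-\mathfrak f_H(\varepsilon))\Psi_H(\delta)n^2p^\Delta$, and the entropy $\log\mathcal N_{e_0}$ is negligible, or else compensated.

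The first step is the variational bound. For $\Delta\geq3$, (C1) together with the Erd\H{o}s--R\'enyi extremal input used in \cite{ABRB} (the generalized H\"older / fractional-cover bound $N(H,G)\leq (2e(G))^{v_H/2}$ for regular $H$, refined with $N(J,G)$ from Lemma \ref{lem:contribution zero}) yields $e(G)\geq \tfrac12(\delta(1-3\varepsilon))^{2/v_H}n^2p^\Delta$. For $\Delta=2$, i.e.\ $H=C_\ell$, we additionally use $\Delta(G)\leq d$. We write $N(C_\ell,G)\leq \operatorname{tr}A(G)^\ell\leq\sum_i|\lambda_i|^\ell$ with $\sum_i\lambda_i^2=2e(G)$ and $|\lambda_i|\leq d$. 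Maximizing $\sum x_i^{\ell/2}$ subject to $\sum x_i=2e$ and $0\leq x_i\leq d^2$ puts as many $x_i$ as possible at $d^2$ and one remainder. Hence $N\geq\delta(1-3\varepsilon)d^\ell$ forces $2e(G)\geq(\lfloor\delta\rfloor+\{\delta\}^{2/\ell}-o_\varepsilon(1))d^2$, which gives $\Psi_{C_\ell}$.

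Next we split the cores by size. Cores with $e(G)\geq Ce_0^\star$ are handled by the high/low degree decomposition from the outline. Vertices of degree at most some $K_H$ are low, and the rest are high. Using (C3), every edge lies in many copies, so a $\bar c_H$ fraction of edges touch high-degree vertices. Encoding the graph by the neighborhoods of its high vertices, which number at most $O(e/K_H)$, gives $\log\mathcal N_{e_0}\leq(1-\bar c_H/2)e_0\log(1/p)$, and summing over $e_0\geq Ce_0^\star$ gives a negligible contribution. For $e_0^\star\leq e_0\leq Ce_0^\star$ we need $\log\mathcal N_{e_0}=o(e_0\log(1/p))$, at least on the subclass of non-bipartite-dominated cores. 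Here we use (C3) and the ``seed'' argument of \cite{HMS, ABRB}: every core is determined by a small set of high-degree vertices of size $O(np^{\Delta/2})$ together with its neighbourhood structure, which costs $e^{O(np^{\Delta/2}\log n)}$. This is $o(n^2p^\Delta\log(1/p))$ exactly because $np^{\Delta/2}\gg(\log n)^{1/(v_H-2)}$ (this is where that hypothesis enters). For cores in which a bipartite subgraph carries a nonnegligible share of copies, we show $e(G)\geq(1+c)e_0^\star$ and count these more crudely, so that the count is balanced by the extra factor $p^{ce_0^\star}$, as in \cite{ABRB}. For $\Delta=2$, note that $K_{d,d}$ is extremal for even $\ell$, so there the bipartite case must be matched against the rate $\Psi$ itself rather than by a strict gain.

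The main obstacle is this middle-range entropy count, especially in the bipartite case near the lower end of the sparsity range. I would first try to transplant the counting lemmas of \cite{ABRB} verbatim, checking that they only use the degree bound and the edge count of $G$, and not independence, which only enters through the probability bound already secured above.
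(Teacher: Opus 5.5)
Your skeleton matches the paper: the single-core bound $\P(G\subseteq\Gd)\leq p^{(1-o(1))e(G)}$, the edge lower bound (your spectral argument for $C_\ell$ is exactly Lemma~\ref{lem:minimum edge bound}), and a union bound. The gap is in the entropy estimates, which are the actual content of the proposition: you do not supply them, and both counting mechanisms you sketch fail. For large cores, encoding by high-degree vertices and their neighbourhoods in $\set{n}$ costs at least $\log\binom{n}{\deg h}\geq \deg(h)\log(n/d)=\deg(h)\log(1/p)$ per high vertex $h$. That is at least $\log(1/p)$ per edge at a high vertex, which is exactly what the probability pays, so nothing is saved (and low--low edges are not even encoded). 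The paper's saving charges vertices, not edges. It chooses $V(G)$ at cost $n^{\bm v}$ and the edges inside at cost $\binom{\bm v^2}{\bm e}=e^{O(\bm e\log\log n)}$, then uses minimum degree $\geq\Delta$ plus Lemma~\ref{lem:control on E_11} to get $\frac{\Delta}{2}\bm v\leq (1-\bar c)\bm e$. This works only when $n^2p^\Delta\leq (\log n)^{2v_H}$, so that $\bm v$ is polylogarithmic and $n\leq p^{-\Delta/2}(\log n)^{v_H}$. The denser sub-regime uses Lemma~\ref{lem:entropic stability} with constant $R$ and the argument of \cite[Lemma 3.5]{ABRB}.

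Your middle-range claim is false. You say a core is determined by $O(np^{\Delta/2})$ high-degree vertices plus neighbourhood data costing $e^{O(np^{\Delta/2}\log n)}$. But for bipartite $H$ a core may contain $\Theta(n^2p^\Delta)$ bounded-degree vertices hanging off a few hubs, for instance $K_{3,t}$ with $t\asymp n^2p^3$ for $H=K_{3,3}$ when $np^2\lesssim 1$. Each such vertex costs about $\log n\geq\frac{\Delta}{2}\log(1/p)$.

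The missing idea is the degree-product bound. Since $np^{\Delta/2}\gg(\log n)^{1/(v_H-2)}$ makes the threshold in (C3) $\gg 1$, every core edge satisfies $\deg(u)\deg(v)\geq Q_{\core}$ (Lemma~\ref{lem:product of degrees}). This, together with the resulting minimum-degree and $E_{1,1}$ control, is where the hypothesis enters. It does not enter through $np^{\Delta/2}\log n=o(n^2p^\Delta\log(1/p))$, which needs far less. Fed into the dyadic degree-class count of Lemma~\ref{lem:entropic stability}, the degree-product bound confines all edges to a set $\mathfrak M$ with $|\mathfrak M|\lesssim p^{-\varepsilon/3}\bm e$. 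So only the low-degree set $\W_R(G)$ carries genuine entropy $\binom{n}{\bm v}$.

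\begin{itemize}
\item When $np^{\Delta-1}\gg(\log n)^{v_H}$, take $R=Q_{\core}/(2d)$. Then $\W_R(G)=\emptyset$ and entropic stability holds outright (Corollary~\ref{cor:entropic stability 2}).
\item Otherwise, first discard cores with $e(G)\geq\cstar n^2p^\Delta$ (Lemma~\ref{lem:SC reduction step}) and pass to strong cores.
\item For $\Delta=2$, strong cores again have $\W_R(G)=\emptyset$: $\deg(u)\deg(v)\geq c_0d^2$ and $\deg\leq d$ force every degree to be $\geq c_0 d$. So the even-cycle/$K_{d,d}$ difficulty you anticipate never arises. No Erd\H{o}s--R\'enyi balancing is needed, which is fortunate since it is calibrated to $\frac12\delta^{2/\ell}<\Psi_{C_\ell}(\delta)$ for $\delta>1$.
\item For $\Delta\geq 3$ one pays $\binom{n}{\bm v}\leq p^{-\frac{\Delta}{2}\bm v-\varepsilon\bm e}$. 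The quantity to compare with $e_0^\star$ is therefore $\bm e-\frac{\Delta}{2}\bm v_{\#}$ rather than $e(G)$. In the bipartite case this requires the inequality of Lemma~\ref{lem:bipartite edge,vertex bound}.
\end{itemize}
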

\dteal{Notice that Proposition \ref{prop:prob of existence of core} together with Lemma \ref{lem:existence of a core} yields the required bound \eqref{eq:ubd-with-mathfrak-f}.} 

\dteal{The proof of Proposition \ref{prop:prob of existence of core} needs several ingerdients, is long, and will be done in stages through the remainder of Section \ref{sec:loc-regime-ubd}. As indicated in Section \ref{sec:proof-outline} a key is to bound the number of subgraphs with certain specified properties. We begin with the required bounds for core graphs.} We formulate the required counting estimate for a slightly more general class of graphs. 
     

Let $Q\coloneqq Q_{n,p, \varepsilon}$ and $T\coloneqq T_{n,p, \varepsilon}$ be positive parameters, and define
\begin{equation}\label{eq:mathfrak-G}
    \mathfrak{G}(Q,T)\coloneqq \{G\in \widehat{\Xd}:\ e(G)\leq T,\ \tu{and }\ \tu{deg}_{G}(u)\cdot \tu{deg}_{G}(v)\geq Q,\ \forall (u,v)\in E(G)\}.
\end{equation}
    Given another parameter $R\coloneqq R_{n,p,\varepsilon}\geq 1$, define the set of low degree vertices
    \begin{equation}
        \label{eq:def of W_R(G)}
        \W_{R}(G)\coloneqq \{u\in V(G):\ \tu{deg}_{G}(u)< R\}.
    \end{equation}
 
The following lemma bounds the number of graphs in $\mathfrak G(Q,T)$ after fixing \dteal{the} number of edges and the size of their low degree vertex set.

\begin{lemma}
    \label{lem:entropic stability}
    Fix $\varepsilon\in (0,1)$ and assume $p=o(1)$. Let $Q,T,R>0$ depend only on $n,p$ and $\varepsilon$. Fix $\bm{v}\in \set{n}\cup \{0\}$ and $\bm{e}\in \set{\lfloor T \rfloor}$. Suppose that the following three conditions hold:
        \begin{equation}
            \label{eq:three condition}
            \frac{T}{Q}\cdot \max\{R\mathds{1}_{\bm{v}>0}, 1\}\cdot\log \frac{d}{R}\leq p^{-\varepsilon/3},\ n^{1/R}\leq p^{-\varepsilon/16}, \ \tu{ and }\ \dteal{E_{1,1}(G):=}\{(u,v)\in E(G):u,v\in \W_{R}(G)\} = \emptyset. 
        \end{equation}
\dteal{Further assume that $R \ll d$.} Then, for all sufficiently large $n$,
        \begin{equation}
            \label{eq:entropic stability}
            \left|\{G\in \mathfrak{G}(Q,T):\ |\mathcal{W}_{R}(G)|=\bm{v} \tu{ and } e(G)=\bm{e}\}\right| \leq {n\choose \bm{v}}\cdot \exp(\varepsilon\bm{e}\log(1/p) ).
    \end{equation}
\end{lemma}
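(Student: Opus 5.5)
We plan an encoding argument in the style of the entropic stability lemmas of \cite{HMS, ABRB}. The vertex set $V(G)$ splits into $\W_R(G)$ and the high degree set $V_{\high}:=V(G)\setminus \W_R(G)$. The third condition in \eqref{eq:three condition} says that every edge of $G$ has at least one endpoint in $V_{\high}$. So $G$ is determined by three pieces of data: the set $\W_R(G)$, the edges inside $V_{\high}$, and the bipartite edges between $V_{\high}$ and $\W_R(G)$. The factor ${n\choose \bm{v}}$ in \eqref{eq:entropic stability} pays for choosing $\W_R(G)$. We must show that the remaining data costs at most $\exp(\varepsilon\bm{e}\log(1/p))$.

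\emph{Step 1: bound $|V_{\high}|$.} Summing degrees gives $|V_{\high}|\le 2\bm{e}/R$. We also use the product condition. Every edge $(u,v)$ has $\deg_G(u)\deg_G(v)\ge Q$ and maximum degree at most $d$, so each endpoint has degree at least $Q/d$. Every vertex in $V(G)$ therefore has degree at least $Q/d$, and hence $|V(G)|\le 2\bm{e}d/Q\le 2Td/Q$.

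\emph{Step 2: specify $V_{\high}$ and the edges incident to it.}
\begin{itemize}
\item[(a)] We choose $V_{\high}$ as a subset of $\set{n}$. This costs at most $n^{2\bm{e}/R}=\exp((2\bm{e}/R)\log n)\le p^{-\varepsilon\bm{e}/8}$ by the condition $n^{1/R}\le p^{-\varepsilon/16}$.
\item[(b)] For each $u\in V_{\high}$ we list its neighbours. Neighbours in $V_{\high}$ cost a subset of a set of size at most $2\bm{e}/R$. Neighbours in $\W_R(G)$ cost a subset of a set of size $\bm{v}$. Since $\bm{v}\le |V(G)|\lesssim Td/Q$, the number of ways is at most $\binom{\bm{v}+2\bm{e}/R}{\le \bm{e}}\le \exp\big(\bm{e}\log(e(\bm{v}+2\bm{e}/R)/\bm{e})\big)$.
\end{itemize}
We need this logarithm to be at most $\tfrac{\varepsilon}{2}\log(1/p)$. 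When $\bm{v}=0$ the bound is trivial. When $\bm{v}>0$ we need $\bm{v}/\bm{e}\le p^{-\varepsilon/2}$, and a refinement is required: the trivial bound $\bm{v}/\bm{e}\lesssim d/Q$ is insufficient since only $T/Q\cdot R$ is controlled.

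\emph{Step 3: the refinement (main obstacle).} We bound the entropy more carefully using $\deg(w)\ge Q/\deg(u)$ for each $w\in\W_R(G)$ adjacent to $u$. Vertices $u\in V_{\high}$ are grouped dyadically by degree $2^j\in[R,d]$; there are $\log_2(d/R)$ classes. A vertex of degree about $2^j$ has neighbours of degree at least $Q/2^j$. Low vertices have degree below $R$, so a high vertex $u$ with low neighbours must satisfy $\deg(u)\ge Q/R$. The number of low vertices adjacent to high vertices in class $j$ is at most $\bm{e}\,2^j/Q$ per unit edge mass. The ratio of candidate neighbours to chosen edges is therefore at most $\max\{R,1\}\cdot(T/Q)\cdot\log(d/R)\le p^{-\varepsilon/3}$, using the first condition in \eqref{eq:three condition}. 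Each edge is then encoded with cost at most $\frac{\varepsilon}{3}\log(1/p)+O(1)$. Concretely, we encode the neighbourhood of each $u$ in class $j$ as a subset of the low vertices whose degree is at least $Q/2^{j+1}$. That set has size at most $2\bm{e}2^{j+1}/Q$, and we sum over $j$.

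\emph{Step 4: combine.} The cost is $\exp(\bm{e}(\varepsilon/8+\varepsilon/3+o(1))\log(1/p))$, where the $o(1)$ absorbs the $O(\bm{e})$ constants because $\log(1/p)\to\infty$. This is at most $\exp(\varepsilon\bm{e}\log(1/p))$, which gives \eqref{eq:entropic stability}. The hypothesis $R\ll d$ guarantees $\log(d/R)\ge 1$, so the dyadic count is meaningful.
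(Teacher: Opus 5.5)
\textbf{There is a genuine gap: the edges with both endpoints in $V_{\high}$ are never pruned.} The quantity $\binom{\bm v+2\bm e/R}{\le\bm e}$ in Step 2(b) counts a single neighbourhood, not the whole edge set. In fact $E(G)$ is an $\bm e$-subset of $V_{\high}\times(V_{\high}\cup\W_R(G))$, so the crude count is $\binom{|V_{\high}|(|V_{\high}|+\bm v)}{\bm e}$. For $\bm v=0$ this is roughly $(4e\bm e/R^2)^{\bm e}$.

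The hypotheses control $T/Q$, not $\bm e/R^2$, so the case $\bm v=0$ is far from trivial. It is exactly the case used in Corollary~\ref{cor:entropic stability 2}, where $Q>Rd\gg R^2$. For instance, for cores ($Q=Q_{\core}$, $R=Q_{\core}/(2d)$, $\bm e\asymp T_{\core}$) one has $\bm e/R^2\asymp p^{-(\Delta-2)}$ up to polylogarithmic factors. Hence $\log(\bm e/R^2)\gg\varepsilon\log(1/p)$ when $\Delta\ge 3$. So Step 4 misses the dominant contribution. Your Step 3 refinement does not rescue it, because you apply it only to neighbours in $\W_R(G)$.

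\textbf{The fix is to apply the product-of-degrees pruning to every edge, as the paper does.} Put $B_k:=\{u:\deg_G(u)\ge 2^kR\}$. For each level $i$ let $\widehat j(i):=\max\{\widehat i,0\}$, where $\widehat i$ is the least integer with $2^{i+\widehat i+1}R^2\ge Q$. Then every edge can be oriented as $(u,v)$ with $u\in B_{i-1}\setminus B_i$ and $v\in B_{\widehat j(i)}$ for some $i$; for $i=0$ this uses $E_{1,1}(G)=\emptyset$. Since $|B_k|\le 2\bm e/(2^kR)$:
\begin{itemize}
\item each level $i\ge 1$ contributes at most $|B_{i-1}|\,|B_{\widehat j(i)}|\le 16\bm e^2/Q\le 16T\bm e/Q$ candidate pairs;
\item the level $i=0$ contributes at most $\bm v\,|B_{\widehat j(0)}|\le 8TR\bm e/Q$ candidate pairs;
\item there are $O(\log(d/R))$ levels.
\end{itemize}
So all $\bm e$ edges are chosen from a set of size $O(p^{-\varepsilon/3}\bm e)$.

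\textbf{Two smaller repairs are also needed.}
\begin{itemize}
\item The decoder must know all level sets $B_{i-1}\setminus B_i$, not just $V_{\high}$. Encoding them costs $\prod_i n^{|B_{i-1}|}\le n^{8\bm e/R}\le p^{-\varepsilon\bm e/2}$, which is affordable.
\item Your candidate set ``low vertices of degree at least $Q/2^{j+1}$'' presupposes the degrees of the low vertices, which you never encode. Use all of $\W_R(G)$ instead. This is exactly where the factor $R$ (present only when $\bm v>0$) in the first condition of \eqref{eq:three condition} comes from.
\end{itemize}
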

    
\begin{proof}[Proof of Lemma \ref{lem:entropic stability}]
Let $G\in \mathfrak{G}_{Q,T}$ with $|\W_{R}(G)| = \bm{v}$ and $e(G) = \bm{e}$.
We begin by defining a collection of subsets of the vertex set $V(G)$: for each $k\in \{0,1,2, \ldots, \lfloor \log_{2}(d/R) \rfloor\}$, let
\begin{equation*}
    B_{k} \coloneqq \left\{u\in V(G):\ \tu{deg}_{G}(u)\geq 2^{k}\cdot R\right\}.
\end{equation*}
For ease of notation, set $k_{*} := \lfloor \log_{2}(d/R)\rfloor$ and $B_{-1} = V(G)$. We keep in note that $\W_{R}(G) = B_{-1}\setminus B_{0}$. This collection of $B_{i}$'s forms a nested sequence: 
\begin{equation*}
    \emptyset=B_{k_{*}+1}\subset B_{k_{*}}\subset \cdots \subset B_{1}\subset B_{0} \subset B_{-1},
\end{equation*}
\dteal{where the leftmost equality is due to the upper bound on the degrees and the choice of $k_*$.}	

\dteal{Next, for $i\in \{0,1,\ldots, k_{*}+1\}$ we let $\widehat{i}\coloneqq \widehat{i}(i)$ to be the smallest integer satisfying 
\begin{equation}
    \label{eq:i hat bound}
    2^{i+\widehat{i}+1}\cdot R^{2}\geq Q.
\end{equation}
Set $\widehat{j}\coloneqq \widehat{j}(i)\coloneqq \max\{\widehat{i}, 0\}$. We claim that the end points of any edge $(u,v) \in E(G)$ must belong to
\[
\mathfrak{M}  := \bigcup_{i=0}^{k_{*}+1} {\left((B_{i-1}\setminus B_{i})\times  B_{\widehat{j}(i)}\right)}.
\]
} 

\dteal{To see this, fix any 	
$u\in B_{i-1}\setminus B_{i}$ for some $i\in \{0,1,\ldots, k_{*}+1\}$. Let us assume that $\widehat i <0$. Consider any edge $(u,v)$ that is incident to $u$. If $v \in B_0 = B_{\widehat j(i)}$ then there is nothing to be proved. If $v \in B_{-1}\setminus B_0$ then as $E_{1,1}(G)=\emptyset$ we must have that $i \geq 1$. Hence, also in this case the end points of the edge $(u,v)$ belongs to $(B_{-1}\setminus B_0) \times B_0 \subseteq \mathfrak{M}$.
} 

\dteal{Now let us focus on the case $\widehat i \geq 0$. So, in this case $\widehat j =\widehat i$.} By definition, 
\dteal{As $u \in B_{i-1}\setminus B_i$, we find} $\tu{deg}_{G}(u)< 2^{i} R$. 
\dteal{Suppose} $v\notin B_{\widehat{j}}$. Then
\[\tu{deg}_{G}(u)\cdot  \tu{deg}_{G}(v) < 2^{i+\widehat{i}} R^{2}< Q,\] 
where the last inequality follows from the minimality of $\widehat{i}$. However, this contradicts our assumption that $G\in \mathfrak{G}_{Q,T}$. \dteal{Therefore, we have the claim.}

We now 
\dteal{proceed to} bound the number of graphs $G \in \mathfrak{G}_{Q,T}$ with $|\W_{R}(G)|=\bm{v}$ and  $e(G)=\bm{e}$ by a simple process:
\begin{itemize}
    \item[(1)] Choose the vertex sets $\{B_{i-1}\setminus B_{i}\}_{i=0}^{k_{*}+1}$.
    \item[(2)] Choose $\bm{e}$ edges from the collection of possible \dteal{end points, as given by $\mathfrak{M}$}:
    
\end{itemize}

As $|B_{-1}\setminus B_{0}| = {\bm v}$, there are ${n \choose \bm{v}}$ ways to choose its vertices. 
\dteal{On the other hand,} for $i\in \{0, \ldots, k_{*}\}$, the size of $B_{i}$ can be bounded as follows: 
\begin{equation}
    \label{eq:bound on subset of vertices}
    |B_{i}|\leq 2e(G)\cdot 2^{-i} R^{-1}. 
\end{equation}
\dteal{Therefore,} the number of choices of vertices in $\{B_{i-1}\setminus B_{i}\}_{i=1}^{k_{*}+1}$ \dteal{is bounded above by}
\begin{equation}
    \label{eq:bound on choice of vertices}
    \prod_{i=1}^{k_{*}+1}\left(\sum_{w_{i}=0}^{\lfloor 2^{-i+2}R^{-1}\cdot \bm{e}\rfloor}n^{w_{i}}\right)\leq \prod_{i=1}^{k_{*}+1} n^{8 R^{-1} \cdot 2^{-i}\cdot \bm{e}}\leq n^{8 R^{-1}\cdot \bm{e}}\leq p^{-(\varepsilon/2)\cdot \dteal{{\bm e}}},
\end{equation}
where in the last step we used the second assumption in \eqref{eq:three condition}.

We now bound the cardinality of $\mathfrak{M}$. We first consider the collection when $i=0$. Using the bounds $|B_{-1}\setminus B_{0}| = \bm{v}\leq \mathds{1}_{\dteal{{\bm v}}>0}\cdot \dteal{2} \bm{e}$, $\bm{e}\leq T$ and \eqref{eq:i hat bound}-\eqref{eq:bound on subset of vertices}, we obtain
\begin{equation}
    \label{eq:edge cardinal 1}
    |B_{-1}\setminus B_{0}| \cdot |B_{\widehat{j}(0)}| = \bm{v}\cdot  |B_{\widehat{j}(0)}| \leq \bm{v}\cdot \frac{2\bm{e}}{2^{\widehat{j}(0)}R}\leq \frac{\dteal{8} TR }{Q}\cdot \mathds{1}_{\bm{v}>0}\cdot  \bm{e} .  
\end{equation}
Again using \eqref{eq:i hat bound}-\eqref{eq:bound on subset of vertices} we deduce that for any $i\in \{1,\ldots,k_{*}+1\}$,
\begin{equation}
    \label{eq:edge cardinal 2}
    |B_{i-1}\setminus B_{i}|\cdot |B_{\widehat{j}(i)}| \leq |B_{i-1}|\cdot |B_{\widehat{j}(i)}|\leq \frac{4 \bm{e}^{2}} { 2^{i+\widehat{j}(i)-1}R^{2}} \leq \frac{16 T}{Q}\cdot \bm{e}.
\end{equation} 
Summing the bounds obtained in \eqref{eq:edge cardinal 1}-\eqref{eq:edge cardinal 2}, we get
\begin{equation}
    \label{eq:bound on set of edges}
    |\mathfrak{M}|\leq (k_{*}+1)\cdot \frac{16T }{Q} \max\{R\mathds{1}_{\bm{v}>0},1\}\cdot \bm{e}\leq \frac{32 T}{Q}\cdot \max\{R\mathds{1}_{\bm{v}>0},1\}\cdot \log\frac{d}{R}\cdot \bm{e}\leq 32\cdot  p^{-\varepsilon/3}\cdot \bm{e},
\end{equation}
where \dteal{in the penultimate step we used $d \gg R$ and} in the final step we used the definition of $k_{*}$ and the \dteal{left most}  assumption in \eqref{eq:three condition}.

Following the scheme as in steps (1)-(2) and applying the bounds in \eqref{eq:bound on choice of vertices} and \eqref{eq:bound on set of edges}, we derive
\begin{equation*}
\begin{aligned}
    |\{G\in \mathfrak{G}_{Q,T}:|\W(G)|= \bm{v}\tu{ and }e(G)= \bm{e}\}| \leq \dteal{{n \choose {\bm v}}} \cdot p^{-(\varepsilon/2)\cdot \bm{e}}\cdot {|\mathfrak{M}|\choose \bm{e}} & \leq  \dteal{{n \choose {\bm v}}} \cdot p^{-(\varepsilon/2)\bm{e}}\cdot (32ep^{-\varepsilon/3})^{\bm{e}}\\
    & \leq \dteal{{n \choose {\bm v}}} \cdot p^{-\varepsilon \cdot \bm{e}},
    \end{aligned}
\end{equation*}
where in the penultimate step we use the inequality ${|\mathfrak{M}|\choose \bm{e}}\leq (e|\mathfrak{M}|/\bm{e})^{\bm{e}}$ and in the last step we use $p=o(1)$. Hence, the proof is complete.
\end{proof}

\dteal{Next, we record the following useful implication of Lemma \ref{lem:entropic stability}.} 


\begin{corollary}
    \label{cor:entropic stability 2}
\dteal{Fix $\varepsilon\in (0,1)$ and assume $p=o(1)$. Let $Q,T,R>0$ depend only on $n,p$ and $\varepsilon$. 
Suppose
        \begin{equation}
            \label{eq:three condition 2}
             R < Q/d, \quad R \ll d, \quad \frac{T}{Q}\cdot \log \frac{d}{R}\leq p^{-\varepsilon/3},\quad \tu{and }\quad n^{1/R}\leq p^{-\varepsilon/16}. 
        \end{equation} 
Then, for any $\bm{e}\in \set{\lfloor T \rfloor}$ and all sufficiently large $n$,}
        \begin{equation}
            \label{eq:true entropic stability}
            |\{G\in \mathfrak{G}(Q,T):\ e(G)= \bm{e}\}|\leq \exp(\varepsilon \bm{e}\log (1/p)).
    \end{equation}
\end{corollary}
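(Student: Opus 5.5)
The plan is to reduce Corollary \ref{cor:entropic stability 2} to Lemma \ref{lem:entropic stability} applied with $\bm{v}=0$. The key observation is that the hypothesis $R < Q/d$ forces every graph $G \in \mathfrak{G}(Q,T)$ to have no low degree vertices at all, i.e.\ $\W_R(G)=\emptyset$. Indeed, let $u \in V(G)$, where $V(G)$ is understood as the set of non-isolated vertices, which is the convention implicit in \eqref{eq:def of W_R(G)} since otherwise isolated vertices would trivially be in $\W_R(G)$. Then $u$ lies on some edge $(u,v)\in E(G)$. Since $G\in\widehat{\Xd}$ we have $\deg_G(v)\le d$, so the constraint $\deg_G(u)\deg_G(v)\ge Q$ gives
\[
\deg_G(u)\ \ge\ Q/d\ >\ R .
\]
Hence $u\notin \W_R(G)$, and so $|\W_R(G)|=0$ for every $G\in\mathfrak{G}(Q,T)$.

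Next I verify the three conditions \eqref{eq:three condition} with $\bm{v}=0$. Since $\bm v=0$, the factor $\max\{R\mathds{1}_{\bm v>0},1\}$ equals $1$. The first condition then reads $\frac{T}{Q}\log\frac dR\le p^{-\varepsilon/3}$, which is assumed in \eqref{eq:three condition 2}. The second condition $n^{1/R}\le p^{-\varepsilon/16}$ is also assumed. The third condition $E_{1,1}(G)=\emptyset$ holds vacuously because $\W_R(G)=\emptyset$. The remaining hypothesis $R\ll d$ of Lemma \ref{lem:entropic stability} is assumed as well.

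Lemma \ref{lem:entropic stability} then gives
\[
|\{G\in\mathfrak G(Q,T): e(G)=\bm e\}|=|\{G\in\mathfrak G(Q,T): |\W_R(G)|=0,\ e(G)=\bm e\}|\le \binom{n}{0}\exp(\varepsilon\bm e\log(1/p)),
\]
and since $\binom n0=1$ this is exactly \eqref{eq:true entropic stability}.

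There is essentially no obstacle here. The only point requiring care is the convention that $V(G)$ in \eqref{eq:def of W_R(G)} excludes isolated vertices. This is how the proof of Lemma \ref{lem:entropic stability} uses it: there $|B_{-1}\setminus B_0|=\bm v\le 2\bm e$, which would fail if isolated vertices were counted. So the first step amounts to checking that this convention is consistent with the lemma's proof.
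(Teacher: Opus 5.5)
Your proposal is correct and matches the paper's proof: the degree cap $d$ together with $\deg_G(u)\deg_G(v)\geq Q$ forces $\min\{\deg_G(u),\deg_G(v)\}\geq Q/d>R$, so $\W_R(G)=\emptyset$, and Lemma~\ref{lem:entropic stability} applied with $\bm v=0$ gives the bound. The paper leaves implicit that $V(G)$ means the set of non-isolated vertices, and your remark making this explicit is a reasonable clarification.
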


\begin{proof}
\dteal{It is a straightforward consequence of the degree bound and Lemma \ref{lem:entropic stability}. Indeed, for 
any $G\in \mathfrak{G}(Q,T)$ and any edge $(u,v)\in E(G)$, as the degree is upper bounded by $d$, we have
\[
\min\{\deg_G(u), \tu{deg}_{G}(v)\} \geq \frac{1}{d}\cdot \tu{deg}_{G}(u)\cdot \tu{deg}_{G}(v)\geq \frac{Q}{d}.
\]
As $R < Q/d$ the above implies that 
$\W_{R}(G)= \emptyset$. Hence, upon applying Lemma \ref{lem:entropic stability} the required upper bound follows.}
\end{proof}

\begin{remark}
\dteal{Let us observe that to apply either Lemma \ref{lem:entropic stability} or Corollary \ref{cor:entropic stability 2}, as $d=np\geq 1$, the condition $n^{1/R}\leq p^{-\varepsilon/16}$ implies $R\geq 16/\varepsilon$. Hence, to apply any of these two results we need this lower bound on $R$.}
\end{remark}

\begin{remark}
\dteal{Following \cite{HMS}, we refer to a family of subgraphs satisfying a bound of the form \eqref{eq:true entropic stability} as entropically stable. The rationale is that, for graphs with $\bm{e}$ edges, the logarithm of the number of such admissible configurations is at most $\varepsilon \bm{e}\log(1/p)$, whereas, thanks to the near independence of the edges, the cost (i.e.~negative of the $\log$-probability) of realizing any particular configuration is roughly $\bm{e}\log(1/p)$. 
This is precisely what allows a union bound over all possible cores while preserving the correct leading order asymptotics. As already indicated in Section \ref{sec:proof-outline}, and will be seen below, such a bound does {\em not} hold for the regime $np^{\Delta -1} \lesssim (\log n)^{v_H}$.}
\end{remark}
%
%


\dteal{To initiate the proof of Proposition \ref{prop:prob of existence of core}  we need two more ingreadients. First, we need} a deterministic estimate showing that the rate function \dteal{$\Psi_{H}$ (recall \eqref{eq:def-psi}) determines} the minimum number of edges required to support a prescribed number of copies of $H$. \dteal{This will provide a sharp lower bound on the minimum number of edges a core graph must possess. It is also worth highlighting that this lower bound matches with the corresponding bound in the Erd\H{o}s-R\'enyi setting for $\Delta \geq 3$, while it is strictly larger for $\Delta =2$ and $\delta >1$.} 


\begin{lemma}
\label{lem:minimum edge bound}
    Let $H$ be a connected $\Delta$-regular graph, and suppose that $G\in \widehat{\Xd}$ satisfies $N(H, G)\geq \gamma n^{v_{H}}p^{e_{H}}$ for some $\gamma>0$. Then 
    \begin{equation*}
        e(G)\geq \Psi_{H}(\gamma)\cdot n^{2}p^{\Delta}. 
    \end{equation*}
\end{lemma}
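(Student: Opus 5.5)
The plan is to split according to whether $\Delta\geq 3$ or $\Delta=2$, matching the two cases in the definition \eqref{eq:rate function defn} of $\Psi_H$. Throughout we use $e_H=\Delta v_H/2$, so that $n^{v_H}p^{e_H}=(np^{\Delta/2})^{v_H}$.

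\emph{Case $\Delta\geq 3$.} Here the degree constraint in $\widehat{\Xd}$ is not needed; a purely extremal bound suffices.
\begin{enumerate}
\item For a $\Delta$-regular $H$ we have $\alpha^*_H=v_H/2$. The Alon/Friedgut--Kahn type bound (as in \cite[Theorem~5.7]{HMS}, already used in \eqref{eq:number of clusters}) gives $N(H,G)\leq (2e(G))^{\alpha^*_H}=(2e(G))^{v_H/2}$.
\item Combining this with the hypothesis $N(H,G)\geq\gamma (np^{\Delta/2})^{v_H}$ gives $2e(G)\geq \gamma^{2/v_H}n^2p^{\Delta}$.
\item This is exactly $e(G)\geq \Psi_H(\gamma)n^2p^\Delta$ when $\Delta\geq 3$.
\end{enumerate}

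\emph{Case $\Delta=2$.} Here $H=C_\ell$ with $\ell=v_H\geq 3$, and $n^{v_H}p^{e_H}=d^\ell$. The argument is spectral.
\begin{enumerate}
\item Let $A$ be the adjacency matrix of $G$, with eigenvalues $\lambda_1,\dots,\lambda_n$. Labelled copies of $C_\ell$ are in particular closed walks of length $\ell$, so
\[
N(C_\ell,G)\leq \tu{tr}(A^\ell)\leq \sum_i|\lambda_i|^\ell .
\]
\item Two constraints hold. First, $\sum_i\lambda_i^2=\tu{tr}(A^2)=2e(G)$. Second, since $G\in\widehat{\Xd}$ has maximum degree at most $d$, the spectral radius is at most $d$, so $|\lambda_i|\leq d$ for all $i$.
\item Set $x_i:=\lambda_i^2/d^2\in[0,1]$ and $S:=2e(G)/d^2=\sum_i x_i$. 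The bound from step 1 reads $\gamma\leq \sum_i x_i^{\ell/2}$.
\item The map $x\mapsto x^{\ell/2}$ is convex since $\ell\geq 3$. Hence the maximum of $\sum_i x_i^{\ell/2}$ over the polytope $\{x\in[0,1]^n:\sum_i x_i=S\}$ is attained at a vertex. At any vertex all coordinates but at most one lie in $\{0,1\}$.
\item So the maximum equals $g(S):=\lfloor S\rfloor+\{S\}^{\ell/2}$, using $n\geq S+1$, which holds because $e(G)\leq nd/2$.
\end{enumerate}

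\emph{Inverting $g$.} The function $g$ is continuous and strictly increasing on $[0,\infty)$, and $g(m+\theta^{2/\ell})=m+\theta$ for $m\in\N_0$, $\theta\in[0,1)$. Therefore $\gamma\leq g(S)$ forces $S\geq \lfloor\gamma\rfloor+\{\gamma\}^{2/\ell}$. Since $d^2=n^2p^2$, this gives $e(G)=Sd^2/2\geq \Psi_H(\gamma)n^2p^{\Delta}$.

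\emph{Expected obstacle.} The only delicate point is the optimization in step 4 of the $\Delta=2$ case: the extremal configuration must be the vertex form, and $g$ must be inverted correctly, including at integer values of $\gamma$, where $\{\gamma\}=0$ and continuity of $g$ handles it. Negative eigenvalues for odd $\ell$ cause no issue because we pass to absolute values.
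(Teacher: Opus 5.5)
Your proposal is correct and follows essentially the same route as the paper. For $\Delta\geq 3$ you use the bound $N(H,G)\le (2e(G))^{v_H/2}$ from \cite{HMS}, and for $\Delta=2$ you use $\tu{Trace}(A^2)=2e(G)$, $N(C_\ell,G)\le \tu{Trace}(A^\ell)$ and $|\lambda_i|\le d$ to reduce to an optimization over $[0,1]^n$. The only difference is that you solve that optimization directly (maximizing the convex $\sum_i x_i^{\ell/2}$ at fixed $\sum_i x_i$ and then inverting $g$), whereas the paper minimizes $\sum_i \eta_i^2$ subject to $\sum_i \eta_i^\ell\geq\gamma$ and cites \cite[Lemma~2.2]{BD}; the two formulations are equivalent.
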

\begin{proof}
    First, let us consider $\Delta\geq 3$. \dteal{The graph} $H$ being regular implies that $\alpha_{H}^{*}= v_{H}/2$. Therefore, from \cite[Theorem~5.4]{HMS}, we have $N(H, G)\leq (2e(G))^{v_{H}/2}$. Combining this with the assumed lower bound on $N(H,G)$ yields the desired bound on $e(G)$.

    Now, suppose that $\Delta=2$. Since $H$ is connected, $H=C_{\ell}$, where $\ell=v_{H}=e_{H}$. Let \dteal{$A:=A(G)$} denote the adjacency matrix of $G$. \dteal{Since $A$ is  a symmetric  matrix with $\{0,1\}$-valued entries with each row sum at most $d$, by Perron-Frobenius theorem all its eigenvalues $\{\widetilde \lambda_{i}(G)\}$} 
are  in $[-d,d]$. Moreover, \dteal{$2e(G) = \tu{Trace}(A^2) = \sum_{i=1}^{n}\widetilde\lambda_{i}(G)^2$}, while
    \begin{equation*}
        \gamma d^{\ell}\ \leq\ N(C_{\ell}, G)\ \leq\ \tu{Hom}(C_{\ell}, G)\ =\ \dteal{\tu{Trace}(A^\ell)\ =\ \sum_{i=1}^{n} \widetilde \lambda_{i}(G)^{\ell}.}
    \end{equation*}
    Setting \dteal{$\eta_{i}:=\widetilde\lambda_{i}(G)/d$}, \dteal{it follows from the discussion above that}
    \begin{equation}
        \label{eq:optimization}
        e(G)\geq \frac{d^2}{2}\cdot \inf\left\{\sum_{i=1}^{n}\eta_{i}^{2}:\ |\eta_{i}|\leq 1,\ \sum_{i=1}^{n}\eta_{i}^{\ell}\geq \gamma \right\},
    \end{equation}
\dteal{for any $G$ under consideration. It is easy to note that the} optimal $\{\eta_{i}\}$ in \eqref{eq:optimization} may be taken to be nonnegative. Now, the claimed bound follows directly from \cite[Lemma~2.2]{BD} applied to optimization problem \eqref{eq:optimization}.
\end{proof}

\dteal{Next we need} a uniform lower bound on the product of the degrees at the endpoints of each core edge. Condition (C3) provides precisely the information needed to obtain such a bound.	
\begin{lemma}
    \label{lem:product of degrees}
Assume $G_\star$ is a core and let $(u,v)\in E(G_\star)$. Then 
        \begin{equation*}
            \tu{deg}_{G_\star}(u)\cdot \tu{deg}_{G_\star}(v)	\geq \frac{\widetilde{c}_{0}\cdot n^{2}p^{\Delta}}{\left(\log(1/p)\right)^{v_{H}}},
        \end{equation*}
        for some constant $\widetilde{c}_{0}\coloneqq \widetilde{c}_{0}(H, \varepsilon, \delta)>0$.
\end{lemma}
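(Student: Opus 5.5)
The plan is to compare the lower bound on $N(H,G_\star,e)$ given by (C3) with an upper bound on the number of labelled copies of $H$ through a fixed edge $e=(u,v)$, written in terms of $\deg_{G_\star}(u)\deg_{G_\star}(v)$. By (C3) and $e_H=\Delta v_H/2$,
\[
N(H,G_\star,e)\geq \frac{\delta\varepsilon\, n^{v_H}p^{e_H}}{\cbar n^2p^\Delta\log(1/p)}=\frac{\delta\varepsilon}{\cbar}\cdot \frac{(np^{\Delta/2})^{v_H-2}}{\log(1/p)}.
\]
So it suffices to show
\[
N(H,G_\star,e)\lesssim \deg_{G_\star}(u)\deg_{G_\star}(v)\cdot \bigl(e(G_\star)\bigr)^{(v_H-4)/2}\cdot(\text{lower order}),
\]
or some variant of this. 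Then, using (C2) ($e(G_\star)\leq \cbar n^2p^\Delta\log(1/p)$), one solves for the degree product.

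To get the upper bound, I fix a labelled copy with $\phi(a)=u$, $\phi(b)=v$ for an edge $(a,b)\in E(H)$; there are at most $2e_H$ choices of $(a,b)$. Since $H$ is $\Delta$-regular and connected with $\Delta\geq 2$, the vertex $a$ has a neighbour $a'\neq b$ and $b$ has a neighbour $b'\neq a$. Suppose first that these can be chosen with $a'\neq b'$. I embed $a'$ among the neighbours of $u$ (at most $\deg_{G_\star}(u)$ choices) and $b'$ among the neighbours of $v$ (at most $\deg_{G_\star}(v)$ choices). The remaining $v_H-4$ vertices span a subgraph $H'$ of $H$, and I bound the number of its embeddings by a generalized H\"older / fractional cover bound such as \cite[Theorem~5.4]{HMS}: roughly $(2e(G_\star))^{v_{H'}-\alpha^*_{H'}}n^{2\alpha^*_{H'}-v_{H'}}$. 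A cruder alternative is a matching plus degree bound as in the proof of Lemma~\ref{lem:contribution zero}. Either way this gives at most $(2e(G_\star))^{(v_H-4)/2}$ up to factors of $d$. Connectivity lets the unmatched vertices be attached through neighbours, at cost $d\leq n p^{\Delta/2}$ times powers of $p$.

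The main obstacle is handling the leftover vertices so that the total bound is $(n^2p^\Delta\log(1/p))^{(v_H-4)/2}$ times at most a bounded power of $\log(1/p)$. The exponent $v_H$ in the statement suggests crude losses are allowed. My first attempt is a simpler bound: count the remaining $v_H-4$ vertices in pairs via edges of $G_\star$, giving at most $2e(G_\star)$ per pair, and use $d\leq np^{\Delta/2}\cdot p^{1-\Delta/2}$ for an odd leftover vertex. For $\Delta\geq 3$ this last bound would lose, so instead I would choose a spanning structure of $H\setminus\{a,b,a',b'\}$ (a maximum matching plus pendant edges) and bound it via \cite[Theorem~5.4]{HMS}, which gives $(2e(G_\star))^{v_{H'}-\alpha^*_{H'}}n^{2\alpha^*_{H'}-v_{H'}}$. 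Combined with Lemma~\ref{lem:contribution zero}-type exponent bookkeeping (using $e_{H'}\geq$ the number of edges forced by regularity), this should yield $(\log(1/p))^{O(v_H)}(np^{\Delta/2})^{v_H-4}$.

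The degenerate case $a'=b'$ forced (i.e.\ $\Delta=2$, $H=C_3$) is handled directly: $N\leq 6\min(\deg u,\deg v)\leq 6\sqrt{\deg u\deg v}$ is enough. Comparing the two bounds then gives
\[
\deg_{G_\star}(u)\deg_{G_\star}(v)\gtrsim (np^{\Delta/2})^{2}(\log(1/p))^{-v_H},
\]
with the constant depending on $\delta,\varepsilon,\cbar,H$.
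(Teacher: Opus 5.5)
Your reduction to an upper bound on $N(H,G_\star,(u,v))$ is right. The $\Delta=2$ case also works: the path from $\phi(a')$ to $\phi(b')$ can be counted by walking, at cost $d=np=np^{\Delta/2}$ per vertex.

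\textbf{The gap is $\Delta\geq 3$.} After using only the two constraints $\phi(a')\in N_{G_\star}(u)$ and $\phi(b')\in N_{G_\star}(v)$, you count embeddings of $H''=H-\{a,b,a',b'\}$ with all of its edges to $\{a,b,a',b'\}$ discarded. \cite[Theorem~5.4]{HMS} then gives $(2e(G_\star))^{v_{H''}-\alpha^*_{H''}}n^{2\alpha^*_{H''}-v_{H''}}$. This exceeds the $(2e(G_\star))^{v_{H''}/2}$ you need by the factor $(n^2/2e(G_\star))^{\alpha^*_{H''}-v_{H''}/2}$. By (C2), $n^2/2e(G_\star)\geq p^{-\Delta}/(2\cbar\log(1/p))$, so the loss is polynomial in $1/p$ whenever $H''$ has no fractional perfect matching, for instance when it has an isolated vertex. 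Attaching a leftover vertex through a neighbour instead costs $d=np^{\Delta/2}\cdot p^{1-\Delta/2}$, which is again a polynomial loss. Bookkeeping with $e_{H''}$ cannot help, because the comparison is with $(np^{\Delta/2})^{v_{H''}}$, not with $n^{v_{H''}}p^{e_{H''}}$.

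Concretely, take $H=K_5$. Every choice of $a',b'$ leaves $H''=K_1$, so the best your scheme gives is $N(H,G_\star,(u,v))\leq 20\deg_{G_\star}(u)\deg_{G_\star}(v)\cdot np$. That yields only $\deg_{G_\star}(u)\deg_{G_\star}(v)\gtrsim n^2p^5/\log(1/p)$, which falls short of the claimed $n^2p^4(\log(1/p))^{-5}$ by the factor $p(\log(1/p))^4\to 0$.

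\textbf{The missing idea is to keep all the neighbourhood constraints, with fractional weights.} Count maps $\phi$ of $V(H)\setminus\{a,b\}$ subject to three kinds of constraint: $\phi(x)\in N_{G_\star}(u)$ for every $x\in N_H(a)\setminus\{b\}$; $\phi(y)\in N_{G_\star}(v)$ for every $y\in N_H(b)\setminus\{a\}$; and $\phi$ edge-preserving on every edge of $H-\{a,b\}$. Each remaining vertex lies in exactly $\Delta$ of these constraints, so weight $1/\Delta$ on each is a fractional cover. Shearer's lemma (equivalently, the generalized H\"older inequality) then gives
\[
N(H,G_\star,(u,v))\leq 2e_H\,\bigl(\deg_{G_\star}(u)\deg_{G_\star}(v)\bigr)^{1-1/\Delta}\,\bigl(2e(G_\star)\bigr)^{v_H/2-2+1/\Delta}.
\]
Combined with (C2) and (C3), this yields $\deg_{G_\star}(u)\deg_{G_\star}(v)\gtrsim n^2p^\Delta(\log(1/p))^{-(\Delta v_H/2-\Delta+1)/(\Delta-1)}$, and this exponent is at most $v_H$.

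Your intermediate target $N\lesssim \deg_{G_\star}(u)\deg_{G_\star}(v)\,(2e(G_\star))^{(v_H-4)/2}$ is in fact true for $\Delta\geq 3$. It follows from a fractional perfect matching of $H-ab$. Such a matching exists because the isolated vertices $I$ of $(H-ab)-S$ send at least $\Delta|I|-2$ edges into $S$, which forces $|S|\geq |I|$ when $\Delta\geq 3$. Putting weight one on $aa'$ and $bb'$, as you do, is the special case of this that can fail.
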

\dteal{The proof of Lemma \ref{lem:product of degrees} being identical to that of \cite[Lemma~3.6]{ABRB} the details are omitted.} 


\begin{proof}[Proof of Proposition \ref{prop:prob of existence of core} when $np^{\Delta-1}\gg (\log n)^{v_{H}}$]
\dteal{We will show that
\begin{equation}
\label{eq:entropic stability of cores large p}
|\{G\in \widehat{\Xd}:\ G\tu{ is a core with }e(G) = \bm{e}\}|\leq \exp(\varepsilon \bm{e}\log(1/p)).
\end{equation}
To prove \eqref{eq:entropic stability of cores large p} we will apply Corollary \ref{cor:entropic stability 2}. Let us first introduce some notation:
\begin{equation}
    \label{eq:def Q,T core}
    Q_{\core} := \widetilde{c}_{0} n^{2}p^{\Delta}\cdot (\log(1/p))^{-v_{H}}\quad  \tu{and} \quad T_{\core} := \cbar n^{2}p^{\Delta}\log (1/p).
\end{equation}
Recall \eqref{eq:mathfrak-G}. By (C2) and Lemma \ref{lem:product of degrees}, every core graph $G$ belongs to $\mathfrak{G}(Q_{\core},T_{\core})$. Hence, Corollary \ref{cor:entropic stability 2} applies with $Q = Q_{\core}$, $T=T_{\core}$ and $R = \frac{Q_{\core}}{2d}$ if its hypotheses are met with these parameters.

By definition, $R<Q/d$. As $p \ll 1$ we have $R \ll d$. Further, notice that, for sufficiently large $n$,
    \[
\frac{T}{Q}\log \frac{d}{R}= \frac{T_{\core}}{Q_{\core}}\cdot \log \frac{d^2}{Q_{\core}} \lesssim (\log(1/p))^{v_{H}+2} \leq p^{-\varepsilon/3}.
\]
Moreover, using $np^{\Delta-1}\gg (\log n)^{v_{H}}$, we have
\[\log n^{2d/Q_{\core}}\lesssim  \frac{(\log n)^{v_H}}{np^{\Delta-1}} \cdot \log(1/p) \leq  \frac{\varepsilon}{16} \log(1/p).\]
Thus, all the hypotheses of Corollary \ref{cor:entropic stability 2} are met and so we have \eqref{eq:entropic stability of cores large p}.} 
    
\dteal{On the other hand, applying} Corollary \ref{cor:almost independence lemma(simple)} iteratively for each of its $e(G)$ many edges, we obtain 
    \begin{equation}\label{eq:single core probability}
        \P(G\subseteq \Gd)\leq p^{(1-o(1))e(G)}.
    \end{equation}
%
%
To complete the proof we note that by (C1) and Lemma \ref{lem:minimum edge bound}, for any core graph $G$ we have 
%
    \begin{equation}
        \label{eq:def of T_0}
        e(G)\ \geq\  n^{2}p^{\Delta} \cdot \Psi_{H}(\delta(1-3\varepsilon))\ \geq\ (1- \dteal{\mathfrak{g}}_{H}(\varepsilon)) \cdot \Psi_{H}(\delta) \cdot n^{2}p^{\Delta}\eqqcolon T_0,
    \end{equation}
where the second inequality follows from the  continuity of the function $\Psi_H(\cdot)$ and $\mathfrak{g}_{H}(\cdot)$ is some nonnegative function such that $\lim_{\varepsilon \downarrow 0}\mathfrak{g}_{H}(\varepsilon)=0$. \dteal{Finally} combining \eqref{eq:entropic stability of cores large p} with \eqref{eq:single core probability},
    \begin{equation}
            \dteal{\P(\Gd \tu{ contains a core}) \leq \sum_{\bm{e}\geq \lceil  T_{0}\rceil} p^{(1-o(1))\bm{e}} \cdot|\{G \in \widehat{\Xd}: G\tu{ is a core with }e(G)=\bm{e}\}|\leq 
p^{(1-2\varepsilon)T_0}.}
    \end{equation}
This finishes the proof.		
\end{proof}		
%
It remains to treat the complementary, sparser regime $np^{\Delta-1}\lesssim(\log n)^{v_H}$. In this range, $Q_{\core}/d$ need not diverge, so there may be no choice of $R$ satisfying simultaneously
$R<\frac{Q_{\core}}{d}$ and the hypotheses of Corollary \ref{cor:entropic stability 2}. Consequently, the low-degree set $\W_R(G)$ of a core may be nonempty, and the additional entropy associated with choosing these vertices can no longer be discarded. Thus the direct entropic-stability argument above does not by itself recover the correct upper tail exponent. We overcome this by first showing that cores with sufficiently many edges make an asymptotically negligible contribution to the upper-tail probability. This allows us to restrict attention to a more structured class of graphs, which we call strong cores, for which the remaining entropy can be controlled more sharply. The next section develops this refinement. As the following \dteal{set of} arguments continue to hold in a bigger regime of $p$, for sake of simplicity, we assume $np^{\Delta}\leq 1$.

\subsection{Reduction to strong cores when $np^{\Delta}\leq 1$}
To make the above idea precise, fix a sufficiently large constant $\cstar\coloneqq \cstar(\delta,H)>0$. The first lemma shows that cores above the cutoff $\cstar n^2p^\Delta$ have a negligible probability compared with the upper tail scale.
\begin{lemma}
    \label{lem:SC reduction step}
    Assume $(\log n)^{1/(v_{H}-2)}\ll np^{\Delta/2}\leq n^{1/2}$. For sufficiently large $n$, we have
    \[\P(\Gd \tu{ contains a core } G\tu{ with }e(G)\geq \cstar n^{2}p^{\Delta})\leq \exp\left(-\frac{\cstar}{(4\Delta)^{\dteal{4} v_{H}}} n^{2}p^{\Delta}\log(1/p)\right).\]
\end{lemma}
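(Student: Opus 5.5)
The plan is a union bound over cores of each size $\bm{e}\geq \cstar n^2p^\Delta$, with entropy controlled by a high/low degree split as sketched in Section~\ref{sec:proof-outline}. By (C2) a core has $\bm{e}\leq T_{\core}=\cbar n^2p^\Delta\log(1/p)$ edges. For any fixed $G$ with $e(G)=\bm{e}$, Corollary~\ref{cor:almost independence lemma(simple)}, applied edge by edge together with $\bm{e}=o(nd)$, gives $\P(G\subseteq\Gd)\leq p^{(1-o(1))\bm{e}}$. It therefore suffices to prove
\[
\log|\{G\text{ core}: e(G)=\bm{e}\}|\leq (1-\bar c_H)\,\bm{e}\log(1/p)
\]
for some $\bar c_H>0$ depending only on $H$ (roughly $(4\Delta)^{-4v_H}$). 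Summing $p^{\bar c_H\bm{e}/2}$ over $\bm{e}\geq \cstar n^2p^\Delta$ then yields the claim once $\cstar$ is large.

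To get this entropy bound, fix a constant threshold $R_0=R_0(H)$ (for instance $R_0=4\Delta$ or a power of it) and call $u$ low if $\deg_G(u)<R_0$, so $\W_{R_0}(G)$ is the low set. The key deterministic claim is that in a core, the edges with at least one high endpoint number at least $\bar c_H e(G)$. I would prove it by contradiction using (C3). Each edge $e$ lies in at least $\delta\varepsilon n^{v_H}p^{e_H}/T_{\core}$ copies of $H$. If an edge has both endpoints low, a copy of $H$ through it must spread through low vertices until it reaches high ones. Since $H$ is connected and $\Delta$-regular, each copy through a low–low edge is determined by few choices: at most $R_0^{O(v_H)}$ from low vertices, plus at most $d$ per high vertex entered. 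Comparing with the (C3) lower bound, which is $\gg 1$ under $np^{\Delta/2}\gg(\log n)^{1/(v_H-2)}$, forces every low–low edge to be adjacent to enough high-degree structure. A double-counting argument then shows that low–low edges cannot dominate. I expect this step to be the \emph{main obstacle}: the counting of copies of $H$ through purely low regions must use the lower bound $np^{\Delta/2}\gg(\log n)^{1/(v_H-2)}$, and it is not obvious how to make the constant $\bar c_H$ explicit.

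Given this, encode $G$ by three pieces of data. First choose the high set $B$; since $|B|\leq 2\bm{e}/R_0$, this costs at most $n^{2\bm{e}/R_0}$. Next choose the edges incident to $B$; there are at least $\bar c_H\bm{e}$ of them, and each is specified by a high endpoint and a neighbor. Using the dyadic decomposition from the proof of Lemma~\ref{lem:entropic stability} together with Lemma~\ref{lem:product of degrees}, the cost per edge is about $(\log(1/p))^{O(v_H)}$ times $\bm{e}/Q_{\core}$. Finally the remaining edges, of which there are at most $(1-\bar c_H)\bm{e}$, each cost at most $n^2$. Now $np^{\Delta}\leq 1$ gives $n^2\leq p^{-2\Delta}$, but this is too lossy, so I would refine it. Low–low edges are attached to low vertices of degree below $R_0$, so one endpoint is chosen among at most $n$ vertices and the other among neighbors. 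A direct bound $n\leq p^{-1}\cdot(\text{poly}\log)$ holds in the regime near the boundary, because $np^{\Delta/2}\le n^{1/2}$ and $d\geq p^{-o(1)}$ there. The resulting total is
\[
\log|\cdot|\leq \bigl(1-\bar c_H+O(1/R_0)\bigr)\bm{e}\log(1/p)+O(\bm{e}\log\log n).
\]
With $R_0$ chosen large relative to $1/\bar c_H$, this gives the stated bound with exponent $\cstar(4\Delta)^{-4v_H}$.
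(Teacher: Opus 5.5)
Your skeleton is the paper's argument for the sparse sub-regime $n^2p^\Delta\le(\log n)^{2v_H}$: a union bound with $\P(G\subseteq\Gd)\le p^{(1-o(1))\bm e}$, plus an entropy bound $(1-\bar c)\bm e\log(1/p)$. That entropy bound rests on the fact that (C3) forces each low--low edge to be charged, by double counting along short paths in the bounded-degree graph $G_{1,1}$, to an edge touching a high-degree vertex (Lemma~\ref{lem:control on E_11}). The gap is in how you count cores.

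The inequality $n\le p^{-1}\cdot\mathrm{polylog}$ is false for $\Delta\ge3$. When $n^2p^\Delta\le(\log n)^{O(1)}$ one has $n=p^{-\Delta/2}(\log n)^{O(1)}$; for $K_4$ near the threshold, $d\approx n^{1/3}$. So every free choice of a vertex in $\set{n}$ costs about $\frac{\Delta}{2}\log(1/p)$, which exceeds the $\log(1/p)$ per edge that the probability returns. This affects your low--low edges, and also the edges at $B$. Their low endpoints are arbitrary vertices of $\set{n}$, and the dyadic bound of Lemma~\ref{lem:entropic stability} is cheap only after $\W_R(G)$ has been paid for through $\binom{n}{\bm v}$. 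Moreover $Q_{\core}=o(1)$ near the threshold, so Lemma~\ref{lem:product of degrees} gives no structure there. The missing idea has two parts. First, pay per vertex, $n^{\bm v}$ in total; the $\bm e\le(\log n)^{3v_H}$ edges then cost only $O(\bm e\log\log n)$. Second, amortize the vertex cost using that a core has minimum degree at least $\Delta$, since by (C3) each edge lies in a copy of the $\Delta$-regular $H$. This gives $|\W_R(G)|\le(2e_{1,1}+e_{1,2})/\Delta$.

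Bounding $|B|\le 2\bm e/R_0$ also makes your argument circular. The double counting you sketch gives $e_{1,1}\le R_0^{v_H}e_{1,2}$, so $\bar c_H\approx R_0^{-v_H}$. A loss of order $\bm e\log(1/p)/R_0$ therefore cannot be absorbed by taking $R_0$ large. The paper fixes $R=4\Delta$ and uses $|V\setminus\W_R(G)|\le\frac2R(e_{1,2}+e_{2,2})$. This charges the high vertices to exactly the edges that produce the saving:
\[
\tfrac{\Delta}{2}|V(G)|\le e_{1,1}+\tfrac34\bigl(e_{1,2}+e_{2,2}\bigr)=e(G)-\tfrac14\bigl(e_{1,2}+e_{2,2}\bigr)\le\Bigl(1-\tfrac{1}{4(4\Delta)^{2v_H}}\Bigr)e(G).
\]

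No single scheme of this kind covers the whole range. When $n^2p^\Delta\ge(\log n)^{2v_H}$, $n$ can be as large as $p^{-\Delta}$, so even the degree amortization yields no saving. For that sub-regime the paper uses three ingredients, following \cite[Lemma~3.5]{ABRB}:
\begin{itemize}
\item $E_{1,1}(G)=\emptyset$, from Lemma~\ref{lem:product of degrees};
\item the count $\binom{n}{\bm v}e^{\varepsilon\bm e\log(1/p)}$ from Lemma~\ref{lem:entropic stability};
\item the hypothesis $\bm e\ge\cstar n^2p^\Delta$, which makes $\binom{n}{\bm v}\le(en/\bm v)^{\bm v}$ affordable.
\end{itemize}
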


Compared to a core, we define a `strong core' to be a smaller \dteal{sub class of graphs} which \dteal{satisfy} a stronger local participation condition on every edge.

\begin{definition}[Strong core]
A graph $G\in \widehat{\Xd}$ is called a strong core if the \dteal{followings hold}:  
\begin{itemize}
    \item[(SC1)]$N(H, G)\geq \delta(1-4\varepsilon) \cdot n^{v_{H}}p^{e_{H}}$,
    \item[(SC2)]$e(G)\leq \cstar n^{2}p^{\Delta}$,
\end{itemize}
and,
\begin{itemize}
    \item[(SC3)]$\min_{e\in E(G)}N(H, G, e)\geq (\delta \varepsilon/\cstar) (np^{\Delta/2})^{v_{H}-2}$.
\end{itemize}
\end{definition}

\begin{lemma}
\label{lem:strong core main lemma}
Fix $\delta>0$ and a $\Delta$-regular connected graph $H$ with $\Delta\geq 2$. Then, for any $p\in (0,1)$ satisfying $(\log n)^{1/(v_{H}-2)}\ll np^{\Delta/2}\leq n^{1/2}$, and $\varepsilon\in (0,1)$ sufficiently small, we have
\[\liminf_{n\rightarrow \infty}-\frac{\log \P(\Gd \tu{ contains a strong core graph }G)}{n^{2}p^{\Delta}\log (1/p)}\geq \Psi_{H}(\delta)\cdot(1-\mathfrak{f}_{H}(\varepsilon)),\]
for some non-negative function $\mathfrak{f}_{H}(\cdot)$ such that $\lim_{\varepsilon\downarrow 0}\mathfrak{f}_{H}(\varepsilon) = 0$.
\end{lemma}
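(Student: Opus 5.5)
We would prove Lemma \ref{lem:strong core main lemma} by a union bound over strong cores, organised by the number of edges $\bm e$ and the number of low degree vertices $\bm v=|\W_R(G)|$. We will apply Lemma \ref{lem:entropic stability} with $T=\cstar n^2p^\Delta$ and a degree-product threshold $Q$. By (SC3), each edge $(u,v)$ of a strong core lies in at least $(\delta\varepsilon/\cstar)(np^{\Delta/2})^{v_H-2}$ copies of $H$. Repeating the argument of Lemma \ref{lem:product of degrees} (count the copies through $(u,v)$ by first fixing neighbours of $u$ and $v$, and then the rest via the bound of \cite[Theorem~5.4]{HMS} with $e(G)\le T$), this gives $\deg(u)\deg(v)\ge Q:=c\, n^2p^\Delta$ with no logarithmic loss. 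Then $T/Q=O(1)$, which is why strong cores are much better behaved than cores. Each edge has probability at most $p^{1-o(1)}$ by Corollary \ref{cor:almost independence lemma(simple)}, and Lemma \ref{lem:minimum edge bound} gives $\bm e\ge T_0=(1-\mathfrak g_H(\varepsilon))\Psi_H(\delta)n^2p^\Delta$ as in \eqref{eq:def of T_0}. So it suffices to show that, for each relevant $(\bm e,\bm v)$, the count is $\exp(O(\varepsilon)\bm e\log(1/p))$, up to a factor that we can absorb.

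We pick $R$ to be a large constant or a slowly growing function, so that $n^{1/R}\le p^{-\varepsilon/16}$. Since $np^{\Delta/2}\le n^{1/2}$, we have $p\le n^{-1/\Delta}$, hence $\log(1/p)\gtrsim\log n$ and $R=\lceil 32\Delta/\varepsilon\rceil$ works. The first condition in \eqref{eq:three condition} then holds, since $(T/Q)R\log(d/R)=O(\log n)\le p^{-\varepsilon/3}$. For the third condition, note that an edge with both endpoints of degree less than $R$ has degree product below $R^2<Q$, because $Q\gtrsim (np^{\Delta/2})^2\gg1$. So $E_{1,1}(G)=\emptyset$ automatically. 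Lemma \ref{lem:entropic stability} then bounds the count by $\binom{n}{\bm v}p^{-\varepsilon\bm e}$.

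The remaining issue, and the main obstacle, is the factor $\binom n{\bm v}\le n^{\bm v}$ coming from the low degree vertices. Each such vertex $u$ has degree below $R$, and every neighbour $w$ of $u$ must satisfy $\deg(w)\ge Q/R$. We plan to control this in two ways. First, $\bm v$ is small: the number of edges touching $\W_R(G)$ is at most $R\bm v$, and every copy of $H$ through $u$ uses two edges at $u$ into the high degree set. Using (SC3) and a count of such copies via the high degree vertices (which number $O(T/(Q/R))=O(R^{-1}\cdot\text{const})\cdot$ small), we aim to show that the copies supported on $\W_R$ are negligible. Pruning these vertices then changes $N(H,G)$ by only $O(\varepsilon)n^{v_H}p^{e_H}$. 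Second, and better, the charge $n^{\bm v}$ should be paid for by the edges at these vertices. Each low degree vertex carries at least two edges (since $\Delta\ge2$ and it lies in some copy), which contribute $p^{2}$ each. Its neighbours lie in a high degree set of size $O(R)\ll n$, so we choose the vertex at cost $n$ and its edges at cost $O(R)$ rather than $n$. This gives a net per-vertex factor of $n\,p^{2-o(1)}\le n^{-1+2/\Delta\cdot 0}$. This is not automatically small when $\Delta=2$, so we would instead compare with $n\cdot p^{\deg(u)}$ using $\deg(u)\ge2$ and $p\le n^{-1/\Delta}$, and we would expect to need the extra degree counting from (SC3), namely that $\deg(u)$ must exceed $\Delta$ in an averaged sense. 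We would first attempt the split into $\bm v\le\varepsilon\bm e/\log n$ versus larger $\bm v$. In the first case $n^{\bm v}\le p^{-O(\varepsilon)\bm e}$. In the second case we would show that the removal argument yields a graph with more than $(1+c)T_0$ edges, whose extra probability cost dominates $n^{\bm v}$. Summing over $\bm e\in[T_0,T]$ and $\bm v$ gives at most $\mathrm{poly}(n)\,p^{(1-O(\varepsilon))T_0}$, and this is negligible because $n^2p^\Delta\log(1/p)\gg\log n$ follows from the hypothesis $np^{\Delta/2}\gg(\log n)^{1/(v_H-2)}$.
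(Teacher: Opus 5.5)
Your setup matches the paper's: the degree-product bound $\deg_G(u)\deg_G(v)\ge c_0(\varepsilon)n^2p^\Delta$ (Lemma~\ref{lem:deg product bound strong core}(a)), $T/Q=O(1)$, $E_{1,1}(G)=\emptyset$ for constant $R$, and Lemma~\ref{lem:entropic stability} bounding the number of strong cores by $\binom{n}{\bm v}p^{-\varepsilon\bm e}$.

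For $\Delta=2$ this already finishes the proof, and your worry there is misplaced. Every endpoint of an edge has degree at least $c_0(\varepsilon)n^2p^2/d=c_0(\varepsilon)d\gg R$, so $\mathcal W_R(G)=\emptyset$ and Corollary~\ref{cor:entropic stability 2} applies.

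\textbf{The gap is for $\Delta\ge 3$.} There $Q/d\asymp np^{\Delta-1}$ may tend to $0$, and the factor $\binom{n}{\bm v}$ cannot be discarded.

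\emph{Your first route fails for bipartite $H$.} You propose that copies through $\mathcal W_R(G)$ are negligible, so these vertices can be pruned. Take $H=K_{\Delta,\Delta}$ and $G=K_{\Delta,b}$, with $b\asymp n^2p^\Delta$ tuned so that $N(H,G)\approx\delta n^{v_H}p^{e_H}$; for general bipartite $H$ use $K_{v_H/2,b}$. When $np^{\Delta-1}\to 0$ (e.g.\ $p$ near $n^{-2/\Delta}$) one has $b\le d$, and (SC1)--(SC3) all hold. Yet every copy passes through the $b$ vertices of degree $\Delta<R$. So $\bm v\asymp\bm e$, pruning destroys all copies, and the low degree vertices have no averaged degree surplus over $\Delta$.

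\emph{Your second route does not close the gap either.} The correct entropy bound is $\binom{n}{\bm v}\le (en/\bm v)^{\bm v}\le p^{-\frac{\Delta}{2}\bm v}e^{np^{\Delta/2}}$; your cruder $n^{\bm v}$ is too lossy in part of the regime. Near $p\approx n^{-2/\Delta}$ this entropy genuinely cancels about half the cost of the edges at degree-$\Delta$ vertices. What must be proved is therefore the extremal inequality $\bm e-\frac{\Delta}{2}\bm v_\#\ge(1-o_\varepsilon(1))\Psi_H(\delta)n^2p^\Delta$. Your claim that a removal argument produces more than $(1+c)T_0$ edges, whose cost dominates $n^{\bm v}$, gives no mechanism for this inequality. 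It is also not stated in terms of the right quantity.

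\textbf{How the paper handles it.} This inequality is the core of the paper's argument for $\Delta\ge 3$.
\begin{itemize}
\item For non-bipartite $H$, your pruning intuition is essentially right. The edges at low degree vertices span a bipartite graph, which cannot contain $H$. The paper imports the resulting entropic stability \eqref{eq:entropic stability of SC-cycles-1} from \cite{ABRB}.
\item For bipartite $H$, the paper uses nested thresholds $R_i$ and Lemma~\ref{claim: i_star} to isolate $G_{i_\star}$, then splits on the size of $N(H,G_{i_\star})$.
\item If $N(H,G_{i_\star})\le\varepsilon^2\delta n^{v_H}p^{e_H}$, then (SC3) forces $\bm v_\#\le 2e(G_{i_\star})=O(\varepsilon)n^2p^\Delta$. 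This is the rigorous form of your first idea.
\item Otherwise $G_{i_\star}$ is bipartite with one side $V_{i_\star}$, and Lemma~\ref{lem:bipartite edge,vertex bound} gives $\bm e_\#-\frac{\Delta}{2}\bm v_\#\ge(1-o_\varepsilon(1))(\Delta/2)^{2/v_H}\tfrac12 N(H,G_{i_\star})^{2/v_H}$.
\end{itemize}
The factor $(\Delta/2)^{2/v_H}>1$ for $\Delta\ge 3$ is what makes hub-like configurations strictly costlier than cliques. Combined with Lemma~\ref{lem:minimum edge bound} for $\overline G_{i_\star}$ and the concavity of $F$, it yields the required lower bound on $\bm e-\frac{\Delta}{2}\bm v_\#$. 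Nothing in your proposal plays this role, so your union bound does not close.
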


Assuming \dteal{the last two lemmas}, we can now complete the proof of Proposition \ref{prop:prob of existence of core}.

\begin{proof}[Proof of Proposition \ref{prop:prob of existence of core} when $(\log n)^{1/(v_{H}-2)}\ll np^{\Delta/2}\leq n^{1/2}$]
We choose $\cstar>(4\Delta)^{\dteal{4} v_H}\cdot 2\Psi_{H}(\delta)$. Then Lemma \ref{lem:SC reduction step} gives us 
\begin{equation}
    \label{eq:core with many edges simple}
    \P(\Gd \tu{ contains a core $G$ with }e(G)\geq \cstar n^{2}p^{\Delta})\leq \exp(-2\Psi_{H}(\delta)n^{2}p^{\Delta}\log(1/p)).
\end{equation}
Next, we claim that any core graph $G$ with $e(G)\leq \cstar n^{2}p^{\Delta}$, contains a strong core subgraph \dteal{$G^{\prime}\subseteq G$}. \dteal{The proof follows a pruning procedure similar to the one used to obtain a core subgraph of a pre-seed graph (see the proof of Lemma \ref{lem:existence of a core}).} 
%
Hence, it follows from the claim that
\begin{equation}
    \label{eq:core contains strong-core}
    \P(\Gd\tu{ contains a core graph $G$ with $e(G)\leq \cstar n^{2}p^{\Delta}$})
\leq \P(\Gd \tu{ contains a strong core graph $G$}).
\end{equation}
\dteal{Applying} Lemma \ref{lem:strong core main lemma} \dteal{to upper bound the RHS of} \eqref{eq:core contains strong-core}, and using \eqref{eq:core with many edges simple}, we obtain the required bound.  
\end{proof}

\subsection{Proof of Lemma \ref{lem:SC reduction step}}
We begin by recalling the definition of $\W_{R}(G)$ from \eqref{eq:def of W_R(G)} and \dteal{of} $T_{\core}$ \dteal{and} $Q_{\core}$ \dteal{from} \eqref{eq:def Q,T core}. We also set up some \dteal{notation that are relevant for the next couple of subsections.} 
Consider the following edge sets:
\begin{equation}
    \label{eq:def of edge sets}
    E_{1,1}(G) \coloneqq \{(u,v)\in E(G): u,v\in \W_{R}(G)\}, \quad E_{2,2}(G)\coloneqq \{(u,v)\in E(G): u,v\notin \dteal{\mathcal W}_{R}(G)\},
\end{equation}
and,
\begin{equation}
    \label{eq:def of edge sets 2}
    E_{1,2}(G)\coloneqq \{(u,v)\in E(G): \tu{one of }u \tu{ and }v \tu{ is in }\W_{R}(G)\}.
\end{equation}
Further, we write 
\begin{equation}\label{eq:e-112}
\dteal{e_{1,1}(G) \coloneqq |E_{1,1}(G)|, \quad e_{1,2}(G) \coloneqq |E_{1,2}(G)|, \quad \text{ and } \quad e_{2,2}(G) \coloneqq |E_{2,2}(G)|.}
\end{equation} 

\subsubsection{Sparsity sub regime $n^{2}p^{\Delta}\geq (\log n)^{2v_{H}}$} \dteal{The following result yields Lemma \ref{lem:SC reduction step} in the sub regime $n^2p^\Delta \geq (\log n)^{2v_H}$.}



\begin{lemma}
    \label{lem:core graph with many edges large p}
If $n^{2}p^{\Delta}\geq \dteal{(\log n)^{2v_{H}}}$ \dteal{ and $n p^\Delta \leq 1$}, then for any $\varepsilon\in (0,\frac{1}{8})$, 
    \[
    \dteal{\P\left(\bigcup_{e\geq \overline{C}_*n^2p^\Delta}\left\{\exists G\subseteq\Gd:G\tu{ is a core graph} \tu{ and }e(G)= \bm{e}\right\}\right)}  \leq  \exp\left(-\frac{\cstar}{16} n^{2}p^{\Delta}\log (1/p)\right).\]
\end{lemma}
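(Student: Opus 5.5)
Fix $\bm{e}\geq \cstar n^2p^\Delta$ (with $\bm{e}\leq T_{\core}$ by (C2)) and bound the probability that some core with exactly $\bm{e}$ edges lies in $\Gd$ by a union bound. Each such graph appears with probability at most $p^{(1-o(1))\bm{e}}$, by \eqref{eq:single core probability} via Corollary \ref{cor:almost independence lemma(simple)}. The cores with $\bm{e}$ edges will be counted so that the count is at most $\exp((1-c)\bm{e}\log(1/p))$ for some absolute $c$ (say $c=1/8$, accounting for the $\cstar/16$). Summing over at most $T_{\core}$ values of $\bm{e}$ then costs only a polylogarithmic factor, which the exponent $\cstar n^2p^\Delta\log(1/p)$ absorbs.

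For the counting I would pick the low-degree threshold $R$ to be a large constant depending only on $\varepsilon$. This can be done because $n^{1/R}\leq p^{-\varepsilon/16}$ holds whenever $np^\Delta\le 1$ makes $\log n\lesssim \log(1/p)$, with $R\gtrsim 16\Delta/\varepsilon$. Then split $E(G)=E_{1,1}\cup E_{1,2}\cup E_{2,2}$ as in \eqref{eq:def of edge sets}--\eqref{eq:e-112}.

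The first step is to rule out $E_{1,1}$ edges. Since $(u,v)\in E(G)$ satisfies $\deg_G(u)\deg_G(v)\ge Q_{\core}$ by Lemma \ref{lem:product of degrees}, and the hypothesis $n^2p^\Delta\ge(\log n)^{2v_H}$ gives $Q_{\core}\gg (\log n)^{v_H}\gg R^2$, we get $e_{1,1}(G)=0$. The same product bound forces every neighbour of a low-degree vertex to have degree at least $Q_{\core}/R$.

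The second step handles $E_{2,2}$. The graph $G[V\setminus \W_R]$ lies in a $\mathfrak{G}(Q,T)$-type family, and Lemma \ref{lem:entropic stability} with $\bm v=0$ bounds its number of choices by $\exp(\varepsilon e_{2,2}\log(1/p))$. The hypothesis $\frac{T}{Q}\log\frac dR\le p^{-\varepsilon/3}$ holds because $T_{\core}/Q_{\core}\lesssim (\log(1/p))^{v_H+1}$.

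The third step counts $E_{1,2}$, and I expect it to be the main obstacle. Here I would invoke the high/low decomposition observation from Section \ref{sec:proof-outline}: there is $\bar c_H>0$ with $e_{1,2}+e_{2,2}\ge$ (edges incident to high vertices) and $e_{2,2}\geq \bar c_H \bm{e}$. To prove it, note that each $E_{1,2}$ edge $(u,w)$ with $u$ low must lie in many copies of $H$ by (C3). Those copies pass through $u$, whose degree is below $R$, so they must use $E_{2,2}$ edges near $w$. Since $H$ is $\Delta$-regular, a copy through a degree-$<R$ vertex spends at least a constant fraction of its edges in $E_{2,2}$, and double counting copies gives $e_{1,2}\le C_{H,R}\, e_{2,2}$.

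Given this, I count the low vertices and their edges. The high endpoints lie in $B_0$ with $|B_0|\le 2\bm e/R$, and each low vertex has fewer than $R$ neighbours in $B_0$. So the $E_{1,2}$ edges contribute at most $n^{\bm v}(2\bm e/R)^{R\bm v}$ choices with $\bm v\le e_{1,2}$. Since $p^{-1}\ge n^{1/\Delta}$ (by $np^\Delta\le1$), this is at most $\exp((1+o(1))\Delta\, e_{1,2}\log(1/p)/\Delta)$, roughly $p^{-e_{1,2}}$ up to $\bm e^{R}$ factors. The total count is then at most $p^{-e_{1,2}-\varepsilon e_{2,2}-o(\bm e)}$. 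Against the probability $p^{\bm e}$ this leaves $p^{(1-\varepsilon) e_{2,2}-o(\bm e)}\le p^{\bar c_H\bm e/2}$, and taking $\cstar$ large relative to $\bar c_H$ yields the claimed bound. If the degree-$R$ factor $\bm e^{R\bm v}$ proves too lossy, I would instead choose the $<R$ neighbours of each low vertex among those of its high neighbour's two-step neighbourhood, which has size $\le d^2$.
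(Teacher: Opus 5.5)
Your first step (ruling out $E_{1,1}$ via Lemma \ref{lem:product of degrees} and $Q_{\core}\gg R^2$) is exactly the paper's. The third step, however, rests on a false claim: cores in this regime need not satisfy $e_{1,2}(G)\le C_{H,R}\,e_{2,2}(G)$.

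Here is a counterexample. Take $H=K_{\Delta,\Delta}$ with $\Delta\ge 3$ and $p=n^{-1/(\Delta-1)}$. Then $np^{\Delta-1}=1$, $np^\Delta\le 1$, and $n^2p^\Delta=d=n^{(\Delta-2)/(\Delta-1)}\ge(\log n)^{2v_H}$. Let $G$ be a vertex-disjoint union of $m$ copies of $K_{\Delta,d}$, with $m$ a large constant such that $m\Delta\ge\cstar$. Then:
\begin{itemize}
\item $N(H,G)\asymp m\,d^{\Delta}=m\,n^{v_H}p^{e_H}$;
\item $\cstar n^2p^\Delta\le e(G)=m\Delta d\le \cbar n^2p^\Delta\log(1/p)$;
\item every edge lies in $\asymp d^{\Delta-1}$ copies of $H$, far above the (C3) threshold $\asymp d^{\Delta-1}/\log(1/p)$.
\end{itemize}
So $G$ is a core with $\bm e\ge\cstar n^2p^\Delta$. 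But the $d$-side vertices have degree $\Delta<R$ and the $\Delta$-side is an independent set, so $e_{1,1}(G)=e_{2,2}(G)=0$. Copies of $H$ through a low vertex branch out at its high neighbours along other $E_{1,2}$ edges and never need $E_{2,2}$. The observation in Section \ref{sec:proof-outline} you are recalling is about $e_{1,2}+e_{2,2}$, and is used in the other sub-regime (cf.\ Lemma \ref{lem:control on E_11}). For such $G$, your count $p^{-e_{1,2}-\varepsilon e_{2,2}-o(\bm e)}$ cancels the probability $p^{(1-o(1))\bm e}$ completely.

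There is a second, independent problem. The factor $(2\bm e/R)^{R\bm v}=\exp(R\bm v\log(2\bm e/R))$ is not an ``$\bm e^{R}$ factor''. Since $\log\bm e$ can be of order $\log(1/p)$, it costs $p^{-\Theta(R\bm v)}$, and your $d^2$-neighbourhood fallback has the same defect.

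The missing idea is that a low-degree vertex costs only about $p^{-\Delta/2}$, not $n\le p^{-\Delta}$; this comes from using $\binom{n}{\bm v}$ rather than $n^{\bm v}$.

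The paper applies Lemma \ref{lem:entropic stability} to $G$ itself, with $R=16\Delta/\varepsilon$ and $\bm v=|\W_R(G)|$. The hypotheses hold because $E_{1,1}(G)=\emptyset$, $n^{1/R}\le p^{-\Delta/R}$, and $\frac{T_{\core}}{Q_{\core}}R\log\frac dR\lesssim(\log(1/p))^{v_H+2}$. This bounds the number of cores by $\binom{n}{\bm v}\exp(\varepsilon\bm e\log(1/p))$. That bound already pays for the edges at low vertices: their other endpoints have degree at least $Q_{\core}/R$, and there are at most $2\bm eR/Q_{\core}$ such vertices. The paper then follows \cite[Lemma 3.5]{ABRB}.

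The estimate that closes the argument is the one used in \eqref{eq:2nd uniform bound on I}:
\[
\binom{n}{\bm v}\le\Bigl(\frac{enp^{\Delta/2}}{\bm v}\Bigr)^{\bm v}p^{-\Delta\bm v/2}\le e^{np^{\Delta/2}}p^{-\Delta\bm v/2},\qquad np^{\Delta/2}=o(n^2p^\Delta).
\]
Combine it with $\Delta\bm v\le e_{1,2}(G)\le\bm e$, which holds because every vertex of a core has degree at least $\Delta$ and low vertices have only high neighbours. This gives at most $e^{o(\bm e)}p^{-(1/2+\varepsilon)\bm e}$ cores. Against $p^{(1-o(1))\bm e}$ it leaves $p^{(1/2-\varepsilon-o(1))\bm e}\le p^{(3/8-o(1))\bm e}$ for $\varepsilon<1/8$. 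Summing over the polynomially many pairs $(\bm v,\bm e)$ with $\bm e\ge\cstar n^2p^\Delta$ yields the lemma, with no input from $E_{2,2}$.

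Your second step is also not right as stated: $G[V\setminus\W_R(G)]$ need not satisfy the degree-product condition with respect to its own degrees. Applying the lemma to $G$ makes that step unnecessary.
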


\dteal{The proof of Lemma \ref{lem:core graph with many edges large p} follows a strategy similar to that used in the proof of \cite[Lemma 3.5]{ABRB}.} 
\begin{proof}[Proof of Lemma \ref{lem:core graph with many edges large p}]
\dteal{Set $R=16\Delta/\varepsilon$. We claim that, with this choice of $R$, $E_{1,1}(G)=\emptyset$ for any core graph $G$ under consideration.  To see this, fix 
any $(u,v)\in E(G)$ with $u\in\W_R(G)$. Recall \eqref{eq:def Q,T core}. By Lemma \ref{lem:product of degrees} we have
\[
\tu{deg}_{G}(v)\geq \frac{1}{R}\tu{deg}_{G}(u)\cdot \tu{deg}_{G}(v)\geq \frac{Q_{\core}}{R}\gg R,
\]
where the last step follows from our assumption $n^{2}p^{\Delta}\geq (\log n)^{2v_{H}}$. Thus $v\notin\W_R(G)$ and hence the claim. 

As $E_{1,1}(G)=\emptyset$ and $np^\Delta \leq 1$, applying Lemma \ref{lem:entropic stability} with 
with $T=T_{\core}$, $Q=Q_{\core}$, and $R= 16\Delta/\varepsilon$ (recall \eqref{eq:def Q,T core} and notice that all the hypotheses of Lemma \ref{lem:entropic stability} are met with these parameters) we find that 
\begin{equation}\label{eq:core-large-p-large-1}
|\{G\in \widehat{\Xd}: G\tu{ is a core, } |\W_{R}(G)| = \bm{v}, \tu{ and } e(G)=\bm{e} \}|\leq {n\choose v} \exp(\varepsilon \bm{e}\log(1/p)).
\end{equation}
Further, as the number of edges in a core graph $G$ is $o(nd)$, applying Corollary \ref{cor:almost independence lemma(simple)} iteratively it follows that 
\begin{equation}\label{eq:core-large-p-large-2}
\P(G\subseteq \Gd)\leq p^{(1-o(1))e(G)}, \qquad \text{ for any core graph } G,
\end{equation}
which differs from the corresponding probability in the Erd\H{o}s-R\'enyi setting by the negligible factor $p^{o(e(G))}$. Since, the proof of \cite[Lemma 3.5]{ABRB} only relies on the bound \eqref{eq:core-large-p-large-1} and the analog of \eqref{eq:core-large-p-large-2} for the Erd\H{o}s-R\'enyi setting, one can carry out the same approach here. We do not repeat it here. This completes the proof.} 
\end{proof}	

\subsubsection{Sparsity sub regime $n^{2}p^{\Delta}\leq (\log n)^{2v_{H}}$}
\dteal{Let us remind the reader that to prove an analog of Lemma \ref{lem:core graph with many edges large p} in the current sub regime, the approach taken in \cite{ABRB} was quite involved. One can indeed repeat the same argument. We provide an argument below that is much simpler and shorter. 

The crucial observation behind this simpler route is that the condition $E_{1.1}(G)=\emptyset$ used in the proof of Lemma \ref{lem:core graph with many edges large p} is not essential. One can essentially repeat the same argument  under the weaker assumption $e_{1,2}(G) + e_{2,2}(G) \gtrsim e(G)$ (recall \eqref{eq:e-112}), only at the cost of increasing $\cstar$ in Lemma \ref{lem:core graph with many edges large p}. Below we show that such a bound is possible by choosing $R=4\Delta$ in \eqref{eq:def of edge sets}-\eqref{eq:def of edge sets 2}, and then using that bound we complete the proof of Lemma \ref{lem:SC reduction step}. 
}
\begin{lemma}
\label{lem:control on E_11}
\dteal{Let $G$ be a core graph and set $R=4\Delta$. 
Then for $p\in (0,1)$ satisfying $np^{\Delta/2}\gg (\log n)^{1/(v_{H}-2)}$, we have}
\[e(G)\leq (4\Delta)^{2v_{H}} \cdot (e_{1,2}(G)+e_{2,2}(G)).\]
\end{lemma}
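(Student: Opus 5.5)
The plan is a charging argument. Each edge of $E_{1,1}(G)$ will be assigned to a nearby edge of $E_{1,2}(G)$, and the number of $E_{1,1}$-edges assigned to any single edge will be bounded by a constant depending only on $\Delta$ and $v_H$.

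\textbf{Step 1: every $E_{1,1}$-edge lies in a copy of $H$ that leaves $\W_R(G)$.} By (C3), every edge $e\in E(G)$ lies in at least
\[
\frac{\delta\varepsilon n^{v_H}p^{e_H}}{\cbar n^2p^\Delta\log(1/p)} = \frac{\delta\varepsilon}{\cbar}\cdot\frac{(np^{\Delta/2})^{v_H-2}}{\log(1/p)}
\]
labelled copies of $H$ in $G$. Since $\log(1/p)\lesssim \log n$, the hypothesis $np^{\Delta/2}\gg(\log n)^{1/(v_H-2)}$ makes this quantity tend to infinity.

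Now fix $e=(u,v)\in E_{1,1}(G)$ and count the labelled copies of $H$ containing $e$ whose vertex set lies entirely in $\W_R(G)$. All such vertices have degree $<4\Delta$ in $G$, and $H$ is connected. So after placing the two endpoints of some edge of $H$ on $e$ (at most $2e_H$ ways), we can embed the remaining vertices one at a time along a spanning tree. Each new vertex is then a neighbour of an already embedded vertex, giving at most $4\Delta$ choices. Hence there are at most $2e_H(4\Delta)^{v_H-2}=O(1)$ such copies. For $n$ large, therefore, some copy $I\ni e$ contains a vertex outside $\W_R(G)$.

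\textbf{Step 2: define the charging map.} Take $I$ as in Step 1. Let $K$ be the connected component of $I[V(I)\cap \W_R(G)]$ that contains $e$. Since $I$ is connected and has a vertex outside $\W_R(G)$, there is an edge $f=(w,x)\in E(I)$ with $w\in K$ and $x\notin\W_R(G)$. Thus $f\in E_{1,2}(G)$. Moreover, $w$ is joined to $u$ by a path of length at most $v_H$ all of whose vertices lie in $\W_R(G)$. Charge $e$ to $f$.

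\textbf{Step 3: bound the number of edges charged to each $f$.} Fix $f\in E_{1,2}(G)$; its low endpoint $w$ is determined by $f$. Any $e=(u,v)$ charged to $f$ has $u$ reachable from $w$ by a walk of length at most $v_H$ inside $G[\W_R(G)]$, a graph of maximum degree $<4\Delta$. The number of such $u$ is at most $\sum_{j\le v_H}(4\Delta)^j$, and then $v$ is one of fewer than $4\Delta$ neighbours of $u$. So at most $v_H(4\Delta)^{v_H+1}\le (4\Delta)^{2v_H}-1$ edges are charged to $f$, using $v_H\ge 3$ and $\Delta\ge 2$. It follows that
\[
e_{1,1}(G)\le \big((4\Delta)^{2v_H}-1\big)\, e_{1,2}(G),
\]
and adding $e_{1,2}(G)+e_{2,2}(G)$ to both sides gives the claimed bound $e(G)\le (4\Delta)^{2v_H}\,(e_{1,2}(G)+e_{2,2}(G))$.

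\textbf{Main obstacle.} The key step is Step 1: one must check that the (C3) lower bound really diverges in the stated sparsity range. This is precisely where the assumption $np^{\Delta/2}\gg(\log n)^{1/(v_H-2)}$ is used, together with $\log(1/p)\lesssim\log n$; it must beat the constant $2e_H(4\Delta)^{v_H-2}$. The constants in Step 3 are crude but have ample room.
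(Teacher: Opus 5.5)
Your proof is correct and follows the paper's argument: bound the copies of $H$ through an $E_{1,1}$-edge that stay inside $\mathcal{W}_R(G)$ by $2e_H(4\Delta)^{v_H-2}$, use (C3) with the sparsity assumption to find a copy leaving $\mathcal{W}_R(G)$, charge the edge to an $E_{1,2}$-edge reached by an $E_{1,1}$-path, and double count. Your component argument in Step 2 and the extra factor for choosing the second endpoint in Step 3 are slightly more careful than the paper's count of $R^{v_H}$ edges per charged edge; the constant $(4\Delta)^{2v_H}$ absorbs either version.
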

We defer its proof and first complete the proof of Lemma \ref{lem:SC reduction step}. 
\begin{proof}[Proof of Lemma \ref{lem:SC reduction step} when $(\log n)^{1/(v_H-2)}\ll np^{\Delta/2}\leq (\log n)^{v_H}$]
As $1\leq n^{2}p^{\Delta}\leq (\log n)^{2v_H}$, any core graph $G$ has $|V(G)|\leq e(G)\leq T_{\core}\leq  (\log n)^{3v_{H}}$, for large enough $n$ \dteal{(recall \eqref{eq:def Q,T core})}. Fixing some $\bm{v}\leq \bm{e}\leq (\log n)^{3v_H}$, we naturally bound the number of possible core graphs $G$ with $|V(G)|=\bm{v}$ and $e(G)= \bm{e}$ by first choosing the $\bm{v}$ vertices in at most $n^{\bm{v}}$ ways and then choosing $\bm{e}$ edges in at most ${\bm{v}^{2}\choose \bm{e}}$ ways. 
\dteal{As $n^{2}p^{\Delta}\leq (\log n)^{2v_H}$ and $\bm{v}\leq \bm{e}\leq (\log n)^{3v_{H}}$, we further get} 
\[
n^{\bm{v}}\leq \exp\left(\frac{\Delta}{2}\bm{v}\log (1/p)+v_{H}\bm{e}\log \log n\right) \quad  \text{ and } \quad {\bm{v}^{2}\choose \bm{e}}\leq \bm{v}^{2\bm{e}}\leq \exp(6v_H \bm{e}\log \log n). 
\]
Since $\log\log n =o(\log (1/p))$ in this regime, \dteal{we deduce from above} 
\begin{equation}
\label{eq:core bound small p}
|\{G\in \widehat{\Xd}: G\tu{ is a core with }|V(G)|=\bm{v}\tu{ and }e(G)=\bm{e} \}|\leq \exp\left(\left(\frac{\Delta}{2}\cdot \frac{\bm{v}}{\bm{e}}+o(1)\right) \bm{e} \log (1/p)\right).
\end{equation}
\dteal{To complete the proof we will argue that the RHS of \eqref{eq:core bound small p} is bounded above by $(1-\bar c) {\bm e} \log(1/p)$, for $\bar c = 1/(4 \cdot R^{3v_H})$. 
This will then allow to us to use the union bound over the set of all core graphs with $e(G) \geq \cstar n^2p^\Delta$.}

\dteal{Turning to prove the above}, we observe that any edge incident to some vertex in $V(G)\setminus \W_{R}(G)$ must either be in $E_{1,2}(G)$ or $E_{2,2}(G)$. \dteal{As any vertex in $V(G)\setminus \W_{R}(G)$ has degree at least $R= 4\Delta$, we obtain}
\[|V(G)\setminus \W_{R}(G)|\leq (2/R)\cdot(e_{1,2}(G)+e_{2,2}(G))\leq (1/2\Delta)\cdot(e_{1,2}(G)+e_{2,2}(G)).\]

On the other hand, for any core graph $G$, condition (C3) and $np^{\Delta/2}\gg (\log n)^{1/(v_{H}-2)}$ imply that each edge of $G$ participate in at least one copy of $H$. Subsequently, $H$ being a $\Delta$-regular graph implies that $\min_{u\in V(G)}\tu{deg}_{G}(u)\geq \Delta$. Now, from definition of $E_{1,1}(G)$ and $E_{1,2}(G)$ it follows that,
\[|\W_{R}(G)|\leq \frac{1}{\Delta} (2e_{1,1}(G)+e_{1,2}(G)).\]
Using \dteal{the last two bounds} and the identity $e(G) = e_{1,1}(G)+e_{1,2}(G)+e_{2,2}(G)$, we obtain
\begin{equation}
\label{eq:v/e bound small p}
\frac{\Delta}{2}|V(G)| \leq e_{1,1}(G) +\frac{3}{4}(e_{1,2}(G)+e_{2,2}(G)) =e(G)-\frac{1}{4}(e_{1,2}(G)+e_{2,2}(G)) \leq \left(1-\frac{1}{4\cdot (4\Delta)^{2v_H}}\right)e(G),
\end{equation}
where the last inequality follows from Lemma \ref{lem:control on E_11}. 

\dteal{Now}, using observation \eqref{eq:v/e bound small p} with the fixed core probability bound \eqref{eq:single core probability} and the counting estimate \eqref{eq:core bound small p}, we have
\[\P(G\subseteq \Gd: G \tu{ is a core with }|V(G)|=\bm{v},\ e(G)=\bm{e})\leq \exp\left(-(4\Delta)^{-3v_H}\cdot \bm{e}\cdot \log (1/p)\right).\]
Summing over all $\bm{v},\bm{e}$ satisfying $\bm{v}\leq \bm{e}\leq (\log n)^{3v_{H}}$ and \dteal{$\bm{e}\geq \cstar n^{2}p^{\Delta}$}, yield
\[\P(\Gd \tu{ contains a core $G$ with }e(G)\geq \cstar n^{2}p^{\Delta})\leq (\log n)^{6v_{H}}\cdot \exp\left(-(\cstar/(4\Delta)^{3v_{H}}) n^{2}p^{\Delta}\log(1/p)\right).\]
\dteal{Upon using $\log \log n = o(\log (1/p))$, the proof completes.} 
\end{proof}
We now prove Lemma \ref{lem:control on E_11}.
\begin{proof}[Proof of Lemma \ref{lem:control on E_11}]

Let $G_{1,1}\subseteq G$ be the subgraph induced by $E_{1,1}(G)$. We claim that for any edge $(x,y)\in E_{1,1}(G)$, \begin{equation}
    \label{eq:bound on number of copies of H in G_11}
    N(H, G_{1,1}, (x,y))\leq 2e_{H}\cdot R^{v_{H}-2}.\end{equation}
\dteal{To show this, we begin by noting that there are $2e_H$ many ways an edge in $H$ can be mapped to the edge $(x,y)$ (the factor two arises because of the choice of the end point of the chosen edge of $H$ to be mapped to $x$).  
Suppose $(a,b)\in E(H)$ is mapped to $(x,y)$.} 	
Fix $V_0\coloneqq\{a,b\}$ and define for each $i\in \set{v_H-2}$, \[V_{i}\coloneqq \{u\in V(H)\setminus \cup_{j=0}^{i-1}V_j: \exists v\in V_{i-1}\tu{ such that }(u,v)\in E(H)\}.\]
\dteal{Notice that there may exist some $i_0 < v_H-2$ such that $V_i =\emptyset$ for $i \geq i_0$.} Since $H$ is connected, the collection $\{V_i\}$ forms a partition of $V(H)$. \dteal{Next, as} maximum degree of $G_{1,1}$ is $R$, we observe that, recursively for each $i\in \set{v_{H}-2}$, having fixed the choices for vertices in $V_{i-1}$, the vertices in $V_{i}$ can be chosen in at most $R^{|V_{i}|}$ ways. This proves the \dteal{claimed upper bound \eqref{eq:bound on number of copies of H in G_11}.}

\dteal{Now, by our lower bound on $p$, and since $G$ is a core graph (recall (C3)), upon taking $R=4\Delta$ and applying \eqref{eq:bound on number of copies of H in G_11}, we find that, for all sufficiently large $n$, 
\[
N(H, G, (x,y))- N(H, G_{1,1}, (x,y))\gg 1, \qquad \text{ for any edge } (x,y) \in E_{1,1}(G).
\]
This, in particular, implies that, given any edge $(x,y) \in E_{1,1}(G)$,} there exists at least one labelled copy of $H$ which uses the edge $(x,y)$ and some other edge $(x^{\prime}, y^{\prime})\in E_{1,2}(G)$. Thus, for each edge $(x,y)\in E_{1,1}(G)$, we can choose \dteal{(at least)} an edge $(x^{\prime}, y^{\prime})$ such that $x^{\prime}\in \W_{R}(G)$ \dteal{and} $y^{\prime}\in V(G)\setminus \W_{R}(G)$, 
with the additional property that there exists a path $P$ of length at most $v_{H}-1$ consisting entirely of edges from $E_{1,1}(G)$,  whose one endpoint is $x^{\prime}$ and \dteal{the} other endpoint belongs to $\{x,y\}$. \dteal{The last property stems from the fact that both $(x,y)$ and $(x',y')$ belong to a copy of $H$.} 
Set $Y((x,y))\coloneqq (x^{\prime}, y^{\prime})$. \dteal{In case there are more than one choice for the edge $(x',y')$, pick any one of them arbitrarily.} 

Now, given \dteal{an edge} $(x^{\prime}, y^{\prime})\in E_{1,2}(G)$ with $x^{\prime}\in \W_{R}(G)$, there are at most $\sum_{k=0}^{v_{H}-1}R^{k}\leq R^{v_{H}}$ many paths $P$ with length at most $v_{H}-1$ with one endpoint at $x^{\prime}$ and consisting entirely of edges from $E_{1,1}(G)$. \dteal{Therefore, for any $(x',y') \in E_{1,2}(G)$, the number of edges $(x,y) \in E_{1,1}(G)$ such that $Y((x,y))=(x',y')$ is bounded above by $R^{v_H}$.} 
Equipped with this, a double counting argument gives us
\[e_{1,1}(G) 
=\sum_{\substack{(x,y)\in \\  E_{1,1}(G)}}\sum_{\substack{(x^{\prime},y^{\prime})\in \\  E_{1,2}(G)}}\mathds{1}_{Y((x,y))=(x^{\prime}, y^{\prime}))}
\ \dteal{=} \sum_{\substack{(x^{\prime},y^{\prime})\in \\  E_{1,2}(G)}}\sum_{\substack{(x,y)\in \\  E_{1,1}(G)}}\mathds{1}_{Y((x,y))=(x^{\prime}, y^{\prime}))}\leq R^{v_{H}}\cdot e_{1,2}(G).\]
\dteal{Plugging in the above upper bound we derive
\[
e(G)= e_{1,1}(G)+e_{1,2}(G)+e_{2,2}(G) \leq (R^{v_{H}}+1) e_{1,2}(G)+e_{2,2}(G),
\]	
completing the proof.}
\end{proof}

\subsection{Proof of Lemma \ref{lem:strong core main lemma}}\label{sec:pf-lm-513}
\dteal{In this subsection we prove Lemma \ref{lem:strong core main lemma}. We will consider three cases separately: (i) $\Delta =2$, (ii) $\Delta \geq 3$ and $H$ non-bipartite, and (iii) $\Delta \geq 3$ and $H$ bipartite. The proofs of the first two cases are simpler, while case (iii) needs some refinements of the arguments in \cite{ABRB}.}

To obtain sufficiently sharp counting estimates for strong core graphs, we first record some structural properties of such graphs. Following \cite{ABRB}, we decompose a strong core graph into subgraphs consisting of edges with unusually large product \dteal{of the degrees of its end points} and the remaining edges, and isolate a small collection of `bad' edges whose copies of $H$ necessarily interact with the former \dteal{collection of edges}. This decomposition will allow us to retain almost all copies of $H$ while gaining additional control over the structure of the graph.

\begin{definition}[$G_{\high},G_{\low}, G_{\bad}$]
\label{def:low, bad and high}
Given a graph $G$ and a positive constant $C_{0}\in (0,\infty)$, let $G_{\high}$ to be the subgraph of $G$ spanned by edges $(u,v)\in E(G)$ satisfying 
\begin{equation}
    \label{eq:deg prod upper bound}
    \tu{deg}_{G}(u)\cdot \tu{deg}_{G}(v)\geq C_{0}\cdot n^{2}p^{\Delta}.
\end{equation}
Set $G_{\low} \coloneqq G\setminus G_{\high}$. Further, let $G_{\bad}$ denote the subgraph of $G$ spanned by edges that participate in at least one copy of $H$ that also uses an edge from $G_{\high}$. 
\end{definition} 

The following lemma, adapted from \cite[Lemma~4.2]{ABRB}, shows that the degree products of the endpoints of every edge of a strong core are uniformly bounded from below, while edges with exceptionally large degree product form only a small portion of the graph.

\begin{lemma}[{\cite[Lemma~4.2]{ABRB}}]
\label{lem:deg product bound strong core}
Fix a $\Delta$-regular graph $H$ with $\Delta\geq 2$. Let $G$ be a strong core graph. Then for any $\varepsilon>0$, there exist $0<c_{0}(\varepsilon), C_{0}(\varepsilon)<\infty$, such that the \dteal{followings hold}:
\begin{itemize}
    \item[(a)] For every edge $(u,v)\in E(G)$, 
    \begin{equation}
        \label{eq:deg prod lower bound SC}
        \tu{deg}_{G}(u)\cdot \tu{deg}_{G}(v)\geq c_{0}(\varepsilon)\cdot n^{2}p^{\Delta}.
    \end{equation}
    \item[(b)] Having defined $G_{\high}, G_{\low}, G_{\bad}$ with $C_{0}(\varepsilon)$ as in Definition \ref{def:low, bad and high}, we have 
    \begin{equation}
        \label{eq:G-low contains all excess copies}
        N(H, G_{\low})\geq N(H, G)-\varepsilon^{2}\delta n^{v_{H}}p^{e_{H}} \qquad \dteal{\text{ and } \qquad e(G_{\high})\leq e(G_{\bad})\leq \varepsilon^{2} \cdot e(G).}
    \end{equation}
\end{itemize}
\end{lemma}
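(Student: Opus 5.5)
Both parts are deterministic statements about a graph $G$ that satisfies (SC1)--(SC3), with $G \in \widehat{\Xd}$ so that all degrees are at most $d$. I follow the route of \cite[Lemma~4.2]{ABRB}. The only inputs are the participation bound (SC3), the edge bound (SC2), and the homomorphism-counting estimates of \cite[Section~5]{HMS} (e.g. \cite[Theorem~5.4]{HMS}).

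\textbf{Part (a).} I argue as in Lemma \ref{lem:product of degrees}, but with the stronger threshold in (SC3). Fix an edge $(u,v)$. I bound $N(H,G,(u,v))$ from above in terms of $\deg_G(u)\deg_G(v)$. Map an edge $(a,b)$ of $H$ to $(u,v)$; this can be done in $2e_H$ ways. Since $H$ is $\Delta$-regular, $a$ and $b$ each have $\Delta-1$ further neighbours, and these are embedded into $N_G(u)$ and $N_G(v)$. Deleting $a$ and $b$ leaves $H'=H\setminus\{a,b\}$. Its vertices adjacent to $a$ or $b$ have degree at most $\Delta-1$ inside $H'$, and its other vertices have degree $\Delta$.

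I would use a fractional-cover type bound for the number of copies of $H'$ with those vertices constrained to lie in the neighbourhoods of $u$ and $v$. The aim is a bound of the form
\[
N(H,G,(u,v)) \lesssim (\deg_G(u)\deg_G(v))^{1/2}\, e(G)^{(v_H-2)/2-1/2}\cdot(\text{lower order}).
\]
In the simplest instance of the count, one chooses a neighbour of $u$ and a neighbour of $v$ and then uses $e(G)^{(v_H-3)/2}$-type bounds for the rest.

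Combining this with (SC2), $e(G)\le \cstar n^2p^\Delta$, and with the lower bound $(\delta\varepsilon/\cstar)(np^{\Delta/2})^{v_H-2}$ from (SC3), the powers of $np^{\Delta/2}$ should balance exactly. This gives $\deg_G(u)\deg_G(v)\ge c_0(\varepsilon)\, n^2p^\Delta$. Getting the exponent bookkeeping right, i.e. the correct generalized H\"older/Finner inequality for the rooted count, is the delicate point of part (a). Here I would copy \cite[Lemma~4.2]{ABRB}, which only uses deterministic graph inequalities.

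\textbf{Part (b).} The set of high-degree-product edges is small. Let $B=\{w:\deg_G(w)\ge \sqrt{C_0}\, np^{\Delta/2}\}$. Every edge of $G_\high$ has at least one endpoint in $B$. Moreover $|B|\le 2e(G)/(\sqrt{C_0}\,np^{\Delta/2})$, and each vertex of $B$ has degree at most $d$.

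The main step is to bound the number of copies of $H$ that use an edge of $G_\high$. I would count such copies by rooting at a vertex $w\in B$ and using the homomorphism bound for $H$ minus a vertex, together with $\sum_{w\in B}\deg(w)\le 2e(G)$. The goal is to show this number is at most $\eta(C_0)\, n^{v_H}p^{e_H}$ with $\eta(C_0)\to 0$ as $C_0\to\infty$. Choosing $C_0$ large then gives $N(H,G)-N(H,G_\low)\le \varepsilon^2\delta n^{v_H}p^{e_H}$.

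For the edge bound, $e(G_\high)\le e(G_\bad)$ holds trivially: by (SC3) every high edge lies in some copy of $H$, and so it lies in $G_\bad$. Each bad edge lies in a copy of $H$ meeting $G_\high$, and by (SC3) each edge lies in at least $(\delta\varepsilon/\cstar)(np^{\Delta/2})^{v_H-2}$ copies. Double counting the pairs (bad edge, copy meeting $G_\high$) then gives
\[
e(G_\bad)\cdot(\delta\varepsilon/\cstar)(np^{\Delta/2})^{v_H-2}\le e_H\cdot \eta(C_0)\, n^{v_H}p^{e_H}.
\]
Hence $e(G_\bad)\lesssim \eta(C_0)\cstar\varepsilon^{-1}\, n^2p^\Delta$. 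Part (a) gives $e(G)\gtrsim n^2p^\Delta$, since (SC1) forces at least order $n^2p^\Delta$ edges by Lemma \ref{lem:minimum edge bound}. It follows that $e(G_\bad)\le\varepsilon^2 e(G)$ once $C_0$ is large.

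I expect the main obstacle to be the estimate $\eta(C_0)\to0$. A naive vertex-rooted bound loses exactly the factor one wants to gain, so one must exploit that an edge of $G_\high$ has an endpoint of degree at least $\sqrt{C_0}\,np^{\Delta/2}$ while the total edge budget is $O(n^2p^\Delta)$. I would follow the argument of \cite[Lemma~4.2]{ABRB}, where this estimate is carried out.
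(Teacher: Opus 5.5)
Deferring part (a) to \cite[Lemma~4.2]{ABRB} matches the paper, which gives no argument here beyond that citation and enlarging $C_0$ to obtain $\varepsilon^2$. The same goes for the estimate that at most $\eta(C_0)\,n^{v_H}p^{e_H}$ copies of $H$ use an edge of $G_{\high}$. For (a) you do not need a sharper exponent than the rooted Finner bound, with weight $1/\Delta$ on every edge of $H$ other than the rooted one:
$N(H,G,(u,v))\le 2e_H(\deg_G(u)\deg_G(v))^{1-1/\Delta}(2e(G))^{v_H/2-2+1/\Delta}$.
This already balances against (SC2)--(SC3).

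The step you argue yourself, the bound on $e(G_{\bad})$, has a genuine gap. You double count pairs $(e,I)$, with $I$ a copy of $H$ meeting $G_{\high}$ and $e\in E(I)\cap E(G_{\bad})$. The upper bound $e_H\,\eta(C_0)n^{v_H}p^{e_H}$ on the number of pairs is fine. The lower bound $e(G_{\bad})\cdot(\delta\varepsilon/\cstar)(np^{\Delta/2})^{v_H-2}$, however, needs every bad edge to lie in that many copies \emph{that meet $G_{\high}$}. (SC3) only counts all copies through $e$, and by definition a bad edge is guaranteed just one copy meeting $G_{\high}$. So you only get $e(G_{\bad})\le e_H\,\eta(C_0)n^{v_H}p^{e_H}$, which is useless.

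With $G_{\bad}$ defined as in the statement, this cannot be repaired, because the inequality itself fails. Take $H=K_{3,3}$ and $m=\mu n^2p^3$ with $24\mu^3=\delta$. Let $E(G)=(S\times M)\cup(S'\times M')$ with $|S|=|S'|=3$ and $|M|=|M'|=m$, the four sets disjoint except for one vertex $v\in M\cap S'$. The maximum degree is $m+3\le d$ once $np^2\to0$ (e.g.\ $p=n^{-11/20}$), which is inside the regime where strong cores are used.
\begin{itemize}
\item We have $e(G)=6m$ and $N(H,G)=(1+o(1))\delta n^6p^9$, and every edge lies in $(1+o(1))36m^2$ copies. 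So $G$ is a strong core for small $\varepsilon$.
\item For any fixed $C_0$, the three edges $S\times\{v\}$ have degree product $m(m+3)\gg C_0n^2p^3$, so they lie in $G_{\high}$.
\item If $C_0>3\mu$ these are the only high edges, and they carry $O(m^2)=o(n^6p^9)$ copies. So the first half of (b) is fine.
\item Since $K_{3,3}$ is $2$-connected and $v$ is a cut vertex, every copy through a high edge is $S\times\{v,y,y'\}$ with $y,y'\in M$. Hence all of $S\times M$ is bad, and $e(G_{\bad})\ge e(G)/2$.
\end{itemize}

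Your double count does go through verbatim if you call $e$ bad when at least half of its $N(H,G,e)$ copies use an edge of $G_{\high}$. Each bad edge then contributes at least $\frac12(\delta\varepsilon/\cstar)(np^{\Delta/2})^{v_H-2}$ pairs, and $e(G_{\high})\le e(G_{\bad})$ still follows from (SC3). The only later use of $G_{\bad}$ also survives: a vertex of $V_{i_\star}\setminus V(G_{\bad})$ still lies in a copy of $H$ inside $G_{\low}$, hence has degree at least $\Delta$ in $G_{i_\star}$.
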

Compared with \cite[Lemma~4.2]{ABRB}, part (b) is stated here with the slightly stronger error $\varepsilon^2$. This follows by choosing the threshold constant $C_0(\varepsilon)$ sufficiently larger.

\dteal{Next, for ease in witing, set} 
\begin{equation}
\label{eq:def of Q-sc and T-sc}
T_{\sc}\coloneqq \cstar n^{2}p^{\Delta}\quad \tu{and}\quad Q_{\sc} = c_{0}(\varepsilon)\cdot n^{2}p^{\Delta}.
\end{equation}
\dteal{Recall \eqref{eq:mathfrak-G}.} By Lemma \ref{lem:deg product bound strong core}(a) and condition (SC2), 
\begin{equation}
\label{eq:SC in G(Q,T)}
\{G\in \widehat{\Xd}:\ G\tu{ is a strong-core graph}\} \subseteq \mathfrak{G}(Q_{\sc}, T_{\sc}).
\end{equation}
Further, by (SC1) and Lemma \ref{lem:minimum edge bound}, we get
\begin{equation}
\label{eq:def of T_1}
e(G)\ \geq\  n^{2}p^{\Delta} \cdot \Psi_{H}(\delta(1-4\varepsilon))\ \geq\ (1-\mathfrak{f}_{H}(\varepsilon)) \cdot \Psi_{H}(\delta) \cdot n^{2}p^{\Delta}\eqqcolon T_1,
\end{equation}
where $\mathfrak{f}_{H}(\cdot)$ is a nonnegative function  satisfying $\lim_{\varepsilon \downarrow 0}\mathfrak{f}_{H}(\varepsilon)=0$



\dteal{We now prove Lemma \ref{lem:strong core main lemma} for $\Delta=2$. Compared to the Erd\H{o}s-R\'enyi setting the proof in this case turns out to be much simpler. It stems from the fact in the Erd\H{o}s-R\'enyi setting one may have the maximum degree of a core graph be as large as $\Theta(d^2)$, while due to the degree upper bound that is not permitted in our setting.}

\begin{proof}[Proof of Lemma \ref{lem:strong core main lemma} for $\Delta=2$]
\dteal{The proof is another application of Corollary \ref{cor:entropic stability 2} and a union bound. To carry out the details, 
we begin by noticing that any} connected $2$-regular graph $H$ is a cycle $C_{\ell}$, for some $\ell\geq 3$. Hence, the \dteal{sparsity regime simplifies to the following:}  
\begin{equation*}
    \label{eq:regime cycle SC}
    (\log n)^{1/(\ell-2)}\ll d=np\leq \sqrt{n}.
\end{equation*}
\dteal{We now proceed to} verify the hypotheses of Corollary \ref{cor:entropic stability 2} \dteal{with $Q=Q_{\sc}$, $T=T_{\sc}$, and $R=16\Delta/\varepsilon$ (recall \eqref{eq:def of Q-sc and T-sc})}.
\dteal{Since $d\gg1 $, we have $R < Q_{sc}/d$, for sufficiently large $n$, and $R \ll d$}. 
%
\dteal{As $p=d/n\leq n^{-1/2}$ 
we get}
\[
\dteal{\frac{T}{Q}\cdot \log \frac{d}{R} \lesssim \log n \lesssim \log(1/p)\leq p^{-\varepsilon/3}.}
\]
Similarly, by another application of $p\leq n^{-1/2}$ and \dteal{as} $R=16\Delta/\varepsilon$, we have
\[\frac{1}{R}\log n\leq \frac{\Delta}{R} \cdot \log(1/p) \leq  \frac{\varepsilon}{16} \log(1/p).\]	
Thus, by Corollary \ref{cor:entropic stability 2} 
and \eqref{eq:SC in G(Q,T)}, we obtain 
\begin{equation}
    \label{eq:entropic stability of SC-cycles}
    |\{G\in \widehat{\Xd}:\ G\tu{ is a strong-core graph with }e(G) = \bm{e}\}|\leq p^{-\varepsilon \bm{e}}.
\end{equation}
\dteal{Combining \eqref{eq:core-large-p-large-2} \dteal{(although stated for core graphs it continues to hold for strong core graphs, and will be used later as well)}, \eqref{eq:entropic stability of SC-cycles}, and \eqref{eq:def of T_1} we therefore deduce} 
\begin{equation}
    \label{eq:SC cycles}
    \begin{aligned} 
        \P(\Gd \tu{ contains a strong-core}) &\leq \dteal{\sum_{\bm{e}\geq \lceil  T_{1}\rceil}} p^{(1-2\varepsilon)\bm{e}} \leq p^{(1-3\varepsilon)T_{1}},
    \end{aligned}
\end{equation}
for all sufficiently large $n$. 
Substituting the definition of $T_1$ from \eqref{eq:def of T_1} yields the desired bound and completes the proof. 	
\end{proof}

\dteal{
\begin{proof}[Proof of Lemma \ref{lem:strong core main lemma} for non-bipartite $\Delta$-regular graphs with $\Delta \geq 3$]
Repeating the same arguments in the proof of \cite[Proposition 3.3]{ABRB} for non-bipartite graphs (see also \cite[Eqn.~(4.8)]{ABRB} it follows that
\begin{equation}\label{eq:entropic stability of SC-cycles-1}
\left|\{G\in \widehat{\Xd}:\ G\tu{ is a strong-core graph with }e(G) = \bm{e} \}\right| \leq \exp\left(5 \Delta \varepsilon {\bm e} \log(1/p)\right).
\end{equation}
Now, applying \eqref{eq:core-large-p-large-2} and a union bound, and noticing that the rate function in the Erd\H{o}s-R\'enyi setting and in the random regular graph setting are the same for $\Delta \geq 3$, the required bound follows. 
\end{proof}
}

\subsubsection{$\Delta \geq 3$ and $H$ bipartite}
\dteal{In this case, one can attempt to repeat the arguments of \cite{ABRB}. However, the arguments in \cite{ABRB} are not sharp enough to claim that the probability of the existence of a strong core graph with $(1+C\varepsilon)T_1$, for some large constant $C$ depending only on $\delta$ and $H$, is negligible in the large deviation scale. This enhanced probability bound will be a crucial input in the proof of Theorem \ref{thm:cond-loc}. Hence, we need some refinements of the arguments in \cite{ABRB}.}

\dteal{To this end, we begin with an improvement of \cite[Lemma 4.4]{ABRB}. It requires the following assumption.}


\begin{assumption}
\label{asmpt:bipartite}
Let $\overline{G}$ be a bipartite graph with two parts $U_1$ and $U_2$, i.e., $E(\overline{G})\subset U_1\times U_2$ for some disjoint set of vertices $U_{1}$ and $U_2$. Assume that there exists $U_{1,1}\subseteq U_1$ such that 
\begin{equation}
    \label{eq:min degree requirement}
    \dteal{\min_{u\in U_{1,1}}\tu{deg}_{\overline{G}}(u)\geq \Delta, \quad \text{ for some } \Delta \geq 2, \quad \text{ and } \quad |U_1\setminus U_{1,1}|\leq \varepsilon_{2} e(\overline{G}),}
\end{equation}
for \dteal{some 
$\varepsilon_{2}\in (0,1)$.}
\end{assumption}

\begin{lemma}
\label{lem:bipartite edge,vertex bound}
Fix a $\Delta$-regular connected bipartite graph $H$ with $\Delta\geq 2$. Let $G$ be a bipartite graph with parts $U_{1}$ and $U_{2}$ satisfying Assumptions \ref{asmpt:bipartite}. Then, \dteal{for $\varepsilon_2 \in (0,1)$ sufficiently small, depending only on $\Delta$}, 
\begin{equation}\label{eq:bipartite-improved}
e(\overline{G})- \frac{\Delta}{2}\cdot |U_1|\geq (1-2\Delta \varepsilon_{2})\cdot \left(\frac{\Delta}{2}\right)^{2/v_{H}}\cdot \frac{1}{2}N(H, \overline{G})^{2/v_{H}}. 
\end{equation}
\end{lemma}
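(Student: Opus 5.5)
We aim for a bound on $N(H,\overline G)$ in terms of $e(\overline G)-\frac{\Delta}{2}|U_1|$. The key structural input is that $H$ is a connected $\Delta$-regular bipartite graph. Hence $H$ has a perfect matching, its two sides have equal size $v_H/2$, and every copy of $H$ in $\overline G$ maps one side into $U_1$ and the other into $U_2$. There are at most two ways to choose which side goes to $U_1$, since $H$ is connected.

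\textbf{Step 1: embedding bound.} We use a vertex-cover/matching style embedding count, in the spirit of the proof of Lemma \ref{lem:contribution zero}. Fix the side $A\subseteq V(H)$ mapped into $U_1$, with $|A|=v_H/2$. A labelled copy is determined by the images of the edges of a perfect matching $M$ of $H$, and there are at most $e(\overline G)^{v_H/2}$ such choices. This alone gives $N(H,\overline G)\le 2\,e(\overline G)^{v_H/2}$, which is too weak. We refine it by weighting vertices of $U_1$ by their excess degree. The right-hand side involves $(\Delta/2)^{2/v_H}\cdot\frac12 N^{2/v_H}$, so the target inequality is
\[
N(H,\overline G)\lesssim 2\Bigl(\tfrac{2}{\Delta}\Bigr)\bigl(2(e-\tfrac{\Delta}{2}|U_1|)\bigr)^{v_H/2}
\]
up to the $(1-2\Delta\varepsilon_2)$ loss. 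To get this, we plan to use a generalized H\"older / entropy (Finner-type) bound on $\mathrm{Hom}(H,\overline G)$ for bipartite $\Delta$-regular $H$:
\[
\mathrm{Hom}(H,\overline G)\le \prod_{(a,b)\in E(H)}\Bigl(\sum_{u\in U_1,w\in U_2}\overline G(u,w)\,\phi(u)\Bigr)^{1/\Delta},
\]
with a suitably chosen weight $\phi$ on $U_1$. The idea is to count from the $U_1$ side. Each $u\in U_1$ that is the image of an $A$-vertex has $\Delta$ neighbours chosen among its $\deg(u)$ neighbours, contributing $\deg(u)^{\Delta}$-type terms. Using $\deg(u)^{\Delta}\le$ a convex function of the excess degree $\deg(u)-\Delta/2$ when $\deg(u)\ge\Delta$, we reduce to a bound in terms of $\sum_{u}(\deg(u)-\Delta/2)=e-\frac{\Delta}{2}|U_1|$.

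\textbf{Step 2: handling the exceptional vertices.} Vertices in $U_1\setminus U_{1,1}$ may have degree below $\Delta$, so the subtraction $\Delta/2$ per vertex may be ``wasted'' on them. We discard copies of $H$ using them only through the subtraction term. Since $|U_1\setminus U_{1,1}|\le\varepsilon_2 e$, we have
\[
e-\tfrac{\Delta}{2}|U_1|\ge e-\tfrac{\Delta}{2}|U_{1,1}|-\tfrac{\Delta}{2}\varepsilon_2 e .
\]
We then note that $e-\frac{\Delta}{2}|U_{1,1}|\ge e/2$, because every $u\in U_{1,1}$ has degree at least $\Delta$. Together these give a multiplicative loss of $(1-\Delta\varepsilon_2)$. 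Raising to the power $2/v_H$ and absorbing constants yields the factor $(1-2\Delta\varepsilon_2)$ for small $\varepsilon_2$.

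\textbf{Main obstacle.} The hardest step is obtaining the sharp constant $(\Delta/2)^{2/v_H}\cdot\frac12$ in Step 1. This constant should be tight for $\overline G=K_{\Delta,m}$-like configurations, where $U_1$ has degree-$\Delta$ vertices attached to a common set. The natural first attempt is the per-vertex bound: each copy is determined by choosing images of $A$ in $U_{1,1}$ and, for each, a $\Delta$-subset of its neighbourhood consistent with the rest. Bounding $\binom{\deg(u)}{\Delta}$-like factors via AM--GM in terms of the excess degrees and optimizing should give the constant. If that fails, I would fall back on the vertex-cover argument from Lemma \ref{lem:contribution zero} combined with \cite[Theorem~5.4]{HMS} applied to an auxiliary graph in which each $u\in U_1$ keeps only $\deg(u)-\Delta/2$ ``effective'' edges.
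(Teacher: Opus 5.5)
Your opening observation (a perfect matching $M$ of $H$, two possible orientations, and $N(H,\overline G)\le 2e(\overline G)^{t}$ with $t=v_H/2$) is the right starting point, and it is also where the paper's count begins. The gap is Step~1, which is the entire content of the lemma: it is only a list of candidate strategies, and the principal one cannot work.

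\textbf{The Hom route fails.} Your Finner/H\"older display bounds $\mathrm{Hom}(H,\overline G)$, and no bound of the required shape holds for homomorphisms. Take $\overline G=K_{a,\Delta}$ with $|U_1|=a$. It satisfies Assumption~\ref{asmpt:bipartite} with $U_{1,1}=U_1$ for every $\varepsilon_2$. Every map sending one side of $H$ into $U_1$ and the other into $U_2$ is a homomorphism, so $\mathrm{Hom}(H,\overline G)=2(a\Delta)^{t}$. On the other hand $e(\overline G)-\frac{\Delta}{2}|U_1|=a\Delta/2$, and the lemma is equivalent to
\[
N(H,\overline G)\le \frac{2}{\Delta}\Bigl(\frac{2}{1-2\Delta\varepsilon_2}\Bigr)^{t}\Bigl(e(\overline G)-\frac{\Delta}{2}|U_1|\Bigr)^{t}=\frac{2}{\Delta}(1-2\Delta\varepsilon_2)^{-t}(a\Delta)^{t}.
\]
This is smaller than $\mathrm{Hom}(H,\overline G)$ by the factor $\Delta(1-2\Delta\varepsilon_2)^{t}>1$ once $\varepsilon_2$ is small. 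So the saving of $\Delta/2$ per vertex of $U_1$ must come from injectivity, which no H\"older-type homomorphism bound sees. Your display is also degenerate as written: its $e_H$ factors coincide, so it is just $(\sum_u\phi(u)\deg(u))^{t}$.

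\textbf{The other routes also fail.} The per-vertex alternative either ignores consistency or keeps it. If it ignores consistency, each vertex on the $U_2$ side is chosen $\Delta$ times and you get $\deg(u)^{\Delta}$-type moments; for $K_{a,b}$ this gives a factor $ab^{\Delta}$ where $ab$ is needed. If it keeps consistency, it is no longer a product bound. The auxiliary-graph fallback bounds nothing, since copies of $H$ in $\overline G$ need not survive in a subgraph. Your displayed target also carries a spurious extra factor $2$ compared with the display above. With it the constant would become $(\Delta/4)^{2/v_H}$, which for $\Delta=3$ falls below the constant $1$ of \cite[Lemma~4.4]{ABRB} that this lemma is meant to improve.

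\textbf{The missing idea.} Count embeddings of a slightly larger spanning subgraph: $M$ together with the remaining $\Delta-1$ edges of $H$ at a single vertex $x_t$ of the side mapped into $U_1$. The paper takes this subgraph from \cite[Lemma~4.6]{ABRB}. Write $e=e(\overline G)$ and fix the images $u_1,\dots,u_t\in U_1$ of that side.
\begin{itemize}
\item The matching partners of $x_1,\dots,x_{t-1}$ contribute at most $\prod_{i<t}\deg(u_i)$ choices.
\item These choices already place the $\Delta-1$ extra neighbours of $x_t$, at distinct neighbours of $u_t$.
\item By injectivity the partner of $x_t$ then has at most $\deg(u_t)-\Delta+1$ choices, and none unless $\deg(u_t)\ge\Delta$.
\end{itemize}
Summing over $u_1,\dots,u_t$ gives, per orientation, $e^{t-1}\bigl(e-(\Delta-1)|U_1|\bigr)=\mathfrak h^{t-1}(\mathfrak h-\Delta+1)|U_1|^{t}$ with $\mathfrak h=e/|U_1|$. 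This holds up to a harmless correction of at most $(\Delta-1)\varepsilon_2 e$ in the last factor, coming from vertices of degree below $\Delta$.

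\textbf{Where the constant comes from.} It comes from one-variable calculus, not from any extremal configuration. The function $g(x)=x^{t-1}(x-\Delta+1)(x-\Delta/2)^{-t}$ is decreasing beyond a threshold $x_H<\Delta$, and $g(\Delta)=2^t/\Delta$. This is exactly where $t\ge\Delta$ is used: the logarithmic derivative of $g$ at $\Delta$ equals $1-(t+1)/\Delta<0$. The assumption enters only through $\Delta|U_1|\le(1+\Delta\varepsilon_2)e$, i.e.\ $\mathfrak h\ge\Delta/(1+\Delta\varepsilon_2)$. So no copies need to be discarded as in your Step~2.
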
 

\dteal{Let us point out that \cite[Lemma 4.4]{ABRB} holds with $2\Delta \varepsilon_2$ and $\Delta/2$ in \eqref{eq:bipartite-improved} replaced by $\Delta \varepsilon_2$ and $1$, respectively. Therefore, Lemma \ref{lem:bipartite edge,vertex bound} provides a better bound for $\Delta \geq 3$. This will be a key in improving the error bounds in the proof of the Lemma \ref{lem:strong core main lemma} for $\Delta \geq 3$.} 


\begin{proof}[Proof of Lemma \ref{lem:bipartite edge,vertex bound}]
\dteal{We borrow ideas from the proof of \cite[Lemma~4.4]{ABRB}. We will assume that $\Delta \geq 3$.} 

\dteal{We begin by noting that, $H$ being} a bipartite graph, we can partition its vertex set into $V^{(1)}(H)$ and $V^{2}(H)$, where both parts are independent \dteal{sets}. We also observe that $H$ being $\Delta$-regular, gives
\begin{equation}
    \label{eq:t geq Delta}
    |V^{1}(H)|=|V_{2}(H)| = \frac{v_{H}}{2}\eqqcolon t, \quad t\geq \Delta.
\end{equation}
\dteal{Without loss of generality, let $V^{1}(H) = \{x_1,x_2,\ldots,x_t\}$ and $V^{2}(H)=\{\bar{x}_1, \bar{x}_2, \ldots, \bar{x}_t\}$. By \cite[Lemma 4.6]{ABRB}, there exists a subgraph $\widehat M_1 \subseteq H$ that is the edge disjoint union of some perfect matching $M$ and a collection of $(\Delta -1)$ edges incident to the vertex $x_t \in V^1(H)$. Reversing the roles of $V^1(H)$ and $V^2(H)$ one also obtains a subgraph $\widehat M_2 \subseteq H$ with properties similar to $\widehat M_1$, with the vertex $x_t$ replaced by $\bar x_t$. 

Next, for $j \in \{1,2\}$, we set $N_j(H,\overline{G})$ to be the number of labelled copies of $H$ in $G$ such that the vertices of $V^j(H)$ are mapped to $U_1$. Fixing, $t$ distinct vertices $\bm{u} = (u_{1},u_{2},\ldots, u_{t})\in U_{1}^{t}$ we let  $N(\widehat{M}_1, \overline{G},\bm{u})$ and $N(\widehat{M}_2, \overline{G},\bm{u})$ to be the number of copies of $\widehat{M}_1$  and $\widehat M_2$ in $\overline{G}$, where the the vertices $x_i$ and $\bar x_i$ are mapped to the vertex $u_i$, respectively.} 

\dteal{Therefore, as $\overline{G}$ is bipartite, from the discussion above it follows
\begin{equation}\label{eq:H-ot-M}
N(H,\overline{G}) = \sum_{j=1}^2 N_j(H,\overline{G}) \leq \sum_{j=1}^2 \sum_{\bm{u}\in U_{1}^{t}} N(\widehat{M}_j, \overline{G}, \bm{u}),
\end{equation}
where the innermost sum in the RHS above is taken over tuples with distinct coordinates. We will show that for sufficiently small $\varepsilon_{2}\in (0,1)$, 
\begin{equation}
    \label{eq:bipartite improvement claim}
    \sum_{\bm{u}\in U_{1}^{t}} N_{j}(\widehat{M}, \overline{G}, \bm{u})\leq \frac {1}{\Delta}\cdot \left(\frac{2}{1-2\Delta\varepsilon_2}\right)^{t}\cdot \left(e(\overline{G})-\frac{\Delta}{2}|U_1|\right)^{t}, \quad \tu{for }j\in \{1,2\}.
\end{equation}
Notice that \eqref{eq:H-ot-M} and \eqref{eq:bipartite improvement claim} together imply the required bound.} 	


\dteal{So, it remains to  prove \eqref{eq:bipartite improvement claim}. To this end, arguing similarly as in the steps leading to \cite[Eqn.~(4.13)]{ABRB} we obtain 
%
\begin{equation}
    \label{eq:intermediate improved bound}
    \begin{aligned}
        \sum_{\bm{u}\in U_{1}^{t}}N(\widehat{M}_j, \overline{G}, \bm{u}) 
        & \leq\left(\sum_{u\in U_1}\deg_{\overline{G}}(u)\right)^{t-1}\cdot \left[\sum_{u\in U_1}(\deg_{\overline{G}}(u)-(\Delta-1))\right] \\
         & = e(\overline{G})^{t-1}\cdot (e(\overline{G})-(\Delta-1)|U_1|) = \mathfrak{h}^{t-1}(\mathfrak{h}-\Delta+1) \cdot |U_1|^t,
    \end{aligned}
\end{equation}
where $\mathfrak{h}:=e(\overline{G})/|U_1|$. To complete the proof we need to find an upper bound on the RHS of \eqref{eq:intermediate improved bound}. Turning to do this, define 
\[
g(x):= x^{t-1}(x-\Delta+1)\cdot (x-(\Delta/2))^{-t},\quad   x\geq \Delta -1.
\]
It is strightforward to note that derivative of $\log g(x)$ is strictly negative on $x\in(x_{H},\infty)$ where $x_{H} \coloneqq x_H(\Delta, t):=\frac{\Delta(\Delta-1)(t-1)}{\Delta t-2(\Delta-1)}$, and hence $g(x)$ is strictly decreasing on the same interval. Further noting that the map $(\Delta, t) \mapsto x_H(\Delta, t)$ is decreasing in $t$, and as $t= \frac{v_{H}}{2}\geq \Delta$, we find that $x_H(\Delta, t) \leq x_H(\Delta, \Delta) < \Delta$. This, in particular, implies that $x_{H}<(1-\Delta \varepsilon_{2})\Delta$, provided $\varepsilon_{2}$ is sufficiently small, depending only on $\Delta$. Therefore,  
\begin{equation}\label{eq:sup-g}
\sup_{x\geq (1-\Delta\varepsilon_2)\Delta}g(x)\leq g((1-\Delta\varepsilon_2)\Delta) = \frac{2^t}{\Delta}\cdot \frac{(1-\Delta\varepsilon_{2})^{t-1}\cdot (1-\Delta^{2}\varepsilon_2)}{(1-2\Delta\varepsilon_2)^t}\leq \frac{2^t}{\Delta\cdot (1-2\Delta\varepsilon_2)^{t}}.
\end{equation}
}
\dteal{Now we apply 
the assumption \eqref{eq:min degree requirement}
to derive
\begin{equation*}
    \Delta |U_1|\leq \sum_{u\in U_1}\deg_{\overline{G}}(u)+\Delta|U_1\setminus U_{1,1}|\leq (1+\Delta\varepsilon_2)e(\overline{G}),
\end{equation*}
which in turn implies that $\mathfrak{h} \geq \frac{\Delta}{1+\Delta\varepsilon_2}\geq (1-\Delta\varepsilon_2)\Delta$. Combining \eqref{eq:intermediate improved bound} with \eqref{eq:sup-g} we arrive at \eqref{eq:bipartite improvement claim}. This finishes the proof.}  
\end{proof}	

\dteal{The reader may have already anticipated that to prove Lemma \ref{lem:strong core main lemma} for $\Delta \geq 3$ and $H$ bipartite, ideas from the proofs of \cite{ABRB} will again be borrowed. Indeed, we will use several ingredients from the proofs of in \cite[Section 4.4]{ABRB}. To this end, we first need to introduce several notation.} 


Set 
\begin{equation*}
    C_{3}\coloneqq C_{3}(\varepsilon) = (t-1)\left(\left\lceil \frac{(2\cstar)^{t}}{\delta \varepsilon^{2}}\right\rceil+2\right),
\end{equation*}
\dteal{where $t=v_H/2$.} Take the constants $0<c_{0}(\varepsilon)<C_{0}(\varepsilon)<\infty$ as in Lemma \ref{lem:deg product bound strong core} for which the degree product bounds \eqref{eq:deg prod upper bound}-\eqref{eq:deg prod lower bound SC} and  bounds in \eqref{eq:G-low contains all excess copies} hold. For $i\in \set{C_3}$, define 
\begin{equation}
    \label{eq:def R_i}
    R_{i}\coloneqq R_{i}(\varepsilon)\coloneqq 
    \frac{16\Delta}{\varepsilon}\cdot \left(\frac{C_{0}(\varepsilon)}{c_{0}(\varepsilon)}\right)^{t-1}.
\end{equation}
Given a strong-core graph $G$, we consider the sets  $V_{i}\coloneqq \W_{R_i}(G)\cap V(G_\low)$ for $i\in \set{C_3}$. Let $G_{i}$ be the subgraph of $G_{\low}$ spanned by edges that are incident to some vertex in $V_{i}$, and set $\overline{G}_i\coloneqq G_{\low}\setminus G_{i}$. \dteal{Before moving to the proof of Lemma \ref{lem:strong core main lemma}, let us state the following observations that were made during the course of the proofs in \cite[Section 4.4]{ABRB} in the form of a lemma.} 
\dteal{	
\begin{lemma}
    \label{claim: i_star}
Let $G$ be any strong core graph. There exists $i_{\star}\in \set{C_3-t+1}$, such that the followings hold:
    \begin{itemize}
        \item[(a)] $N(H, G_{i_\star+t-1})-N(H, G_{i_{\star}})\leq \varepsilon^{2}\delta n^{v_{H}}p^{e_{H}}$.
        \item[(b)] Any copy of $H$ in $G_{\low}$ that uses an edge of $G_{i_{\star}}$ is contained in $G_{i_{\star}+t-1}$.
\item[(c)] $N(H, G_{i_{\star}})+N(H, \overline{G}_{i_\star})\geq \delta(1-5\varepsilon) n^{v_{H}}p^{e_{H}}$.			
    \end{itemize}
\end{lemma}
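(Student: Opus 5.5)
We read \eqref{eq:def R_i} as intended to be geometric in $i$, namely $R_i=\frac{16\Delta}{\varepsilon}\big(C_0(\varepsilon)/c_0(\varepsilon)\big)^{i-1}$; if the factor really does not depend on $i$, the argument for part (b) below breaks down. With this reading we have $R_1<R_2<\cdots$, so the sets $V_i$ and the graphs $G_i$ are nested increasing in $i$. Consequently $i\mapsto N(H,G_i)$ is nondecreasing, and each term is bounded by $N(H,G_{\low})\le N(H,G)$. The plan is to obtain (a) by pigeonhole, (b) from the degree-product window of Lemma \ref{lem:deg product bound strong core}, and (c) by combining (a), (b) and Lemma \ref{lem:deg product bound strong core}(b).

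For (a), we first bound $N(H,G)\le (2e(G))^{t}\le (2\cstar)^t (np^{\Delta/2})^{v_H}=(2\cstar)^t n^{v_H}p^{e_H}$, using \cite[Theorem~5.4]{HMS} with $\alpha^*_H=t$ and (SC2). We then split $\set{C_3}$ into the consecutive blocks $\{(k-1)(t-1)+1,\dots,k(t-1)+1\}$. There are at least $\lceil (2\cstar)^t/(\delta\varepsilon^2)\rceil+1$ such blocks, and their increments of $N(H,G_i)$ telescope, so the increments sum to at most $(2\cstar)^t n^{v_H}p^{e_H}$. Hence some block, starting at an index $i_\star\le C_3-t+1$, has increment at most $\varepsilon^2\delta n^{v_H}p^{e_H}$, which is (a).

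For (b), take a copy of $H$ in $G_{\low}$ using an edge $(u,v)$ with $u\in V_{i_\star}$, so $\deg_G(u)<R_{i_\star}$. Every edge $(a,b)$ of $G_{\low}$ satisfies
\[
c_0 n^2p^\Delta\le \deg_G(a)\deg_G(b)< C_0 n^2p^\Delta,
\]
by \eqref{eq:deg prod lower bound SC} and Definition \ref{def:low, bad and high}. Applying this along a path $a-b-c$ in $G_{\low}$ gives $\deg_G(c)<(C_0/c_0)\deg_G(a)$. Iterating, any vertex of the copy at even distance $2k$ from $u$ within the copy has $\deg_G<R_{i_\star+k}$. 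Since $H$ is connected, bipartite, and has $t$ vertices on each side, every vertex on $u$'s side lies at even distance at most $2(t-1)$ from $u$. Every edge of the copy has an endpoint on $u$'s side, so every edge is incident to a vertex of degree $<R_{i_\star+t-1}$, and that vertex lies in $V(G_{\low})$. Hence the copy lies in $G_{i_\star+t-1}$. We expect the main care to lie here: the degrees involved are those in $G$, not in $G_{\low}$, and bipartiteness is used essentially.

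For (c), every copy of $H$ in $G_{\low}$ falls into exactly one of three classes. A copy avoiding $E(G_{i_\star})$ lies in $\overline G_{i_\star}$. A copy lying entirely in $G_{i_\star}$ is counted by $N(H,G_{i_\star})$. A copy meeting $E(G_{i_\star})$ but not contained in it is, by (b), a copy in $G_{i_\star+t-1}$ not contained in $G_{i_\star}$; by (a) there are at most $\varepsilon^2\delta n^{v_H}p^{e_H}$ of these. Combining this with \eqref{eq:G-low contains all excess copies} and (SC1) gives
\[
N(H,G_{i_\star})+N(H,\overline G_{i_\star})\ge \delta(1-4\varepsilon-2\varepsilon^2)n^{v_H}p^{e_H}\ge \delta(1-5\varepsilon)n^{v_H}p^{e_H}
\]
for $\varepsilon$ small.
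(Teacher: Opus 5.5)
Your argument is correct and is essentially the argument of \cite[Section~4.4]{ABRB}, to which the paper defers for this lemma: you take a pigeonhole over blocks of $t-1$ consecutive indices using $N(H,G)\le (2\cstar)^t n^{v_H}p^{e_H}$, which is exactly what the choice of $C_3$ is built for; for (b) you propagate $c_0 n^2p^\Delta\le \deg_G(a)\deg_G(b)<C_0 n^2p^\Delta$ along even-length paths of the bipartite $H$; and for (c) you split the copies in $G_{\low}$ three ways. Your reading of \eqref{eq:def R_i} as $R_i=\frac{16\Delta}{\varepsilon}\big(C_0(\varepsilon)/c_0(\varepsilon)\big)^{i-1}$ is the intended one, and it is consistent with the paper's later use of $R_{i_\star}\ge 16\Delta/\varepsilon$.
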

}	

\begin{proof}[Proof of Lemma \ref{lem:strong core main lemma} for bipartite $\Delta$-regular graphs, with $\Delta\geq 3$]
\dteal{Recalling \eqref{eq:def of Q-sc and T-sc} and \eqref{eq:def of T_1}, and applying Lemma \ref{claim: i_star}(c) we find that 
\begin{equation}
    \label{eq:SC tuple}
    \{\Gd\tu{ contains a strong core graph}\} \subseteq \bigcup_{i_\star=1}^{C_3-t+1}\bigcup_{e=\lceil T_1\rceil}^{\lfloor T_{\sc}\rfloor} \bigcup_{\substack{\bm{e}_{\#}\leq \bm{e},\\ \bm{v}_{\#}\leq \bm{v}\leq 2\bm{e}}}\{\exists G\subseteq \Gd:\ G\in \mathcal{I}_{i_\star, \bm{e}, \bm{e}_{\#}, \bm{v}, \bm{v}_{\#}}\},
\end{equation}
where   
\begin{equation}
    \begin{aligned}
        \mathcal{I}_{i_{\star}, \bm{e}, \bm{e}_{\#}, \bm{v}, \bm{v}_{\#}}\coloneqq \{G:\ G \tu{ is a strong core graph}& \tu{ with }e(G)=\bm{e},\ e(G_{i_\star})=\bm{e}_{\#}, |\W_{R_{i_{\star}}}(G)|=\bm{v}, |V_{i_{\star}}|=\bm{v}_{\#},\\
        &\tu{ and }N(H, G_{i_\star})+N(H, \overline{G}_{i_\star})\geq (1-5\varepsilon)\delta n^{v_{H}}p^{e_{H}} \}.
    \end{aligned}
\end{equation}
}	

\dteal{
As we have already seen a few times the approach would to be bound the cardinality of $\mathcal{I}_{i_{\star}, \bm{e}, \bm{e}_{\#}, \bm{v}, \bm{v}_{\#}}$. To this end, we claim that  
\begin{equation}
    \label{eq:2nd uniform bound on I}
    |\mathcal{I}_{i_{\star}, \bm{e}, \bm{e}_{\#}, \bm{v}, \bm{v}_{\#}}|\leq  {n\choose \bm{v}}\cdot p^{-\varepsilon \bm{e}} \leq    p^{-\frac{\Delta}{2}\bm{v} -2\varepsilon \bm{e}} \leq p^{-\frac{\Delta}{2}\bm{v}_{\#} -3\Delta\varepsilon \bm{e}}.
\end{equation}
}

To prove the leftmost inequality in  we apply Lemma \ref{lem:entropic stability} with $T=T_{\sc},Q=Q_{\sc}$ and $R=R_{i_\star}(\varepsilon)$. \dteal{Observe that, in the current regime $R \asymp 1 \ll d$. Since, $T/Q \asymp 1$ and $\log d \lesssim \log(1/p)$ the leftmost assertion in \eqref{eq:three condition} holds. As, $np^\Delta \leq 1$ and $R_{i_\star}\geq 16\Delta/\varepsilon$ the middle assertion in \eqref{eq:three condition} holds.} 

%
\dteal{Further}, if $(u,v)\in E(G)$ with $u,v\in \W_{R_{i_\star}}(G)$, then from Lemma \ref{lem:deg product bound strong core}(a), we have
\begin{equation}
    \label{eq:bipartiteness of G_i-star}
    \tu{deg}_{G}(v)\geq  \frac{1}{R_{i_\star}}\tu{deg}_{G}(u)\cdot \tu{deg}_{G}(v)\geq \frac{Q_{\sc}}{R_{i_\star}}\gg R_{i_\star}, 
\end{equation} 
where the last inequality is true since $np^{\Delta/2}\gg1$. So there are no edges with both endpoints in $\W_{R_{i_\star}}$. Hence, upon recalling \eqref{eq:SC in G(Q,T)} and applying Lemma \ref{lem:entropic stability} we obtain the leftmost inequality in \eqref{eq:2nd uniform bound on I}.
%
%

\dteal{Next, we aim to derive the middle inequality in \eqref{eq:2nd uniform bound on I}. If ${\bm v}=0$ then the inequality}
holds trivially. \dteal{So assume} $\bm{v}\geq 1$. Using the inequality ${a\choose b}\leq (ea/b)^{b}$, we can bound as below
\[{n\choose {\bm v}}\cdot p^{-\varepsilon \bm{e}}\leq \left(\frac{en}{\bm{v}}\right)^{{\bm v}} p^{-\varepsilon \bm{e}} = p^{-\frac{\Delta}{2}\bm{v} -\varepsilon \bm{e}}\cdot \left(\frac{enp^{\Delta/2}}{\bm{v}}\right)^{\bm{v}}\leq p^{-\frac{\Delta}{2}\bm{v} -\varepsilon \bm{e}}\cdot \exp(np^{\Delta/2})\leq p^{-\frac{\Delta}{2}\bm{v}-2\varepsilon \bm{e}},\]
where in the penultimate step we use the inequality $\left(\frac{a}{b}\right)^{b}\leq e^{a/e}$ and the final step follows from the observation that $np^{\Delta/2}\gg 1$ and $\bm{e}\geq T_{1}\asymp n^{2}p^{\Delta}$.

Now, turning to prove the rightmost inequality in \eqref{eq:2nd uniform bound on I} \dteal{recalling the definition of $V_{i_\star}$ 
we find that} $\W_{R_{i_\star}}(G)\setminus V_{i_\star}\subseteq V(G_{\high})$. So, using \eqref{eq:G-low contains all excess copies}, we have for any graph $G\in \mathcal{I}_{i_{\star}, \bm{e}, \bm{e}_{\#}, \bm{v}, \bm{v}_{\#}}$, 
\begin{equation*}
    0\leq \bm{v}-\bm{v}_{\#} \leq |V(G_{\high})|\leq 2e(G_{\high})\leq 2\varepsilon^{2} \bm{e}. 
\end{equation*}
\dteal{Substituting the above we obtain the rightmost inequality in \eqref{eq:2nd uniform bound on I}.}
%
%
\dteal{Now, combining the bounds \eqref{eq:2nd uniform bound on I} and \eqref{eq:core-large-p-large-2}, and} using $\bm{e}\leq T_{\sc}\leq \cstar n^{2}p^{\Delta}$, we get
\begin{equation}
    \label{eq:general prob bound Delta-3}
    \P(\exists G\subseteq \Gd: G\in \mathcal{I}_{i_{\star}, \bm{e}, \bm{e}_{\#}, \bm{v}, \bm{v}_{\#}})\leq \exp\left(-\left(\bm{e}-\frac{\Delta}{2}\bm{v}_{\#}\right) \log\frac{1}{p}+\varepsilon (4\Delta\cstar) n^{2}p^{\Delta}\log\frac{1}{p}\right).
\end{equation}

\dteal{Next}, the goal is to simplify the RHS. For this, we need to find a lower bound on $(\bm{e}-\frac{\Delta}{2}\bm{v}_{\#})$. 
\dteal{We} need to distinguish according to whether $N(H, G_{i_{\star}})$ is small or large.

\subsubsection*{Case 1: $N(H, G_{i_{\star}})\leq \varepsilon^{2}\cdot \delta n^{v_{H}}p^{e_{H}}$} 
It follows from Lemma \ref{claim: i_star}(a),  
\begin{equation}
    \label{eq:bound on G-i+t-1}
    N(H,G_{i_\star + t-1})\leq N(H, G_{i_{\star}})+\varepsilon^2 \delta n^{v_{H}}p^{e_{H}}\leq 2\varepsilon^{2}\delta n^{v_{H}}p^{e_{H}}.
\end{equation} 
Moreover, Lemma \ref{claim: i_star}(b) implies that every copy of $H$ passing through an edge of $G_{i_{\star}}$ must either be contained in $G_{i_\star+t-1}$ or does not use any edge from $G_{\low}$.  As lower bound (SC3)  holds for every edge $e\in E(G_{i_{\star}})$, 
we deduce
\begin{equation*}
    \begin{aligned} 
        e(G_{i_\star})\cdot (\delta\varepsilon/\cstar) (np^{\Delta/2})^{v_{H}-2}\leq \sum_{e\in E(G_{i_\star})}N(H,G,e)\leq& e_{H} \cdot (N(H, G_{i_\star+t-1}+N(H, G)-N(H, G_{\low})))\\[-10pt]
        &\leq 3\varepsilon^{2}e_{H}\delta n^{v_{H}}p^{e_{H}},
    \end{aligned}
\end{equation*}
where the final step follows from \eqref{eq:G-low contains all excess copies} and \eqref{eq:bound on G-i+t-1}. Thus, for any $G\in \mathcal{I}_{\tup}$, we obtain
\begin{equation}
    \label{eq:vertex cardinal of G_i-star}
    \bm{v}_{\#}\leq 2e(G_{i_\star})\leq \varepsilon (6\cstar e_{H}) n^{2}p^{\Delta}.
\end{equation}
Therefore, plugging the bound \eqref{eq:vertex cardinal of G_i-star} in \eqref{eq:general prob bound Delta-3}, \dteal{by a union bound, and using $C_3 \lesssim 1$ and $\log T_{\sc} \ll n^{2}p^{\Delta}\log(1/p)$, and recalling the definitions of $T_1$ and $\Psi_H(\delta)$ from \eqref{eq:def of T_1} and \eqref{eq:rate function defn}, respectively, we derive that} 
    \begin{equation}
        \label{eq:entropically stable}
        \begin{aligned}
            &\P\left(\bigcup_{i_\star=1}^{C_3-t+1}\bigcup_{e=\lceil T_1\rceil}^{\lfloor T_{\sc}\rfloor} \bigcup_{\substack{\bm{e}_{\#}\leq \bm{e},\\ \bm{v}_{\#}\leq \bm{v}\leq 2\bm{e}}}\left\{\exists G\subseteq \Gd: G\in \mathcal{I}_{\tup},\ N(H, G_{i_\star}) \leq \varepsilon^{2}\delta n^{v_{H}}p^{e_{H}}\right\}\right)\\ 
             \leq \ & \exp\left(-(1-\mathfrak{f}^{(1)}_{H}(\varepsilon))\Psi_{H}(\delta)n^{2}p^{\Delta}\log\frac{1}{p}\right),\\
        \end{aligned}
    \end{equation}
for some positive function $\mathfrak{f}^{(1)}_{H}(\varepsilon)$ 
satisfying $\lim_{\varepsilon\downarrow 0}\mathfrak{f}^{(1)}_{H}(\varepsilon)=0$.

    
\subsubsection*{Case 2: $N(H, G_{i_{\star}})\geq \varepsilon^{2}\delta n^{v_H}p^{e_{H}}$}
\dteal{The proof in this case is more elaborate.} To get a lower bound on $(\bm{e}-\frac{\Delta}{2}\bm{v}_{\#})$, we split it into two parts: $(\bm{e}-\bm{e}_{\#})$ and $(\bm{e}_{\#}-\frac{\Delta}{2}\bm{v}_{\#})$, \dteal{and unlike the previous case both these quantities may be non negligible.}

\dteal{Let} $G\in \mathcal{I}_{\tup}$ with $N(H, G_{i\star})\geq \varepsilon^{2}\delta n^{v_{H}}p^{e_{H}}$ and set 
\begin{equation}\label{eq:def-wh-x}
\x\coloneqq \min\left\{\frac{N(H, G_{i_\star})}{\delta n^{v_{H}}p^{e_{H}}}, 1-5\varepsilon\right\}.
\end{equation}
\dteal{By Lemma \ref{claim: i_star}(c)  we have the lower bound $N(H, \overline{G}_{i_\star})\geq \delta(1-5\varepsilon-\x)n^{v_{H}}p^{e_{H}}$. Therefore, using the lower bound on $N(H, G_{i_\star})$, applying Lemma \ref{lem:minimum edge bound}, and noting that $\Psi_H(\gamma)=\gamma^{2/v_H}/2 $, as $\Delta \geq 3$, we deduce  
    \begin{equation}
        \label{eq:e-e-sharp}
        \bm{e}-\bm{e}_{\#}\geq e(\overline{G}_{i_\star}) \geq (1-5\varepsilon -\x)^{1/t}\cdot \frac{1}{2}\delta^{1/t}\cdot n^{2}p^{\Delta} \qquad \text{ and } \qquad \bm{e}_{\#}\geq \varepsilon^{2/t}\cdot \frac{1}{2}\delta^{1/t} n^{2}p^{\Delta}.
    \end{equation}
}		
Next, we prepare to apply Lemma \ref{lem:bipartite edge,vertex bound}. As demonstrated in \eqref{eq:bipartiteness of G_i-star}, there are no edges with both endpoints in $\W_{R_{i_\star}}(G)$ and since $V_{i_\star}\subseteq  \W_{R_{i_\star}}(G)$, we note that $G_{i_\star}$ is a bipartite graph with one part $V_{i_\star}$. We further observe that for any $v\in V_{i\star}\setminus V(G_\bad)$, the vertex $v$ must participate  in at least one  copy of $H$ contained in $G_{\low}$. Thus implying $\deg_{G_{i_\star}}(v)\geq \Delta$. Also  by Lemma \ref{lem:deg product bound strong core}(b) and \dteal{as} $e(G)\leq T_{sc}=\cstar n^{2}p^{\Delta}$,  we have
    \[|V(G_{\bad})|\leq \dteal{2}\varepsilon^{2} e(G)\leq \dteal{2}\varepsilon^{2} \cstar n^{2}p^{\Delta} \leq K_0 \cdot \varepsilon^{2(1-1/t)}\cdot  \bm{e}_{\#},\]
    for some constant $K_0$ (depending only on $\delta$ and $H$), where the last inequality follows from \dteal{using the lower bound on ${\bm e}_{\#}$ obtained in \eqref{eq:e-e-sharp}}. Therefore, $G_{i_\star}$ satisfies Assumption \ref{asmpt:bipartite} with $\varepsilon_2 = K_0 \cdot \varepsilon^{2(1-1/t)}$, $U_1=V_{i_\star}$ and $U_{1,1}= V_{i_\star}\setminus V(G_{\bad})$. Lemma \ref{lem:bipartite edge,vertex bound} now gives us  
    \begin{equation}
        \label{eq:e-sharp-v-sharp}
        \bm{e}_{\#} -\frac{\Delta}{2}\bm{v}_{\#}\geq (1-2\dteal{K_0}\Delta \varepsilon^{2(1-1/t)})\cdot \left(\frac{\Delta}{2}\right)^{1/t}\frac{1}{2}N(H, G_{i_\star})^{1/t} \geq (1-2\Delta K_0\varepsilon^{2(1-1/t)})\cdot \left(\frac{3\x}{2}\right)^{1/t}  \frac{1}{2}\delta^{1/t}n^{2}p^{\Delta}, 
    \end{equation}
    where in the last step we used $\Delta\geq 3$ and $N(H, G_{i_\star})\geq \x \delta n^{v_{H}}p^{e_H}$. 
\dteal{Upon combining 
\eqref{eq:e-e-sharp} and  \eqref{eq:e-sharp-v-sharp}, and recalling \eqref{eq:rate function defn} we obtain that, for $\varepsilon >0$ sufficiently small,
    \begin{equation}
        \label{eq:concave function}
        \bm{e}-\frac{\Delta}{2}\bm{v}_{\#}\geq F(\x)\cdot \frac{1}{2}\delta^{1/t}n^{2}p^{\Delta} \geq \left((1+(9/8)^{1/t}\varepsilon^{2/t}\right)\Psi_H(\delta),
    \end{equation}
    where 
    \begin{equation*}
        F(x)\coloneqq (1-5\varepsilon-x)^{1/t}+(1-2\Delta K_0 \varepsilon^{2(1-1/t)})\left(\frac{3x}{2}\right)^{1/t}, \quad x \in[0, 1-5\varepsilon],
    \end{equation*}
and in the last step in \eqref{eq:concave function} we used that $F(\cdot)$ is  concave on its domain and $\x \in [\varepsilon^2, 1-5\varepsilon]$ (recall \eqref{eq:def-wh-x}).}

    
\dteal{Substituting the bound \eqref{eq:concave function} in \eqref{eq:general prob bound Delta-3}, and using $t\geq \Delta \geq 3$ and taking $\varepsilon$ sufficiently small. 
we deduce}
    \begin{equation*}
        \begin{aligned}
            &\P(\exists G\subseteq\Gd:G\in \mathcal{I}_{\tup}, N(H, G_{i_\star})\geq\varepsilon^{2}\delta n^{v_{H}}p^{e_{H}})\\ &\leq \exp\left(-\left(1+\left(\frac{9\varepsilon^{2}}{8}\right)^{1/t}-\varepsilon \frac{8\Delta\cstar}{\delta^{1/t}}\right)\frac{\delta^{1/t}}{2}n^{2}p^{\Delta}\log\frac{1}{p}\right)\leq \exp\left(-\left(1+\left(\frac{17\varepsilon^{2}}{16}\right)^{\frac{1}{t}}\right)\frac{\delta^{1/t}}{2}n^{2}p^{\Delta}\log\frac{1}{p}\right),
        \end{aligned}
    \end{equation*}
    
%
\dteal{Therefore by a union bound, we obtain} 
    \begin{equation}
        \label{eq:entropically expensive}
        \begin{aligned}
            &\P\left(\bigcup_{i_\star=1}^{C_3-t+1}\bigcup_{e=\lceil T_1\rceil}^{\lfloor T_{\sc}\rfloor} \bigcup_{\substack{\bm{e}_{\#}\leq \bm{e},\\ \bm{v}_{\#}\leq \bm{v}\leq 2\bm{e}}}\{\exists G\subseteq \Gd: G\in \mathcal{I}_{\tup},\ N(H, G_{i_\star}) 	\geq \varepsilon^{2}\delta n^{v_{H}}p^{e_{H}}\}\right)\\
            \leq \ & \exp\left(-\left(1+\left(\frac{33\varepsilon^{2}}{32}\right)^{\frac{1}{t}}\right)\Psi_{H}(\delta)n^{2}p^{\Delta}\log\frac{1}{p}\right).
        \end{aligned}
    \end{equation}
    
    To complete the proof, observe that the bound obtained in \eqref{eq:entropically expensive} is exponentially smaller than the bound in \eqref{eq:entropically stable}. 
This completes the proof.
\end{proof}

\dteal{We end the section with the following result which will be used in Section \ref{sec:conditional-loc-regime} to prove Theorem \ref{thm:cond-loc}.}

\newcommand{\lrge}{\tu{large}}
\begin{proposition}\label{prop:loc-cond}
\dteal{Consider the same setup as in Theorem \ref{thm:loc-regime}.} 
\dteal{Then, there exists a large constant $C\in (0,\infty)$ depending only on $\delta$ and $H$ such that for any $\varepsilon>0$ sufficiently small and sufficiently all large $n$, we have  

\[\P\left(\left\{\exists G\subseteq \Gd: G \in \mathcal{G}_0\right\}\big|\, \UTH \right)\geq 1-\P(\UTH)^{\varepsilon},\]
where 
\begin{equation}\label{eq:mathcal-G-0}
\mathcal G_0:= \mathcal G_0(C,\varepsilon):= \left\{ G \subseteq \widehat{\Xd}: N(H,G)\geq \delta(1-C\varepsilon)n^{v_{H}}p^{e_{H}} \tu{ and }e(G)\leq (1+C\varepsilon)\Psi_{H}(\delta)n^{2}p^{\Delta}\right\}.
\end{equation}}	
\end{proposition}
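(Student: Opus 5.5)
We will show that the complement event, i.e.~$\UTH$ occurring without any $G\in\mathcal G_0$ being contained in $\Gd$, has probability at most $\P(\UTH)^{1+\varepsilon}$. The starting point is the strengthened reduction \eqref{eq:cond-struc-starting}. With $\overline{C}$ enlarged, the event $\UTH$ minus the event that $\Gd$ contains a core has probability at most $\P(\UTH)^2$. By Lemma \ref{lem:SC reduction step}, with $\cstar$ large, cores with $e(G)\ge \cstar n^2p^\Delta$ contribute at most $\exp(-2\Psi_H(\delta) n^2p^\Delta\log(1/p))\le \P(\UTH)^{3/2}$; here we use Theorem \ref{thm:loc-regime} (the lower bound, Proposition \ref{prop:Lower bound}) to compare with $\P(\UTH)$. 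Every remaining core contains a strong core. In the regime $np^{\Delta-1}\gg(\log n)^{v_H}$ we would instead work directly with cores using \eqref{eq:entropic stability of cores large p}; the argument below is identical with $T_0$ in place of $T_1$.

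It therefore suffices to prove that
\[
\P\big(\exists \text{ strong core } G\subseteq\Gd \text{ with } e(G)\ge (1+C\varepsilon)\Psi_H(\delta)n^2p^\Delta\big)\le \exp\big(-(1+2\varepsilon)\Psi_H(\delta)n^2p^\Delta\log(1/p)\big).
\]
A strong core with $e(G)\le(1+C\varepsilon)\Psi_H(\delta)n^2p^\Delta$ already lies in $\mathcal G_0$ by (SC1) once $C\ge4$, and Theorem \ref{thm:loc-regime} gives $\P(\UTH)=\exp(-(1+o(1))\Psi_H(\delta)n^2p^\Delta\log(1/p))$, so this bound finishes the proof. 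For $\Delta=2$ and for non-bipartite $\Delta\ge3$ the bound follows from entropic stability. By \eqref{eq:entropic stability of SC-cycles} and \eqref{eq:entropic stability of SC-cycles-1}, the number of strong cores with $\bm e$ edges is at most $p^{-5\Delta\varepsilon\bm e}$, and by \eqref{eq:core-large-p-large-2} each has probability at most $p^{(1-o(1))\bm e}$. Summing over $\bm e\ge(1+C\varepsilon)\Psi_H(\delta)n^2p^\Delta$ gives the exponent $(1-6\Delta\varepsilon)(1+C\varepsilon)\Psi_H(\delta)$, which exceeds $(1+2\varepsilon)\Psi_H(\delta)$ once $C\ge 10\Delta$ and $\varepsilon$ is small.

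The main obstacle is the bipartite case with $\Delta\ge3$, where entropic stability fails and the count carries the factor $p^{-\frac{\Delta}{2}\bm v_\#}$. Here we redo the decomposition \eqref{eq:SC tuple} and restrict to $\bm e\ge(1+C\varepsilon)\Psi_H(\delta)n^2p^\Delta$. In Case 1, \eqref{eq:vertex cardinal of G_i-star} gives $\bm v_\#\le 6\cstar e_H\varepsilon n^2p^\Delta$. By \eqref{eq:general prob bound Delta-3} the exponent is then at least $\big((1+C\varepsilon)\Psi_H(\delta)-\varepsilon(3\Delta\cstar e_H+4\Delta\cstar)\big)n^2p^\Delta\log(1/p)$, which suffices for $C$ large in terms of $\cstar,\Delta,e_H,\delta$. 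In Case 2 the edge count is not needed: \eqref{eq:concave function} already gives $\bm e-\frac\Delta2\bm v_\#\ge (1+(9/8)^{1/t}\varepsilon^{2/t})\Psi_H(\delta)n^2p^\Delta$. As shown in \eqref{eq:entropically expensive}, this yields the exponent $(1+c\varepsilon^{2/t})\Psi_H(\delta)$, and $c\varepsilon^{2/t}\gg 2\varepsilon$ for small $\varepsilon$ since $t\ge3$.

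The union bounds over $i_\star,\bm e,\bm e_\#,\bm v,\bm v_\#$ cost only polynomially many terms in $n^2p^\Delta$, which is negligible on the scale $n^2p^\Delta\log(1/p)$. Collecting all the error terms, each at most $\P(\UTH)^{1+\varepsilon}$, and dividing by $\P(\UTH)$ gives the claimed $1-\P(\UTH)^\varepsilon$. The one delicate point to check is that the $o(1)$ in Theorem \ref{thm:loc-regime} is absorbed by the fixed $\varepsilon$-margins, which holds for all large $n$.
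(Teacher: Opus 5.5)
Your proposal is correct and follows the paper's proof essentially step by step. You reduce via \eqref{eq:cond-struc-starting} and Lemma~\ref{lem:SC reduction step} to strong cores with at least $(1+C\varepsilon)\Psi_H(\delta)n^2p^\Delta$ edges (or to cores, via entropic stability, when $np^{\Delta-1}\gg(\log n)^{v_H}$), handle $\Delta=2$ and non-bipartite $H$ by entropic stability, and in the bipartite case with $\Delta\ge 3$ combine the Case~1 bound on $\bm v_\#$ with $\bm e\ge (1+C\varepsilon)\Psi_H(\delta)n^2p^\Delta$ and use the Case~2 gain $\varepsilon^{2/t}\gg\varepsilon$. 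Your explicit remark that a strong core with at most $(1+C\varepsilon)\Psi_H(\delta)n^2p^\Delta$ edges already lies in $\mathcal G_0$ makes precise a step the paper leaves implicit: pruning a large core may yield a small strong core, and that case is harmless.
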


\begin{proof}
\dteal{By \eqref{eq:cond-struc-starting}, it suffices to prove that 
\begin{equation}\label{eq:mathcal-E-cond-prob}
\P(\mathcal{E} \mid \UTH)\leq \P(\UTH)^{2 \varepsilon}, 
\end{equation}
where $T_\lrge \coloneqq (1+C\varepsilon)\Psi_{H}(\delta)n^{2}p^{\Delta}$ and 
\[\mathcal{E} \coloneqq \{\Gd \tu{ contains a core graph }G \tu{ with }e(G)\geq T_\lrge\}.
\]
As in the proof of Proposition \ref{prop:prob of existence of core} we need to cosider a few cases:

\subsubsection*{Case 1: $np^{\Delta-1}\gg (\log n)^{v_{H}}$} 
Applying \eqref{eq:entropic stability of cores large p},\eqref{eq:single core probability}, the lower bound \eqref{eq:lower bound}, and a union bound we have 
\[
\P(\mathcal{E}) \leq \sum_{\bm{e}\geq \lceil  T_{\lrge}\rceil} p^{(1-2\varepsilon)\bm{e}} \leq  
p^{(1-3\varepsilon)T_\lrge} \leq \P(\UTH)^{1+2\varepsilon},
\]
whenever $C$ is a sufficiently large absolute constant. This yields \eqref{eq:mathcal-E-cond-prob}. 
}
\dteal{
\subsubsection*{Case 2: $(\log n)^{1/(v_H-2)}\ll np^{\Delta/2}\leq n^{1/2}$} By Lemma \ref{lem:SC reduction step} and the observation that any core graph $G$ with $e(G)\leq \cstar n^{2}p^{\Delta}$, contains a strong core subgraph $G^{\prime}\subseteq G$, we notice that it suffices to show
\begin{equation}\label{eq:wh-cE-prob-bound}
\P(\widehat{\mathcal{E}})\leq \P(\UTH)^{1+2 \varepsilon}, 
\end{equation} 
where
\[
\widehat{\mathcal{E}} \coloneqq \{\Gd \tu{ contains a string core graph }G \tu{ with }e(G)\geq T_\lrge\},
\]
for some large constant $C$ depending only on $\delta$ and $H$ to be determined below. We consider two further subcases.
}
\dteal{
\subsubsection*{Case 2(a): $\Delta =2$ or $H$ non-bipartite} By \eqref{eq:entropic stability of SC-cycles} and \eqref{eq:entropic stability of SC-cycles-1}, and arguing as in \eqref{eq:SC cycles} we get 
\begin{equation*}
        \P(\widehat{\mathcal E}) \leq \sum_{\bm{e}\geq \lceil  T_{1}\rceil} p^{(1-6\Delta\varepsilon)\bm{e}} \leq p^{(1-7\Delta \varepsilon)T_{\lrge}} \leq \P(\UTH)^{1+2\varepsilon},
\end{equation*}
for any $C$ sufficiently large depending only on $\Delta$. 	
}

%

\dteal{
\subsubsection*{Case 2(b): $H$ bipartite and $\Delta \geq 3$} Observe that as $H$ is bipartite and $\Delta$-regular with $\Delta \geq 3$, we have $v_H \geq 2\Delta \geq 6$. Consequently, $\varepsilon^{4/v_H}\gg\varepsilon$ as $\varepsilon\downarrow0$. Hence, the RHS of \eqref{eq:entropically expensive} is at most $\P(\UTH)^{1+3\varepsilon/2}$ for $\varepsilon >0$ sufficiently small. Now, recalling \eqref{eq:SC tuple} and \eqref{eq:entropically expensive}, and applying \eqref{eq:general prob bound Delta-3}, \eqref{eq:vertex cardinal of G_i-star}, and a union bound we obtain 
%
%
%
%
\begin{equation}
\label{eq:conditional SC large part 3}
    \P(\widehat{\mathcal E}) \leq
    \exp\left(-(T_\lrge-\varepsilon(8\Delta\cstar e_{H})n^{2}p^{\Delta})\log\frac{1}{p}\right)+ \P(\UTH)^{1+3\varepsilon/2} \leq \P(\UTH)^{1+2\varepsilon},
\end{equation}
where the last step holds for $C$ sufficiently large depending only on $\Delta$ and $v_H$.}
\dteal{The proof is now complete.}
\end{proof}


\section{Conditional structure in the localized regime}\label{sec:conditional-loc-regime}
In this section we prove Theorem \ref{thm:cond-loc}. A key to the proof of Theorem \ref{thm:cond-loc} is the following proposition. 
\begin{proposition}\label{prop:loc-structure}
Let $G=(V,E)$ be a graph with maximum degree at most $d$, 
such that 
\begin{equation}\label{eq:cycle-and-edge-bd}
N(C_\ell, G) \geq (\delta -\tau_\star) d^\ell \qquad \text{ and } \qquad 2 e(G) \leq \left( \lfloor \delta \rfloor + \{\delta\}^{2/\ell} +\tau_\star\right) d^2,
\end{equation}
for some $\delta >0$, $\tau_\star \in (0,1)$, and $\ell \geq 3$ an integer. Then, there exist some positive constants $\underline{c}$ and $\overline{c}$, depending only on $\ell$ and $\delta$, and some absolute constant $c \in (0,1)$ such that for any $\tau_\star \in (\underline{c}/d, \overline{c})$, 
the following properties hold:
\begin{enumerate}
\item[(i)] Let $\ell \in 2 \N +1$. There exist pairwise disjoint subsets $\{U_i\}_{i\in \set{\lceil \delta \rceil}}$ of $V$ such that
\[
\min_{i \in \set{\lfloor \delta \rfloor}} |U_i| \geq (1-\tau_\star^c)d, \quad |U_{\lceil \delta \rceil}| \geq (1-\tau_\star^c) \{\delta\}^{1/\ell} d, \text{ and } \min_{i \in \set{\lceil \delta \rceil}} \min_{u \in U_i} \deg_{G[U_i]}(u)/|U_i| \geq (1-\tau_\star^c).
\]
\item[(ii)] Let $\ell \in 2 \N$. There exist a nonnegative integer $k' \leq \lfloor \delta \rfloor/2$,  and a collection of pairwise disjoint subsets $\{U_i\}_{i \in \set{\lceil \delta'\rceil}}$, $\{V_i\}_{i \in \set{k'}}$, and $\{W_i\}_{i \in \set{k'}}$ of $V$, where $\delta':=\delta -2k'$, such that 
\begin{multline*}
\min\left\{\min_{i \in \set{\lfloor \delta'\rfloor}} |U_i|, \min_{i \in \set{k'}} \min\left\{ |V_i|, |W_i|\right\} \right\}\geq (1-\tau_\star^{c})d, \quad |U_{\lceil \delta'\rceil}| \geq (1-\tau_\star^{ c}) \{\delta\}^{1/\ell} d, \quad \text{ and }\\
\min\left\{ \min_{i \in \set{\lceil \delta '\rceil}} \min_{u \in U_i} \deg_{G[U_i]}(u)/|U_i|, \min_{i \in \set{k'}} \min_{u \in V_i} \deg_{G[V_i,W_i]}(u)/|W_i|, \min_{i \in \set{k'}} \min_{u \in W_i} \deg_{G[V_i,W_i]}(u)/|V_i| \right\} \geq (1-\tau_\star^{ c}).
\end{multline*}
\end{enumerate}
\end{proposition}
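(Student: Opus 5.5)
Write $A=A(G)$ with eigenvalues $\widetilde\lambda_1,\dots,\widetilde\lambda_{|V|}$, all in $[-d,d]$, and set $\eta_i:=\widetilde\lambda_i/d$. The plan is spectral, in two steps. First, \eqref{eq:cycle-and-edge-bd} and ${\rm Hom}(C_\ell,G)=\tu{Trace}(A^\ell)$ give
\[
\sum_i \eta_i^\ell \geq \delta-\tau_\star \qquad\text{and}\qquad \sum_i\eta_i^2\leq \lfloor\delta\rfloor+\{\delta\}^{2/\ell}+\tau_\star .
\]
We then use a stability version of the optimization behind Lemma \ref{lem:minimum edge bound} (i.e.\ \cite[Lemma~2.2]{BD}). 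The argument is that $x\mapsto x^{2/\ell}$ is strictly concave, and $|\eta_i|^\ell\leq \eta_i^2$ with equality only at $|\eta_i|\in\{0,1\}$. Hence any near-optimizer must have exactly $\lfloor\delta\rfloor$ values with $|\eta_i|\geq 1-\tau_\star^{c_1}$, one value with $|\eta_i|$ within $\tau_\star^{c_1}$ of $\{\delta\}^{1/\ell}$, and $\ell^2$-mass at most $\tau_\star^{c_1}$ on the remaining ones. For $\ell$ odd, the negative eigenvalues only decrease $\sum\eta_i^\ell$, so all the large eigenvalues must be positive. For $\ell$ even, we let $k''$ be the number of eigenvalues near $-1$; this will later give $k'$.

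Second, these eigenvalue estimates are converted into combinatorial structure through the ghost-vertex trick described in Section \ref{sec:proof-outline}. Adjoin a vertex $v_\star$ joined to each $v\in V$ with weight $d-\deg_G(v)$; this gives a weighted graph $\widetilde G$ in which every $v\in V$ has weighted degree exactly $d$. On the coordinates indexed by $V$, the normalized Laplacian is $\mathcal L(\widetilde G)=I-A/d$. By Cauchy interlacing, or by testing with the eigenvectors of $A$ extended by $0$ at $v_\star$ and correcting for the single extra coordinate, $\mathcal L(\widetilde G)$ has $m:=\lceil\delta\rceil$ eigenvalues (counting the trivial one) that are $O(\tau_\star^{c_1})$, up to a perturbation accounting for $v_\star$. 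The higher-order Cheeger inequality \cite{LOGT2014} then yields $m$ disjoint sets $S_i\subseteq V$ with conductance $\phi(S_i)\leq C\tau_\star^{c_2}$; we shall take care that $S_i\not\ni v_\star$ by using the disjoint-support version. For $\ell$ even, the higher-order dual Cheeger inequality \cite{Liu2015} applies to eigenvalues near $2$ and yields $k''$ disjoint pairs $(V_i,W_i)$ with small bipartiteness ratio. We expect the number of bicliques to come out even (each $K_{d,d}$ accounts for $\pm d$), so that $k'=k''/2$ and the remaining eigenvalues near $+d$ give cliques; this needs care.

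Regularity of $\widetilde G$ is what turns small conductance into size bounds. If $\phi(S)$ is small, then $e_G(S)\geq (1-O(\tau))d|S|/2$, which forces $|S|\geq(1-O(\tau))d$ since $e_G(S)\leq|S|^2/2$. The $\ell^2$ budget $2e(G)\leq(\lfloor\delta\rfloor+\ldots)d^2$ bounds the sizes from above, giving $|S_i|\leq(1+O(\tau^{c}))d$ for the full clusters; combined with near-maximal internal edge counts, $G[S_i]$ is then nearly complete. Pruning the vertices of low internal degree, which is only an $O(\tau^{c})$ fraction by Markov's inequality, gives the required minimum-degree condition. The same argument applied to $(V_i,W_i)$ gives the near-bicliques. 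The fractional cluster $U_{\lceil\delta\rceil}$ should be obtained by first removing the $\lfloor\delta\rfloor$ full clusters, which removes their eigenvalues up to small error, and then analysing the residual graph, which has a single dominant eigenvalue $\approx\{\delta\}^{1/\ell}d$ and small remaining $\ell^2$-mass.

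The main obstacle is this last fractional step, together with excluding configurations where the near-$d$ and near-$\{\delta\}^{1/\ell}d$ eigenvalues mix inside one near-regular piece. Following the outline, we would show that in a near-regular piece with two separated large eigenvalues, the top eigenvector is close to constant and the second eigenvector takes essentially two values. This splits the piece into two sets, and the edge budget contradicts the separation unless the piece is as described. From the residual top eigenvector $x$, with $x^\top Ax\approx\{\delta\}^{1/\ell}d\|x\|^2$ and $\sum_v x_v^2\deg(v)\leq d\|x\|^2$, we would extract $U_{\lceil\delta\rceil}$ as a level set of $x$ of size about $\{\delta\}^{1/\ell}d$ with internal density near $1$.
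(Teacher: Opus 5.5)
Your architecture matches the paper's. It uses spectral rigidity from $\mathrm{tr}(A^\ell)$ and $\mathrm{tr}(A^2)$ (Lemma \ref{lem:loc-structure-spectral}), the ghost vertex feeding higher-order Cheeger and dual Cheeger (Propositions \ref{prop:cheeger-applied}, \ref{prop:dual-cheeger-applied}), exclusion of two separated large eigenvalues inside one near-regular piece (Lemma \ref{lem:neg-theta}), and a separate treatment of the fractional piece.

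The even case, however, is wrong as written, not merely delicate. A near-$K_{d,d}$ carries exactly one eigenvalue near $-d$ and one near $+d$. So the $k''$ pairs produced by the dual Cheeger inequality are $k''$ bicliques, and the correct identification is $k'=k''$. The number of full cliques is then $\lfloor\delta\rfloor-2k''$, not $\lfloor\delta\rfloor-k''$. For $G=K_{d,d}$ and $\delta=2$, your $k''/2=1/2$ is not even an integer; the answer is one biclique and no clique.

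What is missing is the spectral bookkeeping that justifies $k'=k''$:
(a) There are at least as many eigenvalues near $+d$ as near $-d$, so that $\lfloor\delta\rfloor-2k''\ge 0$. The paper gets this from $\mathrm{tr}(A^3)\ge 0$.
(b) Prune each pair to $\widehat G_i$ and compare $G$, via Hoffman--Wielandt, with the disjoint union $G'$ of the $\widehat G_i$ and $G[T]$. Then each $\widehat G_i$ has exactly one eigenvalue near $-d$, a Perron eigenvalue near $+d$, and no other eigenvalue of size $\gg\tau_\star^{c}d$. The paper uses the test vector $(\mathbf 1_{\widehat T_{2i-1}},-\mathbf 1_{\widehat T_{2i}})$ in Courant--Fischer, Perron--Frobenius, and triangle-freeness $\sum_j\lambda_j^3(\widehat G_i)=0$.

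Only after (a) and (b) does $G[T]$ inherit exactly $\lfloor\delta'\rfloor$ eigenvalues near $d$, one of modulus $\approx\{\delta\}^{1/\ell}d$, and small residual mass, so that part (i) can be rerun on it. The order matters. A near-$K_{d,d}$ is also a low-conductance set for the ordinary Cheeger inequality, so without first removing the bicliques your clique sets and biclique pairs would overlap.

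There are three smaller points.

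First, the Cheeger step can only produce $\lfloor\delta\rfloor$ sets inside $V$, not $\lceil\delta\rceil$. The trivial eigenvector of $\mathcal L(\widetilde G)$ does not vanish at $v_\star$, so it cannot enter the disjoint-support version. The fractional eigenvalue gives a Laplacian eigenvalue $\approx 1-\{\delta\}^{1/\ell}$, which is not small; your third paragraph in fact uses $\lfloor\delta\rfloor$. Relatedly, for $\delta\notin\N$ the edge budget alone only gives $|S_i|\le(1+\{\delta\}^{2/\ell}+o(1))d$. The bound $(1+O(\tau_\star^{c}))d$ needs the mixing exclusion (Lemma \ref{lem:neg-theta}) first, to show that the $\approx\{\delta\}^{2/\ell}d^2$ excess edges lie in $G[V\setminus\bigcup_i S_i]$.

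Second, your level-set extraction of $U_{\lceil\delta\rceil}$ cannot work from $x^{\top}Ax\approx\{\delta\}^{1/\ell}d\|x\|^2$ and the degree bound alone: a sparse $\{\delta\}^{1/\ell}d$-regular expander has the same data. It must use the residual mass, e.g. $\|A-\lambda_1xx^{\top}\|_{\mathrm{HS}}^2\le\tau d^2$, which forces $\lambda_1x_ux_v$ to be close to $0$ or $1$ on most pairs. Done this way it is a legitimate alternative to the paper's Lemma \ref{lem:only-theta}. That lemma instead deduces $N(C_4,G)\ge(1-O(\tau))(2e(G))^2$ and invokes \cite[Lemma A.2]{BK25}.

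Third, the stability claim in your first step is true but does not follow directly from strict concavity. The number of eigenvalues is unbounded, and you need a power of $\tau_\star$. The delicate part is showing that the top $\lfloor\delta\rfloor$ eigenvalues all approach $d$. The paper does this by controlling their sum $x_\star$ through Lemma \ref{lem:optimization} and the majorization Lemma \ref{lem:L}.
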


\dteal{Deferring the proof Proposition \ref{prop:loc-structure} for the moment let us provide the proof of Theorem \ref{thm:cond-loc}.}

\begin{proof}[Proof of Theorem \ref{thm:cond-loc}]
\dteal{First consider the case $\Delta \geq 3$. Recall \eqref{eq:rate function defn} and \eqref{eq:mathcal-G-0}. Observe that for any $G \in \mathcal G_0$ we have $N(H,G) \geq (1- 4 C v_H\varepsilon)(2 e(G))^{v_H/2}$, for $\varepsilon >0$ sufficiently small. Hence, applying \cite[Lemma A.2]{BK25} and Lemma \ref{lem:minimum edge bound} we deduce that for any such graph $G$ there exists a subgraph $G' \subseteq G$ such that $d_{\min}(G') \geq (1-8 C v_H \varepsilon^{1/2})\delta^{1/v_H} np^{\Delta/2}$, where $d_{\min}(\cdot)$ denotes the minimum degree in a graph. As $|V(G')| \geq d_{\min}(G')$ and $d_{\min}(G') \cdot |V(G')| \leq 2 e(G') \leq 2 e(G)$ and $G \in \mathcal G_0$, the required probability estimate follows from Proposition \ref{prop:loc-cond}. 

Next consider $\Delta =2$. Notice that any $G \in \mathcal G_0$ satisfies the assumption \eqref{eq:cycle-and-edge-bd} with $\tau_\star = \lceil \delta \rceil C \varepsilon$. Under our assumption $d \gg 1$. Therefore, by Propositions \ref{prop:loc-cond} and \ref{prop:loc-structure} the required probability lower bound is immediate, completing the proof.} 
%
\end{proof}

\dteal{We now return to the proof of Proposition \ref{prop:loc-structure}. It} will be done in a few stages. In the first step, we show that \eqref{eq:cycle-and-edge-bd} implies certain rigidity in the spectrum of the adjacency matrices associated with such graphs. In the second step we will show that if a graph, whose maximum degree is bounded by $d$, has eigenvalues close to $d$ then it must contain components that are nearly regular and {\em loosely connected}, \dteal{and have certain special structures,} 
and furthermore the number of such components is same as the algebraic multiplicity of eigenvalues that are near $d$ in the original graph. In the third step we show that a graph that is nearly regular and can have only two {\em macroscopically} large eigenvalues \dteal{only if they are close to each other in absolute values.} 

Combining these three pieces we will then complete the proof of Proposition \ref{prop:loc-structure}. 


\subsection{Rigidity in spectrum} For a graph $G$ we write $\{\widetilde \lambda_i(G)\}$ and $\{\lambda_i(G)\}$ to denote the eigenvalues of its adjacency matrix $A(G)$ arranged in a nonincreasing order, and in a nonincreasing order of their absolute values, respectively. The next lemma shows that any graph $G$ with maximum degree at most $d$ that satisfy \eqref{eq:cycle-and-edge-bd}, must have some near extremal structure in its spectrum.

\begin{lemma}\label{lem:loc-structure-spectral}
Let $G$ be as in Proposition \ref{prop:loc-structure}. Then, there exists an absolute constant $c_\star \in (0,1)$ such that for $\tau_\star$ sufficiently small, depending only on $\delta$ and $\ell$, the following properties hold:
\begin{enumerate}
\item[(i)] Assume $\ell \in 2\N+1$. Then 
\begin{equation}\label{eq:lambda-near-1-1}
d \geq \widetilde \lambda_1(G) \geq \widetilde\lambda_2(G) \geq \cdots \geq \widetilde\lambda_{\lfloor \delta \rfloor}(G) \geq (1 -  \tau_\star^{c_\star})d 
\end{equation}
and
\begin{equation}\label{eq:residue-eval}  
\sum_{i \geq \lceil \delta \rceil +1} \lambda_i^2(G) \leq  \tau_\star^{c_\star} d^2. 
\end{equation}
Additionally, for $\delta \notin \N$, 
\begin{equation}\label{eq:lambda-fraction-1-1}
\widetilde\lambda_{\lceil \delta \rceil}(G)/d \in [\{\delta\}^{1/\ell} - \tau_\star^{c_\star}, \{\delta\}^{1/\ell} + \tau_\star^{c_\star}].
\end{equation}
\item[(ii)] Assume $\ell \in 2\N$. Then
\begin{equation}\label{eq:lambda-near-1-2}
d \geq  |\lambda_1(G)| \geq |\lambda_2(G)| \geq \cdots \geq |\lambda_{\lfloor \delta \rfloor}(G)| \geq (1 -  \tau_\star^{c_\star})d 
\end{equation}
and \eqref{eq:residue-eval} holds. 
Additionally, for $\delta \notin \N$, 
\begin{equation}\label{eq:lambda-fraction-1-2}
|\lambda_{\lceil \delta \rceil}(G)|/d \in [\{\delta\}^{1/\ell} - \tau_\star^{c_\star}, \{\delta\}^{1/\ell} + \tau_\star^{c_\star}].
\end{equation}
\end{enumerate}
\end{lemma}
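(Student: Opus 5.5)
Normalize $\eta_i:=\lambda_i(G)/d\in[-1,1]$ (Perron--Frobenius with row sums at most $d$, as in the proof of Lemma \ref{lem:minimum edge bound}). The hypotheses \eqref{eq:cycle-and-edge-bd} together with $N(C_\ell,G)\le \tu{Hom}(C_\ell,G)=\tu{Trace}(A^\ell)$ and $2e(G)=\tu{Trace}(A^2)$ give
\[
\sum_i \eta_i^\ell\ \ge\ \delta-\tau_\star,\qquad \sum_i \eta_i^2\ \le\ \lfloor\delta\rfloor+\{\delta\}^{2/\ell}+\tau_\star .
\]
For $\ell$ even I replace $\eta_i$ by $|\eta_i|$. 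For $\ell$ odd I pass to $x_i:=\max(\eta_i,0)$, which only helps: $\sum x_i^\ell\ge\sum\eta_i^\ell$, and $\sum x_i^2\le\sum\eta_i^2$. This reduces everything to a stability statement for the variational problem of \cite[Lemma~2.2]{BD}. Given $x_i\in[0,1]$ (equivalently $|\eta_i|$) with $\sum x_i^\ell\ge\delta-\tau$ and $\sum x_i^2\le \lfloor\delta\rfloor+\{\delta\}^{2/\ell}+\tau$, we must show that the ordered vector is close to the optimizer $(1,\dots,1,\{\delta\}^{1/\ell},0,\dots)$, with $\lfloor\delta\rfloor$ ones. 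Closeness here means that the first $\lfloor\delta\rfloor$ entries exceed $1-\tau^{c}$, the next entry lies within $\tau^c$ of $\{\delta\}^{1/\ell}$, and the remaining squared mass is at most $\tau^c$. For $\ell$ odd, the positive parts are the large top eigenvalues $\widetilde\lambda_i$, which gives \eqref{eq:lambda-near-1-1} and \eqref{eq:lambda-fraction-1-1}. For \eqref{eq:residue-eval}, the negative eigenvalues also contribute to $\sum\eta_i^2$, and this is controlled because the positive part already consumes almost all of the $\ell^2$ budget.

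The stability argument has three steps. First, write $y_i=x_i^2\in[0,1]$ and $\phi(y)=y^{\ell/2}$, which is convex with $\phi(y)\le y$. Then $\sum\phi(y_i)\le \sum y_i$, and the deficit $\sum (y_i-y_i^{\ell/2})$ must be at most $O(\tau)$ plus whatever is forced by the fractional part. Second, set $m:=\lfloor\delta\rfloor$ and compare against the top $m$ coordinates. If $y_j\le 1-s$ for some $j\le m$, I show this costs order $s$ in the comparison. The mass $y_j$ produces $\ell$-th power at a ratio $y_j^{\ell/2-1}\le(1-s)^{\ell/2-1}$ per unit, and the missing $\ell$-power $(1-y_j^{\ell/2})\gtrsim s$ must be recovered by other coordinates, each of which has ratio at most $1$ and strictly less unless it equals $1$. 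Third, after the top $m$ entries are nearly $1$, the residual satisfies $\sum_{i>m}y_i\le\{\delta\}^{2/\ell}+O(\tau^c)$ and $\sum_{i>m}y_i^{\ell/2}\ge\{\delta\}-O(\tau^c)$. With $y_i\le1$, strict convexity of $\phi$ (the inequality $\sum y_i^{\ell/2}\le (\max y_i)^{\ell/2-1}\sum y_i$, together with $\sum y_i^{\ell/2}\le(\sum y_i)^{\ell/2}$ with an equality defect) forces a single coordinate $y_{m+1}\approx\{\delta\}^{2/\ell}$ and a residual squared mass of at most $\tau^c$. Concretely, $\sum_{i>m+1}y_i^{\ell/2}\le (\sum_{i>m+1}y_i)^{\ell/2}$, and then the one-variable function $a^{\ell/2}+(S-a)^{\ell/2}$ on $[0,S]$ is strictly maximized at an endpoint, which yields power-type stability. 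When $\delta\in\N$, the same step shows that the residual mass is small.

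The main obstacle is making these stability estimates quantitative with a polynomial rate $\tau^{c_\star}$ and with $c_\star$ independent of $\delta$ and $\ell$, in particular near the degenerate case where $\{\delta\}$ is tiny, where the two competing configurations nearly tie. I would first try to handle this by letting $\overline{c}$ (the upper bound on $\tau_\star$) depend on $\delta$ and $\ell$. A coordinate below $\tau^{1/4}$ contributes $\ell$-power at most $\tau^{(\ell-2)/4}$ times its square, so such coordinates can be lumped into the residual. Choosing exponents such as $c_\star=1/8$ then absorbs all constants, because each step loses only a square root.
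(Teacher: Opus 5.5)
Your reduction (trace identities, normalization by $d$, using $|\lambda_i|$ for even $\ell$ and positive parts for odd $\ell$, and deducing \eqref{eq:residue-eval} from the $\ell^2$ budget) is the paper's. Your Step~3 is essentially the paper's proof of \eqref{eq:eta-fraction}: once the top $m:=\lfloor\delta\rfloor$ coordinates are near $1$, the bound $\sum_{i>m}y_i^{\ell/2}\le y_{m+1}^{\ell/2-1}\sum_{i>m}y_i$ pins $y_{m+1}$ near $\{\delta\}^{2/\ell}$ and forces the remaining mass to be small.

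The gap is Step~2, which is the real content of the lemma. Your justification fails for $\delta\notin\N$. Average-ratio counting shows only that lowering $y_m$ to $1-s$ adds a deficit $y_m-y_m^{\ell/2}\gtrsim s$. But the admissible total deficit is $\{\delta\}^{2/\ell}-\{\delta\}+2\tau_\star$, which is of order one, so nothing is contradicted. Nor is the missing $\ell$-power bought back at ratio at most $1$. It is absorbed by enlarging the fractional coordinate, whose \emph{marginal} efficiency at $y=\{\delta\}^{2/\ell}$ is $\frac{\ell}{2}\{\delta\}^{1-2/\ell}$, and this exceeds $1$ unless $\{\delta\}$ is small. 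For example, take $\ell=3$ and $\{\delta\}=0.9$. Replacing $(y_m,y_{m+1})=(1,0.9^{2/3})$ by $(1-s,\,(1.9-(1-s)^{3/2})^{2/3})$ keeps $\sum_i y_i^{3/2}$ fixed and increases $\sum_i y_i$ by only about $0.036\,s$. Your reasoning would give an increase of at least $(\ell/2-1)s=s/2$. The correct first-order margin is $s(\{\delta\}^{-(1-2/\ell)}-1)$, which compares marginal efficiencies at $1$ and at $\{\delta\}^{2/\ell}$. It degenerates as $\{\delta\}\uparrow 1$, not as $\{\delta\}\downarrow 0$, and for $s$ not small a global comparison is needed anyway.

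The paper makes this global comparison as follows. It sets $\eta_i=(\lambda_i/d)^\ell$ and $x_\star=\sum_{i\le m}\eta_i$, caps every tail coordinate by $x_\star/m$, and applies Lemma~\ref{lem:optimization} with this cap to reduce to the one-variable function $L_{m,\theta}$ of \eqref{eq:define-L}. It then uses the majorization Lemma~\ref{lem:L} on $(0,x^\star_\theta]$ and concavity of $\widetilde L$ on $[x^\star_\theta,m-\tau_\star^{1/2}]$. Together these show that $x_\star\le m-\tau_\star^{1/2}$ would force $\sum_i\eta_i^{2/\ell}\ge m+\{\delta\}^{2/\ell}+\tau_\star^{2/3}$.

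A shorter repair fits inside your framework. With $\eta_i\in[0,1]$ sorted, move mass $a:=\min\{1-\eta_m,\sum_{i>m}\eta_i\}$ from the tail, whose entries are all at most $\eta_m$, onto coordinate $m$. Since $t\mapsto t^{2/\ell}$ has second derivative at most $-\frac{2}{\ell}(1-\frac{2}{\ell})$ on $(0,1]$, this keeps $\sum_i\eta_i$ and lowers $\sum_i\eta_i^{2/\ell}$ by at least $\frac{\ell-2}{\ell^2}a^2$. Applying \cite[Lemma~2.2]{BD} to the new vector gives $\frac{\ell-2}{\ell^2}a^2\le\tau_\star+\{\delta\}^{2/\ell}-(\{\delta\}-\tau_\star)^{2/\ell}\lesssim_\delta\tau_\star$, and $\le 2\tau_\star$ when $\delta\in\N$. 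Since $\sum_{i>m}\eta_i\ge 1-\eta_m+\{\delta\}-\tau_\star$, we have $a=1-\eta_m$ if $\{\delta\}\ge\tau_\star$, and $a\ge 1-\eta_m-\tau_\star$ if $\delta\in\N$. Hence $1-\eta_m\lesssim_{\delta,\ell}\tau_\star^{1/2}$, which is exactly what Step~2 needs.
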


The proof of Lemma \ref{lem:loc-structure-spectral} relies on the following two results.  

\begin{lemma}\label{lem:optimization}
Fix  and integer $\ell \geq 3$, $c_0 \in (0,1]$, and $c_1 > 0$. Then, for any integer $s \geq \lceil c_1/c_0\rceil$,
\begin{equation}\label{eq:optimization0}
\inf_{(x_1,x_2,\ldots, x_s) \in \R^s} \left\{ \sum_{i\in \set{s}} x_i^{2/\ell} \;:\; \sum_{i \in \set{s}} x_i \geq c_1 \ \text{ and } \ x_i \in \left[(-1)^{{\bf 1}_{\{\ell \in 2\N+1\}}}, c_0\right] \right\}
= \left\lfloor \frac{c_1}{c_0} \right\rfloor c_0^{2/\ell}
+  \left\{ \frac{c_1}{c_0} \right\}^{2/\ell} c_0^{2/\ell}.  
\end{equation}
\end{lemma}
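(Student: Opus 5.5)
The plan is to reduce \eqref{eq:optimization0} to minimizing a concave function over a box intersected with a hyperplane, and then read off the minimizer from an extreme-point (or exchange) argument. We interpret $x^{2/\ell}$ as $|x|^{2/\ell}$, the real odd root squared, when $\ell$ is odd and $x<0$. We read the admissible lower endpoint $(-1)^{{\bf 1}_{\{\ell \in 2\N+1\}}}$ as an endpoint $L\le 0$, namely $L=-1$ for odd $\ell$ and $L=0$ for even $\ell$. If it is instead meant literally as $1$ for even $\ell$, the argument below needs separate care.

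\emph{Step 1: reduction to nonnegative coordinates with equality.} Fix a feasible $(x_i)$. Replacing every negative coordinate by $0$ does not decrease $\sum_i x_i$, so the constraint $\sum_i x_i\ge c_1$ persists. It also does not increase $\sum_i |x_i|^{2/\ell}$. Hence we may assume $x_i\in[0,c_0]$. Next, if $\sum_i x_i>c_1$, we lower some positive coordinates continuously until $\sum_i x_i=c_1$. Since $x\mapsto x^{2/\ell}$ is nondecreasing on $[0,\infty)$, the objective does not increase. So the infimum equals that of $\Phi(x):=\sum_i x_i^{2/\ell}$ over the compact polytope
\[
P:=\Big\{x\in[0,c_0]^s:\ \textstyle\sum_i x_i=c_1\Big\}.
\]
This set is nonempty exactly because $s\ge\lceil c_1/c_0\rceil$, so $s c_0\ge c_1$. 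By compactness and continuity the infimum is attained.

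\emph{Step 2: structure of a minimizer.} Since $2/\ell<1$, the map $g(x)=x^{2/\ell}$ is strictly concave on $[0,\infty)$. Suppose a minimizer has two coordinates $0<x_i\le x_j<c_0$. Set $\eta:=\min\{x_i,\,c_0-x_j\}>0$ and replace $(x_i,x_j)$ by $(x_i-\eta,x_j+\eta)$. The point stays in $P$. By strict concavity, $g(x_i-\eta)+g(x_j+\eta)<g(x_i)+g(x_j)$, because for a concave function spreading two arguments apart with fixed sum decreases the sum of values. This contradicts minimality. Therefore at a minimizer all coordinates but at most one lie in $\{0,c_0\}$. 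Equivalently, one may invoke the fact that a concave function attains its minimum over a polytope at a vertex, and vertices of $P$ have at most one coordinate outside $\{0,c_0\}$.

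\emph{Step 3: evaluation.} Write the minimizer as $k$ coordinates equal to $c_0$, one coordinate $r\in[0,c_0)$, and the rest $0$. The constraint $kc_0+r=c_1$ with $0\le r<c_0$ forces
\[
k=\lfloor c_1/c_0\rfloor \quad\text{and}\quad r=\{c_1/c_0\}\,c_0 .
\]
This uses at most $\lceil c_1/c_0\rceil\le s$ coordinates, so the configuration is admissible. Its value is $\lfloor c_1/c_0\rfloor c_0^{2/\ell}+\{c_1/c_0\}^{2/\ell}c_0^{2/\ell}$, which is the claimed identity.

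The only delicate point is the exchange in Step 2, specifically checking strict decrease and that the moved point stays in $P$. Both follow directly from strict concavity of $g$ and the choice of $\eta$. Everything else is bookkeeping.
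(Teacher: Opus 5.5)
Your proposal is correct and follows essentially the same route as the paper: truncate negative coordinates to reduce to $[0,c_0]^s$, then use strict concavity of $x\mapsto x^{2/\ell}$ in pairs of coordinates to force at most one non-extremal coordinate. You also fill in the steps the paper leaves implicit, namely the reduction to the equality constraint, existence of a minimizer by compactness, and the explicit exchange. Your reading of the lower endpoint as $0$ for even $\ell$ matches how the paper's proof and its later applications treat it.
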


\begin{lemma}\label{lem:L}
Fix $k \in \N$,  and $h \in (0,1)$. For $\theta \in (-1,\infty)$, set $x^\star_\theta:=k(k+\theta)/(k+1)$ and define 
\begin{equation}\label{eq:define-L}
L_{k,\theta}(x):= \lfloor x \rfloor + \{x\}^h + \left(\frac{x}{k}\right)^h \cdot \left(\left\lfloor \frac{(k+\theta -x) k}{x}\right\rfloor + \left\{ \frac{(k+\theta -x) k}{x}\right\}^h \right), \quad \mbox{ for } x \in (0,\min\{k, k+\theta\}].  
\end{equation}
Then, for any $\theta \in (-k^{-1},1)$, $L_{k,\theta}(x) \geq L_{k,\theta}(x^\star_\theta)$ for all $x \in (0, x^\star_\theta]$. 
\end{lemma}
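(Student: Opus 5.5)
Write $g(z):=\lfloor z\rfloor+\{z\}^h$ and $y(x):=(k+\theta-x)k/x$, so that $L_{k,\theta}(x)=g(x)+(x/k)^h g(y(x))$.

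\textbf{Step 1: evaluate the right-hand side.} A direct computation gives $y(x^\star_\theta)=1$. Moreover, the condition $\theta\in(-k^{-1},1)$ is exactly what places $x^\star_\theta$ in $(k-1,k)$. Hence
\[
L_{k,\theta}(x^\star_\theta)=k-1+\Big(\frac{k\theta+1}{k+1}\Big)^h+\Big(\frac{k+\theta}{k+1}\Big)^h .
\]

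\textbf{Step 2: reduce to one variable.} The map $x\mapsto y(x)$ is strictly decreasing, so $y(x)\ge 1$ on $(0,x^\star_\theta]$. Since $\{y\}^h\ge\{y\}$, we have $g(y)\ge y$, with equality at $y=1$. Therefore
\[
L_{k,\theta}(x)\ \ge\ \varphi(x):=g(x)+k^{1-h}(k+\theta-x)\,x^{h-1},\qquad \varphi(x^\star_\theta)=L_{k,\theta}(x^\star_\theta),
\]
and it suffices to show $\varphi(x)\ge\varphi(x^\star_\theta)$ on $(0,x^\star_\theta]$. This lower bound is tight at the target point, so nothing is lost. If it later proves too lossy for small $x$ (where $y$ is large), I would instead use the exact value $g(y)=\lfloor y\rfloor+\{y\}^h$ and rerun the argument below on the finitely many intervals where $\lfloor y\rfloor$ is constant.

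\textbf{Step 3: analyse $\varphi$ piecewise.} The second term $\psi(x):=k^{1-h}(k+\theta-x)x^{h-1}$ is strictly decreasing and convex on $(0,k+\theta)$. On each interval $[j,j+1)$ the first term $j+(x-j)^h$ is increasing and concave.
\begin{itemize}
\item On $(0,1)$: $\varphi(x)=x^h+\psi(x)$, and $\psi(x)\ge k^{1-h}(k-1+\theta)x^{h-1}$ blows up as $x\downarrow0$. I would show $\varphi$ is decreasing there by checking that $hx^{h-1}+\psi'(x)<0$, using $k+\theta-x>k-1$.
\item At an integer $j$: $g$ jumps from $j^{-}$-value $j$ (the limit of $j-1+(x-j+1)^h$) to $j$, so $g$ is continuous and piecewise concave. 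On each piece $\varphi$ is concave plus convex.
\item Dividing into the subinterval where $\varphi'<0$ and the one where $\varphi'>0$: because $(x-j)^h$ has infinite derivative at $j^+$, $\varphi$ first increases and may then decrease. Hence on each piece $[j,j+1)\cap(0,x^\star_\theta]$ the minimum is attained at an endpoint, provided I can show $\varphi'$ changes sign at most once there. I expect to get this from $\varphi''=-h(1-h)(x-j)^{h-2}+\psi''$ and the fact that $(x-j)^{h-2}$ dominates near $j$, giving at most one sign change.
\end{itemize}
This reduces the problem to comparing $\varphi(j)=j+k^{1-h}(k+\theta-j)j^{h-1}$ for $j=1,\dots,k-1$ with $\varphi(x^\star_\theta)$.

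\textbf{Step 4: compare the endpoint values (main obstacle).} For the integer points $j\le k-2$, I would use $j^{h-1}\ge (k-1)^{h-1}$, show that $j\mapsto\varphi(j)$ is nonincreasing in $j$ up to $k-1$, and then compare $\varphi(k-1)$ with $\varphi(x^\star_\theta)$ on the last interval $[k-1,x^\star_\theta]$. On that last interval, set $x=k-1+s$ with $s\in[0,(k\theta+1)/(k+1)]$. The claim becomes that
\[
s^h+k^{1-h}(1+\theta-s)(k-1+s)^{h-1}
\]
is minimised at the right endpoint. I expect the difficulty to be that this function is not monotone: it first increases then decreases, so it is enough to check $\varphi(k-1)\ge\varphi(x^\star_\theta)$. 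That inequality reads
\[
k^{1-h}(1+\theta)(k-1)^{h-1}\ \ge\ \Big(\frac{k\theta+1}{k+1}\Big)^h+\Big(\frac{k+\theta}{k+1}\Big)^h .
\]
I would attack it via concavity of $t\mapsto t^h$. The two terms on the right have masses summing to $1+\theta$ up to the factor $k/(k+1)$. Compare with the left side using $(k/(k-1))^{1-h}\ge1$, and use the restriction $\theta<1$ and $\theta>-1/k$ exactly at this point. If the one-variable monotonicity in Step 3 fails for some range of $h$, I would fall back on a direct two-point convexity comparison on each interval.
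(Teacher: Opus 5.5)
There is a genuine gap, located in Step 2 rather than where you anticipated it. The bound $g(y)\ge y$ is valid and is tight at $x^\star_\theta$. But tightness at the target point does not make $x^\star_\theta$ a minimiser of $\varphi$, and the reduced claim $\varphi(x)\ge\varphi(x^\star_\theta)$ on $(0,x^\star_\theta]$ is false. The loss $\{y\}^h-\{y\}$ is worst, relative to the scale involved, when $y$ is just above an integer. Since $y(x)\downarrow 1$ as $x\uparrow x^\star_\theta$, this happens on the last interval $[k-1,x^\star_\theta]$, not for small $x$.

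Concretely, the inequality you plan to prove in Step 4 is false. Put $a:=(k\theta+1)/(k+1)$ and $b:=(k+\theta)/(k+1)$, so that $a+b=1+\theta$. The inequality then reads $(a+b)\,(k/(k-1))^{1-h}\ge a^h+b^h$. For fixed $\theta\in[0,1)$ the left side exceeds $a+b$ only by $O(1/k)$, while $a^h+b^h-(a+b)\ge a^h-a\gtrsim k^{-h}\gg 1/k$. Already for $k=3$, $h=1/2$, $\theta=0$ you get
\[
\varphi(2)=2+\sqrt{3/2}\approx 3.225 \;<\; \varphi(9/4)=L_{3,0}(9/4)=\tfrac52+\tfrac{\sqrt3}{2}\approx 3.366,
\]
whereas the true value is $L_{3,0}(2)=2+\sqrt{2/3}\,\bigl(1+\sqrt{1/2}\bigr)\approx 3.394$. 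So no piecewise calculus on $\varphi$ can close the argument. Your fallback of restoring $g(y)$ only for small $x$ also misses the region where the loss actually occurs.

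The missing idea is to keep the concave term $\{y\}^h$ throughout.

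One way to do this is a direct calculus repair. Note that $(x/k)^h\{y\}^h=k^{-h}\bigl(k(k+\theta)-(k+\lfloor y\rfloor)x\bigr)^h$. Hence $L_{k,\theta}$ itself is concave on every interval where both $\lfloor x\rfloor$ and $\lfloor y(x)\rfloor$ are constant. This reduces the problem to comparing values at the points where $x$ or $y(x)$ is an integer. That is doable but tedious, and those points accumulate at $0$.

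The paper avoids all of this with a majorization argument. It writes $L_{k,\theta}(x)=\sum_i\mathfrak u_i(x)^h$, where $\mathfrak u(x)\in[0,1]^{k+K+2}$ consists of $\lfloor x\rfloor$ ones, $\{x\}$, $\lfloor y\rfloor$ copies of $x/k$, $(x/k)\{y\}$, and zeros. The entries of $\mathfrak u(x)$ sum to $k+\theta$ for every $x$. The paper then shows $\mathfrak u(x)\prec\mathfrak u(x^\star_\theta)$ and invokes Schur-concavity of $u\mapsto\sum_i u_i^h$. Since $\mathfrak u(x^\star_\theta)$ is $k-1$ ones, $x^\star_\theta/k$ and $\{x^\star_\theta\}$, the only nontrivial partial-sum check is that the top $k$ entries of $\mathfrak u(x)$ sum to at most $k-1+x^\star_\theta/k$.
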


Deferring the proofs of Lemmas \ref{lem:optimization} and \ref{lem:L}, we now prove Lemma \ref{lem:loc-structure-spectral}.

\begin{proof}[Proof of Lemma \ref{lem:loc-structure-spectral}]
\dteal{For ease in writing, define} $\widetilde\eta_i(G):=(\widetilde\lambda_i(G)/d)^\ell$. For \dteal{a} later use, let us notice that $\{\widetilde \eta_i(G)\}$ are nonnegative for $\ell \in 2\N$, while for $\ell \in 2 \N+1$, a priori there is no such restriction on $\{\widetilde \eta_i(G)\}$. Let $\{\eta_i(G)\}_{i \in \set{|V|}}$ be an nonincreasing rearrangement of $\{\widetilde \eta_i(G)\}_{i \in \set{|V|}}$. Also notice that, by Perron-Frobenius theorem, $\max_i |\eta_i(G)| \leq 1$. 

\dteal{Recall ${\rm Hom}(H,G)$ is the number of homomorphisms of $H$ in $G$.} Since 
\[
\sum_{i\in \set{|V|}} \lambda_i(G)^\ell = {\rm Hom}(C_\ell, G) \geq N(C_\ell, G) \qquad \text{ and } \qquad 2 e(G)= \sum_{i\in \set{|V|}} \lambda_i^2(G),
\]
we have from \eqref{eq:cycle-and-edge-bd} that
\begin{equation}\label{eq:ineq-in-eta}
\sum_{i \in \set{|V|}} \eta_i(G) \geq \delta -\tau_\star=:  \delta_{\tau_\star} \qquad \text{ and } \qquad \sum_{i \in \set{|V|}} \eta_i^{2/\ell}(G) \leq \lfloor \delta\rfloor  +\{\delta\}^{2/\ell}+ \tau_\star.
\end{equation}
\dteal{Observe that, as $\eta_i(G)$ for $\ell \in 2 \N$, the quantity $\eta_i^{2/\ell}(G)$ is well defined.}

Using the bounds \eqref{eq:ineq-in-eta} we \dteal{will prove} that 
\begin{equation}\label{eq:eta-near-1}
1 \geq \eta_1(G) \geq \eta_2(G) \geq \cdots \geq \eta_{\lfloor \delta \rfloor}(G) \geq 1 -  \tau_\star^a,
\end{equation}
and, for $\delta \notin \N$,
\begin{equation}\label{eq:eta-fraction}
\eta_{\lceil \delta \rceil}(G) \in [\{\delta\} - \tau_\star^{\bar a}, \{\delta\} + \tau_\star^{\bar a}],
\end{equation}
for some absolute constants $a, \bar a \in (0,1)$ and $\tau_\star$ sufficiently small depending on $\delta$ and $\ell$.
%
Later we will argue that the desired bounds on $\{\lambda_i(G)\}_{i \in \set{|V|}}$ follow from \eqref{eq:eta-near-1}-\eqref{eq:eta-fraction}. 

For the rest of the proof, for ease in writing, we write $\eta_i$ instead of $\eta_i(G)$.
Postponing the proof \eqref{eq:eta-near-1} for \dteal{the moment}, we proceed to prove \eqref{eq:eta-fraction}.  To this end, by \eqref{eq:ineq-in-eta} and \eqref{eq:eta-near-1} we find that 
\begin{equation}\label{eq:eta-sum-residue}
\eta_{\lceil \delta \rceil}^{2/\ell} \leq \sum_{i \geq \lceil \delta \rceil} \eta_i^{2/\ell} \leq \lfloor \delta\rfloor  + \{\delta\}^{2/\ell} +  \tau_\star - \sum_{i \leq \lfloor \delta \rfloor} \eta_i^{2/\ell} \leq \{\delta\}^{2/\ell} +\lceil \delta \rceil \tau_\star^a.
\end{equation}
This implies that $\eta_{\lceil \delta \rceil} \leq |\eta_{\lceil \delta \rceil}| \leq \{\delta\} + \tau_\star^{a/2}$, for $\tau_\star$ sufficiently small, depending on $\delta$ and $\ell$, yielding the desired upper bound in \eqref{eq:eta-fraction}. 

Next, using the upper bound $\eta_i \leq 1$ we also obtain from \eqref{eq:ineq-in-eta} that
\[
\sum_{i \geq \lceil \delta \rceil} \eta_i \geq \{\delta\} - \tau_\star =: \widetilde \delta_{\tau_\star}>0,
\]
for $\tau_\star$ small. This, in particular, implies that $z:=\eta_{\lceil \delta \rceil} >0$. As $\eta_i \leq z$ for $i \geq \lceil \delta \rceil$, applying Lemma \ref{lem:optimization}, as $\ell \geq 3$, we find that 
\[
\sum_{i \geq \lceil \delta \rceil} \eta_i^{2/\ell} \geq z^{2/\ell} \cdot( \lfloor \widetilde \delta_{\tau_\star}/z \rfloor + \{ \widetilde\delta_{\tau_\star}/z\}^{2/\ell}) \geq z^{2/\ell-1} \widetilde\delta_{\tau_\star}. 
\]
The last bound together with \eqref{eq:eta-sum-residue} implies that $z^{2/\ell-1} \leq \{\delta\}^{2/\ell-1} (1+\tau_\star^{a/2})$. This yields the desired lower bound on $\eta_{\lceil \delta \rceil}$.

We now proceed to prove \eqref{eq:eta-near-1}. Assume $\delta \geq 1$, otherwise there is nothing to be proved. Let $\{\eta_i\}$ be such that $\sum_{i \leq \lfloor \delta \rfloor} \eta_i= x_\star$, for some $x_\star$. We claim that $x_\star >0$. If not, then there exists some $i \in \set{\lfloor \delta\rfloor}$ such that $\eta_i \leq 0$. Hence, $\eta_j \leq 0$ for all $j \geq \lceil \delta \rceil$, and therefore $\sum_i \eta_i \leq x_\star\leq 0$ contradicting \eqref{eq:ineq-in-eta}, for $\tau_\star$ small. \dteal{As $\eta_i \leq 1$, applying} Lemma \ref{lem:optimization} we find that
\begin{equation}\label{eq:eta-large-lbd}
\sum_{i \leq \lfloor \delta \rfloor} \eta_i^{2/\ell} \geq \lfloor x_\star \rfloor + \{x_\star\}^{2/\ell}. 
\end{equation}
To move forward, without loss of generality we may assume $x_\star \leq \lfloor \delta \rfloor -\tau_\star^{1/2}$, as otherwise \eqref{eq:eta-near-1} is immediate. The last assumption, together with \eqref{eq:ineq-in-eta}, imply that 
\[
\sum_{i \geq \lfloor \delta \rfloor+1} \eta_i \geq  \delta_{\tau_\star} - x_\star =: \delta_{\tau_\star, x_\star} >0 \qquad \text{ and } \qquad \eta_{\lfloor \delta \rfloor+1}  \leq \frac{1}{\lfloor \delta \rfloor} \sum_{i \leq \lfloor \delta \rfloor}\eta_i = \frac{x_\star}{\lfloor \delta \rfloor}=:\widehat x_\star.
\]
Applying Lemma \ref{lem:optimization} again, we find that
\begin{equation}\label{eq:eta-small-lbd}
\sum_{i \geq \lfloor \delta \rfloor+1} \eta_i^{2/\ell} \geq \widehat x_\star^{2/\ell} \left\lfloor \frac{\delta_{\tau_\star, x_\star}}{\widehat x_\star}\right\rfloor + \widehat x_\star^{2/\ell} \left\{ \frac{\delta_{\tau_\star, x_\star}}{\widehat x_\star}\right\}^{2/\ell}. 
\end{equation}
Combining \eqref{eq:ineq-in-eta}, \eqref{eq:eta-large-lbd}, and \eqref{eq:eta-small-lbd}, and recalling \eqref{eq:define-L} we obtain
\begin{equation}\label{eq:L-ubd}
L_{\lfloor \delta \rfloor, \theta_\delta}(x_\star) \leq \sum_{i \in \set{|V|}} \eta_i^{2/\ell} \leq \lfloor \delta\rfloor  +\{\delta\}^{2/\ell}+ \tau_\star,
\end{equation}
for any $x_\star$ such that $\sum_{i \leq \lfloor \delta \rfloor} \eta_i= x_\star$, 
where $\theta_\delta:= \{\delta\} -\tau_\star \in (-1/2\lfloor \delta \rfloor, 1)$, for all $\tau_\star$ sufficiently small. 

\dteal{To arrive at a contradiction we set
\[
\widetilde L(x):= \lfloor \delta \rfloor -1 + (x - \lfloor \delta \rfloor +1)^{2/\ell} +  (\delta_{\tau_\star} -x)^{2/\ell}, \qquad x \in [\lfloor \delta \rfloor -1, \delta_{\tau_\star}]
\]
and note that $L_{\lfloor \delta \rfloor, \theta_\delta}(\cdot) = \widetilde L(\cdot)$ on $[x^\star_{\theta_\delta}, \lfloor \delta \rfloor -{\tau_\star}]$, where we recall that $x^\star_{\theta_\delta}= \lfloor \delta \rfloor \delta_{\tau_\star}/(\lfloor \delta \rfloor +1)$. 

Therefore,}
\begin{multline}\label{eq:L-to-wtL}
\inf_{x \in (0, \lfloor \delta \rfloor -\tau_\star^{1/2}]} L_{\lfloor \delta \rfloor, \theta_\delta}(x) = \min \left\{ \inf_{x \in (0,x_{\theta_\delta}^\star]} L_{\lfloor \delta \rfloor, \theta_\delta}(x), \inf_{x \in [x_{\theta_\delta}^\star, \lfloor \delta \rfloor -\tau_\star^{1/2} ]} L_{\lfloor \delta \rfloor, \theta_\delta}(x)\right\} \\
= \min \left\{ L_{\lfloor \delta \rfloor, \theta_\delta}(x^\star_{\theta_\delta}), \inf_{x \in [x_{\theta_\delta}^\star, \lfloor \delta \rfloor -\tau_\star^{1/2} ]} \widetilde L(x)\right\} = \min \left\{ \widetilde L(x^\star_{\theta_\delta}), \widetilde L(\lfloor \delta \rfloor -\tau_\star^{1/2})\right\},
\end{multline}
\dteal{where the penultimate equality in \eqref{eq:L-to-wtL} follows from Lemma \ref{lem:L},  
and the last equality in \eqref{eq:L-to-wtL} follows upon noting that $\widetilde L(\cdot)$ is strictly concave on its domain.} 

We will show that  
\begin{equation}\label{eq:wtL-min}
\min \left\{ \widetilde L(x^\star_{\theta_\delta}), \widetilde L(\lfloor \delta \rfloor -\tau_\star^{1/2})\right\} \geq \lfloor \delta \rfloor + \{\delta \}^{2/\ell} + \tau_\star^{2/3}.
\end{equation}
Combining \eqref{eq:wtL-min} with \eqref{eq:L-to-wtL} and \eqref{eq:L-ubd} we find that it contradicts the assumption that $x_\star \leq \lfloor \delta \rfloor -\tau_\star^{1/2}$. Hence, we must have $x_\star > \lfloor \delta \rfloor - \tau_\star^{1/2}$, and therefore \eqref{eq:eta-near-1} must hold.

We now turn to the proof of \eqref{eq:wtL-min}. It is elementary. Indeed, using the concavity of the map $\lambda \mapsto (u+\lambda)^{2/\ell}$ we find that
\begin{equation}\label{eq:wt-L-right}
\widetilde L(\lfloor \delta \rfloor -\dteal{\tau_\star^{1/2}}) = \lfloor \delta \rfloor -1 + (1-\tau_\star)^{2/\ell} + (\{\delta\} - \tau_\star + \tau_\star^{1/2})^{2/\ell} \geq \lfloor \delta \rfloor + \{\delta \}^{2/\ell} + \tau_\star^{2/3}, 
\end{equation}
for $\tau_\star$ sufficiently small depending only on $\delta$ and $\ell$. On the other hand, as the map $x \mapsto (x+r)^h-x^h$ is strictly decreasing on $[0,\infty)$ for $r >0$ and $h \in (0,1)$ it follows that   
\begin{equation*}
\begin{aligned}
(1+\{\delta\} \lfloor \delta \rfloor)^{2/\ell} +  (\lfloor \delta \rfloor + \{\delta\})^{2/\ell}  & \geq (1+c_{\delta, \ell})\left( \{\delta \}^{2/\ell}  (\lfloor \delta \rfloor +1)^{2/\ell}+ (\lfloor \delta \rfloor +1)^{2/\ell} \right)\\
& = (1+c_{\delta, \ell}) (\{\delta \}^{2/\ell}+1)  (\lfloor \delta \rfloor +1)^{2/\ell},
\end{aligned}
\end{equation*}
for some constant $c_{\delta, \ell} >0$, depending on $\delta$ and $\ell$. \dteal{As
\[
\widetilde L(x_{\theta_\delta}^\star) = \lfloor \delta \rfloor -1 + (\lfloor \delta \rfloor +1)^{-2/\ell} \cdot \left( \left(1+(\{\delta\}-\tau_\star) \lfloor \delta \rfloor\right)^{2/\ell} + \left( \delta -\tau_\star\right)^{2/\ell}\right),
\]
the last observation implies that} there exists some $\tau_\star >0$, sufficiently small, depending only on $\delta$ and $\ell$ such that $\widetilde L(x^\star_{\theta_\delta}) \geq \lfloor \delta \rfloor + \{\delta \}^{2/\ell} + \tau_\star^{2/3}$. \dteal{Together with \eqref{eq:wt-L-right} this yield} \eqref{eq:wtL-min}. Thus, we now finally have \eqref{eq:eta-near-1}-\eqref{eq:eta-fraction}

%
%

It remains to use \eqref{eq:eta-near-1}-\eqref{eq:eta-fraction} and derive the desired bounds on the eigenvalues. It is immediate. Indeed, as $\eta_i(G)= (\widetilde \lambda_i(G)/d)^\ell$ for $\ell \in 2 \N+1$ and $\eta_i(G)= |\lambda_i(G)/d|^\ell$ for $\ell \in 2\N$, \dteal{the bounds in \eqref{eq:lambda-near-1-1}, \eqref{eq:lambda-fraction-1-}, \eqref{eq:lambda-near-1-2} and \eqref{eq:lambda-fraction-1-2} follow}. The upper bound in \eqref{eq:residue-eval} follows from the upper bound on $e(G)$ in \eqref{eq:cycle-and-edge-bd} and the lower bounds in \eqref{eq:eta-near-1}-\eqref{eq:eta-fraction}.
%
%
This completes the proof. 
\end{proof}

We next turn to the proofs of Lemmas \ref{lem:optimization} and \ref{lem:L}. 
The proof of Lemma \ref{lem:optimization} is straightforward and is a generalization of \cite[Lemma 2.2]{BD}. We include it for completeness. 

\begin{proof}[Proof of Lemma \ref{lem:optimization}] 
Fix $\ell \in 2 \N+1$. Since $\sum_i \max\{x_i,0\} \geq \sum_i x_i$ and $\sum_i (\max\{x_i,0\})^{2/\ell} \leq \sum_i x_i^{2/\ell}$, any $(x_1, x_2, \ldots, x_s)$ such that $\min_i x_i < 0$ cannot be an optimizer to the variational problem in \eqref{eq:optimization0}. Therefore, irrespective of $\ell$, it suffices to prove that 
\begin{equation}\label{eq:optimization1}
\inf_{(x_1,x_2,\ldots, x_s) \in \R^s} \left\{ \sum_{i\in \set{s}} x_i^{2/\ell} \;:\; \sum_{i \in \set{s}} x_i \geq c_1 \ \text{ and } \ x_i \in \left[0, c_0\right] \right\}
= \left\lfloor \frac{c_1}{c_0} \right\rfloor c_0^{2/\ell}
+ \left( c_1 - c_0 \left\lfloor \frac{c_1}{c_0} \right\rfloor \right)^{2/\ell}.
\end{equation}
As the map $(x_1,x_2, \ldots,x_s) \mapsto \sum_{i \in \set{s}} x_i^{2/\ell}$ is strictly concave for any pair of variables,  any optimizer can contain at most one non extremal coordinate. This completes the proof. 
\end{proof}

\dteal{To prove Lemma \ref{lem:L} we observe that the function $L_{k,\theta}(\cdot)$, although not concave on the whole domain, is concave on each block of a certain partition of the interval $(0, x_\theta^\star]$, where each block 
is again some interval. 
Hence, it boils down to comparing the values of $L_{k,\theta}(\cdot)$ at the end points of those sub intervals. However, this turns out to be tedious. On the other hand, we also notice that by enlarging the domain $L_{k,\theta}(x^\star_\theta)$ can be expressed as the sum $\sum_{i} \mathfrak{u}_i(x_\theta^\star)^h$, where $\mathfrak{u}(x):=\{\mathfrak{u}_i(x)\}$ is a certain transformation of $x$ such that each coordinate 
lie in the interval $[0,1]$, with at most two coordinates of $\{\mathfrak{u}_i(x_\theta^\star)\}$ non extremal. This suggests the following alternate route: view $L_{k,\theta}(\cdot)$ as a Schur-concave function of the transform $x \mapsto \mathfrak{u}(x)$, 
and show that $\mathfrak{u}(x^\star_\theta)$ majorizes $\mathfrak{u}(x)$ for all relevant $x$. We take this less tedious route. Below we carry out the details.
%
}

\begin{proof}[Proof of Lemma \ref{lem:L}]
%
%
\dteal{Recalling} $x^\star_\theta:=k(k+\theta)/(k+1)$, for $K \in \N$, set 
\[
\mathcal{T}_{K, \theta}:= \{ x \in (0, x^\star_\theta]: \lfloor \mathfrak{s}_{k,\theta}(x) \rfloor \leq K\}, \qquad \mbox{ where } \quad \mathfrak{s}_{k,\theta}(x):= \frac{(k+\theta -x)k}{x}. 
\] 
Observe that $\mathfrak{s}_{k,\theta}(x^\star_\theta)=1$. Therefore, it suffices to show that $L_{k,\theta}(x) \geq L_{k,\theta}(x^\star_\theta)$ for all $x \in \mathcal{T}_{K, \theta}$ and any $K \in \N $. 

To this end, for any $x \in \mathcal{T}_{K, \theta}$,  define
\[
\mathfrak{u}(x):= \left(\underbrace{1,1,\ldots,1}_{\lfloor x \rfloor}, \{x\}, \underbrace{x/k, x/k, \ldots, x/k}_{\lfloor{\mathfrak{s}_{k,\theta}(x)}\rfloor}, (x/k) \cdot \{\mathfrak{s}_{k,\theta}(x)\},\underbrace{0,0,\ldots,0}_{k+K- \lfloor x \rfloor -\lfloor \mathfrak{s}_{k, \theta}(x) \rfloor}\right) \in [0,1]^{k'},
\]
where, for brevity, we write $k':=k+K+2$. Notice that $L_{k,\theta}(x)=\Phi(\mathfrak{u}(x))$, where for any $\widehat{\mathfrak{u}}=(\widehat{\mathfrak{u}}_1, \widehat{\mathfrak{u}}_2, \ldots, \widehat{\mathfrak{u}}_{k'}) \in [0,1]^{k'}$, $\Phi(\widehat{\mathfrak{u}}):=\sum_{i\in \set{k'}} \widehat{\mathfrak{u}}_i^h$. Since the map $\widehat{\mathfrak{u}} \mapsto \Phi(\widehat{\mathfrak{u}})$ is permutation invariant and concave (as $h \in [0,1)$), it follows that it is also Schur-concave (see \cite[Proposition III.C.1]{MOA2011}). That is, for any $\widehat{\mathfrak{u}}, \widetilde{\mathfrak{u}}=(\widetilde{\mathfrak{u}}_1, \widetilde{\mathfrak{u}}_2, \ldots, \widetilde{\mathfrak{u}}_{k'}) \in [0,1]^{k'}$ such that $\widetilde{\mathfrak{u}} \prec \widehat{\mathfrak{u}}$ one has $\Phi(\widehat{\mathfrak{u}}) \leq \Phi(\widetilde{\mathfrak{u}})$, where

\[
\widetilde{\mathfrak{u}}\prec\widehat{\mathfrak{u}} \iff \sum_{i\in \set{\ell}} \widetilde{\mathfrak{u}}^{\downarrow}_{i}\leq \sum_{i\in \set{\ell}}\widehat{\mathfrak{u}}^{\downarrow}_{i}, \quad \mbox{ for } \ell \in \set{k'-1} \qquad \text{  and } \qquad \sum_{i \in \set{k'}}\widetilde{\mathfrak{u}}_{i}=\sum_{i\in \set{k'}}\widehat{\mathfrak{u}}_{i},
\]
and for any $\mathfrak{u}=(\mathfrak{u}_1, \mathfrak{u}_2, \ldots, \mathfrak{u}_{k'}) \in [0,1]^{k'}$, the coordinates of the vector $\mathfrak{u}^{\downarrow}= (\mathfrak{u}_1^\downarrow, \mathfrak{u}_2^\downarrow, \ldots, \mathfrak{u}_{k'}^\downarrow)$ are rearrangements of the coordinates of $\mathfrak{u}$ so that $\mathfrak{u}_1^\downarrow \geq \mathfrak{u}_2^\downarrow\geq  \cdots \geq \mathfrak{u}_{k'}^\downarrow$. 

Hence, it suffices to show that $\mathfrak{u}(x) \prec \mathfrak{u}(x^\star)$, for any $x \in \mathcal{T}_{K, \theta}$. 
Turning to prove the same, observe that, by definition $\sum_{i\in \set{k'}} (\mathfrak{u}(x))_i = k+\theta$ for all $x \in \mathcal{T}_{K, \theta}$. Next, as $(\mathfrak{u}(x))_i \in [0,1]$ for all $i \in \set{k'}$ and $x \in \mathcal{T}_{K, \theta}$, and as $\theta \in (- k^{-1},1)$ implies $\lfloor x^\star_\theta \rfloor=k-1$, 
we also have 
\[
\sum_{i \in \set{\ell}} (\mathfrak{u}(x))^{\downarrow}_{i} \leq \ell = \sum_{i \in \set{\ell}} (\mathfrak{u}(x^\star_\theta))^{\downarrow}_{i}, \qquad \text{ for any } \qquad \ell \in \set{k-1}.
\]
Since $\mathfrak{s}_{k, \theta}(x^\star_\theta)=1$ and $x_\theta^\star \notin \N$ (use $\theta \in(- k^{-1},1)$), we see that $\mathfrak{u}(x^\star_\theta)$ consists of only $k+1$ nonzero terms. Hence, we moreover have
\[
\sum_{i \in \set{\ell}} (\mathfrak{u}(x))^{\downarrow}_{i} \leq \sum_{i \in \set{k'}} (\mathfrak{u}(x))_{i} = k+\theta = \sum_{i \in \set{\ell}} (\mathfrak{u}(x^\star_\theta))^{\downarrow}_{i}, \qquad \text{ for any } \qquad \ell \in \set{k'}\setminus \set{k}.
\]
To complete the proof it remains to argue that 
\begin{equation}\label{eq:majorize}
\sum_{i \in \set{k}} (\mathfrak{u}(x))^{\downarrow}_{i} \leq \sum_{i \in \set{k}} (\mathfrak{u}(x^\star_\theta))^{\downarrow}_{i}.
\end{equation}
Towards obtaining \eqref{eq:majorize} we observe that   
\begin{equation*}
\sum_{i \in \set{k}} (\mathfrak{u}(x))^{\downarrow}_{i} \leq  \lfloor x \rfloor  +(k-1 -\lfloor x \rfloor) \cdot (x/k) + \max\{x/k, \{x\}\} \leq k-1 + \max\{x/k, \{x\}\}.
\end{equation*}
Since, the map $x \mapsto \max\{x/k, \{x\}\}$ is increasing for $x \in [k-1,k)$ the bound in \eqref{eq:majorize} follows from above, for any $x \in \mathcal{T}_{K, \theta}$ be such that $\lfloor x \rfloor = k-1$. Next, fix any $x \in \mathcal{T}_{K, \theta}$ such that 
$\lfloor x \rfloor < k-1$. In this case, we derive from above that  
\[
\sum_{i \in \set{k}} (\mathfrak{u}(x))^{\downarrow}_{i} \leq k-2 + x/k + \max\{x/k, \{x\}\} \leq k -1 +x/k < k-1+x^\star_\theta/k = \sum_{i \in \set{k}} (\mathfrak{u}(x^\star_\theta))^{\downarrow}_{i},
\]
yielding \eqref{eq:majorize}, where in the last step we also used $ \{x_\theta^\star\} \leq x_\theta^\star/k$. Hence, we have shown that $\mathfrak{u}(x) \prec \mathfrak{u}(x^\star_\theta)$, for any $x \in \mathcal{T}_{K, \theta}$ and $\theta \in (-k^{-1},1)$. This completes the proof of the lemma.
\end{proof}



\subsection{Structural rigidity via higher order Cheeger and dual Cheeger inequalities for weighted graphs} The following two results are the main results of this subsection. 

\begin{proposition}\label{prop:cheeger-applied}
Let $G=(V,E)$ be a graph with maximum degree at most $d$. Further assume there exists some $k \in \N$ such that
\begin{equation}\label{eq:lambda-assumption-cheeger}
d \geq \widetilde \lambda_1(G) \geq \widetilde \lambda_2(G) \geq \cdots \geq \widetilde \lambda_k(G) \geq (1 -\tau_0)d,
\end{equation}
for some $\tau_0 >0$. Then, there exist an absolute constant $\widehat C <\infty$, and a collection of pairwise disjoint subsets $\{S_i\}_{i \in \set{k}}$ of $V$ such that, for any $\tau_0$ sufficiently small,
\begin{equation}\label{eq:ho-cheeger}
2e (G[S_i]) \geq \left(1- \widehat C k^3 \sqrt{\tau_0}\right) d|S_i| \quad \text{ and } \quad e(G[S_i, V \setminus S_i]) \leq \widehat C k^3 \sqrt{\tau_0} d |S_i|, \quad \text{ for all } i \in \set{k}.
\end{equation}
\end{proposition}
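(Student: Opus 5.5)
The plan is to follow the ghost-vertex idea from Section \ref{sec:proof-outline}. First augment $G$ to a weighted graph $\widetilde G$ on $V\cup\{v_\star\}$: keep every edge of $G$ with weight one, and join each $v\in V$ to a new ghost vertex $v_\star$ with weight $d-\deg_G(v)\geq 0$. Then every $v\in V$ has weighted degree exactly $d$ in $\widetilde G$, and $v_\star$ has weighted degree $w_\star:=\sum_v (d-\deg_G(v))$.

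Next I transfer the eigenvalue hypothesis \eqref{eq:lambda-assumption-cheeger} to the normalized Laplacian $\mathcal L(\widetilde G)=I-D^{-1/2}\widetilde A D^{-1/2}$. Let $x_1,\dots,x_k$ be orthonormal eigenvectors of $A(G)$ for $\widetilde\lambda_1(G),\dots,\widetilde\lambda_k(G)$. Extend each by zero at $v_\star$. On $V$ the degrees of $\widetilde G$ equal $d$, so $D^{1/2}$ acts there as $\sqrt d\,I$, and on any vector vanishing at $v_\star$ the Rayleigh quotient of $\mathcal L(\widetilde G)$ reduces to
\[
1-\frac{\langle x,A(G)x\rangle}{d}.
\]
Over $\mathrm{span}\{x_i\}$ this is at most $\tau_0$. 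By Courant--Fischer, $\lambda_k(\mathcal L(\widetilde G))\leq \tau_0$ (eigenvalues now in increasing order).

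Now I invoke the higher-order Cheeger inequality of \cite{LOGT2014} for weighted graphs. It gives $k$ disjoint sets $S_1,\dots,S_k\subseteq V\cup\{v_\star\}$ with conductance
\[
\phi_{\widetilde G}(S_i)\leq C k^{2}\sqrt{\lambda_k(\mathcal L(\widetilde G))}\leq Ck^2\sqrt{\tau_0}.
\]
I use the $O(k^2)$ version and absorb the losses into the $k^3$. The main obstacle is the ghost vertex: one $S_i$ may contain $v_\star$, and then its volume is dominated by $w_\star$, which carries no information about $G$. I would handle this in two ways, trying the first before the second.
\begin{itemize}
\item Use that the LOGT construction produces $2k$ (or $k+1$) disjoint sets of small conductance when applied with $k+1$ in place of $k$. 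This can cost a slightly larger eigenvalue index, so I would prefer the next option.
\item Simply remove $v_\star$ from the set $S_i$ containing it and show its conductance stays controlled. Every edge from $v_\star$ into $S_i\setminus\{v_\star\}$ becomes boundary, so I need $\sum_{v\in S_i}(d-\deg_G(v))$ to be small relative to $d|S_i|$. For this I would apply the bound in the mass form: the LOGT sets arise from spectral embedding mass, and since the $x_i$ vanish at $v_\star$, the set containing $v_\star$ can be chosen to exclude it. That is, I run the LOGT argument restricted to the support of the embedding, noting that the embedding $F(v)=(x_1(v),\dots,x_k(v))/\sqrt{d}$ is zero at $v_\star$. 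I expect this bookkeeping to be the hard step.
\end{itemize}

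Finally I translate conductance into \eqref{eq:ho-cheeger}. For $S_i\subseteq V$ we have $\mathrm{vol}_{\widetilde G}(S_i)=d|S_i|$, and the boundary weight is
\[
e(G[S_i,V\setminus S_i])+\sum_{v\in S_i}(d-\deg_G(v)).
\]
Small conductance therefore bounds both terms by $\widehat Ck^3\sqrt{\tau_0}\,d|S_i|$. Since
\[
2e(G[S_i])=\sum_{v\in S_i}\deg_G(v)-e(G[S_i,V\setminus S_i]),
\]
this gives $2e(G[S_i])\geq(1-2\widehat Ck^3\sqrt{\tau_0})d|S_i|$. Adjusting $\widehat C$ completes both inequalities.
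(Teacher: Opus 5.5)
Your proposal is essentially the paper's proof: the ghost vertex with weights $d-\deg_G(v)$, the top eigenvectors of $A(G)$ extended by zero as test functions with Rayleigh quotient at most $\tau_0$, the LOGT localization run on the embedding $F=(x_1,\dots,x_k)/\sqrt d$ (the paper's Lemma \ref{lem:cheeger}) so that the resulting sets lie in $\mathrm{supp}(F)\subseteq V$, and the same conductance-to-edge-count translation. Only this final form of your second option works, however: the black-box bound via $\lambda_k(\mathcal L(\widetilde G))$ cannot exclude $v_\star$ because the Laplacian's own eigenvectors need not vanish there, and the ``$k+1$ sets'' option would need control of an eigenvalue beyond the $k$-th, which the hypothesis does not provide.
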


\begin{proposition}\label{prop:dual-cheeger-applied}
Let $G=(V,E)$ be a graph with maximum degree at most $d$. Further assume there exists some $k \in \N$ such that
\begin{equation}\label{eq:lambda-assumption-cheeger-dual}
- d \leq \widehat \lambda_1(G) \leq \widehat \lambda_2(G) \leq \cdots \leq \widehat \lambda_k(G) \leq -(1 -\tau_0)d,
\end{equation}
for some $\tau_0 >0$, where for brevity we have used the notation $\widehat \lambda_i(G):= \widetilde \lambda_{|V(G)|-i+1}(G)$ for $i \in \set{|V(G)}$. Then, there exist an absolute constant $\widehat C <\infty$, and a collection of pairwise disjoint subsets $\{S_i\}_{i \in \set{2k}}$ of $V$ such that, for any $\tau_0$ sufficiently small,
\begin{multline}\label{eq:ho-dual-cheeger}
e (G[S_{2i-1},S_{2i}])/d \geq \left(1- \widehat C k^3 \sqrt{\tau_0}\right) \max\left\{|S_{2i-1}|,|S_{2i}|\right\} \\
\geq \left(1- \widehat C k^3 \sqrt{\tau_0}\right) \min\left\{|S_{2i-1}|,|S_{2i}|\right\} \geq \left(1- 2\widehat C k^3 \sqrt{\tau_0}\right) d, \quad \text{ for all } i \in \set{k}.
\end{multline}
\end{proposition}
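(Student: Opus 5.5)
The statement to prove is Proposition~\ref{prop:dual-cheeger-applied}. I reduce it to the positive case by the ghost vertex augmentation described in Section~\ref{sec:proof-outline}, and then apply a higher order dual Cheeger inequality for weighted graphs \cite{Liu2015}.

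\textbf{Step 1: augmentation and spectral translation.} Form the weighted graph $\widetilde G$ on $V\cup\{v_\star\}$. Keep the unit weights on $E$, and join each $v\in V$ to the ghost vertex $v_\star$ with weight $d-\deg_G(v)$. Every $v\in V$ then has weighted degree exactly $d$, while $v_\star$ has degree $D_\star:=\sum_v (d-\deg_G(v))$.

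Write $\mathcal L(\widetilde G)=I-W^{-1/2}A(\widetilde G)W^{-1/2}$ for the normalized Laplacian. Let $x_1,\dots,x_k$ be orthonormal eigenvectors of $A(G)$ for $\widehat\lambda_1,\dots,\widehat\lambda_k$, and extend each by $0$ at $v_\star$. On this $k$-dimensional space the Rayleigh quotient of $A(\widetilde G)$ relative to $W$ agrees with that of $A(G)/d$. The cross terms involve only $v_\star$, and they vanish because the ghost coordinate is $0$.

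Consequently, for every $x$ in the span,
\[
\frac{\langle x,\mathcal L x\rangle}{\langle x,x\rangle}\;\ge\; 2-\tau_0 .
\]
By Courant--Fischer, $\mathcal L(\widetilde G)$ therefore has $k$ eigenvalues in $[2-\tau_0,2]$; equivalently, $\lambda_{\max-k+1}(\mathcal L)\ge 2-\tau_0$.

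\textbf{Step 2: higher order dual Cheeger.} Apply the higher order dual Cheeger inequality of \cite{Liu2015}, in its weighted form. It produces $k$ pairs of disjoint sets $(L_i,R_i)$, with all $2k$ sets mutually disjoint, whose bipartiteness ratio satisfies
\[
\beta(L_i,R_i):=\frac{2e(L_i)+2e(R_i)+e(L_i\cup R_i,\,\text{rest})}{\mathrm{vol}(L_i\cup R_i)}\;\le\; C k^3\sqrt{\tau_0}.
\]
Here $C$ is a universal constant, and the $k^3$ dependence comes from \cite{Liu2015}.

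I first argue that $v_\star$ can be dropped from these sets. If $v_\star\in L_i$, then removing it changes the volume by $D_\star$ but removes cut weight of at least the weight of its edges to the outside. I expect to handle this by discarding $v_\star$ and checking that the ratio degrades by at most a constant factor. Alternatively, $v_\star$ can be sent to the partner side, since all of its edges go to $V$. This bookkeeping is the step I expect to be most delicate, because $D_\star$ may be huge.

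\textbf{Step 3: extracting the sets.} Set $S_{2i-1}:=L_i\setminus\{v_\star\}$ and $S_{2i}:=R_i\setminus\{v_\star\}$. All vertices of these sets have weighted degree exactly $d$, so $\mathrm{vol}=d(|S_{2i-1}|+|S_{2i}|)$. The numerator of $\beta$ counts every unit of weight leaving the bipartite pair, including ghost weight (deficient degree) and internal edges. Hence
\[
e(G[S_{2i-1},S_{2i}])\;\ge\;\tfrac12\,(1-\beta)\,d\,(|S_{2i-1}|+|S_{2i}|).
\]
Since each vertex of $S_{2i-1}$ has at most $d$ edges into $S_{2i}$, we also have $e(G[S_{2i-1},S_{2i}])\le d\min\{|S_{2i-1}|,|S_{2i}|\}$. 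Combining the two bounds gives
\[
\min\{|S_{2i-1}|,|S_{2i}|\}\;\ge\;(1-2\beta)\max\{|S_{2i-1}|,|S_{2i}|\},
\]
and then $e(G[S_{2i-1},S_{2i}])/d\ge(1-O(\beta))\max\{|S_{2i-1}|,|S_{2i}|\}$. This yields the first two inequalities of \eqref{eq:ho-dual-cheeger}.

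For the final lower bound $\min|S_j|\ge(1-2\widehat Ck^3\sqrt{\tau_0})d$, observe that $e(G[S_{2i-1},S_{2i}])\le |S_{2i-1}|\,|S_{2i}|$. Combined with the edge lower bound above, this forces $\min|S_j|\ge(1-O(\beta))d$. Finally, choose $\widehat C$ large enough to absorb all the constants.
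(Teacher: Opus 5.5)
Your Steps~1 and~3 are sound and match the paper. Step~1 is the ghost vertex $v_\star$ with weights $d-\deg_G(v)$. Step~3 uses that the volume is exactly $d(|S_{2i-1}|+|S_{2i}|)$ for sets inside $V$, together with the elementary deductions from $d\min\{|S_{2i-1}|,|S_{2i}|\}\ge e(G[S_{2i-1},S_{2i}])\ge \tfrac12(1-\beta)d(|S_{2i-1}|+|S_{2i}|)$ and $e(G[S_{2i-1},S_{2i}])\le |S_{2i-1}||S_{2i}|$.

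The gap is in Step~2, and it is not mere bookkeeping. You convert the hypothesis into ``$\mathcal L(\widetilde G)$ has $k$ eigenvalues in $[2-\tau_0,2]$'' and then invoke the eigenvalue form of the higher order dual Cheeger inequality. In doing so you discard the only information that controls $v_\star$, namely that your test vectors vanish there. That theorem is purely existential, and the pairs it returns may use $v_\star$ essentially, in which case neither of your repairs works.

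For instance, let $G$ be $k$ disjoint copies of $K_{d,d}$ together with $M$ isolated vertices. In $\widetilde G$, $v_\star$ is joined only to the isolated vertices, each with weight $d$, so these vertices and $v_\star$ form a bipartite weighted star component. Hence $\mathcal L(\widetilde G)$ has eigenvalue $2$ with multiplicity $k+1$. The family consisting of $k-1$ of the bicliques together with the pair $(\{v_\star\},\text{isolated vertices})$, whose bipartiteness ratio is $0$, satisfies the conclusion of the theorem with parameter $k$, so nothing prevents it from being the output.

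Deleting $v_\star$ leaves $(\emptyset,\text{isolated vertices})$ with ratio $1$. The ratio is not degraded by a constant factor; it is destroyed, and there is nothing in $V$ to replace that pair. Moving $v_\star$ to the partner side does not help either, since the final sets must lie in $V$. Because the $2k$ sets are disjoint, at most one pair contains $v_\star$, so this route only guarantees $k-1$ usable pairs.

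The missing idea, which is how the paper proceeds (Lemma~\ref{lem:dual-cheeger}), is to bypass the spectrum of $\mathcal L(\widetilde G)$ entirely.
\begin{itemize}
\item Take the $k$ test functions $f_i=\widehat f_i/\sqrt d$: the orthonormal $A(G)$-eigenvectors for $\widehat\lambda_1(G),\dots,\widehat\lambda_k(G)$, extended by $0$ at $v_\star$ and rescaled.
\item These are orthonormal in $\ell^2(\widetilde V,w)$ and satisfy $\overline{\mathcal R}_{\widetilde G}(f_i)\le\tau_0$ by \eqref{eq:G-to-wtG}.
\item Run the \emph{proof} of the higher order dual Cheeger inequality of \cite{Liu2015} (Theorem~6.1 via Lemma~6.2 there) on the embedding $F=(f_1,\dots,f_k)$. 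That argument uses only orthonormality and the bound on the dual Rayleigh quotient, not that the $f_i$ are eigenvectors of $\mathcal L(\widetilde G)$.
\item The $2k$ sets it produces lie in ${\rm supp}(F)$. Since $F(v_\star)=0$, they are automatically subsets of $V$, with $1-\bar\phi_{\widetilde G}(S_{2i-1},S_{2i})\le Ck^3\sqrt{\tau_0}$.
\end{itemize}
Your Step~3 then finishes the proof verbatim.
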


Notice that Lemma \ref{lem:loc-structure-spectral} ensures for any graph $G$ as in Proposition \ref{prop:loc-structure} there must be a certain number of eigenvalues of $A(G)$ that are close to $\pm d$. Propositions \ref{prop:cheeger-applied} and \ref{prop:dual-cheeger-applied} show that such a rigidity in the spectrum must enforce certain clique or biclique like structures in $G$. 

\dteal{Before proving these two porisitions let us note that these two results 
maybe} 
considered as stability versions of the Perron-Frobenius theorem. Indeed, if the maximum degree is bounded by $d$ and if $\widetilde \lambda_k(G)=d$ for some $k \in \N$, then it follows from Perron-Frobenius theorem that $G$ must contain $k$ disjoint $d$-regular components. On the other hand, $\widehat\lambda_{\ell}(G)=-d$ forces $\ell$ disjoint $d$-regular bipartite components. Setting $\tau_0=0$ in \eqref{eq:ho-cheeger} and \eqref{eq:ho-dual-cheeger} one recovers these two conclusions. The content of Propositions \ref{prop:cheeger-applied} and \ref{prop:dual-cheeger-applied} is that this picture is stable under perturbation, that is, eigenvalues close to $\pm d$ force the corresponding extremal structures to be present in approximate forms.

Proofs of Propositions \ref{prop:cheeger-applied} and \ref{prop:dual-cheeger-applied} will employ higher order Cheeger and dual Cheeger inequalities for weighted graphs. It requires us introduce quite a few notation: 
Let $\widetilde G=(\widetilde V,\widetilde E,w)$ be a finite, undirected graph, with positive weights 
$w:\widetilde E\mapsto(0,\infty)$ on the edges. We extend the definition of $w$ to $\widetilde V$ by setting $w(v):=\sum_{u: u \sim v} w(u)$ for all $v \in \widetilde V$. Denote, 
$w(F):=\sum_{e\in F} w(e)$ for $F \subseteq \widetilde E$ and $w(S):=\sum_{v \in S} w(v)$ for $S \subseteq \widetilde V$. Next, for $S \subseteq \widetilde V$ we define its Dirichlet conductance as $\phi_{\widetilde G}(S):= w(E(S, \bar S))/w(S)$, where for an $S \subseteq \widetilde V$ the set $E(S, \bar S)$ is the collection of edges that has one and only one end point in $S$, with the convention that $\phi_{\widetilde G}(\emptyset):=\infty$.  We will also assume that there is no isolated vertex in $\widetilde G$ so that $\phi_{\widetilde G}(S)$ is well defined for any $S \neq \emptyset$. Moreover, for a pairwise disjoint $S, T \subseteq \widetilde V$ such that $S \cup T \neq \emptyset$ we define the dual version of Dirichlet conductance as $\bar \phi_{\widetilde G} (S,T):= 2w(E(S,T))/w(S \cup T)$. 

Next, we set $\ell^2(\widetilde V,w)$ to be the Hilbert space of functions $f: \widetilde V \mapsto \R$ with inner product \(
\langle f, g \rangle_{\ell^2(\widetilde V,w)}:= \sum_{v \in \widetilde V} f(v) g(v) w(v), 
\) and norm $\|f\|_{\ell^2(\widetilde V,w)}:= \sqrt{\langle f, f \rangle_{\ell^2(\widetilde V,w)}}$ for $f,g: \widetilde V \mapsto \R$. For $F: \widetilde V \mapsto \mathscr{H}$, where $\mathscr{H}$ is a Hilbert space we also need the following definition of Rayleigh quotient and its dual version: 
\[
\mathcal{R}_{\widetilde G}(F):= \frac{\displaystyle\sum_{u \sim v} w(u,v)\,\|F(u)-F(v)\bigr\|_{\mathscr{H}}^2}
{ \sum_{v\in \widetilde V} w(v) \|F(v)\|_{\mathscr{H}}^2} \qquad \text{ and } \qquad \overline{\mathcal{R}}_{\widetilde G}(F):= \frac{\displaystyle\sum_{u \sim v} w(u,v)\,\|F(u)+F(v)\bigr\|_{\mathscr{H}}^2}
{ \sum_{v\in \widetilde V} w(v) \|F(v)\|_{\mathscr{H}}^2}.
\]

We are now ready to state the relevant results related to higher order Cheeger and dual Cheeger inequalities that will be used in the proofs of Propositions \ref{prop:cheeger-applied} and \ref{prop:dual-cheeger-applied}, respectively.

\begin{lemma}\label{lem:cheeger}
Let $\widetilde G=(\widetilde V,\widetilde E, w)$ be a finite undirected weighted graph Fix $k \in \N$ and $\tau >0$. Let $\{f_1, f_2,\ldots,f_k\}$ be a collection of orthonormal vectors in $\ell^2(\widetilde V,w)$ such that, for all $i \in \set{k}$, 
\[
\mathcal{R}_{\widetilde G}(f_i) \leq \tau \qquad \text{ and } \qquad f_i(v_\star)=0, \text{ for some } v_\star \in \widetilde V. 
\]
Then, there exist an absolute constant $\bar C < \infty$ and pairwise disjoint subsets $\{S_i\}_{i \in \set{k}}$ of $\widetilde V\setminus \{v_\star\}$ such that  $\phi_{\widetilde G}(S_i) \leq C k^3 \sqrt{\tau}$ for all $i \in \set{k}$. 
\end{lemma}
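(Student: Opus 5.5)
This is a higher-order Cheeger inequality with a Dirichlet boundary vertex $v_\star$. I would deduce it from the higher-order Cheeger inequality of \cite{LOGT2014}, after removing the degenerate role of $v_\star$.

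\textbf{Step 1: spectral embedding.} I would use the localization machinery of \cite{LOGT2014} directly rather than passing through eigenvalues. Define the spectral embedding $F:\widetilde V\to\R^k$ by $F(v):=(f_1(v),\ldots,f_k(v))$. Orthonormality in $\ell^2(\widetilde V,w)$ gives $\sum_v w(v)\|F(v)\|^2=k$. Summing the Rayleigh bounds gives $\sum_{u\sim v}w(u,v)\|F(u)-F(v)\|^2\le k\tau$, so $\mathcal R_{\widetilde G}(F)\le\tau$. Orthonormality also gives the isotropy condition $\sum_v w(v)\langle x,F(v)\rangle^2=\|x\|^2$ for every unit vector $x$. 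This is exactly the \emph{spreading} property used in \cite{LOGT2014}: no small ball, in the radial projection metric, carries more than a $1/k$-type fraction of the total mass.

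\textbf{Step 2: localization and disjoint supports.} Following their argument, I would use random partitioning of the projected points $F(v)/\|F(v)\|$ on the sphere. This produces $k$ disjointly supported functions $\psi_1,\ldots,\psi_k$ of the form $\psi_i=\theta_i\|F\|$, with $\theta_i$ Lipschitz cutoffs, such that $\mathcal R_{\widetilde G}(\psi_i)\lesssim k^{6}\tau$, or $k^4\tau$ with their sharper bounds. The key observation is that $F(v_\star)=0$, so $\psi_i(v_\star)=0$ for every $i$. This is the reason for the hypothesis $f_i(v_\star)=0$.

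\textbf{Step 3: sweep cuts.} Apply the standard one-dimensional Cheeger sweep to each $\psi_i$. A level set $S_i=\{\psi_i^2>t\}$ has $\phi_{\widetilde G}(S_i)\le\sqrt{2\mathcal R_{\widetilde G}(\psi_i)}\lesssim k^3\sqrt\tau$. Since $\psi_i(v_\star)=0$ and $t\ge 0$, the strict inequality excludes $v_\star$, so $S_i\subseteq\widetilde V\setminus\{v_\star\}$. Disjointness of the supports of the $\psi_i$ gives disjointness of the $S_i$.

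\textbf{Main obstacle.} The step I expect to be delicate is Step 2: verifying that the localization lemma of \cite{LOGT2014} (their Lemmas on radial projection distance and random partitions) goes through with an arbitrary orthonormal family rather than actual eigenvectors. I believe only isotropy and the Rayleigh bound on $F$ are used there, so the $k^3\sqrt\tau$ bound (the paper's constant $C$, i.e.\ $\bar C$) should follow.
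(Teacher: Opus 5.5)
Your proposal is correct and follows essentially the same route as the paper: you form the embedding $F=(f_1,\ldots,f_k)$ with $F(v_\star)=0$ and $\mathcal{R}_{\widetilde G}(F)\le\tau$, rerun the localization argument of \cite[Theorem 3.7]{LOGT2014} (which, as you note, uses only orthonormality/isotropy and the Rayleigh bound rather than the eigenvector property) to obtain disjointly supported $\psi_i$ with $\mathcal{R}_{\widetilde G}(\psi_i)\lesssim k^6\tau$, and finish with the sweep-cut bound \cite[Lemma 2.2]{LOGT2014}. Your observation that $\psi_i=\theta_i\|F\|$ vanishes at $v_\star$ is the same point the paper makes by citing \cite[Lemma 3.5]{LOGT2014} for $\mathrm{supp}(\psi_i)\subseteq\mathrm{supp}(F)$, and this is what excludes $v_\star$ from the $S_i$.
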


\begin{lemma}\label{lem:dual-cheeger}
Let $\widetilde G=(\widetilde V,\widetilde E, w)$ be a finite undirected weighted graph Fix $k \in \N$ and $\tau >0$. Let $\{f_1, f_2,\ldots,f_k\}$ be a collection of orthonormal vectors in $\ell^2(\widetilde V,w)$ such that, for all $i \in \set{k}$, 
\[
\overline{\mathcal{R}}_{\widetilde G}(f_i) \leq \tau \qquad \text{ and } \qquad f_i(v_\star)=0, \text{ for some } v_\star \in \widetilde V. 
\]
Then, there exist and absolute constant $C < \infty$ and pairwise disjoint subsets $\{S_i\}_{i \in \set{2k}}$ of $\widetilde V\setminus \{v_\star\}$, with $S_{2i-1} \cup S_{2i} \neq \emptyset$ such that  $\bar\phi_{\widetilde G}(S_{2i-1}, S_{2i}) \geq (1-C k^3 \sqrt{\tau})$ for all $i \in \set{k}$. 
\end{lemma}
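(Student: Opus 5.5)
The plan is to reduce the statement to the higher order dual Cheeger machinery of \cite{Liu2015}, which builds on \cite{LOGT2014}, and to check that the vertex $v_\star$ is harmless throughout. First I would record the elementary identity linking the dual conductance to the bipartiteness ratio. For disjoint $S,T$ with $S\cup T\neq\emptyset$ set
\[
\beta_{\widetilde G}(S,T):=\frac{2w(E(S))+2w(E(T))+w(E(S\cup T,\overline{S\cup T}))}{w(S\cup T)}.
\]
Since $w(S\cup T)=2w(E(S))+2w(E(T))+2w(E(S,T))+w(E(S\cup T,\overline{S\cup T}))$, this gives $\bar\phi_{\widetilde G}(S,T)=1-\beta_{\widetilde G}(S,T)$. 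It therefore suffices to produce $k$ pairs $(S_{2i-1},S_{2i})$, all pairwise disjoint and avoiding $v_\star$, with $\beta_{\widetilde G}(S_{2i-1},S_{2i})\leq Ck^3\sqrt{\tau}$.

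Second, I would treat the one-dimensional case, which is Trevisan's dual Cheeger inequality \cite{Trev12}. Given $\psi:\widetilde V\to\R$ with $\psi(v_\star)=0$, sweep over $t>0$ with $S_t=\{\psi>t\}$ and $T_t=\{\psi<-t\}$. The standard co-area and Cauchy--Schwarz argument, with $t^2$ drawn uniformly from $[0,\max\psi^2]$, gives some $t$ with $\beta_{\widetilde G}(S_t,T_t)\leq\sqrt{2\overline{\mathcal R}_{\widetilde G}(\psi)}$. Because $\psi(v_\star)=0$ and $t>0$, both sets automatically avoid $v_\star$, and moreover $S_t\cup T_t\subseteq\operatorname{supp}\psi$.

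Third, I would localize in order to pass from $k$ functions to $k$ disjointly supported ones, following \cite{LOGT2014} as adapted in \cite{Liu2015}.
\begin{enumerate}
\item Form the spectral embedding $F=(f_1,\ldots,f_k):\widetilde V\to\R^k$. Orthonormality gives $\sum_v w(v)\|F(v)\|^2=k$, and the hypothesis gives $\overline{\mathcal R}_{\widetilde G}(F)\leq\tau$. Note that $F(v_\star)=0$.
\item Measure distances between vertices with $F(v)\neq0$ by the radial distance taken modulo antipodes, $d(u,v)=\min_\pm\bigl\|F(u)/\|F(u)\|\mp F(v)/\|F(v)\|\bigr\|$. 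Working modulo $\pm$ is exactly what the dual problem requires, since the plus-type quantity $\|F(u)+F(v)\|$ is small precisely when $F(u)$ and $-F(v)$ are close.
\item Use the spreading property, which comes from isotropy of the orthonormal family, together with a random partition of $\mathbb{RP}^{k-1}$ of small diameter. This yields $k$ regions $A_1,\ldots,A_k$, each carrying $\gtrsim 1/k$ of the total mass $\sum_v w(v)\|F(v)\|^2$ and pairwise separated by $\gtrsim 1/k$ in $d$.
\item Define smooth cutoffs $\psi_i(v)=\sigma_v\,\|F(v)\|\max\{0,1-k\cdot d(v,A_i)\}$, where the sign $\sigma_v\in\{\pm1\}$ records on which side of the antipodal pair the point $F(v)/\|F(v)\|$ lies relative to $A_i$.
\end{enumerate}
These $\psi_i$ have pairwise disjoint supports, vanish at $v_\star$, and satisfy $\overline{\mathcal R}_{\widetilde G}(\psi_i)\lesssim k^{6}\tau$. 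This bound comes from a Lipschitz estimate of the form $\|\psi_i(u)+\psi_i(v)\|\lesssim k\,\|F(u)+F(v)\|$ for the relevant sign choice, combined with the mass lower bound $1/k$. Applying the second step to each $\psi_i$ then gives the disjoint pairs, with $\beta\leq Ck^3\sqrt{\tau}$, as required.

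I expect the main obstacle to be the localization step, specifically the antipodal Lipschitz bound. When $u$ and $v$ fall into opposite sign classes, one must check that $\psi_i(u)+\psi_i(v)$ is controlled by $\|F(u)+F(v)\|$ rather than by $\|F(u)-F(v)\|$. A further, smaller point is making $\sigma_v$ consistent near the boundary of each cap. I would first try to import this verbatim from \cite[Section~4]{Liu2015}, checking only that the zero of $F$ at $v_\star$ just removes $v_\star$ from every support. If that import is awkward, the fallback is the bipartite double cover $\widehat G$ of $\widetilde G$, with the two copies of $v_\star$ identified, carrying $g(v,\pm)=\pm f(v)$, so that $\mathcal R_{\widehat G}(g)=\overline{\mathcal R}_{\widetilde G}(f)$. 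One would then apply Lemma \ref{lem:cheeger} on $\widehat G$ and project the resulting sets back to pairs. The cost of this route is handling sets that contain both lifts of the same vertex.
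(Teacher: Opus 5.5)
Your main route is the paper's: it also forms $F=(f_1,\dots,f_k)$, notes $F(v_\star)=0$ and $\overline{\mathcal{R}}_{\widetilde G}(F)\le\tau$, and then runs Liu's projective-space localization (the proof of \cite[Theorem 6.1]{Liu2015} together with \cite[Lemma 6.2]{Liu2015}) followed by Trevisan's sweep to obtain disjoint pairs inside ${\rm supp}(F)$ with $(1-\bar\phi_{\widetilde G})^2\le Kk^6\tau$, exactly as you propose. One caveat is that the intermediate parameters you quote (mass fraction and separation both $\gtrsim 1/k$, cutoff slope $k$) are more optimistic than what the LOGT/Liu partitioning gives, and they would yield a bound better than $k^6$; the regions actually obtained are much more weakly separated and/or lighter, and that loss is what produces $k^6$, but this does not affect your argument since you import the localization wholesale.
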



Although the main results of \cite{LOGT2014} and in \cite{Liu2015} are about higher order cheeger and dual cheeger inequalities involving the small and large eigenvalues of a normalized laplacian of any finite weighted graph, respectively, the experts in the area will recognize that Lemmas \ref{lem:cheeger} and \ref{lem:dual-cheeger} are essentially established in \cite{LOGT2014} and \cite{Liu2015}, respectively, during the course of the proofs of the main results in those papers. 

We provide an outline of the proofs of Lemmas \ref{lem:cheeger} and \ref{lem:dual-cheeger} for completeness: For $v \in \widetilde V$, set $F(v):=(f_1(v), f_2(v), \ldots, f_k(v))$. Notice that, by assumption, $F(v_\star)=0$. As,
\[
\|F(u)\pm F(v)\|^2 = \sum_{i \in \set{k}} (f_i(u) \pm f_i(v))^2 \qquad \text{ and } \|F(v)\|^2 = \sum_{i \in \set{k}} f_i^2(v),
\]
for any $u, v \in \widetilde V$, it follows that $\mathcal{R}_{\widetilde{G}}(F) \leq \tau$, under the assumptions of Lemma \ref{lem:cheeger}, while $\overline{\mathcal{R}}_{\widetilde{G}}(F) \leq \tau$, under the assumptions of Lemma \ref{lem:dual-cheeger}. 

Repeating the proof of \cite[Theorem 3.7]{LOGT2014} one obtains disjointly supported functions $\psi_1, \psi_2, \ldots, \psi_k: \widetilde V \mapsto \R$ such that $\max_{i \in \set{k}} \mathcal{R}_{\widetilde G}(\psi_i) \leq K k^6 \mathcal{R}_{\widetilde G}(F)$, for some absolute constant $K< \infty$. Although the proof of \cite[Theorem 3.7]{LOGT2014} proceeds with $\{f_i\}_{i \in \set{k}}$ being the set of orthonormal eigenvectors corresponding to the bottom $k$ eigenvalues of the normalized Laplacian of $\widetilde G$, the same proof works for any $k$-set of orthonormal vectors in $\ell^2(\widetilde V, w)$. It further follows from \cite[Lemma 3.5]{LOGT2014} that ${\rm supp}(\psi_i) \subseteq {\rm supp}(F)$ for all $i \in \set{k}$. Since for any  $\psi: \widetilde V \mapsto \mathscr{H}$, there exists some $S \subseteq {\rm supp}(\psi)$ such that $\phi_{\widetilde G}(S) \leq \sqrt{2 \mathcal{R}_{\widetilde G}(\psi)}$ (see \cite[Lemma 2.2]{LOGT2014}) we have the desired conclusions in Lemma \ref{lem:cheeger}.  

To obtain Lemma \ref{lem:dual-cheeger} one proceeds as in the proof of \cite[Theorem 6.1]{Liu2015} and uses \cite[Lemma 6.2]{Liu2015} to find pairwise disjoint subsets $\{S_i\}_{i \in \set{2k}}$ of ${\rm supp}(F)$, with $S_{2i-1} \cup S_{2i} \neq \emptyset$ such that $(1-\bar\phi_{\widetilde G}(S_{2i-1}, S_{2i}))^2 \leq K k^6 \overline{\mathcal{R}}_{\widetilde G}(F)$ for all $i \in \set{k}$, where $K< \infty$ is some absolute constant. This yields Lemma \ref{lem:dual-cheeger}.

We now proceed to prove Propositions \ref{prop:cheeger-applied} and \ref{prop:dual-cheeger-applied}.

\begin{proof}[Proof of Proposition \ref{prop:cheeger-applied}]
Set $\widetilde V:= V \cup \{v_\star\}$, where $v_\star$ is a `ghost' vertex. Set $w(u,v)=1$ for every edge $uv \in E$, and $w(v,v_\star)= d-\deg_{G}(v)$ for every $v \in V$. This defines the weighted graph $\widetilde G$. Observe that with this choice of $w$ we have $w(v)= d$ for $v \in V$. 

Let $\{e_i\}_{i \in \set{k}}$ be a set of orthogonal eigenvectors corresponding to $\{\widetilde \lambda_i(G)\}_{i \in \set{k}}$ of unit $\ell^2$-norm. Since for any real valued vector $x \in \R^{V}$ of unit $\ell^2$-norm,
\begin{equation}\label{eq:G-to-wtG}
\sum_{u \sim v} (x_u \pm x_v)^2 + \sum_{v \in V} (d-\deg_{G}(v)) x_v^2 = d \pm x^{\sf T} A(G) x,
\end{equation}
setting $\widehat f_i$ to be $e_i$ appended with a zero corresponding to $v_\star \in \widetilde V$, and $f_i:= \widehat f_i/\sqrt{d}$ we find that 
\[
\langle f_i, f_j \rangle_{\ell^2(\widetilde V, w)} = \delta_{i,j}, \quad \mathcal{R}_{\widetilde G}(f_i) = \mathcal{R}_{\widetilde G}(\widehat f_i) \leq \tau_0, \qquad \text{ for } i,j \in \set{k}. 
\]
Hence, applying Lemma \ref{lem:cheeger} we obtain pairwise disjoint subsets $\{S_i\}_{i \in \set{k}}$ of $V$ such that 
\[
\frac{e(G[S_i, V\setminus S_i]) + \sum_{v \in V} (d-\deg_{G}(v)) }{d |S_i|}=\phi_{\widetilde G}(S_i) \leq C k^3 \sqrt{\tau_0} \quad \text{ for } i \in \set{k}.
\]
Since $2 e(G[S_i])= \sum_{v \in S_i} \deg_{G}(v)$, the proof is now complete. 
\end{proof}

\begin{proof}[Proof of Proposition \ref{prop:dual-cheeger-applied}]
Let $\widetilde G$ be as in the proof of Proposition \ref{prop:cheeger-applied}. Let $\{\bar e_i\}_{i \in \set{k}}$ be a set of orthogonal eigenvectors corresponding to $\{\widetilde \lambda_{|V(G)|-i+1}(G)\}_{i \in \set{k}}$ of unit $\ell^2$-norm. Set $\widehat f_i$ to be $e_i$ appended with a zero corresponding to $v_\star \in \widetilde V$ and $f_i:= \widehat f_i/\sqrt{d}$. Using \eqref{eq:G-to-wtG} we find that 
\[
\langle f_i, f_j \rangle_{\ell^2(\widetilde V, w)} = \delta_{i,j}, \quad \overline{\mathcal{R}}_{\widetilde G}(f_i) = \mathcal{R}_{\widetilde G}(\widehat f_i) \leq \tau_0, \qquad \text{ for } i,j \in \set{k}. 
\]
Therefore, applying Lemma \ref{lem:dual-cheeger} we obtain pairwise disjoint subsets $\{S_i\}_{i \in \set{2k}}$ of $V$ such that $S_{2i-1} \cup S_{2i} \neq \emptyset$, and 
\begin{equation}\label{eq:bar-phi-to-biclique}
\min\left\{|S_{2i-1}|\cdot |S_{2i}|, d |S_{2i-1}|, d |S_{2i}|\right\} \geq e(G[S_{2i-1}, S_{2i}]) \geq (1-Ck^3 \sqrt{\tau_0}) \cdot \frac{d (|S_{2i-1}|+|S_{2i}|)}{2},
\end{equation}
for all $i \in \set{k}$, where in the leftmost inequality we have also used that the maximum degree in $G$ is at most $d$. To finish the proof, 
upon using \eqref{eq:bar-phi-to-biclique} it follows that 
\begin{equation}\label{eq:S-ratio}
\frac{1-Ck^3 \sqrt{\tau_0}}{1+C k^3 \sqrt{\tau_0}} \leq \frac{|S_{2i-1}|}{|S_{2i}|} \leq \frac{1+Ck^3 \sqrt{\tau_0}}{1- C k^3 \sqrt{\tau_0}}.
\end{equation}
Plugging the above bounds in \eqref{eq:bar-phi-to-biclique} it further yields 
\begin{equation}\label{eq:S-ratio1}
\frac{\min\{|S_{2i-1}|, |S_{2i}|\}}{d} \geq \frac{(1-Ck^3 \sqrt{\tau_0})}{(1+C k^3 \sqrt{\tau_0})}. 
\end{equation}
The upper and lower bounds in \eqref{eq:S-ratio} also imply that 
\begin{equation}\label{eq:S-ratio2}
\frac{|S_{2i-1}|+|S_{2i}|}{2} \geq \frac{\max\left\{|S_{2i-1}|,|S_{2i}|\right\}}{1+C k^3\sqrt{\tau_0}}.
\end{equation}
Upon combining \eqref{eq:bar-phi-to-biclique}, \eqref{eq:S-ratio1}, and \eqref{eq:S-ratio2} the proof completes. 
\end{proof}



\subsection{Structural rigidity in near regular graphs with a few large eigenvalues}
This section contains two results about near regular graphs that have at most two large eigenvalues and a negligible $\ell^2$ mass for the rest of the eigenvalues. The first result shows that there does not exist any graph with the above assumption if the two large eigenvalues are such that their absolute values are separated. The second result shows that if there is only large eigenvalue then \dteal{the graph} must be close to a clique. These are the remaining pieces for completing the proof of Proposition \ref{prop:loc-structure}.

\begin{lemma}\label{lem:neg-theta}
Let $G=(V,E)$ be a finite undirected simple graph with maximum degree at most $d$. 
Fix $\tau >0$ and $\theta \in (-1,1)\setminus \{0\}$. Assume 
\begin{equation}\label{eq:deg-lbd}
\sum_{v \in V} (d-\deg_G(v)) \leq \tau |V|. 
\end{equation}
Then, there exists some $\uptau_\theta >0$, depending only on $\theta$, such that for any $\tau \in (0, \uptau_\theta)$ there does not exist any graph $G$ such that 
\begin{equation}\label{eq:eig-neg-theta}
\lambda_1(G) \geq (1-\tau)d, \quad \lambda_2(G) \in [(\theta-\tau)d, (\theta+\tau)d], \quad \text{ and } \quad \sum_{i =3}^{|V|} \lambda_i^2(G) \leq \tau d^2. 
\end{equation}
\end{lemma}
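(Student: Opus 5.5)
The plan is to show that, under \eqref{eq:deg-lbd} and \eqref{eq:eig-neg-theta}, the adjacency matrix is forced to be Frobenius-close to a rank-two matrix with an explicit structure. Because $A(G)$ has $\{0,1\}$-valued entries, the second eigenvector must then be essentially two-valued. A short algebraic case analysis on those two values will show this is only possible when $\theta\in\{0,\pm 1\}$, which are exactly the excluded values.

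\emph{Step 1: the top eigenvector is nearly constant.} Write $N:=|V|$ and $\mathbf{1}$ for the all-ones vector, and let $\{e_i\}$ be an orthonormal eigenbasis of $A(G)$, ordered as the $\lambda_i(G)$. By \eqref{eq:deg-lbd},
\[
\mathbf{1}^{\sf T}A(G)\mathbf{1}=\sum_v \deg_G(v)\geq (d-\tau)N .
\]
Expand $\mathbf{1}/\sqrt N=\sum_i c_ie_i$. We have $\sum_i c_i^2=1$, and for $i\geq 3$ the bound $\sum_{i\ge 3}\lambda_i^2\le\tau d^2$ gives $|\lambda_i|\le \sqrt\tau d$. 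Hence $\sum_i c_i^2\lambda_i\geq (1-\tau/d)d$ forces $c_1^2\geq 1-O(\tau^{1/2}/(1-|\theta|))$ (note $\lambda_1\ge(1-\tau)d$ and $\lambda_2\approx\theta d$ is bounded away from $d$). Consequently $\|e_1-\mathbf{1}/\sqrt N\|=O_\theta(\tau^{1/4})$, and $e_2$ is nearly orthogonal to $\mathbf{1}$.

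Next, comparing traces, $2e(G)=\sum_i\lambda_i^2$ lies in $[(1-\tau)dN,\ (1+\theta^2+O(\tau))d^2]$ from above and $\ge (1+\theta^2-O(\tau))d^2$ from below, while $2e(G)=\sum_v\deg_G(v)\in[(d-\tau)N,\,dN]$. Together these give $d/N=a+O_\theta(\tau^{1/2})$ with $a:=1/(1+\theta^2)$.

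\emph{Step 2: reduction to a two-valued vector.} Write
\[
A(G)=\lambda_1e_1e_1^{\sf T}+\lambda_2e_2e_2^{\sf T}+R,\qquad \|R\|_F^2\leq\tau d^2 .
\]
Replacing $e_1$ by $\mathbf{1}/\sqrt N$ and $\lambda_i$ by their limiting values costs only $o_\tau(1)\,d^2$ in squared Frobenius norm. Set $w:=\sqrt N e_2$, so that $\frac1N\sum_v w_v^2=1$ and $\frac1N\sum_v w_v\approx 0$. Then
\[
\sum_{u,v}\Bigl(A_{uv}-a\bigl(1+\theta w_uw_v\bigr)\Bigr)^2=o_\tau(1)\,N^2 .
\]
Since $A_{uv}\in\{0,1\}$, for all but $o_\tau(1)N^2$ pairs we get $\theta w_uw_v\in\{-1,\theta^2\}$ up to $o_\tau(1)$.

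The main obstacle is making this quantitative at the level of the empirical distribution of $w$, because $w$ could a priori have a few huge entries. To handle this, I would first use $\frac1N\sum w_v^2=1$ and Chebyshev to discard the $O(K^{-2})N$ vertices with $|w_v|>K$. Then I would pick a typical vertex $u$ whose row is good for most $v$. The values $w_v=(\text{one of two targets})/(\theta w_u)$ then show that, outside an $o_\tau(1)$-fraction, $w$ concentrates near at most two values $x,y$, with weights $p,1-p$. Compactness in $(x,y,p)$ as $\tau\downarrow0$ will then produce exact limiting constraints.

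\emph{Step 3: algebraic contradiction.} The limiting constraints are: $px+(1-p)y=0$, $px^2+(1-p)y^2=1$, and for each pair of values that actually occurs (namely $x^2$, $y^2$, and $xy$ whenever $p\in(0,1)$), $\theta\cdot(\text{product})\in\{-1,\theta^2\}$. If only one value carries mass, the mean-zero and unit-mean-square conditions are incompatible. Otherwise $x<0<y$, so $xy<0$.

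If $\theta>0$, then $x^2,y^2\in\{\theta\}$ forces $x=-y$. This gives $\theta xy=-\theta^2$, which must equal $-1$, i.e. $\theta=1$, a contradiction.

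If $\theta<0$, then $\theta xy>0$ forces $\theta xy=\theta^2$, i.e. $xy=\theta$. Also $x^2,y^2\in\{-1/\theta,\,-\theta\}$ (as $\theta^2/\theta=\theta<0$ is impossible for a square). Combined with $xy=\theta$ this forces $\{x^2,y^2\}=\{-1/\theta,-\theta\}$. Then $p=y^2/(x^2+y^2)$ from the mean-zero condition, and substituting into the unit-mean-square condition gives $-xy=1$, i.e. $\theta=-1$, again excluded.

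Choosing $\uptau_\theta$ small enough that all the $o_\tau(1)$ errors stay below the distance of $\theta$ from these forbidden configurations completes the argument.
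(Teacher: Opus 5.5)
Your strategy works, and it is a genuinely different route from the paper's, at least for $\theta<0$. Both arguments share your Steps 1--2: the top eigenvector is nearly constant, and $A(G)$ is Frobenius-close to $a(J+\theta ww^{\sf T})$, which is the paper's \eqref{eq:second-evec-neg-theta}. From there the paper splits by sign.

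\textbf{The paper, $\theta<0$.} It never needs $w$ to be two-valued. With $S=\{w_2\ge 0\}$, the product $\theta w_2(v)w_2(v')$ is $\le 0$ inside $S$ and inside $\bar S$, so almost all edges cross. Then $|S|\,|\bar S|\ge e(G[S,\bar S])\approx \tfrac{1+\theta^2}{2}d^2>|V|^2/4$, because $|V|\approx(1+\theta^2)d$ and $\theta^2<1$.

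\textbf{The paper, $\theta>0$.} It follows essentially your route. It shows $w_2\approx\pm\sqrt{\theta/((1+\theta^2)d)}$ on the two sign classes, i.e. $w\approx\pm\sqrt\theta$ in your normalization. Non-edges across the classes are costly, so one class is small, and orthogonality to $w_1$ forces the classes to be balanced.

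\textbf{Your route.} The typical-row argument plus compactness treats both signs at once and shows transparently why exactly $\theta\in\{0,\pm1\}$ are the obstructions. The cost is a non-explicit $\uptau_\theta$ (enough for the statement) and some bookkeeping that the paper's sign argument avoids for $\theta<0$.

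\textbf{Two points need repair.}

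First, Chebyshev truncation bounds the \emph{number} of vertices with $|w_v|>K$, not their share of $\frac1N\sum_v w_v^2$. A set of $N/K^4$ vertices can carry half the mass, so as written you cannot pass to $px^2+(1-p)y^2=1$ in the limit. You use that identity both in the one-value case and in your $\theta<0$ case. The fix is the paper's \eqref{eq:w_2-ell-infty-norm}. From the eigenvalue equation and Cauchy--Schwarz, $|\lambda_2|\,|e_2(v)|\le\sqrt{\deg_G(v)}\le\sqrt d$, so $\|w\|_\infty\le\sqrt{N/d}/(|\theta|-\tau)=O_\theta(1)$. Hence there are no huge entries at all, the exceptional $o_\tau(1)$-fraction carries $o_\tau(1)$ mass, and compactness in $(x,y,p)$ is immediate. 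Alternatively, restricting the Frobenius bound to $X\times X$ gives $a|\theta|\sum_{v\in X}w_v^2\le |X|+o(N)$.

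Second, the algebra in your $\theta<0$ case is wrong. For $\theta<0$, $\theta x^2\in\{-1,\theta^2\}$ forces $x^2=-1/\theta$; the value $-\theta$ is not a solution of either equation. So $x^2=y^2=-1/\theta$, and together with $xy=\theta$ this gives $\theta^4=1$ at once, with no second-moment input needed. Your claim $\{x^2,y^2\}=\{-1/\theta,-\theta\}$ is false. The mean-zero condition gives $p=y/(y-x)$, not $y^2/(x^2+y^2)$. Your final identity $px^2+(1-p)y^2=-xy$ still holds for any mean-zero two-point law. So the route ``$xy=\theta$ from the cross pairs, $xy=-1$ from the moments'' is valid once the second-moment transfer above is fixed.
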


\dteal{Before starting the proof let us describe the overall strategy. First, using the near regularity of $G$, quantified via \eqref{eq:deg-lbd}, we will show that the leading eigenvector should be close to the constant vector in $\ell^2$-norm. Building on this step, for $\theta <0$, we will argue that most of the edges in $G$ should only be between two subsets of $V$ determined by the signs of the eigenvector corresponding to $\lambda_2(G)$. We will see that $|\theta| <1$ will then result in a contradiction on the product of the size of these two subsets. For $\theta >0$, the situation is different: repeating the same argument one now obtains $G$ must consist of two loosely connected cliques whose vertex sets are the same as above. 
As $\langle w_2, {\bf 1}_V \rangle \approx \langle w_2, w_1\rangle = 0$ this will further imply that the cliques are almost of equal sizes. However, as $\theta < 1$, we one can further obtain that the minimum of these two vertex sets must be $o(d)$ and hence $|V|=o(d)$, which gives rise to a contradiction due to the near regularity and the lower bound on $e(G)$. Below we carry out the details.

}

\begin{proof}[Proof of Lemma \ref{lem:neg-theta}]
If possible, let us assume that there exists a graph with \eqref{eq:deg-lbd} and \eqref{eq:eig-neg-theta}. Let $\{w_i\}_{i=1}^{|V|}$ be an orthonormal set of eigenvectors of corresponding to $\{\lambda_i(G)\}_{i=1}^{|V|}$. Therefore,  its adjacency matrix 
\begin{equation}\label{eq:def-A-B}
\dteal{A(G)= \sum_{i=1}^{2} \lambda_i(G) w_i w_i^{\sf T} +B, \quad \text{ where }\quad B:=\sum_{i=3}^{|V|} \lambda_i(G) w_i w_i^{\sf T}.} 
\end{equation}
We claim that, for any $\tau >0$ sufficiently small,
\begin{equation}\label{eq:w_1-almost-constant-theta}
\sum_{v \in V} \left(w_1(v) - \frac{1}{\sqrt{(1+\theta^2) d}}\right)^2 \leq \tau^{1/3}.
\end{equation}
\dteal{Note that \eqref{eq:w_1-almost-constant-theta} shows that $w_1$ is an almost constant vector.} 

Turning to prove \eqref{eq:w_1-almost-constant-theta}, as a first step we will show that
\begin{equation}\label{eq:w_1-const}
\sum_{i=2}^{|V|} \langle {\bf 1}_V, w_i \rangle^2 = \left\| \dteal{{\bf 1}_V} - \langle {\bf 1}_V, w_1 \rangle w_1 \right\|^2 \leq 11 \sqrt{\tau} d.
\end{equation}
As $\{w_i\}_{i=1}^{|V|}$ are orthonormal the equality in \eqref{eq:w_1-const} is obvious. To prove the inequality in \eqref{eq:w_1-const}, writing $c_i:=\langle \dteal{{\bf 1}_V}, w_i\rangle$, for $i \in \set{|V|}$ and using orthonormality again, we observe that $\sum_{i=1}^{|V|} c_i^2 =|V|$. Using \eqref{eq:eig-neg-theta}, and the fact that $\lambda_1(G) \leq d$ (as the maximum degree is at most $d$) we also obtain
\begin{equation}\label{eq:c-i-bd}
( 1+\theta^2-4\tau)d^2 \leq \lambda_1^2(G)+\lambda_2^2(G) \leq 2 e(G)= \dteal{{\bf 1}^{\sf T}_V} A(G) \dteal{{\bf 1}_V} = \sum_{i=1}^{|V|} \lambda_i c_i^2 \leq d |V| - \sum_{i=2}^{|V|} (\lambda_1(G) - \lambda_i(G)) c_i^2.
\end{equation}
Next we need to find a good upper bound on $|V|$. We use \eqref{eq:deg-lbd}, \eqref{eq:eig-neg-theta}, and the fact $ |\lambda_1(G)|\leq d$ to deduce
\[
(1-\tau) d|V| \leq  d|V| - \sum_{v \in V}(d -\deg_G(v)) \leq 2e(G) =\sum_{i=1}^{|V|} \lambda_i^2(G) \leq (1+\theta^2+ 3\tau)d^2. 
\]
This, in turn implies that 
\begin{equation}\label{eq:S-barS-sum}
|V| \leq (1+\theta^2 +7 \tau)d, \quad \text{ for any } \tau >0 \text{ sufficiently small}. 
\end{equation}

%


Since, $\min_{i \ge 2} (\lambda_1(G) -\lambda_i(G)) \geq c_\theta d$, for $\tau$ sufficiently small, where $c_\theta:= (1- \max\{\theta, 0\})/2$ (use \eqref{eq:eig-neg-theta} again), from \eqref{eq:c-i-bd} and \eqref{eq:S-barS-sum} it further follows that 
\[
\sum_{i=2}^{|V|} c_i^2 \leq c_\theta^{-1} \left(|V| - \frac1d\left(\lambda_1^2(G)+\lambda_2^2(G)\right)\right) \leq 11 \sqrt{\tau} d,
\]
for $\tau$ sufficiently small depending on $c_\theta$. 

As the maximum degree is at most $d$ and $2e(G) \geq (1+\theta^2-4\tau)d^2$ (see \eqref{eq:c-i-bd}) we further deduce that $|V| \geq (1+\theta^2-4\tau)d$. Using this lower bound on $|V|$, as $w_1$ is the Perron eigenvector and therefore all its entries are nonnegative, we obtain from \eqref{eq:w_1-const} 
\begin{equation}\label{eq:V-w_1}
\left||V|^{1/2}- \langle {\bf 1}_V, w_1 \rangle \right|\leq  \frac{\sum_{i=2}^{|V|} \langle {\bf 1}_V, w_i \rangle^2}{|V|^{1/2}} \leq 11 \sqrt{\tau d}.
\end{equation}
Hence, \dteal{by triangle inequality},
\begin{multline*}
\left\|w_1 - \frac{1}{\sqrt{(1+\theta^2)d}} {\bf 1}_V\right\| \\
\leq \frac{1}{\sqrt{(1+\theta^2) d}} \left\| {\bf 1}_V - \langle {\bf 1}_V, w_1 \rangle w_1 \right\| + \frac{1}{\sqrt{(1+\theta^2)d}}\left||V|^{1/2} - \langle {\bf 1}_V, w_1 \rangle\right|
+ \left| \sqrt{\frac{|V|}{(1+\theta^2) d}}-1 \right| 
\leq 21 \tau^{1/4},
\end{multline*}
which yields \eqref{eq:w_1-almost-constant-theta}. 

Next, \dteal{recalling  \eqref{eq:def-A-B}, we note} 
\[
-B_{v,v'}=  d w_1(v)w_1(v') +\theta d w_2(v)w_2(v') -(d-\lambda_1(G)) w_1(v)w_1(v') -(\theta d-\lambda_2(G)) w_2(v)w_2(v') - {\bf 1}_{(v,v') \in E}.
\] 
\dteal{This yields} 
\begin{multline*}
\sum_{v,v' \in V} \left( {\bf 1}_{(v,v') \in E} -\frac{1}{1+\theta^2}- \theta dw_2(v) w_2(v')\right)^2 \\
\lesssim \|B\|_{{\rm HS}}^2 + (d-\lambda_1(G))^2 + (\theta d-\lambda_2(G))^2 + \sum_{v,v' \in V} \left(d w_1(v)w_1(v') -\frac{1}{1+\theta^2}\right)^2 \\
\leq 3\tau d^2 +  \left| 1+ \sqrt{\frac{|V|}{(1+\theta^2) d}}\right|^2 \cdot \left\|w_1 -  \frac{1}{\sqrt{(1+\theta^2) d}}{\bf 1}\right\|^2\cdot d^2 \leq 3\tau^{1/3} d^2,
\end{multline*}
where \dteal{$\|\cdot\|_{{\rm HS}}$ denotes the Hilbert-Schmidt norm, and} in the penultimate step we used the fact that  
\[
\|\widetilde w \widetilde w^{\sf T} - \widehat w \widehat w^{\sf T}\|_{{\rm HS}}^2 \leq (\|\widetilde w\|+\|\widehat w \|)^2 \|\widetilde w - \widehat w\|^2,
\]
for any two vectors $\widetilde w$ and $\widehat w$, 
and in the last step we used the tight upper and lower bound on $|V|$ and \eqref{eq:w_1-almost-constant-theta}. 

Thus, we have derived that
\begin{equation}\label{eq:second-evec-neg-theta}
\sum_{(v,v') \in E(G)} \left(\frac{\theta^2}{1+\theta^2} - \theta d w_2(v) w_2(v')\right)^2  + \sum_{(v,v') \notin E(G)} \left(\frac{1}{1+\theta^2} + \theta d w_2(v) w_2(v')\right)^2\leq \tau^{1/4} d^2.
\end{equation}

Now we need to consider the cases $\theta >0$ and $\theta <0$ separately. First, let us consider the case $\theta <0$. 

Set $S:=\{v \in V: w_2(v) \geq 0\}$. As $\theta <0$, and as $v, v' \in S$ or $v,v' \in \bar S\dteal{=: V\setminus S}$ implies that $w_2(v)w_2(v') \geq 0$, we find from \eqref{eq:second-evec-neg-theta} that
\begin{equation}\label{eq:within-edges}
e(G[S]) + e(G[\bar S]) \leq 4 \theta^{-4} \tau^{1/4} d^2.
\end{equation}
\dteal{This indeed shows that $G$ is close to a bipartite graph with almost all its edges are between $S$ and $\bar S$.} 

\dteal{By \eqref{eq:within-edges}, using the lower bound on $e(G)$, we next find that} 
\[
|S| \cdot |\bar S| \geq e(G[S,\bar S]) \geq e(G) - e(G[S])-e(G[\bar S]) \geq \left( \frac{1+\theta^2}{2} -\tau^{1/5}\right)d^2,
\]
for $\tau >0$ sufficiently small. Hence, using the lower bound $|V| \geq (1+\theta^2-4\tau)d$, and as $|\theta| < 1$,
\begin{equation}\label{eq:S-impossible}
\frac{|S|}{|V|} \cdot \frac{|\bar S|}{|V|} \geq \frac{1}{2(1+\theta^2)} \cdot \left( 1 - \tau^{1/6}\right) > \frac14,
\end{equation}
for $\tau >0$ sufficiently small depending on $\theta$. However, since $|S|+|\bar S| =|V|$, \eqref{eq:S-impossible} is an impossibility, implying that no such graph exists. 

It remains to treat the case $\theta \in (0,1)$. To \dteal{implement the strategy mentioned above}, we need the following upper bound on the $\ell^\infty$-norm of $w_2$:
\begin{equation}\label{eq:w_2-ell-infty-norm}
\|w_2\|_\infty \leq 2/|\theta|\sqrt{d}.
\end{equation}
The proof of \eqref{eq:w_2-ell-infty-norm} is straightforward. Indeed, fix any $v \in V$. From the eigenvector-eigenvalue equation, and applying Cauchy-Schwarz inquality we have
\[
\left|\lambda_2(G) w_2(v)\right| = \left|\sum_{u: u \sim v}  w_2(u)\right| \leq \sqrt{\deg_G(v)} \sqrt{\sum_{u: \dteal{(u, v)\in E}} w_2^2(u)} \leq \sqrt{d},
\]
where in the last step we have used that the degree is bounded above by $d$ and $w_2$ is of unit $\ell^2$-norm. Now the bound \eqref{eq:w_2-ell-infty-norm} follows immediately from the lower bound on $|\lambda_2(G)|$ obtained from \eqref{eq:eig-neg-theta}. 

\dteal{Continuing with the proof}, by \eqref{eq:second-evec-neg-theta}, as $\theta >0$,
\begin{equation}\label{eq:theta-pos}
e(G[S, \bar S]) +\binom{|S|}{2} -e(G[S]) + \binom{|\bar S|}{2} -e(G[\bar S]) \leq \tau^{1/5}d^2,
\end{equation}
for $\tau$ sufficiently small depending on $\theta$. \dteal{Notice that this indeed shows that  $G[S]$ and $G[\bar S]$ are loosely connected, and these are essentially cliques. 

\dteal{Next} the bound \eqref{eq:theta-pos} together} with \eqref{eq:w_2-ell-infty-norm} imply that 
\begin{multline}\label{eq:second-evec-structure}
\sum_{v,v' \in S} \left(\frac{\theta^2}{1+\theta^2} - \theta d w_2(v) w_2(v')\right)^2 + \sum_{v,v' \in \bar S} \left(\frac{\theta^2}{1+\theta^2} - \theta d w_2(v) w_2(v')\right)^2 \\
\leq 2 \sum_{(v,v') \in E(G)} \left(\frac{\theta^2}{1+\theta^2} - \theta d w_2(v) w_2(v')\right)^2 
+  2\left(|S|^2 +|\bar S|^2 -2 e(G[S]) - 2 e(G[\bar S])\right)\cdot \left( \theta^4 + \theta^2 d^2 \|w_2\|_\infty^4 \right) \\
\leq \tau^{1/6} d^2,
\end{multline}
for $\tau$ sufficiently small, where we have also used \eqref{eq:second-evec-neg-theta}. 
Since, for any two vectors $\widetilde w$ and $\widehat w$ we have the lower bound
\[
2\|\widetilde w \widetilde w^{\sf T} - \widehat w \widehat w^{\sf T}\|_{{\rm HS}}^2 \geq \|\widetilde w -\widehat w\|^2 \cdot \|\widetilde w +\widehat w\|^2 \geq \min\left\{\|\widetilde w - \widehat w\|^4, \|\widetilde w + \widehat w\|^4\right\},
\]
and as $w_2$ is nonnegative on $S$ and negative on $\bar S$, we further deduce from \eqref{eq:second-evec-structure} that 
\begin{equation}\label{eq:second-evec-structure-1}
\left\| w_2{\bf 1}_S - \sqrt{\frac{\theta}{(1+\theta^2)d}} {\bf 1}_S\right\|^2 +  \left\|w_2{\bf 1}_{\bar S} + \sqrt{\frac{\theta}{(1+\theta^2)d}} {\bf 1}_{\bar S} \right\|^2 \leq  \tau^{1/13}. 
\end{equation}
This, in particular, implies that 
\begin{equation}\label{eq:S-star-bd}
|S\setminus S_\star| + |\bar S \setminus \bar S_\star| \leq \tau^{1/40}d, 
\end{equation}
where 
\[
S_\star:= \left\{v \in S: \left|w_2(v) - \sqrt{\frac{\theta}{(1+\theta^2) d}}\right| \leq  \frac{\tau^{1/40}}{\sqrt{d}}\right\} \, \text{ and } \, \bar S_\star:= \left\{v \in \bar S: \left|w_2(v) + \sqrt{\frac{\theta}{(1+\theta^2) d}}\right| \leq  \frac{\tau^{1/40}}{\sqrt{d}}\right\}.
\]
Upon using the definitions of $S_\star$ and $\bar S_\star$, triangle inequality, and \eqref{eq:w_2-ell-infty-norm} it moreover follows that 
\begin{multline*}
\left| \theta d w_2(v) w_2(v') + \frac{\theta^2}{1+\theta^2} \right| \\
\leq \left|\sqrt{\theta d} w_2(v) - {\frac{\theta}{\sqrt{1+\theta^2}}}\right| \cdot \sqrt{\theta d} \|w_2\|_\infty + \left|\sqrt{\theta d} w_2(v') + \frac{\theta}{\sqrt{1+\theta^2}}\right| \cdot {\frac{\theta}{\sqrt{1+\theta^2}}} \leq \tau^{1/41},
\end{multline*}
for any $v \in S_\star$ and $v' \in \bar S_\star$. Hence, \dteal{by triangle inequality,} 
\begin{multline*}
|S_\star| \cdot |\bar S_\star| \cdot \left(\frac{1-\theta^2}{1+\theta^2} - \tau^{1/41}\right)^2 \leq  \sum_{(v,v') \notin E(G)} \left(\frac{1}{1+\theta^2} + \theta d w_2(v) w_2(v')\right)^2 + e(G[S,\bar S]) \cdot \left( 1 + \theta^2 d^2 \|w_2\|_\infty^4 \right).
\end{multline*}
Therefore, this last observation together with the upper bound on $|V|$, and \eqref{eq:second-evec-neg-theta}, \eqref{eq:w_2-ell-infty-norm}, \eqref{eq:theta-pos}, and \eqref{eq:S-star-bd} imply that $|S| \cdot |\bar S| \leq \tau^{1/41} d^2$, which along with the tight upper and lower bounds on $|V|$ yield
\begin{equation}\label{eq:min-S}
\min\left\{|S|, |\bar S|\right\} \leq \tau^{1/83} d,
\end{equation}
for $\tau$ sufficiently small, depending on $\theta$. 

On the other hand, as $w_2 \perp w_1$, using \eqref{eq:w_1-almost-constant-theta} and \eqref{eq:S-star-bd} we find
\begin{multline*}
\left| \sum_{v \in S_\star \cup  \bar S_\star} w_2(v) \right| \leq \left| w_2^{\sf T} (\dteal{{\bf 1}_V} -\sqrt{(1+\theta^2)d}w_1)\right| + \tau^{1/40}\|w_2\|_\infty d  \leq \|\dteal{{\bf 1}_V} -\sqrt{(1+\theta^2)d} w_1\|  +\tau^{1/41}\sqrt{d}\\
\leq \tau^{1/42} \sqrt{d}.
\end{multline*}
Hence, recalling the definitions of $S_\star$ and $\bar S_\star$ together with the upper bound on $|V|$ we derive from above that
\begin{multline*}
\sqrt{\frac{\theta}{(1+\theta^2) d}} \cdot \left| |S_\star| - |\bar S_\star|\right| \leq \sum_{v \in S_\star} \left|w_2(v) - \sqrt{\frac{\theta}{(1+\theta^2) d}}\right| + \sum_{v \in \bar S_\star} \left|w_2(v) + \sqrt{\frac{\theta}{(1+\theta^2) d}}\right| +\tau^{1/42} \sqrt{d}\\
\leq \tau^{1/43} \sqrt{d}.
\end{multline*}
The last observation combined with \eqref{eq:S-star-bd} and \eqref{eq:min-S} further yield
\[
\max\{|S|, |\bar S|\} \leq \tau^{1/84} d.
\]
Since $|V| =|S|+ |\bar S|$, this is a contradiction to the lower bound $|V| \geq (1+\theta^2-4\tau)d$ implying that such a graph cannot exist. 
The proof of the lemma is now complete. 
\end{proof}

\begin{lemma}\label{lem:only-theta}
Let $G=(V,E)$ be a finite undirected simple graph with maximum degree at most $d$. Fix $\theta \in (0,1)$ and $\tau >0$. Suppose the following bounds hold on the eigenvalues:
\begin{equation}\label{eq:only-theta}
\lambda_1(G) \in [(\theta -\tau)d, (\theta+\tau) d] \qquad \text{ and } \qquad \sum_{i \geq 2} \lambda_i^2(G) \leq \tau d^2.
\end{equation}
Then, there exists some constants $ c_\theta, \bar c_\theta \in (0,1)$, depending only on $\theta$, such that for any $\tau \in [c_\theta/d, \bar c_\theta]$, the graph $G$ contains a subgraph $G_\star$ with minimum degree at least $(\theta -\tau^{1/3})d$. 
\end{lemma}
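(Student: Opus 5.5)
The plan is a rank-one approximation of $A(G)$ followed by degree-based pruning. Let $w_1$ be the unit Perron eigenvector, with nonnegative entries, for $\lambda_1:=\lambda_1(G)$. Since $\lambda_1>0$ is the largest eigenvalue in absolute value, by \eqref{eq:only-theta} we have $\|A(G)-\lambda_1 w_1w_1^{\sf T}\|_{\rm HS}^2=\sum_{i\ge2}\lambda_i^2\le \tau d^2$. The quantity $2e(G)=\sum_i\lambda_i^2$ lies in $[(\theta-\tau)^2d^2,(\theta^2+3\tau)d^2]$.

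The first step is to control $w_1$. As in the proof of \eqref{eq:w_2-ell-infty-norm}, the eigen-equation and Cauchy--Schwarz give $\lambda_1 w_1(v)\le \sqrt{\deg_G(v)}\le\sqrt d$, hence $\|w_1\|_\infty\le 2/(\theta\sqrt d)$. This yields the first-order lower bound on the support: $1=\sum_v w_1(v)^2\le |{\rm supp}(w_1)|\cdot 4/(\theta^2 d)$.

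The key entrywise identity is that for an edge $(v,v')$ we have $(1-\lambda_1w_1(v)w_1(v'))^2\le$ the corresponding HS contribution. Summing over edges, all but $O(\sqrt\tau d^2)$ edges satisfy $\lambda_1 w_1(v)w_1(v')\ge 1-\tau^{1/4}$. Combining this with $\|w_1\|_\infty\le 2/(\theta\sqrt d)$ forces both endpoints of such a good edge to have $w_1\ge \theta\sqrt d(1-\tau^{1/4})/(2\lambda_1)\gtrsim \theta/\sqrt d$.

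The second step is to pin down the level set. Set $U:=\{v: w_1(v)\ge c/\sqrt d\}$. Then $|U|\le d/c^2\cdot$const. On non-edges inside $U$ the HS bound gives $\sum_{v,v'\in U,\,(v,v')\notin E}\lambda_1^2w_1(v)^2w_1(v')^2\le\tau d^2$. Hence $G[U]$ is nearly complete, up to $O(\tau d^2)$ missing pairs. Since almost all edges lie in $U$, we get $\binom{|U|}{2}\approx e(G)\approx \theta^2d^2/2$, giving $|U|=(\theta\pm O(\tau^{1/4}))d$.

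Next I need the stronger statement that $w_1$ is nearly constant $\approx 1/\sqrt{\theta d}$ on $U$. For this I would compare $\lambda_1w_1w_1^{\sf T}$ with ${\bf 1}_U{\bf 1}_U^{\sf T}$ in HS norm. Once $A\approx {\bf 1}_U{\bf 1}_U^{\sf T}$ in HS up to $O(\tau^{1/4}d^2)$, the relevant consequence is that $\sum_{v\in U}(|U|-1-\deg_{G[U]}(v))\lesssim \tau^{1/4}d^2$.

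The final step is to prune: repeatedly delete from $U$ any vertex with degree in the current graph below $(\theta-\tau^{1/3})d$. This is the main obstacle. The standard argument is that each deletion removes a vertex missing at least $\gtrsim(\tau^{1/3}-O(\tau^{1/4}))d$ internal neighbours, and the exponent bookkeeping must be set up so that this margin is positive. A direct count of missing pairs then bounds the number of deleted vertices by $\tau^{1/4}d^2/(\tau^{1/3}d)\cdot$const. I would need to reconcile the exponents, using $\tau^{1/2}$ in place of $\tau^{1/4}$ where the HS bound gives it: missing pairs in $U$ are $O(\tau d^2)$ directly, so deletions number $O(\tau^{2/3}d)\ll |U|$. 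This leaves a nonempty $G_\star$ whose minimum degree is at least $|U|-O(\tau^{2/3}d)-\tau^{1/3}d/2\ge(\theta-\tau^{1/3})d$. The lower constraint $\tau\ge c_\theta/d$ absorbs additive $O(1)$ losses, such as the $-1$ from the diagonal and rounding, and $\tau\le\bar c_\theta$ makes all the small-$\tau$ comparisons valid.
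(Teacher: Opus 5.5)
Your argument is correct, once the exponent bookkeeping you flag is done as described in the second paragraph below, and it takes a different route from the paper.

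The paper never works with eigenvectors. It starts from $\sum_i\lambda_i^4(G)=\mathrm{Hom}(C_4,G)\ge\lambda_1^4(G)$. It discards the degenerate homomorphisms using the degree bound; these cost $O(e(G)d)=O(d^3)$, and this is exactly where $\tau\ge c_\theta/d$ is used. It also uses $(2e(G))^2=(\sum_i\lambda_i^2(G))^2\le(\lambda_1^2(G)+\tau d^2)^2$. Together these give $N(C_4,G)\ge(1-10\tau/\theta)(2e(G))^2$, i.e.\ near-equality in $N(C_4,G)\le(2e(G))^2$. The paper then cites the stability result \cite[Lemma A.2]{BK25}, the same tool used for $\Delta\ge3$ in the proof of Theorem~\ref{thm:cond-loc}, to extract a subgraph of minimum degree close to $\sqrt{2e(G)}\approx\theta d$.

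You instead read the near-clique off the rank-one approximation $\lambda_1w_1w_1^{\sf T}$, as a level set $U$ of the Perron vector, and prune by hand. In effect you reprove the relevant case of that stability lemma. The paper's route is two lines given the black box. Yours is self-contained and locates the clique explicitly, and the same bookkeeping even yields minimum degree $(\theta-O_\theta(\sqrt\tau))d$.

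Two points in your write-up should be tightened.

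First, the paragraph making $w_1$ nearly constant on $U$ is unnecessary, and its $O(\tau^{1/4}d^2)$ defect bound would not close the pruning anyway.

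Second, and more important, the size estimate $|U|=(\theta\pm O(\tau^{1/4}))d$ as stated is too weak. A deleted vertex is only guaranteed $|U_j|-1-(\theta-\tau^{1/3})d$ non-neighbours in the current set, so you need $\theta d-|U|=o(\tau^{1/3}d)$. Your first step already gives this. At most $O(\sqrt\tau\,d^2)$ edges are bad, and every good edge lies in $U$ when $c$ is a small absolute constant (e.g.\ $c=1/3$). Hence $|U|^2\ge 2e(G)-O(\sqrt\tau\,d^2)$, which gives $|U|\ge(\theta-O_\theta(\sqrt\tau))d$.

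Combine this with the direct bound of $O_\theta(\tau d^2)$ non-adjacent pairs inside $U$. Each of the first $\tau^{1/3}d/4$ deletions then accounts for at least $\tau^{1/3}d/2$ distinct missing pairs. So only $O_\theta(\tau^{2/3}d)$ deletions occur and the surviving set is nonempty. Its minimum degree is at least $(\theta-\tau^{1/3})d$ by the stopping rule itself. It does not follow from the expression $|U|-O(\tau^{2/3}d)-\tau^{1/3}d/2$ you wrote, which a survivor need not satisfy.
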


\begin{proof}
\dteal{Observe that}
\[
\sum_i \lambda_i^4(G) = {\rm Hom}(C_4, G)= N(C_4, G)+ 4 N(K_{1,2},G) + 8 e(G).
\]
As the maximum degree in $G$ is at most $d$, upon using \eqref{eq:only-theta}, it follows from above that 
\[
N(C_4, G) \geq (1-\bar\tau)^4 \theta^4 d^4 - 8(1+4 \bar \tau)\theta^2 d^3 \geq (1-5 \bar \tau) \theta^4 d^4 \geq (1-10 \bar \tau) (2 e(G))^2,
\]
whenever $\bar\tau \in [c_\theta/d, \bar c_\theta]$, where $\bar \tau:= \tau/\theta$. The desired conclusion now follows from \cite[Lemma A.2]{BK25}.
\end{proof}

\subsection{\dteal{Proof of Proposition \ref{prop:loc-structure}}}
Below we combine Propositions \ref{prop:cheeger-applied} and \ref{prop:dual-cheeger-applied}, and Lemmas \ref{lem:loc-structure-spectral}, \ref{lem:neg-theta}, and \ref{lem:only-theta} to prove Proposition \ref{prop:loc-structure}. 

\begin{proof}[Proof of Proposition \ref{prop:loc-structure}]
First let us consider the case $\ell \in 2 \N+1$. By Lemma \ref{lem:loc-structure-spectral} we see that \eqref{eq:lambda-assumption-cheeger} holds with $\tau_0=\tau_\star^{c_\star}$ and $k =\lfloor \delta \rfloor$ and hence Proposition \ref{prop:cheeger-applied} applies. Let $\{S_i\}_{i\in\set{k+1}}$ be the partition of $V$ obtained upon applying  Proposition \ref{prop:cheeger-applied}, where $S_{k+1}:=V\setminus (S_1 \cup S_2 \cup \cdots \cup S_k)$. Let $\widehat G$ be the edge disjoint union of the graphs $\{G[S_i]\}_{i \in \set{k+1}}$. Set 
\[
\widehat S_i:=\{v \in S_i: \deg_{G[S_i]}(v) \geq (1-\tau_\star^{c_\star/5})d\}, \qquad i \in \set{k}.
\]
Since $\sum_{v \in S_i} (d- \deg_{G[S_i]}(v) = d|S_i| -2 e(G[S_i]))$, we deduce that
%
%
\[
|S_i \setminus \widehat S_i| \leq \tau_\star^{c_\star/5} |S_i| \leq 4 \tau_\star^{c_\star/5} e(G[S_i])/d \leq \tau_\star^{c_\star/6}d, 
\]
where in the first step we used the leftmost assertion in \eqref{eq:ho-cheeger} and Markov inequality, the leftmost assertion in \eqref{eq:ho-cheeger} again in the penultimate step, and the upper bound on $e(G)$ in the last step. This, in turn, implies that 
\begin{equation}\label{eq:hat-S-i}
|\widehat S_i| \geq |S_i| - |S_i\setminus \widehat S_i| \geq 2 e(G[S_i])/|S_i| - \tau_\star^{c_\star/6} d \geq (1 - \tau_\star^{c_\star/7}) d
\end{equation}
and
\begin{equation}\label{eq:hat-S-i-deg}
\deg_{G[\widehat S_i]}(v) \geq \deg_{G[S_i]}(v) - |S_i\setminus \widehat S_i| \geq (1 - \tau_\star^{c_\star/7}) d,
\end{equation}
for all $i \in \set{k}$. Further, as the maximum degree is $d$,
\[
\sum_{i \in \set{k}} |S_i| \leq 2 d^{-1} \sum_{i \in \set{k}} e(G[S_i]) \leq 2e(G)/d,
\]
\dteal{For the moment assume $\delta=k \in \N$. Then the above upper bound} together with the upper bound on $e(G)$ and the lower bound on \dteal{$|\widehat S_i|$} from \eqref{eq:hat-S-i} imply that
\begin{equation}\label{eq:S-i-ubd}
|\widehat S_i| \leq |S_i| \leq (k +\tau_\star)d - (k-1) (1-\tau_\star^{c_\star/5})d \leq (1+\tau^{c_\star/6})d. 
\end{equation}
\dteal{So, for} $\delta \in \N$ \eqref{eq:hat-S-i}, \eqref{eq:hat-S-i-deg}, and \eqref{eq:S-i-ubd} yield the desired conclusion. 

The case of $\delta \notin \N$ requires some additional arguments. To this end, by the rightmost assertion in \eqref{eq:ho-cheeger},  
\begin{equation}
e(G) -e (\widehat G) \leq \sum_{i=1}^k e(G[S_i, V\setminus S_i]) \leq \tau_\star^{c_\star/3} d \sum_{i=1}^k |S_i| \leq 4 \tau_\star^{c_\star/3}\sum_{i=1}^k e(G[S_i]) \leq \tau_\star^{c_\star/4} d^2,
\end{equation}
for $\tau_\star$ sufficiently small, where yet again we used the leftmost assertion in \eqref{eq:ho-cheeger} in the penultimate step, and the upper bound on $e(G)$ in the last step. Therefore, by Hoffman-Wielandt inequality (see \cite[Lemma 2.1.19]{AGZ2010}), 
\[
\max_{j \in \set{\lceil \delta \rceil}} \left|\widetilde \lambda_j(G) - \widetilde \lambda_j(\widehat G)\right| \leq \sqrt{\sum_i \left(\widetilde \lambda_j(G) - \widetilde \lambda_j(\widehat G)\right)^2} \leq \sqrt{2(e(G) -e (\widehat G))} \leq \tau_\star^{c_\star/9} d. 
\]
\dteal{Thus}, by Lemma \ref{lem:loc-structure-spectral}(i)
\begin{equation}\label{eq:spectra-widehatG-1}
d \geq \widetilde \lambda_1(\widehat G) \geq \widetilde \lambda_2(\widehat G) \geq \cdots \geq \widetilde \lambda_{\lfloor \delta \rfloor}(\widehat G) \geq (1-\tau_\star^{c_\star/10})d, \qquad \widetilde \lambda_{\lceil \delta \rceil}(\widehat G)/d \in [\{\delta\}^{1/\ell} - \tau_\star^{c_\star/10},\{\delta\}^{1/\ell} + \tau_\star^{c_\star/10}],
\end{equation}
and
\begin{equation}\label{eq:spectra-widehatG-2}
\sum_{j \geq \lceil \delta \rceil +1} \lambda_j^2(\widehat G) \leq 2e(G) - \sum_{j=1}^{\lceil \delta \rceil} \widetilde \lambda_j^2(\widehat G) \leq \tau_\star^{c_\star/11} d^2.
\end{equation}
On the other hand, applying \eqref{eq:ho-cheeger} once again,
\begin{equation}\label{eq:Gi-near-reg1}
\widetilde \lambda_1(G[S_i]) \geq \frac{{\bf 1}_{S_i}^{\sf T} A(G[S_i]){\bf 1}_{S_i}}{|S_i|} \geq (1-\tau_\star^{c_\star/3})d, \qquad \text{ for all } i \in \set{k},
\end{equation}
and
\begin{equation}\label{eq:Gi-near-reg2}
\sum_{v \in S_i} (d -\deg_{G[S_i]}(v)) \leq \tau^{c_\star/3} |S_i| d, \qquad \text{ for all } i \in \set{k}.
\end{equation}
Recall $k=\lfloor \delta\rfloor$. Since $\widehat G$ is the disjoint unions of $\{G[S_i]\}_{i \in \set{k+1}}$ the spectrum of $\widehat G$ is simply the union of the spectra of $\{G[S_i]\}_{i \in \set{k+1}}$, it follows from \eqref{eq:spectra-widehatG-1}, \eqref{eq:spectra-widehatG-2}, and \eqref{eq:Gi-near-reg1} (use pigeonhole principle) that we have the following two potential scenarios: (a) $\widetilde \lambda_1(G[S_{k+1}]) = \widetilde \lambda_{\lceil \delta \rceil}(\widehat G)$, or (b) $\widetilde \lambda_2(G[S_{i}]) = \widetilde \lambda_{\lceil \delta \rceil}(\widehat G)$ for some $i \in \set{k}$. However, by \eqref{eq:spectra-widehatG-1}-\eqref{eq:Gi-near-reg2}, and Lemma \ref{lem:neg-theta} case (b) is an impossibility. Hence, case (a) must hold. So, Lemma \ref{lem:only-theta} applies and it yields the desired property on $G[S_{k+1}]$. 

By \eqref{eq:spectra-widehatG-1}, we have $2e(G[S_{k+1}]) \geq (\{\delta\}^{2/\ell} 2 \tau_\star^{c_\star/10}) d^2$. Therefore, arguing similarly as in steps leading to \eqref{eq:S-i-ubd} we deduce that it continues to hold for $\delta \notin \N$ case, as well as \eqref{eq:hat-S-i} and \eqref{eq:hat-S-i-deg}. 
Hence, $\{G[\widehat S_i]\}_{i \in \set{k}}$ have the desired properties as well. This completes the proof of part (i).  


The proof of part (ii) follows a similar line of arguments. Below are the details. We begin by applying Lemma \ref{lem:loc-structure-spectral}. It 
implies that there exists some nonnegative integer $k \leq \lfloor \delta \rfloor$ such that 
\begin{equation}\label{eq:widehat-lambda-ineq0}
d \geq \widetilde \lambda_1(G) \geq \widetilde\lambda_2(G) \geq \cdots \geq \widetilde\lambda_{k}(G) \geq (1 -  \tau_\star^{c_\star})d. 
\end{equation}
and
\begin{equation}\label{eq:widehat-lambda-ineq}
-d \leq \widehat \lambda_1(G) \leq \widehat\lambda_2(G) \leq \cdots \leq \widehat\lambda_{k'}(G) \ \dteal{\leq} -(1 -  \tau_\star^{c_\star})d. 
\end{equation}
where $k':=\lfloor \delta \rfloor -k$, and for brevity we write $\widehat \lambda_i(G):= \widetilde \lambda_{|V(G)|-i+1}(G)$ for all $i$. 

We claim that $k \geq \lfloor \delta\rfloor/2$. If not, as the entries of $A(G)$ are nonnegative, using Lemma \ref{lem:loc-structure-spectral}(ii) we find
\begin{multline*}
0 \leq \sum_i  \widetilde \lambda_i^3(G) \leq \sum_{i \geq \lceil \delta \rceil +1} |\lambda_i(G)|^3 + \sum_{i \in \set{k}}\widetilde \lambda_i^3(G) +\sum_{i\in \set{k'}} \widehat \lambda_{i}^3(G) + {\bf 1}_{\{\delta \notin \N\}} |\lambda_{\lceil \delta \rceil}(G)|^3\\
\leq \tau_\star^{c_\star}d^3 + k d^3 -k'(1-3\tau_\star^{c_\star})d^3 +(\{\delta\}^{3/\ell}+ \tau^{c_\star/2})d^3 \leq (k-k' + \{\delta\}^{3/\ell} + \tau_\star^{c_\star/3})d^3 < 0,
\end{multline*}
as $k' - k \geq 1$ for such values of $k$, yielding a contradiction. \dteal{Thus $k \geq \lfloor \delta\rfloor/2$. This will be used below.}

Since \eqref{eq:widehat-lambda-ineq} holds we can apply Proposition \ref{prop:dual-cheeger-applied}. Let $\{T_i\}_{i \in \set{2k'}}$ be the pairwise disjoint subsets obtained upon applying Proposition \ref{prop:dual-cheeger-applied}.  Set $T:= V \setminus (T_1 \cup T_2 \cup \cdots \cup T_{2k'})$. Observe that, by \eqref{eq:ho-dual-cheeger},
\begin{multline}\label{eq:bipartite-excees-edge}
\dteal{e(G[T_{2i-1} \cup T_{2i}, V \setminus (T_{2i-1} \cup T_{2i})]) + 2\left(e(G[T_{2i-1}])+ e(G[T_{2i}])\right)} \\
= \sum_{v \in T_{2i-1} \cup T_{2i}} (\deg_{G}(v) - \deg_{G[T_{2i-1}, T_{2i}]}(v)) \leq \sum_{v \in T_{2i-1} \cup T_{2i}} (d - \deg_{G[T_{2i-1}, T_{2i}]}(v)) \\
\leq d(|T_{2i-1}|+|T_{2i}|) - 2 e(G[T_{2i-1}, T_{2i}])  
\leq \tau_\star^{c_\star/5} d (|T_{2i-1}|+|T_{2i}|) \leq \tau_\star^{c_\star/6}d^2, 
\end{multline}
for any $i \in \set{k'}$, where in the last step we \dteal{also 
used} the upper bound on $e(G)$ to bound $|T_{2i-1}|+T_{2i}|$ from above. 
Next define
\[
\widehat T_{2i-1}:= \left\{v \in T_{2i-1}: \deg_{G[T_{2i-1}, T_{2i}]}(v) \geq (1-\tau_\star^{c_\star/12})d\right\}\]
and
\[
\widehat T_{2i}:= \left\{v \in T_{2i}: \deg_{G[T_{2i-1}, T_{2i}]}(v) \geq (1-\tau_\star^{c_\star/12})d\right\},  
\]
for $i \in \set{k'}$. By Markov's inequality, we deduce from \eqref{eq:bipartite-excees-edge} that
\begin{equation}\label{eq:widehat-T}
|T_{2i-1}\setminus \widehat T_{2i-1}| + |T_{2i}\setminus \widehat T_{2i}| \leq \tau_\star^{c_\star/12} d. 
\end{equation}
Therefore, for any $v \in \widehat T_{2i-1} \cup \widehat T_{2i}$,
\begin{equation}\label{eq:widehat-Gi-deg-lbd}
\deg_{G[\widehat T_{2i-1}, \widehat T_{2i}]}(v) \geq \deg_{G[T_{2i-1}, T_{2i}]}(v) -  |T_{2i-1}\setminus \widehat T_{2i-1}| - |T_{2i}\setminus \widehat T_{2i}| \geq (1-\tau_\star^{c_\star/13})d.
\end{equation}
Further, by \eqref{eq:ho-dual-cheeger} and \eqref{eq:widehat-T}, 
\begin{equation}\label{eq:V-widehat-Gi-lbd}
\min\{|\widehat T_{2i-1}|, |\widehat T_{2i}| \} \geq (1-\tau_\star^{c_\star/13}) d, \qquad \text{ for any } i \in \set{k'}.
\end{equation}

To proceed further, we let $G_0$ to be the edge disjoint union of the graphs $\{\widehat G_i\}_{i \in \set{k'}}$, where for brevity we write $\widehat G_i:=G[\widehat T_{2i-1}, \widehat T_{2i}]$, for $i \in \set{k'}$, and $G'$ to be the edge disjoint union $G_0$  and $G[T]$.  By \eqref{eq:bipartite-excees-edge} and \eqref{eq:widehat-T}, we have $e(G)-e(G') \leq \tau_\star^{c_\star/13} d^2$. Hence, by Hoffman-Wielandt inequality, \eqref{eq:widehat-lambda-ineq0}-\eqref{eq:widehat-lambda-ineq}, and Lemma \ref{lem:loc-structure-spectral}(i),
\begin{equation}\label{eq:spectra-widehatG-1-even1}
d \geq  \widetilde \lambda_1(G') \geq  \widetilde \lambda_2(G') \geq \cdots \geq \widetilde \lambda_{k}(G') \geq (1-\tau_\star^{c_\star/27})d, 
\end{equation}
\begin{equation}\label{eq:spectra-widehatG-1-even2}
-d \leq  \widehat \lambda_1(G') \leq  \widehat \lambda_2(G') \leq \cdots \leq \widehat \lambda_{k'}(G') \leq -(1-\tau_\star^{c_\star/27})d,
\end{equation}
\begin{equation}\label{eq:spectra-widehatG-1-even3}
| \lambda_{\lceil \delta \rceil}(G')|/d \in [\{\delta\}^{1/\ell} - \tau_\star^{c_\star/27},\{\delta\}^{1/\ell} + \tau_\star^{c_\star/27}],
\end{equation}
and
\begin{equation}\label{eq:spectra-widehatG-2-even}
\sum_{j \geq \lceil \delta \rceil +1} \lambda_j^2(G') \leq 2e(G) - \sum_{j=1}^{\lceil \delta \rceil}  \lambda_j^2(G') \leq \tau_\star^{c_\star/28} d^2.
\end{equation}
Above, the inequality \eqref{eq:spectra-widehatG-1-even3} is to be understood to hold only for $\delta \notin \N$.

We claim that, for all $i \in \set{k'}$, 
\begin{equation}\label{eq:G-T-i-spectra}
d \geq \widetilde \lambda_1(\widehat G_i) \geq \widehat \lambda_1(\widehat G_i) \geq (1-\tau_\star^{c_\star/27}) d \quad \text{ and } \quad \sum_{j \geq 3} \lambda_j^2(\widehat G_i) \leq \tau_\star^{c_\star/28} d^2.
\end{equation}
Turning to prove \eqref{eq:G-T-i-spectra}, as $\widehat G_i$ is a bipartite graph with edges only between $\widehat T_{2i-1}$ and $\widehat T_{2i}$, applying Courant-Fisher-Weyl min-max principle and \eqref{eq:ho-dual-cheeger} we find that 
\[
\widehat \lambda_{1}(\widehat G_i) \leq \frac{{\bm u}_i^{\sf T} A(\widehat G_i) {\bm u}_i}{|\widehat T_{2i-1}| + |\widehat T_{2i}|} = - \frac{2 e(\widehat G_i)}{|\widehat T_{2i-1}| + |\widehat T_{2i}|} \leq -(1-\tau_\star^{c_\star/5})d, \qquad \text{ where } {\bm u}_i:=\begin{pmatrix} {\bf 1}_{\widehat T_{2i-1}} \\ -{\bf 1}_{\widehat T_{2i}} \end{pmatrix},
\]
for all $i \in \set{k'}$. Since the spectrum of $G'$ is the disjoint union of the spectra of $\{\widehat G_i\}_{i \in \set{k'}}$ and of $G[T]$, the last observation together with \eqref{eq:spectra-widehatG-1-even2} imply that there exists one and only one eigenvalue of $\widehat G_i$ that is less than $-(1-\tau_\star^{c_\star/27})d$, for any $i \in \set{k'}$. Hence, by Perron-Frobenius theorem $\widetilde \lambda_1(\widehat G_i) \geq (1-\tau_\star^{c_\star/27})d$ as well. 

To complete the proof of the claim \eqref{eq:G-T-i-spectra}, we will argue that there does not exist any other eigenvalue of $\widehat G_i$ whose absolute value exceeds $\tau_\star^{c_\star/57} d$. Nonexistence of such eigenvalues together with \eqref{eq:spectra-widehatG-2-even} and the observations made in the previous paragraphs indeed implies \eqref{eq:G-T-i-spectra}. 


Now, if possible, let us assume that there exists $m \geq 3$ eigenvalues that exceed $\tau_\star^{c_\star/57} d$ in absolute values.  \dteal{The subgraph} $\widehat G_i$ being a bipartite graph it does not contain any triangle. This yields 
\begin{equation}\label{eq:small-triangle}
\left| \sum_{j \in \set{m}} \lambda_j^3(\widehat G_i)\right| \leq   \left|\sum_{j \geq m+1} \lambda_j^3(\widehat G_i)\right| + \left|\sum_j  \lambda_j^3(\widehat G_i)\right| \leq \tau_\star^{c_\star/28}d^3 + N(K_{3}, \widehat G_i) \leq \tau_\star^{c_\star/28}d^3,
\end{equation}
where in the penultimate step we also used \eqref{eq:spectra-widehatG-2-even}. We have already argued above that there is only one negative eigenvalue of $\widehat G_i$ that is less that $-(1-\tau_\star^{c_\star/27})d$. Therefore, barring $\widehat \lambda_1(\widehat G_i)$ and possibly at most one more (\dteal{the one corresponding to \eqref{eq:spectra-widehatG-1-even3}}), any other eigenvalue that is greater than $\tau_\star^{c_\star/57}d$ in absolute value must be positive (use \eqref{eq:spectra-widehatG-1-even1}-\eqref{eq:spectra-widehatG-2-even}). 
Further, such eigenvalues must be bounded by $(1-\tau_\star^{c_\star/27})d$ from below. So, for $\delta \notin \N$ if there exists an eigenvalue whose absolute value lies in $[(\{\delta\}^{1/\ell} -\tau_\star^{c_\star/27})d, (\{\delta\}^{1/\ell} +\tau_\star^{c_\star/27})d]$, then depending on its sign, we have
\begin{equation}\label{eq:small-triangle1}
\left((m-3) \pm \{\delta \}^{3/\ell} -\tau_\star^{c_\star/28}\right)d^3 \leq \sum_{j \in \set{m}} \lambda_j^3(\widehat G_i) \leq \left( (m-3) \pm \{\delta \}^{3/\ell} + \tau_\star^{c_\star/28}\right)d^3,
\end{equation}
and otherwise
\begin{equation}\label{eq:small-triangle2}
\left((m-2) -\tau_\star^{c_\star/28}\right)d^3 \leq \sum_{j \in \set{m}} \lambda_j^3(\widehat G_i) \leq \left( (m-2)+ \tau_\star^{c_\star/28}\right)d^3.
\end{equation}
Both \eqref{eq:small-triangle1} and \eqref{eq:small-triangle2} contradict \eqref{eq:small-triangle}. This yields the claim \eqref{eq:G-T-i-spectra}. 






Armed with \eqref{eq:G-T-i-spectra} we are nearly at the end of the proof. Indeed, by \eqref{eq:G-T-i-spectra} we obtain that $2e(\widehat G_i) \leq (2+\tau_\star^{c_\star/28})d^2$, which together with  \eqref{eq:ho-dual-cheeger} imply $\max\{|T_{2i-1}|, |T_{2i}|\} \leq (1+\tau_\star^{c_\star/28})d$. Therefore, by \eqref{eq:widehat-Gi-deg-lbd}, 
\begin{equation}\label{eq:widehat-Gi-deg-lbd-1}
\deg_{G[\widehat T_{2i-1}, \widehat T_{2i}]}(v)/ \max\{|\widehat T_{2i-1}|, |\widehat T_{2i}|\} \geq 1 -\tau_\star^{c_\star/29}, \qquad \text{ for all } v \in \widehat T_{2i-1} \cup \widehat T_{2i} \text{ and all } i \in \set{k'}. 
\end{equation}
Notice that \eqref{eq:V-widehat-Gi-lbd} and \eqref{eq:widehat-Gi-deg-lbd-1} provide the required bipartite structures. 

Set $\delta':= \delta -2k'$. As $k \geq \lfloor \delta \rfloor /2$ and $k':=\lfloor \delta \rfloor -k$, we have $\delta'= \delta +2k -2 \lfloor \delta \rfloor \geq \delta - \lfloor \delta \rfloor \geq 0$. 
Recalling that $G'$ is the edge disjoint union of $G[T]$ and $\{\widehat G_i\}_{i \in \set{k'}}$, 
%
we deduce from \eqref{eq:spectra-widehatG-1-even1}-\eqref{eq:spectra-widehatG-2-even} and \eqref{eq:G-T-i-spectra} that  
\begin{equation}\label{eq:spectra-after-bipartite}
d \geq  \widetilde \lambda_1(G[T]) \geq  \widetilde \lambda_2(G[T]) \geq \cdots \geq \widetilde \lambda_{\lfloor \delta '\rfloor}(G[T]) \geq (1-\tau_\star^{c_\star/27})d, 
\end{equation}
\begin{equation}\label{eq:spectra-after-bipartite1}
\sum_{j \geq \lceil \delta' \rceil +1} \lambda_j^2(G[T])  \leq \tau_\star^{c_\star/28} d^2,
\end{equation}
and if $\delta' \notin \N$, 
\begin{equation}\label{eq:spectra-after-bipartite2}
| \lambda_{\lceil \delta' \rceil}(G[T])|/d \in [\{\delta\}^{1/\ell} - \tau_\star^{c_\star/27},\{\delta\}^{1/\ell} + \tau_\star^{c_\star/27}].
\end{equation}
Notice that the bounds in \eqref{eq:spectra-after-bipartite}-\eqref{eq:spectra-after-bipartite2} are almost identical with the bounds on the eigenvalues that we worked with in part (i), except for an indeterminacy in the sign of $\lambda_{\lceil \delta' \rceil}(G[T])$. Since Lemma \ref{lem:neg-theta} allows for both positive and negative signs of the second largest eigenvalue in absolute value, 
one can argue similarly as in part (i) to extract the desired almost clique like structures in $G[T]$. We omit further details. 
The proof of the proposition is now finally complete.
\end{proof}

\bibliographystyle{plain}
\bibliography{mybib.bib}{}


\end{document}